\crefname{hypothesis}{Hypothesis}{Hypotheses}
\Crefname{ALC@unique}{Line}{Lines}
\numberwithin{theorem}{section}
\colorlet{texcscolor}{blue!50!black}
\colorlet{texemcolor}{red!70!black}
\colorlet{texpreamble}{red!70!black}
\colorlet{codebackground}{black!25!white!25}
\lstdefinestyle{siamlatex}{%
  style=tcblatex,
  texcsstyle=*\color{texcscolor},
  texcsstyle=[2]\color{texemcolor},
  keywordstyle=[2]\color{texemcolor},
  moretexcs={cref,Cref,maketitle,mathcal,text,headers,email,url},
}
\DeclareTotalTCBox{\code}{ v O{} }
{ 
  fontupper=\ttfamily\color{black},
  nobeforeafter,
  tcbox raise base,
  colback=codebackground,colframe=white,
  top=0pt,bottom=0pt,left=0mm,right=0mm,
  leftrule=0pt,rightrule=0pt,toprule=0mm,bottomrule=0mm,
  boxsep=0.5mm,
  #2}{#1}
\patchcmd\newpage{\vfil}{}{}{}
\title{Anomalous Diffusion and the Generalized Langevin Equation%
  \thanks{Submitted November 1, 2017.
\funding{This work has been supported by DMS-1644290}}}
\author{Scott A McKinley%
  \thanks{Department of Mathematics, Tulane University, New Orleans, LA (\email{scott.mckinley@tulane.edu}).}%
  \and
  Hung D Nguyen%
  \thanks{Department of Mathematics, Tulane University, New Orleans, LA. (\email{hnguye25@tulane.edu}).}
}
\newcommand{\nbb}{\mathbb{N}}
\newcommand{\rbb}{\mathbb{R}}
\newcommand{\zbb}{\mathbb{Z}}
\newcommand{\cbb}{\mathbb{C}}
\newcommand{\Sc}{\mathcal{S}}
\newcommand{\F}[1]{\mathcal{F}\left[#1\right]}
\newcommand{\Fcos}{\mathcal{F}_{\cos}}
\newcommand{\Fsin}{\mathcal{F}_{\sin}}
\newcommand{\Kcos}{\mathcal{K}_{\cos}}
\newcommand{\Ksin}{\mathcal{K}_{\sin}}
\newcommand{\rhat}{\widehat{r}}
\newcommand{\la}{\langle}
\newcommand{\ra}{\rangle}
\renewcommand{\d}{\mathrm{d}}
\newcommand{\f}{\varphi}
\newcommand{\suchthat}{\, : \,}
\newcommand{\E}[1]{\mathbb{E}\left[#1\right]}
\newcommand{\Enone}[1]{\mathbb{E}#1}
\begin{document}

\maketitle

\begin{abstract}
The Generalized Langevin Equation (GLE) is a Stochastic Integro-Differential Equation that is commonly used to describe the velocity of microparticles that move randomly in viscoelastic fluids.  Such particles commonly exhibit what is known as anomalous subdiffusion, which is to say that their position Mean-Squared Displacement (MSD) scales sublinearly with time. While it is common in the literature to observe that there is a relationship between the MSD and the memory structure of the GLE, and there exist special cases where explicit solutions exist, this connection has never been fully characterized. Here, we establish a class of memory kernels for which the GLE is well-defined; we investigate the associated regularity properties of solutions; and we prove that large-time asymptotic behavior of the particle MSD is entirely determined by the tail behavior of the GLE's memory kernel. 
%
\end{abstract}

\section{Introduction}

The Generalized Langevin Equation (GLE) is a Stochastic Integro-Differential Equation that is now a commonly used to describe the velocity of micro-particles diffusing in viscoelastic fluids. Introduced by Mori in 1965 \cite{mori1965transport} and Kubo in 1966 \cite{kubo1966fluctuation}, then popularized for modeling viscoelastic diffusion by Mason \& Weitz in 1995 \cite{mason1995optical}, the GLE is a balance-of-forces equation that features a prominent memory effect. Let $\{X(t)\}_{t \geq 0}$ and $\{V(t)\}_{t \in \rbb}$ be stochastic processes denoting a particle's time-dependent position and velocity. For the sake of simplicity, we will consider these processes to be one-dimensional, but this has no impact on our major findings. There are several perspectives on how the GLE can be derived from heat bath models \cite{kupferman2004fractional,kou2008stochastic} or from principles of polymer physics \cite{fricks2009time} and viscoelastic fluid theory \cite{goychuk2012viscoelastic,indei2012treating}. With slight notational changes, we consider the version of the GLE that appears in \cite{hohenegger2017fluid}, which has the most general form:
\begin{equation} \label{eq:gle}
m \d V(t)	= -\lambda V(t) - \beta \int_{-\infty}^t \!\!\!\! K(t-s) V(s) \d s + \sqrt{\beta} F(t) \d t + \sqrt{2 \lambda} \d W(t),
\end{equation}
where $m$ is the particle's mass, $\lambda$ and $\beta$ represent the particle's viscous and elastic drag coefficients, and $K: \rbb \to \rbb_+$ is a memory kernel that summarizes how the surrounding fluid stores kinetic energy from the particle and then acts back on the particle at a later time. The process $\{W(t)\}_{t \in \rbb}$ is a two-sided standard Brownian motion, while $\{F(t)\}_{t \in \rbb}$ is a mean zero, stationary, Gaussian process with covariance
\begin{equation} \label{eq:gle-F}
	\E{F(t) F(s)} = K(t-s).
\end{equation}
The fact that we require the covariance of $F(t)$ to be the same function as the memory kernel appearing in \eqref{eq:gle} is a manifestation of the Fluctuation-Dissipation relationship \cite{kubo1966fluctuation}. To have correct physical units, the coefficients of $F(t)$ and $\d W(t)$ should be $\sqrt{\beta k_B T}$ and $\sqrt{2 \lambda k_B T}$, respectively, where $k_B$ is Boltzman's constant and $T$ is the temperature of the system, but we will ignore this factor throughout this work. The reason why there is a two in the coefficient of $\d W(t)$ but not $F(t)$ is in order to satisfy equipartition of energy, as discussed in \cite{hohenegger2017equipartition} and \cite{hohenegger2017fluid}.

The GLE is one of a few qualitatively distinct mathematical models that can produce what is known as \emph{anomalous diffusion}. A particle position process $X(t) := \int_0^t V(s) \d s$, $t \geq 0$ (sometimes referred to as the integrated GLE (iGLE)) is said to be \emph{diffusive} if its Mean-Squared Displacement (MSD), $\E{X^2(t)}$, satisfies $\E{X^2(t)} = C t$ for some constant $C > 0$ for all time $t$. Any departure from being diffusive qualifies a process as exhibiting \emph{anomalous diffusion}. Single particle tracking experiments for a wide variety of particles in biological fluids feature particles that exhibit anomalous \emph{subdiffusion}, which is to say that for a large segment of time $\E{|X|^2(t)} \approx C t^{\alpha}$ for some $\alpha \in (0,1)$ \cite{golding2006physical,ernst2012fractional,hill2014biophysical}.

We will mostly concern ourselves with large-$t$ behavior and whether an iGLE has the following property:
\begin{equation}
	\text{\emph{Asymptotically Subdiffusive} } X(t): \,\,\, \E{X^2(t)} \sim t^{\alpha} \text{ as } t \to \infty,\hspace*{2 cm}
\end{equation}
where, for two functions $f$ and $g$, we say 
\begin{displaymath}
	f(t) \sim g(t) \text{ as } t \to \infty \text { if, for some } C \in (0,\infty), \, \lim_{t \to \infty} f(t)/g(t) = C.
\end{displaymath}

The large-time MSD behavior of the iGLE is entirely determined by its memory kernel $K(t)$. To our knowledge, Morgado et al.~(2002) \cite{morgado2002relation} were the first to make this relationship explicit:
\begin{equation} \label{eq:meta-thm}
	\text{\emph{Meta-Theorem}: } \text{for } \alpha \in (0,1), \,\,  K(t) \sim t^{-\alpha} \implies \E{X^2(t)} \sim t^{\alpha}, \text{ as } t \to \infty.
\end{equation} 

The argument presented by Morgado et al.~was informal and Kneller (2011) \cite{kneller2011generalized} later presented an attempt to make it rigorous. Both arguments rely a chain of three relationships:
\begin{enumerate}[(i)]
\item relating the MSD to the Autocovariance Function (ACF, $r(t) := \E{V(t)V(s)}$);
\item relating the Laplace transform of the ACF to the  Laplace transform of $K$;
\item relating the Laplace transform of $K$ near zero to $K(t)$ itself for large $t$.
\end{enumerate}

Relationship (i) follows from the classical formula \cite{reif1965fundamentals}:
\begin{displaymath}
	\E{X^2(t)} = 2 \int_0^t (t-s) r(s) \d s.
\end{displaymath}
Relationship (iii) follows from the Hardy-Littlewood-Karamata (HLK) Tauberian Theorem for Laplace transforms \cite{feller1966introduction}. However, it has recently been shown that the proposed Relationship (ii) is not valid \cite{hohenegger2017reconstructing}. The reason is that these arguments rely on a widely cited assumption that $E{F(t) V(0)} = 0$ for all $ t > 0$ in stationarity. This is, in fact, not the case for stationary solutions to the GLE. However, the assumption appears, for example, the seminal works by Kubo (1966) \cite{kubo1966fluctuation}, Mason (2000) \cite{mason2000estimating} and Squires \& Mason (2010) \cite{squires2010fluid}).

There are some special cases in which rigorous work has been done on the Meta-Theorem. In 2004, Kupferman \cite{kupferman2004fractional} studied a version of the GLE where $\lambda = 0$ and the convolution integral in \eqref{eq:gle} is defined on the interval $[0,t]$ rather than $(-\infty, t]$. In this system, by assuming $K(t) = C t^{-\alpha}$, the author derived an exact solution and demonstrated that the MSD scales like $t^\alpha$. In 2008, Kou presented the GLE ($\lambda = 0$) defined with the convolution over $(-\infty, t]$ and $K(t) = C t^{-\alpha}$. Importantly, Kou shifted the analysis to a Fourier transform setting (more natural for studying a stationary process like $V$) and proved the Meta-Theorem holds in this special case. Later, in 2012, Didier et al.~\cite{didier2012statistical} introduced a condition on the Fourier transform of the spectral density of solutions (as the frequency tends to 0) that predicts the large-time scaling of the MSD. The limit theorem takes the form that there exist positive constants $c$ and $C$ such that $c \leq \lim_{t \to \infty} \E{X^2(t)}/t^\alpha \leq C$. However, the stated condition is not easy to interpret as a condition directly on $K(t)$.

In the work that follows we establish a large class of memory kernels $K(t)$ for which the GLE and iGLE are well-posed. We analyze regularity of the solutions and are able to characterize the large-$t$ asymptotics of the MSD of $X(t)$ as follows: if $K(t)$ is integrable, then $X(t)$ is asymptotically diffusive; if $K(t)$ is not integrable, but has nice behavior for large $t$, then the Meta-Theorem \eqref{eq:meta-thm} holds. In Section \ref{sec:intro:K} we lay out sufficient assumptions for $K(t)$ in two cases -- in the first case (Assumption \ref{a:K}), when either $m > 0$ or $\lambda > 0$, and in the second case (Assumption \ref{cond:cv}), when $m = \lambda = 0$. Moreover, we describe some important memory kernel examples in the literature.  In Section \ref{sec:intro:summary} we provide a rigorous summary of our results including our version of the Meta-Theorem \eqref{eq:meta-thm}, namely Theorem \ref{thm:subdiffusion}.

\subsection{The class of admissible memory functions $K(t)$}
\label{sec:intro:K}


The two primary examples of memory kernels from the literature are:
\begin{equation}
	\begin{aligned}
		\text{Sum of exponentials:}& \quad K(t) = \sum_{k=1}^n c_k e^{-\lambda_k} \,\,\, \cite{fricks2009time, goychuk2012viscoelastic, pavliotis2014stochastic, lysy2016model,hall2016uncertainty}; \text{ and} \\
		\text{Power law:}& \quad K(t) = c_\alpha t^{-\alpha}, (\alpha \in (0,1)) \, \quad \,\cite{kupferman2004fractional,kou2008stochastic}.
	\end{aligned}
\end{equation}
The coefficients of the sum of exponentials $\{c_k\}$ are real numbers and the coefficient $c_\alpha$ is an $\alpha$-dependent positive constant. We generalize these examples as follows.

\begin{assumption} \label{a:K}
Given $K:\rbb\to\rbb$, where $K(0)$ may be infinite, we assume:
\begin{enumerate}[(I)]
\item \label{cond1} \begin{enumerate}[a.]
\item $K$ is symmetric and positive for all  non-zero $t$;
\item \label{cond1b} $K(t) \to 0$ as $t \to \infty$ and is eventually decreasing;
\item \label{cond1c}$K \in L^1_{loc}(\rbb)$;
\item \label{cond1d}The improper integral $\Kcos(\omega)=\int_0^\infty \! K(t)\cos(\omega t) \, \d t$ is positive for all non-zero $\omega$.
\end{enumerate}
\end{enumerate}
Furthermore, either
\begin{enumerate}[(I)]
\addtocounter{enumi}{1}
\item $K \in L^1$,\label{cond2a}
\end{enumerate}
or
\begin{enumerate}[(I)]
\addtocounter{enumi}{2}
\item $K \notin L^1$ but there exists $\alpha\in(0,1)$ such that $K(t) \sim t^{-\alpha}$ as $t \to \infty$.\label{cond2b}
\end{enumerate}	
\end{assumption}

Assumption \ref{a:K} is sufficient as long as either $m > 0$ or $\lambda > 0$. If $m = \lambda = 0$, then we need to introduce stricter conditions. Most notably, $K$ will need to be convex.

\begin{assumption}[Extension when $\lambda = \mu = 0$]
\label{cond:cv} 
Given $K:\rbb\to\rbb$ where $K(0)$ may be infinite, we assume: 
\begin{enumerate}[(I)]
\addtocounter{enumi}{3}
\item \label{cond:convex} $K\in C^2(0,\infty)$ is convex and $K''(t)$ is monotone near the origin.
\end{enumerate} 
Furthermore, either
	\begin{enumerate}[(I)]
		\addtocounter{enumi}{4}
		\item \label{cond:cv-finite} $K(0)$ is finite and there exists $\sigma_1\in(0,1)$ such that $\lim_{t\to 0}t^{\sigma_1} K'(t)=0$.
	\end{enumerate}
or
	\begin{enumerate}[(I)]
	\addtocounter{enumi}{5}
		\item \label{cond:cv-infinite} $K(0)$ is infinite but there exists $\sigma_2\in(0,1)$ such that $\lim_{t\to 0}t^{\sigma_2} K(t)\in(0,\infty)$. 
	\end{enumerate}
\end{assumption}

It has been noted in many places (recently in \cite{mckinley2009transient, goychuk2012viscoelastic}) that a sum of exponentials with sufficiently many terms can be used to approximate functions that have power-law behavior for large-$t$, but diverse behavior near the origin. As we note in Section \ref{sec:prelim:exponentials}, the ``closure'' of the family of sum-of-exponential functions, namely the \emph{completely monotone functions}, satisfy the conditions of Assumption \ref{a:K}.

\subsection{Summary of Results}
\label{sec:intro:summary}

In Section \ref{sec:prelim}, we lay out the mathematical foundation on which our main theorems are built. The results in Sections \ref{sec:prelim:tempered}-\ref{sec:prelim:stationary} are a review of necessary definitions, notation, and results from classical stationary process theory (with a modest extension in Section \ref{sec:prelim:extension}). Much of the work in Section \ref{sec:tauberian} is inspired from previous work by Soni and Soni (1975), \cite{soni1975slowly}. Namely, we prove some Abelian theorems for improper Fourier transforms that are necessary for our asymptotic analysis of the MSD in the subdiffusive case.

In Section \ref{sec:weak-solutions} we establish our notion of weak solutions for GLE/iGLE pairs, and in Section \ref{sec:regularity} we provide conditions on $K(t)$ and the parameters $m$ and $\lambda$ that lead to continuous (or differentiable) versions of $V(t)$. The parameters $m$ and $\lambda$ play a prominent role here, and it does not matter whether the process is asymptotically diffusive or subdiffusive. We summarize these results as follows.

Suppose that $K(t)$ satisfies Condition \ref{cond1}. Then if $m > 0$ or $\lambda > 0$, the GLE is well-posed and we find the following:
\begin{equation}
\begin{aligned}
	m > 0, \, \lambda > 0:& \quad V(t) \text{ is continuous a.s.} \\
	m > 0, \, \lambda = 0:& \quad V(t) \text{ is continuous a.s.}^\dagger \\
	m = 0, \, \lambda > 0:& \quad X(t) \text{ is continuous a.s.}
\end{aligned}	
\end{equation}
In the last case, we understand the velocity process $V$ in the sense of stationary random distributions. The $\dagger$ indicates that, in the ($m > 0, \, \lambda = 0$) case, stricter conditions can be placed on $K(t)$ so that $V(t)$ is, in fact, differentiable (see Theorem \ref{thm:V-differentiable}). 

To address the $m = \lambda = 0$ case, we must impose further conditions. Namely, suppose that, in addition to $\ref{cond1}$, $K(t)$ satisfies Condition \ref{cond:convex} and either \ref{cond:cv-finite} or \ref{cond:cv-infinite}. Then the GLE is well-posed and
\begin{equation}
	m = 0, \, \lambda = 0: \quad X(t) \text{ is continuous a.s.}
\end{equation}
Again, we understand $V$ in the sense of stationary random distributions.

With these regularity results in hand, we proceed in Section \ref{sec:msd} to prove our \textbf{main theorem} on the dichotomy between being asymptotically diffusive or subdiffusive, Theorem \ref{thm:subdiffusion}.

\begin{theorem}[Asymptotic Behavior of the MSD] \label{thm:subdiffusion}
Let $\{V(t)\}_{t \in \rbb}$ be a solution to the GLE in the sense defined in Definition \ref{def:weak-solution} and let $\{X(t)\}_{t \geq 0}$ be the associated iGLE. If $m > 0$ or $\lambda > 0$, then
\begin{equation}
	\lim_{t \to \infty} \E{X^2(t)} \sim t^{\eta}, \text{ where } \eta = \left\{
	\begin{array}{rl}
	1, & \text{if } K(t) \text{ satisfies }	 \eqref{cond1}+\eqref{cond2a} \\
	\alpha, & \text{if } K(t) \text{ satisfies }	 \eqref{cond1}+\eqref{cond2b}	
	\end{array}\right.
\end{equation}
where, in the latter case, $\alpha \in (0,1)$ is the constant from Assumption \eqref{cond2b}. 

If $m = \lambda = 0$, then the Condition \eqref{cond1} should be replaced with \eqref{cond1} $+$ Assumption \ref{cond:cv}.
\end{theorem}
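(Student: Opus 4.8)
The plan is to run the three-step chain (i)–(iii) from the introduction, but with Relationship (ii) replaced by a correct spectral identity derived from the weak-solution framework of Section~\ref{sec:weak-solutions}. First I would pass to the stationary velocity process $\{V(t)\}$ and work with its autocovariance $r(t) = \E{V(t)V(0)}$ and its spectral density $\rhat(\omega)$. Taking Fourier transforms in the stationary GLE \eqref{eq:gle}—using that $F$ has spectral density proportional to $\Kcos$ by the fluctuation–dissipation relation \eqref{eq:gle-F}—one obtains an explicit algebraic formula for $\rhat(\omega)$ in terms of $\Kcos(\omega)$, $\Ksin(\omega)$, and the parameters $m,\lambda,\beta$; schematically $\rhat(\omega)$ is a rational expression whose denominator is $|{-m\omega^2} + \lambda + \beta(\Kcos(\omega) + i\,\mathrm{sgn}(\omega)\Ksin(\omega) + \dots)|^2$ up to the precise normalization fixed in Section~\ref{sec:prelim:stationary}. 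Condition \eqref{cond1}(d), positivity of $\Kcos$, is exactly what keeps this denominator bounded away from zero for $\omega\neq 0$, so $\rhat$ is a genuine integrable spectral density and the representation is rigorous rather than formal. This is where the earlier critique of the literature is addressed: I will not assume $\E{F(t)V(0)}=0$; the cross term is retained and simply drops out of the modulus in the right way.

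Next I combine the classical MSD formula $\E{X^2(t)} = 2\int_0^t (t-s) r(s)\,\d s$ with a Tauberian passage. Rather than Laplace transforms I would use the improper-Fourier-transform Abelian/Tauberian machinery assembled in Section~\ref{sec:tauberian} (following Soni and Soni), since $V$ is naturally a stationary process and $\rhat$ is its spectral measure. The key reduction is that the large-$t$ behavior of $\E{X^2(t)}$ is governed by the behavior of $\rhat(\omega)$ as $\omega\to 0$: concretely, $\E{X^2(t)}\sim t^{\eta}$ corresponds to $\rhat(\omega)$ having a suitable regularly-varying singularity (or finite positive limit) at the origin. In the case \eqref{cond1}+\eqref{cond2a}, $K\in L^1$ forces $\Kcos(0)=\int_0^\infty K$ finite and positive, so the denominator of $\rhat$ tends to a positive constant, $\rhat(0)\in(0,\infty)$, hence $r\in L^1$ with $\int_0^\infty r > 0$ and the classical formula gives $\E{X^2(t)}\sim t$ — the diffusive case. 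In the case \eqref{cond1}+\eqref{cond2b}, $K(t)\sim t^{-\alpha}$ with $\alpha\in(0,1)$ means $K$ is regularly varying, so by a Tauberian/Abelian theorem for the cosine transform, $\Kcos(\omega)\sim c_\alpha\,\omega^{\alpha-1}$ as $\omega\to 0$ (up to an explicit gamma-function constant). This singularity dominates the $\lambda$ and $-m\omega^2$ terms in the denominator near zero, so $\rhat(\omega)\sim c'\,\omega^{\,1-\alpha}/(\text{something})$... more precisely $\rhat(\omega)$ picks up exponent $\alpha-1$ from the structure, translating to $\E{X^2(t)}\sim t^{\alpha}$.

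For the degenerate case $m=\lambda=0$, the denominator of $\rhat$ is purely $\beta^2|\Kcos(\omega)+i\,\mathrm{sgn}(\omega)\Ksin(\omega)|^2$ (plus the $F$-contribution numerator), so there is no cushioning constant term and the behavior of $\rhat$ both at $\omega\to 0$ and $\omega\to\infty$ is entirely dictated by $K$'s transform. This is why Assumption~\ref{cond:cv} is needed: convexity of $K$ (condition \eqref{cond:convex}) gives, via classical results on Fourier transforms of convex decreasing functions, that $\Kcos(\omega)>0$ and is itself well-behaved (monotone-type control) for all $\omega\neq 0$, while conditions \eqref{cond:cv-finite}/\eqref{cond:cv-infinite} control the near-origin growth of $K$ (hence the large-$\omega$ decay of $\Kcos$) precisely enough to keep $\rhat$ integrable and to keep the weak solution well-defined as established in Section~\ref{sec:regularity}. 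With $\rhat$ under control, the same Tauberian argument applies verbatim: $K\in L^1$ gives $\eta=1$ and $K(t)\sim t^{-\alpha}$ gives $\eta=\alpha$.

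The main obstacle I anticipate is step (iii)–(ii) combined: getting a genuinely two-sided ($\sim$, not just $\lesssim$ and $\gtrsim$) Tauberian statement. The HLK theorem for Laplace transforms is one-directional in general and needs monotonicity/positivity side conditions to be upgraded to an asymptotic equivalence; the analogous improper-Fourier-transform theorems of Section~\ref{sec:tauberian} will carry side hypotheses that I must verify for $r(t)$ — and $r(t)$ is only known implicitly through $\rhat$. Concretely, I expect to need that $\rhat(\omega)$ is regularly varying at $0$ \emph{and} that its inverse transform $r$ inherits enough monotonicity (eventually monotone, or of bounded variation with controlled tail) to invoke the Tauberian direction; establishing this regularity of $r$ from the algebraic formula for $\rhat$ — using the eventual monotonicity of $K$ in \eqref{cond1}(b) and, in the $m=\lambda=0$ case, the convexity in \eqref{cond:convex} — is the technical heart of the proof. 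The secondary obstacle is bookkeeping the exact constants through the fluctuation–dissipation normalization so that the limiting ratio in the definition of $\sim$ is indeed a finite \emph{positive} constant and not zero; this requires the strict positivity in \eqref{cond1}(d) (resp. the convexity-driven positivity of $\Kcos$ when $m=\lambda=0$) to rule out cancellation.
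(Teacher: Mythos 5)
Your overall strategy --- express everything through the spectral density $\rhat$ of the stationary velocity, use the Abelian theorems of Section~\ref{sec:tauberian} to nail down its behavior near $\omega=0$, and read off the growth exponent --- is the right one, and you have correctly identified that Condition~\eqref{cond1d} (resp.\ convexity in the $m=\lambda=0$ case) is what makes $\rhat$ a legitimate spectral density with no cancellation. But your final link from $\rhat$ to the MSD is genuinely different from the paper's, and harder. You propose to pass to the autocovariance $r(t)$, apply the classical identity $\E{X^2(t)}=2\int_0^t(t-s)r(s)\,\d s$, and then invoke a two-sided Tauberian theorem for the cosine transform; you correctly anticipate that this forces you to establish monotonicity/regular-variation side conditions on $r$ itself, which is only implicitly known. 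The paper avoids all of that. It writes the MSD \emph{directly} as a spectral integral, $\E{X^2(t)}=2\int_\rbb\frac{1-\cos(t\omega)}{\omega^2}\rhat(\omega)\,\d\omega$ (Equation~\eqref{eqn:X(t)}), substitutes $z=t\omega$, and then the asymptotics follow from the Dominated Convergence Theorem alone: in the diffusive case boundedness of $\rhat$ plus $\rhat(0)\in(0,\infty)$ gives $\E{X^2(t)}/t\to 2\rhat(0)\int_\rbb\frac{1-\cos z}{z^2}\,\d z$; in the subdiffusive case one divides by $t^\alpha$ and uses Proposition~\ref{prop:fourier.taube} to get $\rhat(\omega)/\omega^{1-\alpha}\to c\in(0,\infty)$ together with boundedness, and DCT again closes the argument. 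Only Abelian (easy-direction) statements are ever used, never a Tauberian inversion, so the ``technical heart'' you flagged --- verifying Tauberian side conditions for $r$ --- is precisely the obstacle that the paper's change of variables dissolves. The $m=\lambda=0$ case (Theorem~\ref{thm:X(t)-diffusive:mzero-lambdazero}) is not ``verbatim'': since $\rhat$ is no longer bounded at infinity one must split the integral at $z=At$ and control the tail with the growth estimate~\eqref{ineq:cv-gle7-2}, which is where Assumption~\ref{cond:cv} and Proposition~\ref{prop:fourier-cv} are truly consumed. One small bookkeeping slip: since the GLE is first order in $V$, the denominator of $\rhat$ is $|im\omega+\lambda+\beta\widehat{K^+}(\omega)|^2$ as in~\eqref{form:rhat-mpos-lambdapos}, not $|{-m\omega^2}+\lambda+\cdots|^2$; your version would apply if one transformed the equation for $X$ rather than for $V$.
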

This is our version of the our version of the Meta-Theorem \eqref{eq:meta-thm} and the proof appears in Section \ref{sec:msd}.

Finally, as has been noted in several places \cite{mckinley2009transient,goychuk2012viscoelastic,pavliotis2014stochastic}, a process might be asymptotically diffusive, but nevertheless exhibit anomalous behavior over a very large time range. In Section \ref{sec:transient-anomalous-diffusion}, we provide a rigorous definition for so-called \emph{transient anomalous diffusion} and characterize one important setting in which it arises.

\section{Mathematical Preliminaries} 
\label{sec:prelim}

\subsection{Tempered Distributions and Fourier Transform}
\label{sec:prelim:tempered}
For a function $f:\rbb\to\cbb$, we define the Fourier transform of $f$ and its inverse as
\begin{displaymath}
\widehat{f}(\omega)=\int_\rbb f(t) e^{-it\omega} \d t, \text{ and }	 \check{f}(t)=\frac{1}{2\pi}\int_\rbb f(\omega) e^{it\omega}\d \omega.
\end{displaymath}

We use $\Sc$ to denote the class of Schwarz functions and $\Sc'$ for the class of tempered distributions on $\Sc$. 
For $g\in\Sc'$, we write $\F{g}$ for the Fourier transform of $g$ in $\Sc'$. That is to say, for all $\varphi\in\Sc$, it holds that
\begin{displaymath}
\langle g,\widehat{\varphi}\rangle = \langle \F{g},\varphi\rangle.	
\end{displaymath}

\subsection{Positive Definiteness}
\label{sec:prelim:positive-definiteness}

We recall some basic definitions and theorems that can be found, for example, in the text by Cram\'{e}r and Leadbetter \cite{cramer1967stationary}.
\begin{definition}
A continuous function $r:\rbb\to\cbb$ is positive definite if the following holds
\[\sum_{j,k=1}^n r(t_j-t_k)z_j\overline{z}_k\geq 0,\]
for any finite set of time points $t_j$ and complex numbers $z_j$.	
\end{definition}

\begin{theorem}[Bochner's Theorem]\label{thm:Bochner}  A function $f(t)$ is positive definite if and only if it can be represented in the form
\begin{displaymath}
f(t)=\int_\rbb e^{it\omega}\nu(\d\omega),	
\end{displaymath}
where $\nu$ is a positive finite Borel measure.
\end{theorem}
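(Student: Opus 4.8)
The statement has two directions, and only the forward one has real content. For the converse, if $f(t)=\int_\rbb e^{it\omega}\,\nu(\d\omega)$ for a positive finite Borel measure $\nu$, then for any finite set of times $t_j$ and complex numbers $z_j$,
\[
\sum_{j,k=1}^n f(t_j-t_k)\,z_j\overline{z}_k=\int_\rbb\Big|\sum_{j=1}^n z_j e^{it_j\omega}\Big|^2\,\nu(\d\omega)\ge 0,
\]
and continuity of $f$ is immediate from dominated convergence since $\nu$ is finite; hence $f$ is positive definite.

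For the forward direction the plan is a Fej\'er-window regularization. First I would record the elementary consequences of positive definiteness coming from the $1\times1$ and $2\times2$ cases of the defining inequality: $f(0)\ge0$ is real, $f(-t)=\overline{f(t)}$, and $|f(t)|\le f(0)$. Next, for each $T>0$, set $f_T(t):=(1-|t|/T)_+ f(t)$, a continuous compactly supported function, and define
\[
p_T(\omega):=\frac{1}{2\pi}\,\widehat{f_T}(\omega)=\frac{1}{2\pi}\int_{-T}^{T}\Big(1-\tfrac{|u|}{T}\Big)f(u)\,e^{-i\omega u}\,\d u .
\]
A change of variables shows $2\pi T\,p_T(\omega)=\int_0^T\!\!\int_0^T f(s-t)\,e^{-i\omega s}\,\overline{e^{-i\omega t}}\,\d s\,\d t$, which is a limit of Riemann sums of the positive-definite quadratic form applied to the weights $z(s)=e^{-i\omega s}$; since $f$ is continuous (so the double integral is a genuine limit of such sums), this yields $p_T(\omega)\ge0$ for every $\omega$.

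I would then show that the positive measures $\mu_T(\d\omega):=p_T(\omega)\,\d\omega$ all have total mass exactly $f(0)$. Integrating $p_T$ against the Gaussian $e^{-\varepsilon\omega^2/2}$ and using Fubini gives $\frac{1}{\sqrt{2\pi\varepsilon}}\int_{-T}^{T}(1-|u|/T)f(u)\,e^{-u^2/(2\varepsilon)}\,\d u$, which tends to $f(0)$ as $\varepsilon\downarrow0$ because the Gaussian kernel concentrates at $u=0$ and $f$ is continuous; since $p_T\ge0$ and $e^{-\varepsilon\omega^2/2}\uparrow1$, monotone convergence yields $\int_\rbb p_T(\omega)\,\d\omega=f(0)$. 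In particular $p_T\in L^1(\rbb)$, so Fourier inversion applies to the pair $(f_T,\widehat{f_T})$ and produces the pre-limit representation
\[
\Big(1-\tfrac{|t|}{T}\Big)_+ f(t)=\int_\rbb e^{it\omega}\,\mu_T(\d\omega),\qquad t\in\rbb .
\]

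The remaining, and most delicate, step is to pass to the limit $T\to\infty$ without losing mass. Writing $g_T(t):=(1-|t|/T)_+f(t)$, averaging the pre-limit identity over $t\in[-\delta,\delta]$ gives $\frac{1}{2\delta}\int_{-\delta}^{\delta}\big(f(0)-\mathrm{Re}\,g_T(t)\big)\,\d t=\int_\rbb\big(1-\tfrac{\sin(\delta\omega)}{\delta\omega}\big)\,\mu_T(\d\omega)\ge\tfrac12\,\mu_T\big(\{|\omega|\ge 2/\delta\}\big)$, using $\mu_T(\rbb)=f(0)$ and the elementary bound $1-\sin x/x\ge\tfrac12$ for $|x|\ge2$. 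Since $g_T\to f$ uniformly on $[-\delta,\delta]$ as $T\to\infty$ and $f$ is continuous at $0$ with $\mathrm{Re}\,f(0)=f(0)$, the left-hand side can be made arbitrarily small, uniformly in all large $T$, by choosing $\delta$ small; this is uniform tightness of $\{\mu_T\}_{T\text{ large}}$. By Prokhorov's theorem a subsequence $\mu_{T_n}$ converges weakly to a positive finite Borel measure $\mu$ with $\mu(\rbb)=f(0)$, and testing against the bounded continuous function $\omega\mapsto e^{it\omega}$, together with $g_{T_n}(t)\to f(t)$, gives $f(t)=\int_\rbb e^{it\omega}\,\mu(\d\omega)$, the desired representation. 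I expect the two genuine obstacles to be the rigorous justification of $p_T\ge0$ (the passage from the discrete positive-definiteness inequality to the double integral) and the tightness estimate above, which is what rules out escape of mass to infinity in the limit $T\to\infty$.
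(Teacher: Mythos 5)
The paper does not prove this statement: Bochner's Theorem is quoted as a classical result from Cram\'er and Leadbetter's text, so there is no in-paper argument to compare against. Judged on its own, your proof is correct and is the standard Fej\'er-window (Ces\`aro-mean) proof: the converse via the quadratic-form identity, positivity of $p_T=\tfrac{1}{2\pi}\widehat{f_T}$ from the double-integral representation as a limit of positive-definite Riemann sums, mass $f(0)$ via Gaussian mollification plus monotone convergence, Fourier inversion for the pre-limit identity, and the $1-\sin x/x\ge\tfrac12$ tightness bound feeding into Prokhorov. All the steps you flag as delicate are handled correctly (continuity of $f$ on the compact square justifies the Riemann-sum limit, and $|f|\le f(0)$ gives the boundedness needed for Fubini and the approximate-identity limit), so the argument is complete.
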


When the measure $\nu$ has a density $\widehat{f}$, i.e.\ the covariance $f$ admits the formula $f(t)=\int_\rbb e^{it\omega}\widehat{f}(\omega)\d\omega$, then $\widehat{f}$ is called the spectral density. In fact, this is guaranteed by the first condition we impose on our memory kernels. 

\begin{proposition} \label{prop:Fourier-Inverse} 
	Let $f$ be a positive definite function satisfying \eqref{cond1b}. Then, $f$ admits the inverse Fourier formula
\begin{equation}\label{eqn:Fourier-Inverse}
f(t) = \frac{1}{\pi}\int_\rbb \widehat{f}(\omega)e^{it\omega}\d\omega,
\end{equation}
where $\widehat{f}(\omega) = \int_0^\infty f(t)\cos(t\omega)\d t$.
\end{proposition}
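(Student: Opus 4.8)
The statement bundles two claims: that the spectral measure $\nu$ produced by Bochner's theorem (Theorem~\ref{thm:Bochner}) for the positive-definite function $f$ is absolutely continuous, and that its density equals $\tfrac1\pi$ times the \emph{improper} cosine transform $\widehat f(\omega)=\int_0^\infty f(t)\cos(\omega t)\,\d t$. The plan is to prove both at once by a mollify-and-pass-to-the-limit argument. Hypothesis~\eqref{cond1b} (that $f$ is eventually decreasing to $0$) will be used twice: once to make sense of the improper integral defining $\widehat f$, and once to produce a bound on the mollified densities that is uniform in the mollification parameter.

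First I would record the soft facts: $f$ is bounded by $f(0)$ and, being real-valued by~\eqref{cond1b}, even, so the measure $\nu$ with $f(t)=\int_\rbb e^{it\omega}\,\nu(\d\omega)$ is symmetric. For $\epsilon>0$ put $f_\epsilon(t):=e^{-\epsilon|t|}f(t)\in L^1(\rbb)$. Since $e^{-\epsilon|t|}$ is the Fourier transform of the Poisson kernel $P_\epsilon(\omega)=\tfrac1\pi\,\epsilon/(\epsilon^2+\omega^2)$, a Fubini computation gives $f_\epsilon(t)=\int_\rbb e^{it\omega}\rho_\epsilon(\omega)\,\d\omega$ with $\rho_\epsilon:=P_\epsilon*\nu\ge 0$, $\rho_\epsilon\in L^1(\rbb)$ and $\int_\rbb\rho_\epsilon=f(0)$; equivalently $\rho_\epsilon(\omega)=\tfrac1{2\pi}\int_\rbb f_\epsilon(t)e^{-it\omega}\,\d t=\tfrac1\pi\int_0^\infty f(t)e^{-\epsilon t}\cos(\omega t)\,\d t$. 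Because $P_\epsilon$ is an approximate identity and $\nu$ is finite, $\rho_\epsilon(\omega)\,\d\omega\Rightarrow\nu(\d\omega)$ weakly as $\epsilon\downarrow 0$.

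The analytic heart is to compute $\lim_{\epsilon\downarrow0}\rho_\epsilon(\omega)$ for $\omega\ne0$ and to bound $\rho_\epsilon$ uniformly. Choose $T$ so large that $f$ is nonnegative and nonincreasing on $[T,\infty)$. On $[0,T]$ one has $f\in L^1_{loc}$ and $(1-e^{-\epsilon t})\to 0$ boundedly, handled by dominated convergence; on $[T,\infty)$ the function $t\mapsto f(t)e^{-\epsilon t}$ is nonnegative and nonincreasing, so the second mean value theorem for integrals yields $\bigl|\int_a^b f(t)e^{-\epsilon t}\cos(\omega t)\,\d t\bigr|\le 2f(a)/|\omega|$ for $T\le a\le b$, uniformly in $\epsilon$. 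This one estimate delivers three things simultaneously: the improper integral $\widehat f(\omega)=\int_0^\infty f(t)\cos(\omega t)\,\d t$ converges for $\omega\ne0$ (Cauchy criterion); $\rho_\epsilon(\omega)\to\tfrac1\pi\widehat f(\omega)$ pointwise for $\omega\ne0$ (the Abelian fact that the Abel means of a convergent improper integral converge to its value); and $0\le\rho_\epsilon(\omega)\le\tfrac1\pi\bigl(\|f\mathbf 1_{[0,T]}\|_{L^1}+2f(T)/|\omega|\bigr)$ for all $\epsilon\in(0,1)$, a bound that is bounded on compact subsets of $\rbb\setminus\{0\}$. I expect this uniform second–mean–value estimate, together with the bookkeeping needed to make the Abelian limit rigorous, to be the main obstacle; the rest is routine.

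To conclude, take any interval $[a,b]$ with $0\notin\{a,b\}$ whose endpoints are not atoms of $\nu$ (all but countably many). Weak convergence gives $\nu([a,b])=\lim_\epsilon\int_a^b\rho_\epsilon$, while the uniform bound plus pointwise convergence give $\lim_\epsilon\int_a^b\rho_\epsilon=\tfrac1\pi\int_a^b\widehat f(\omega)\,\d\omega$ by dominated convergence; hence $\nu$ and $\tfrac1\pi\widehat f\,\d\omega$ agree on all Borel subsets of $\rbb\setminus\{0\}$. It remains to rule out an atom at the origin: by dominated convergence $\tfrac1{2R}\int_{-R}^R f(t)\,\d t=\int_\rbb \tfrac{\sin(R\omega)}{R\omega}\,\nu(\d\omega)\to\nu(\{0\})$, whereas the left side tends to $0$ because $f(t)\to0$ by~\eqref{cond1b}; so $\nu(\{0\})=0$. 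Therefore $\nu(\d\omega)=\tfrac1\pi\widehat f(\omega)\,\d\omega$, and inserting this into Bochner's representation gives $f(t)=\tfrac1\pi\int_\rbb \widehat f(\omega)e^{it\omega}\,\d\omega$, which is~\eqref{eqn:Fourier-Inverse}.
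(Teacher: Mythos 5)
The paper does not give its own proof of this proposition; it cites Theorem~5.1 of \cite{inoue1995abel} and moves on. Your argument --- damping $f$ by $e^{-\epsilon|t|}$, identifying $\rho_\epsilon=P_\epsilon*\nu\ge0$ as the resulting density, using the second mean value theorem both to make the improper cosine transform converge and to obtain a bound on $\rho_\epsilon$ that is uniform in $\epsilon$ and locally integrable away from $0$, then passing to the limit via weak convergence of $\rho_\epsilon\,\d\omega\Rightarrow\nu$ and ruling out an atom at the origin by a Ces\`aro average of $f$ --- is correct, and it is the standard Abelian route to this inversion formula, almost certainly the one taken in the cited source. One small fix in the last paragraph: you should require $[a,b]\subset\rbb\setminus\{0\}$ (so that the $1/|\omega|$ part of the dominating function is integrable over $[a,b]$), rather than merely $0\notin\{a,b\}$, which as written would permit $a<0<b$; since you only need equality of $\nu$ and $\tfrac1\pi\widehat f\,\d\omega$ on Borel subsets of $\rbb\setminus\{0\}$, this costs nothing.
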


The proof of Proposition \ref{prop:Fourier-Inverse} can be found in \cite{inoue1995abel}, Theorem 5.1. The inversion formula~\eqref{eqn:Fourier-Inverse} will be useful in Section \ref{sec:regularity:mpos-lambdazero} where we investigate the differentiability of solutions to the GLE.

In order to make sense of the GLE in general, we will need the theory of stationary random distributions, introduced by It\^{o} \cite{ito1954stationary}. This requires an extension of the notion of positive definiteness to the tempered distributions.

\begin{definition} A tempered distribution $f\in\Sc'$ is called {\it positive definite} if for any $\varphi\in \Sc$,
\[ \langle r,\varphi*\widetilde{\varphi}\rangle \geq 0,  \]
where $\widetilde{\varphi}(x)=\varphi(-x)$.
\end{definition}

Much as Bochner's Theorem characterizes the positive definite functions, there is a characterization of positive definite tempered distributions as well.

\begin{theorem}[\cite{ito1954stationary}]\label{thm:pd-td}
A tempered distribution $f$ is positive definite if and only if $f$ admits a representation 
\begin{displaymath}
	\langle f,\varphi\rangle = \int_\rbb \overline{\widehat{\varphi}(\omega)}\nu(\d\omega),	
\end{displaymath}
where $\nu$ is a non-negative measure on $\rbb$ satisfying
\begin{equation}\label{ineq:spectral-measure}
	\int_\rbb \frac{\nu(\d x)}{(1+x^2)^k}<\infty,
\end{equation}
for some integer $k$.
\end{theorem}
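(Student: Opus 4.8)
The plan is to treat the two implications separately, with Bochner's Theorem (Theorem~\ref{thm:Bochner}) carrying the weight in the nontrivial direction. For \emph{sufficiency}, suppose $f$ admits the stated representation. For $\varphi\in\Sc$ one has $\widehat{\widetilde\varphi}(\omega)=\widehat\varphi(-\omega)=\overline{\widehat\varphi(\omega)}$, hence $\widehat{\varphi*\widetilde\varphi}=|\widehat\varphi|^{2}$, and since $\widehat\varphi\in\Sc$ makes $(1+\omega^{2})^{k}|\widehat\varphi(\omega)|^{2}$ bounded, condition \eqref{ineq:spectral-measure} guarantees that
\begin{displaymath}
\la f,\varphi*\widetilde\varphi\ra=\int_\rbb\overline{\widehat{\varphi*\widetilde\varphi}(\omega)}\,\nu(\d\omega)=\int_\rbb|\widehat\varphi(\omega)|^{2}\,\nu(\d\omega)
\end{displaymath}
is a well-defined, manifestly nonnegative quantity; so $f$ is positive definite (with the customary bookkeeping of conjugates, which is harmless for the real, symmetric kernels of interest here).

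\emph{Necessity} is the substantive direction, which I would carry out by mollification. Fix a Gaussian $g_\epsilon(x)=(2\pi\epsilon)^{-1/2}e^{-x^{2}/2\epsilon}\in\Sc$ and set $\phi_\epsilon:=g_\epsilon*\widetilde{g_\epsilon}\in\Sc$, so that $\widehat{\phi_\epsilon}=|\widehat{g_\epsilon}|^{2}>0$ everywhere and $\phi_\epsilon\to\delta_0$ in $\Sc'$ as $\epsilon\to0$. Then $f_\epsilon:=f*\phi_\epsilon$ is smooth with polynomial growth (as $f\in\Sc'$, $\phi_\epsilon\in\Sc$), and it is a continuous \emph{positive-definite function}: for points $t_1,\dots,t_n$ and $z_1,\dots,z_n\in\cbb$, unwinding $f_\epsilon=f*g_\epsilon*\widetilde{g_\epsilon}$ gives $\sum_{j,k}z_j\overline{z_k}\,f_\epsilon(t_j-t_k)=\la f,\Psi*\widetilde{\overline\Psi}\ra\ge0$, where $\Psi$ is the corresponding linear combination of translates of $g_\epsilon$ and the inequality is the hypothesis on $f$ (using that $f$ is Hermitian). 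Bochner's Theorem then produces a finite positive Borel measure $\mu_\epsilon$ with $f_\epsilon(t)=\int_\rbb e^{it\omega}\mu_\epsilon(\d\omega)$. Taking Fourier transforms, $\widehat{f_\epsilon}=\widehat f\cdot\widehat{\phi_\epsilon}$, and uniqueness identifies $\widehat{f_\epsilon}$ with $\mu_\epsilon$; since $\widehat{\phi_\epsilon}>0$, the measure $\nu:=\widehat{\phi_\epsilon}^{-1}\mu_\epsilon$ is nonnegative and, by consistency across $\epsilon$, independent of $\epsilon$.

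It then remains (i) to bound the growth of $\nu$ and (ii) to pass to the limit. For (i) — which is exactly where the tempered hypothesis $f\in\Sc'$ is used — the defining seminorm bound $|\la f,\varphi\ra|\le C\sum_{|\alpha|,|\beta|\le N}\|x^{\alpha}\partial^{\beta}\varphi\|_\infty$, tested against rescaled bump-type functions whose Fourier transforms concentrate on $[-R,R]$, forces $\nu([-R,R])\le C(1+R^{2})^{N}$ uniformly in $\epsilon$, hence $\int_\rbb(1+x^{2})^{-k}\,\nu(\d x)<\infty$ with $k=N+1$. For (ii), continuity of convolution on $\Sc'$ gives, for $\varphi\in\Sc$, $\la f,\varphi\ra=\lim_{\epsilon\to0}\la f_\epsilon,\varphi\ra=\lim_{\epsilon\to0}\int_\rbb\overline{\widehat\varphi(\omega)}\,\widehat{\phi_\epsilon}(\omega)\,\nu(\d\omega)$, and since $0\le\widehat{\phi_\epsilon}\le1$ with $\widehat{\phi_\epsilon}\to1$ pointwise, dominated convergence — justified precisely by the weighted bound from (i) together with $\widehat\varphi\in\Sc$ — yields the representation $\la f,\varphi\ra=\int_\rbb\overline{\widehat\varphi(\omega)}\,\nu(\d\omega)$. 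I expect step (i) to be the main obstacle: the mollification and the appeal to Bochner's Theorem are routine, but the \emph{a priori} polynomial control forcing $\nu$ to be a tempered measure requires a careful choice of test functions and is the crux of the statement. One can, of course, simply invoke the Bochner--Schwartz theorem, of which this is the one-dimensional instance — which I take to be what is intended here, given the citation.
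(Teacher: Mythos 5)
The paper states Theorem~\ref{thm:pd-td} with a citation to It\^{o} (1954) and provides no proof, so there is no in-paper argument to compare against; what you have written is your own proof of the Bochner--Schwartz theorem (1D), which is indeed the right name for the cited result. Your outline --- sufficiency by direct substitution, necessity by Gaussian mollification, Bochner's theorem for the regularized $f_\epsilon$, division by the strictly positive $\widehat{\phi_\epsilon}$, a polynomial growth bound on $\nu$, and a monotone passage to the limit --- is the standard route and is sound in structure.

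Two remarks on the substance. First, your step (i) is where the real work lives, and as written it is not quite tight: you phrase the bound on $\nu([-R,R])$ as holding ``uniformly in $\epsilon$,'' but the clean way to reach $\nu$ itself is to test with $\varphi_R\in\Sc$ having $\widehat{\varphi_R}\ge 0$ everywhere and $\widehat{\varphi_R}\ge 1$ on $[-R,R]$ (say $\widehat{\varphi_R}(\omega)=\chi(\omega/R)$ for a fixed nonnegative bump $\chi$), observe that the seminorms of $\varphi_R$ grow polynomially in $R$, and then use $\langle f_\epsilon,\varphi_R\rangle=\int\widehat{\varphi_R}\,\widehat{\phi_\epsilon}\,\d\nu$ together with $\widehat{\phi_\epsilon}\uparrow 1$ and monotone convergence to conclude
$\nu([-R,R])\le\int\widehat{\varphi_R}\,\d\nu=\langle f,\varphi_R\rangle\le C\,(1+R^{2})^{N}$.
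The uniformity in $\epsilon$ by itself is vacuous (since $\widehat{\phi_\epsilon}\le 1$, bounding $\mu_\epsilon([-R,R])$ gives nothing for $\nu$ without the limit $\epsilon\to 0$), so the monotone convergence step cannot be postponed to (ii). Second, a minor definitional point: the identity $\sum_{j,k}z_j\overline{z_k}\,f_\epsilon(t_j-t_k)=\langle f,\Psi*\widetilde{\overline\Psi}\rangle$ uses the positive-definiteness hypothesis in the form $\langle f,\varphi*\widetilde{\overline\varphi}\rangle\ge 0$, whereas the paper's definition omits the conjugate; that omission is an inessential slip in the paper (or a tacit restriction to real test functions), but you should either state the corrected form or note the convention, since your proof genuinely uses the Hermitian version. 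The ``consistency across $\epsilon$'' claim is fine and follows from $\mu_{\epsilon_1}\widehat{\phi_{\epsilon_2}}=\mu_{\epsilon_2}\widehat{\phi_{\epsilon_1}}$ as finite measures, hence equality of $\mu_\epsilon/\widehat{\phi_\epsilon}$. With step (i) sharpened as above, the proof is complete.
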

\begin{rem} Analogous to Theorem \ref{thm:Bochner}, when the measure $\nu$ in Theorem \ref{thm:pd-td} is absolutely continuous to Lebesgue measure (i.e.~if there exists a function $\widehat{f}$ such that $\nu(\d \omega)=\widehat{f}(\omega)\d\omega$), then $\widehat{f}$ is called the {\it spectral density} of the tempered distribution $f$. 
\end{rem}

\subsection{Stationary Random Processes and Stationary Random Distributions}
\label{sec:prelim:stationary}

\begin{definition}
 A stochastic process $\{F(t)\}_{t \in \rbb}$ is mean-square continuous and stationary if for all $t, s\in\rbb$,
		\begin{enumerate}[(a)]
		\item $\E{\lvert F(t) \rvert^2}<\infty$ and $\lim_{h\to 0}\E{\lvert F(t+h)-F(t) \rvert^2} =0$;
		\item $\E{F(t)}=a$, for some constant $a$ (we may assume $a=0$); and
		\item the covariance function $\E{F(t)\overline{F(s)}}$ only depends on the difference $(t-s)$.
		\end{enumerate}
\end{definition}
This definition of stationarity is often called \emph{stationary in the wide sense} but we will simply call such processes \emph{stationary}. The following connection between positive definite functions and covariance functions is explained, for example, in \cite{cramer1967stationary}.

\begin{theorem}\label{thm1} A function $r(t)$ is positive definite if and only if it is the covariance function of some mean-square continuous stationary process V(t), i.e.
\[r(t-s)=\E{F(t)\overline{F(s)}}.\]
$V$ can be chosen to be Gaussian.
\end{theorem}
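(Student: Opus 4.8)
The plan is to prove the two implications separately: the forward direction (covariance $\Rightarrow$ positive definite) is immediate, while the reverse direction is a construction built from the positive-definiteness hypothesis together with the Kolmogorov extension theorem.

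\emph{Covariance $\Rightarrow$ positive definite.} Suppose $r(t-s) = \E{V(t)\overline{V(s)}}$ for some mean-square continuous stationary process $V$. For any times $t_1,\dots,t_n$ and complex numbers $z_1,\dots,z_n$, linearity of expectation gives
\[
\sum_{j,k=1}^n r(t_j - t_k)\, z_j \overline{z_k} = \E{\Big|\,\sum_{j=1}^n z_j V(t_j)\Big|^2} \ge 0 .
\]
Continuity of $r$ follows from mean-square continuity and Cauchy--Schwarz: for any $\tau$ and any fixed $t$,
\[
|r(\tau + h) - r(\tau)| = \big|\E{\big(V(t+\tau+h) - V(t+\tau)\big)\overline{V(t)}}\big| \le \E{|V(t+\tau+h) - V(t+\tau)|^2}^{1/2}\,\E{|V(t)|^2}^{1/2},
\]
which tends to $0$ as $h \to 0$. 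Hence $r$ is a continuous positive definite function.

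\emph{Positive definite $\Rightarrow$ covariance of a stationary Gaussian process.} Let $r$ be continuous and positive definite. For each finite tuple $t_1 < \dots < t_n$, the matrix $\Sigma_{t_1,\dots,t_n} := \big(r(t_j - t_k)\big)_{j,k=1}^n$ is Hermitian and positive semidefinite — this is exactly the positive-definiteness hypothesis applied to these $n$ points (the case $n=1$ also forces $r(0)\ge 0$, so the matrices make sense) — and therefore is the covariance matrix of a mean-zero Gaussian vector. This determines a probability law $\mu_{t_1,\dots,t_n}$ on $\rbb^n$ when $r$ is real-valued, and its standard complex-Gaussian analogue (equivalently a $2n$-dimensional real Gaussian with the induced real covariance) when $r$ is complex-valued. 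These laws form a consistent family under permutation and marginalization, since a Gaussian marginal of a Gaussian vector has the corresponding principal submatrix of $\Sigma$ as its covariance. The Kolmogorov extension theorem then yields a process $\{V(t)\}_{t\in\rbb}$ on some probability space with exactly these finite-dimensional distributions; by construction $V$ is mean-zero Gaussian with $\E{V(t)\overline{V(s)}} = r(t-s)$, so its covariance depends only on $t-s$ and $V$ is stationary. Finally, mean-square continuity follows from continuity of $r$ at the origin, since for every $t$
\[
\E{|V(t+h) - V(t)|^2} = 2 r(0) - r(h) - \overline{r(h)} = 2\big(r(0) - \Re\, r(h)\big) \longrightarrow 0 \quad \text{as } h \to 0 .
\]

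\emph{Main obstacle.} There is no deep obstacle here: the forward direction is a one-line computation, and in the reverse direction the only place the hypothesis is genuinely used is to certify that each finite candidate covariance matrix $\Sigma_{t_1,\dots,t_n}$ is positive semidefinite, after which Kolmogorov extension does the rest. The minor bookkeeping points are the complex-valued case (a Hermitian positive semidefinite matrix is the covariance of a complex Gaussian vector) and verifying consistency of the finite-dimensional laws. One could alternatively invoke Bochner's Theorem (Theorem \ref{thm:Bochner}) to write $r(t) = \int_\rbb e^{it\omega}\nu(\d\omega)$ and build $V$ as a stochastic integral $V(t) = \int_\rbb e^{it\omega} Z(\d\omega)$ against a Gaussian random measure $Z$ with control measure $\nu$, which makes stationarity and mean-square continuity transparent; but that requires setting up integration against $Z$, so I would prefer the Kolmogorov route.
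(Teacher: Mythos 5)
Your proof is correct. The paper does not prove this statement at all --- it is quoted as a classical result with a citation to Cram\'er and Leadbetter --- and your argument (expand $\E{\lvert\sum_j z_j V(t_j)\rvert^2}$ for the forward direction; Gaussian finite-dimensional distributions plus Kolmogorov extension for the converse, with mean-square continuity read off from $2(r(0)-\Re\, r(h))$) is exactly the standard textbook proof, with the complex-valued and consistency caveats correctly flagged.
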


The generalization of a stationary random \emph{process} is a stationary random \emph{distribution}, an idea introduced by It\^{o} in 1954 \cite{ito1954stationary}. Denote by $\tau_h$, the shift transform on $\Sc$, $\tau_h\varphi(x):=\varphi(x+h)$.
\begin{definition} A linear functional $F:\Sc\to L^2(\Omega)$, the space of all random variables with finite variance, is called a {\it stationary random distribution} on $\Sc$ if for all $h\in\rbb$, $\varphi_1,\varphi_2\in\Sc$,
\[\E{\langle  F,\tau_h \varphi_1\rangle \overline{\langle F,\tau_h\varphi_2  \rangle}}=\E{\langle  F,\varphi_1\rangle \overline{\langle F,\varphi_2  \rangle}}.\]
\end{definition}

\begin{definition} A process $\{\xi(t)\}_{t \in \rbb}$ is said to have {\it orthogonal increments} if, for any $t_1<t_2\leq t_3<t_4$, we have
\[\E{\left(\xi\left(t_4\right)-\xi\left(t_3\right)\right)\overline{\left(\xi\left(t_2\right)-\xi\left(t_1\right)\right)}}=0.\]
\end{definition}

\begin{theorem}[\cite{cramer1967stationary}] \label{thm:spectral} A process $\{F(t)\}_{t \in \rbb}$ is stationary if and only if there exists a stochastic process $\{\xi(\omega)\}_{\omega \in \rbb}$ with orthogonal increments such that for every $t\in\rbb$,
\[F(t)=\int_\rbb e^{it\omega}\xi(\d\omega).\]
\end{theorem}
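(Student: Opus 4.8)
The plan is to prove the two implications separately; in both directions the bridge is the \emph{spectral measure} of the process together with a Hilbert-space isometry, and both rely on a one-time construction of the stochastic integral $\int_\rbb g(\omega)\,\xi(\d\omega)$ against a process $\xi$ with orthogonal increments. I would first record that construction: if $\xi$ has orthogonal increments and the structure set function $\mu((a,b]):=\E{\lvert\xi(b)-\xi(a)\rvert^2}$ extends to a finite non-negative Borel measure on $\rbb$ (finiteness being implicit in the requirement that $\int_\rbb e^{it\omega}\xi(\d\omega)$ exist in $L^2(\Omega)$), then orthogonality of increments over disjoint intervals makes the assignment $\sum_j c_j\mathbf{1}_{(a_j,b_j]}\mapsto\sum_j c_j(\xi(b_j)-\xi(a_j))$ a well-defined linear isometry from the dense subspace of interval step functions in $L^2(\mu)$ into $L^2(\Omega)$; its continuous extension $I$ defines $\int g\,\d\xi:=I(g)$, and by construction $\E{(\int g\,\d\xi)\overline{(\int h\,\d\xi)}}=\int_\rbb g\overline{h}\,\d\mu$ for all $g,h\in L^2(\mu)$.

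For the ``if'' direction, assume $F(t)=\int_\rbb e^{it\omega}\xi(\d\omega)$. Taking $g(\omega)=e^{it\omega}$ and $h(\omega)=e^{is\omega}$ in the isometry identity gives $\E{F(t)\overline{F(s)}}=\int_\rbb e^{i(t-s)\omega}\mu(\d\omega)$, which depends only on $t-s$; with $t=s$ this equals $\mu(\rbb)<\infty$; and $\E{\lvert F(t+h)-F(t)\rvert^2}=\int_\rbb\lvert e^{ih\omega}-1\rvert^2\mu(\d\omega)\to 0$ as $h\to 0$ by dominated convergence (dominating function $4$, integrable since $\mu$ is finite). Hence $F$ is mean-square continuous and stationary.

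For the ``only if'' direction, assume $F$ is stationary with covariance $r(t-s)=\E{F(t)\overline{F(s)}}$. By Theorem~\ref{thm1}, $r$ is positive definite, so Bochner's Theorem (Theorem~\ref{thm:Bochner}) yields a finite non-negative Borel measure $\nu$ with $r(t)=\int_\rbb e^{it\omega}\nu(\d\omega)$. Let $\mathcal{H}_F\subset L^2(\Omega)$ be the closed linear span of $\{F(t):t\in\rbb\}$, write $e_t(\omega):=e^{it\omega}\in L^2(\nu)$, and define $Ue_t:=F(t)$ on $\mathrm{span}\{e_t:t\in\rbb\}$. This map preserves inner products because $\la e_t,e_s\ra_{L^2(\nu)}=\int_\rbb e^{i(t-s)\omega}\nu(\d\omega)=r(t-s)=\la F(t),F(s)\ra_{L^2(\Omega)}$, so it extends to an isometry $U$ on $\overline{\mathrm{span}\{e_t\}}$; and that closed span is all of $L^2(\nu)$, since any $g\in L^2(\nu)$ orthogonal to every $e_t$ makes $g\,\d\nu$ a finite complex measure with vanishing Fourier transform, hence $g=0$ $\nu$-a.e. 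Thus $U:L^2(\nu)\to\mathcal{H}_F$ is unitary. Set $\xi(\omega):=U(\mathbf{1}_{(-\infty,\omega]})$: for $t_1<t_2\le t_3<t_4$ the indicators $\mathbf{1}_{(t_1,t_2]}$ and $\mathbf{1}_{(t_3,t_4]}$ are orthogonal in $L^2(\nu)$ and $U$ preserves inner products, so $\xi$ has orthogonal increments, and $\E{\lvert\xi(b)-\xi(a)\rvert^2}=\nu((a,b])$ identifies its structure measure as $\mu=\nu$. Finally $I$ and $U$ agree on step functions (both send $\mathbf{1}_{(a,b]}$ to $\xi(b)-\xi(a)$) and hence on all of $L^2(\nu)=L^2(\mu)$, so $\int_\rbb e^{it\omega}\xi(\d\omega)=I(e_t)=U(e_t)=F(t)$ for every $t$.

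I expect the crux to be the bookkeeping that ties these two pictures together: the stochastic integral has to be set up once as an isometry $L^2(\mu)\to L^2(\Omega)$ whose well-definedness rests on orthogonality of increments over disjoint intervals, and then one must recognize that the abstractly built unitary $U$ \emph{is} that integral — without this observation one only learns that $\int_\rbb e^{it\omega}\xi(\d\omega)$ has the same covariance as $F(t)$, not that it equals it. The one genuinely analytic point, needed for surjectivity of $U$, is the density of $\mathrm{span}\{e_t\}$ in $L^2(\nu)$, which follows from injectivity of the Fourier transform on finite measures — and this is exactly where the finiteness of $\nu$ guaranteed by Bochner's Theorem is essential.
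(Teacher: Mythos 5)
Your proof is correct and is precisely the classical Cram\'er--Kolmogorov argument that the paper relies on by citing \cite{cramer1967stationary}: Bochner's theorem supplies the finite spectral measure, the isometry $e^{it\omega}\mapsto F(t)$ extends to a unitary from $L^2(\nu)$ onto the closed linear span of the process, and $\xi$ is recovered by applying that unitary to indicator functions. The paper gives no proof of its own for this quoted result, and your argument --- including the key density step via injectivity of the Fourier transform on finite measures and the identification of the abstract unitary with the stochastic integral --- is the standard one.
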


\begin{theorem}[Characterization of Stationary Random Distributions \cite{ito1954stationary}] \label{thm:Ito-1}
 A tempered distribution $r$ is positive definite if and only if there exists a stationary random distribution $F$ such that for all $\varphi_1,\varphi_2\in\Sc$,
\[\E{\langle  F,\varphi_1\rangle \overline{\langle F,\varphi_2  \rangle}}=\langle r,\varphi_1*\widetilde{\varphi_2}\rangle.\]
$r$ is called the covariance distribution of $F$.
\end{theorem}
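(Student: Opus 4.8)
The statement is an equivalence, and one implication is essentially free. For the ``if'' direction, suppose $F$ is a stationary random distribution obeying the displayed covariance identity. Taking $\varphi_1=\varphi_2=\varphi$ gives $\langle r,\varphi*\widetilde\varphi\rangle=\E{|\langle F,\varphi\rangle|^2}\ge 0$ for every $\varphi\in\Sc$, which is exactly the definition of a positive definite tempered distribution (recall that $r$ is assumed to lie in $\Sc'$). So nothing further is needed in that direction.

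The content is the ``only if'' direction: from a positive definite $r\in\Sc'$ we must construct $F$. The plan is to route through the spectral measure. By Theorem~\ref{thm:pd-td}, write $\langle r,\varphi\rangle=\int_\rbb \overline{\widehat\varphi(\omega)}\,\nu(\d\omega)$ for a non-negative measure $\nu$ with $\int_\rbb(1+\omega^2)^{-k}\,\nu(\d\omega)<\infty$ for some integer $k$. Since every $\widehat\varphi$ is Schwarz, $(1+\omega^2)^k|\widehat\varphi(\omega)|^2$ is bounded, so $\widehat\varphi\in L^2(\nu)$ for all $\varphi\in\Sc$. On an auxiliary probability space build a Gaussian orthogonally scattered random measure $Z$ with control measure $\nu$ --- the standard Gaussian--Hilbert-space construction, which goes through for $\sigma$-finite $\nu$ even when $\nu(\rbb)=\infty$ --- and extend it to the $L^2(\Omega)$-valued isometric stochastic integral $g\mapsto\int_\rbb g\,\d Z$ on $L^2(\nu)$. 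Define $\langle F,\varphi\rangle:=\int_\rbb \widehat{\widetilde\varphi}(\omega)\,Z(\d\omega)$; by the previous sentence this lands in $L^2(\Omega)$, and it is linear in $\varphi$ because $\varphi\mapsto\widehat{\widetilde\varphi}$ and the stochastic integral are both linear. Thus $F:\Sc\to L^2(\Omega)$ is a linear functional.

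It remains to compute the covariance and check stationarity. By the isometry and the Fourier--convolution identity $\widehat{\varphi_1*\widetilde{\varphi_2}}=\widehat{\varphi_1}\,\overline{\widehat{\varphi_2}}$,
\[
\E{\langle F,\varphi_1\rangle\overline{\langle F,\varphi_2\rangle}}
=\int_\rbb \widehat{\widetilde{\varphi_1}}\,\overline{\widehat{\widetilde{\varphi_2}}}\,\d\nu
=\int_\rbb \overline{\widehat{\varphi_1}}\,\widehat{\varphi_2}\,\d\nu
=\int_\rbb \overline{\widehat{\varphi_1*\widetilde{\varphi_2}}}\,\d\nu
=\langle r,\varphi_1*\widetilde{\varphi_2}\rangle,
\]
the last equality being the spectral representation of $r$ evaluated at $\varphi_1*\widetilde{\varphi_2}\in\Sc$; this is the asserted identity, and in particular it shows $r$ is the covariance distribution of $F$. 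For stationarity, since $\widehat{\tau_h\varphi}(\omega)=e^{ih\omega}\widehat\varphi(\omega)$ the reflected transforms pick up conjugate phase factors $e^{\mp ih\omega}$ that cancel inside the integral, so $\E{\langle F,\tau_h\varphi_1\rangle\overline{\langle F,\tau_h\varphi_2\rangle}}=\E{\langle F,\varphi_1\rangle\overline{\langle F,\varphi_2\rangle}}$ for all $h\in\rbb$; hence $F$ is a stationary random distribution, and it may be taken Gaussian since $Z$ is, in parallel with Theorem~\ref{thm1}.

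I expect the main obstacle to be exactly the construction in the middle paragraph: producing the orthogonal Gaussian random measure and its stochastic integral rigorously for a spectral measure $\nu$ that is only polynomially tempered rather than finite, and then carrying out the conjugation/reflection bookkeeping so that the covariance comes out as $\langle r,\varphi_1*\widetilde{\varphi_2}\rangle$ on the nose (this is why the integrand in the definition of $\langle F,\varphi\rangle$ is $\widehat{\widetilde\varphi}$ rather than $\widehat\varphi$, and why the convolution identity must be tracked with the sign convention fixed by $\widetilde\varphi$). As a cleaner route that avoids the random measure altogether, one can instead set $C(\varphi_1,\varphi_2):=\langle r,\varphi_1*\widetilde{\varphi_2}\rangle$, note that positive definiteness of $r$ makes $C$ a non-negative Hermitian form on $\Sc$, invoke Kolmogorov's extension theorem to obtain a centered Gaussian family $\{\langle F,\varphi\rangle\}_{\varphi\in\Sc}\subset L^2(\Omega)$ with covariance $C$, deduce a.s.\ linearity of $\varphi\mapsto\langle F,\varphi\rangle$ from the bilinearity of $C$, and read off stationarity directly from the shift-cancellation $(\tau_h\varphi_1)*\widetilde{\tau_h\varphi_2}=\varphi_1*\widetilde{\varphi_2}$.
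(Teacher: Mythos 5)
The paper does not prove this theorem; it is quoted directly from It\^{o} (1954), and the surrounding results (Theorem \ref{thm:pd-td} for the spectral measure $\nu$ and Theorem \ref{thm:Ito-2} for the random-measure representation) are exactly the two ingredients your argument assembles. Your proof is correct and is essentially It\^{o}'s own route: the easy direction by specializing $\varphi_1=\varphi_2$, and the converse by building a Gaussian orthogonally scattered measure with control measure $\nu$ and defining $\langle F,\varphi\rangle$ as the stochastic integral of $\overline{\widehat{\varphi}}$ (your $\widehat{\widetilde{\varphi}}$, which agrees with $\overline{\widehat{\varphi}}$ under the paper's real-test-function convention for $\widetilde{\varphi}$).
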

Recalling Theorem \ref{thm:pd-td}, $r$ can be represented by a non-negative measure $\nu$. We call $\nu$ the \emph{spectral measure} of $F$.

Next, we recall definition of {\it random measure}.
\begin{definition}[\cite{ito1954stationary}] \label{defn:random-measure}  Let $\mu$ be a non-negative measure on $\rbb$. Denote by $\mathcal{B}_\mu$, the collection of all Borel sets $E$ such that $\mu(E)<\infty$. A map $\xi:\mathcal{B}_\mu\to L^2(\Omega)$ is called a {\it random measure} with respect to $\mu$ if for $E_1, E_2\in \mathcal{B}_\mu$,
\[\E{\xi(E_1)\overline{\xi(E_2)}}=\mu(E_1\cap E_2).\]
\end{definition}

\begin{theorem} \label{thm:Ito-2}
Let $\{F(\f)\}_{\f \in \Sc}$ be a stationary random distribution with spectral measure $\nu$. Then, there exists a random measure $\xi$ that is defined with respect to $\nu$ such that
\begin{displaymath}
\langle F,\varphi\rangle = \int_\rbb \overline{\widehat{\varphi}(\omega) } \xi(\d\omega).	
\end{displaymath}
\end{theorem}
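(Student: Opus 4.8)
The plan is to construct $\xi$ via the classical Kolmogorov isomorphism between the ``time domain'' and the ``spectral domain.'' The starting point is an isometry identity: combining Theorem~\ref{thm:Ito-1} with the representation of the positive-definite covariance distribution $r$ afforded by Theorem~\ref{thm:pd-td} (and the remark following it), one obtains, for all $\varphi_1,\varphi_2\in\Sc$,
\[
\E{\langle F,\varphi_1\rangle\,\overline{\langle F,\varphi_2\rangle}}=\int_\rbb\overline{\widehat{\varphi_1}(\omega)}\,\widehat{\varphi_2}(\omega)\,\nu(\d\omega),
\]
and in particular $\E{|\langle F,\varphi\rangle|^2}=\int_\rbb|\widehat\varphi(\omega)|^2\,\nu(\d\omega)$. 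The right-hand side is finite because $\widehat\varphi$ is Schwartz, so $|\widehat\varphi(\omega)|^2\le C_k(1+\omega^2)^{-k}$, while $\nu$ satisfies the growth bound~\eqref{ineq:spectral-measure}. Thus $\varphi\mapsto\langle F,\varphi\rangle$ is an isometry from $\Sc$, equipped with the seminorm $\varphi\mapsto\|\widehat\varphi\|_{L^2(\nu)}$, into $L^2(\Omega)$.

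Next I would transfer this to the spectral side. Let $\mathcal D:=\{\,\overline{\widehat\varphi}:\varphi\in\Sc\,\}$, viewed as a subspace of $L^2(\nu)$, and define $J_0:\mathcal D\to L^2(\Omega)$ by $J_0(\overline{\widehat\varphi}):=\langle F,\varphi\rangle$. The displayed identity shows $J_0$ is well defined modulo $\nu$-null sets (if $\widehat{\varphi_1}=\widehat{\varphi_2}$ $\nu$-a.e.\ then $\langle F,\varphi_1-\varphi_2\rangle=0$ in $L^2(\Omega)$) and is linear and isometric. Since the Fourier transform is a bijection of $\Sc$ and complex conjugation preserves $\Sc$, one has $\mathcal D=\Sc$ as a subset of $L^2(\nu)$; and because the bound~\eqref{ineq:spectral-measure} forces $\nu$ to be locally finite, hence Radon on $\rbb$, the space $C^\infty_c(\rbb)\subseteq\mathcal D$ is dense in $L^2(\nu)$. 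Extend $J_0$ by uniform continuity to an isometry $J:L^2(\nu)\to L^2(\Omega)$; polarizing the norm identity over $\cbb$ gives $\E{J(f)\,\overline{J(g)}}=\langle f,g\rangle_{L^2(\nu)}$ for all $f,g\in L^2(\nu)$.

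Now define $\xi(E):=J(\mathbf 1_E)$ for every Borel set $E$ with $\nu(E)<\infty$. From the polarized identity, $\E{\xi(E_1)\,\overline{\xi(E_2)}}=\langle\mathbf 1_{E_1},\mathbf 1_{E_2}\rangle_{L^2(\nu)}=\nu(E_1\cap E_2)$, so $\xi$ is a random measure with respect to $\nu$ in the sense of Definition~\ref{defn:random-measure}. Define the stochastic integral in the usual way: $\int\big(\sum_j c_j\mathbf 1_{E_j}\big)\,\d\xi:=\sum_j c_j\,\xi(E_j)$ for simple functions, extended by the $L^2$-isometry to all of $L^2(\nu)$. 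Then $f\mapsto\int f\,\d\xi$ and $J$ are two bounded linear maps $L^2(\nu)\to L^2(\Omega)$ that agree on indicators, hence on simple functions, hence (by density and continuity) on all of $L^2(\nu)$. Since $\overline{\widehat\varphi}\in L^2(\nu)$ for every $\varphi\in\Sc$, we conclude $\langle F,\varphi\rangle=J(\overline{\widehat\varphi})=\int_\rbb\overline{\widehat\varphi(\omega)}\,\xi(\d\omega)$, which is the asserted representation.

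I expect the only genuinely nontrivial points to be the two already flagged: the well-definedness of $J_0$ modulo $\nu$-null sets, which is exactly the content of the isometry identity and therefore costs nothing once that identity is recorded; and the density of $\widehat\Sc$ in $L^2(\nu)$, which is where the spectral-measure bound~\eqref{ineq:spectral-measure} enters, since it guarantees $\nu$ is locally finite so that compactly supported smooth functions are dense. Everything else is the standard construction of a random orthogonal measure; in particular, $L^2$-countable additivity of $\xi$ need not be checked separately, as it follows automatically from the covariance identity $\E{\xi(E_1)\overline{\xi(E_2)}}=\nu(E_1\cap E_2)$.
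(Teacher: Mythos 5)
The paper gives no proof of this statement---it is quoted directly from It\^{o} (1954)---and your argument is the standard Kolmogorov--Karhunen isometric-extension construction that underlies the cited source; it is correct, including the key observations that \eqref{ineq:spectral-measure} makes $\nu$ locally finite (so $C_c^\infty$, hence $\widehat{\Sc}$, is dense in $L^2(\nu)$) and that Definition~\ref{defn:random-measure} asks only for the covariance identity. The one piece of bookkeeping to tighten is that, with the paper's Fourier convention, $\varphi\mapsto\overline{\widehat{\varphi}}$ is conjugate-linear, so $J_0$ is $\cbb$-linear only after restricting to real test functions (or after adjusting the conjugations consistently); the cleaner route is to record the full sesquilinear identity $\E{J_0(g_1)\,\overline{J_0(g_2)}}=\langle g_1,g_2\rangle_{L^2(\nu)}$ on $\mathcal{D}$---which your first display already provides---and pass it through the closure by continuity, rather than polarizing the norm identity afterwards.
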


\subsection{An extension of the stationary random distributions} \label{sec:prelim:extension}

Let $\nu$ be the non-negative measure on $\rbb$ satisfying \eqref{ineq:spectral-measure} for some $k \in \zbb$. Denote by $L^2(\nu)$ the Hilbert space of equivalence classes of non-random complex-valued functions $g$ such that $\int_\rbb \left|g(s)\right|^2\nu( \d s)<\infty.$
Let $\xi$ be a random measure with respect to $\nu$ as in Definition \ref{defn:random-measure}. For every $g\in L^2(\nu)$, the stochastic integral $\int_\rbb g(s)\xi(\d s)$ is a well-defined mean zero Gaussian random variable with \[ \E{\int_\rbb g_1(s)\xi(\d s)\overline{\int_\rbb g_2(s)\xi(\d s)}}=\int_\rbb g_1(s)\overline{g_2(s)}\nu(\d s).\]
See \cite{ito1954stationary} for a detailed discussion.

As detailed above, there is a stationary random distribution $F \suchthat \Sc \to L^2(\Omega)$ whose spectral measure is $\nu$. If, we additionally have that $\nu$ is absolutely continuous to Lebesgue measure, we may extend $F$ to be an operator on $\Sc'$ as follows: for $g \in \Sc'$, let $\Phi \, : \, \Sc' \to L^2(\Omega)$ be defined as
\begin{equation} \label{eqn:V.op}
\langle \Phi,g\rangle = \int_\rbb \overline{\F{g}(\omega)}\xi(\d\omega).
\end{equation}
The domain of $\Phi$, denoted by $\text{Dom}(\Phi)$, is the set of tempered distributions $g$ such that its Fourier transform $\F{g}$ in $\Sc'$ is a function defined on $\rbb$ and that $\F{g}\in L^2(\nu)$. We stress that absolute continuity of $\nu$ with respect to Lebesgue measure is required in order to guarantee that the extension of $F$ is well-defined. To be precise, we have the following Lemma.
\begin{lemma} \label{lem:V-op-defined} Let $F:\Sc\to L^2(\Omega)$ be a stationary random distribution with spectral measure $nu$ and associated random measure $\xi$. Let $\Phi: \Sc'\to L^2(\Omega)$ be the extension of $F$ defined as by \eqref{eqn:V.op}. Assume further that $\nu$ is absolutely continuous to Lebesgue measure. Then, $\Phi$ is well-defined.
\end{lemma}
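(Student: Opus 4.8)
The plan is to first make precise what has to be checked. For $g \in \dom(\Phi)$ the distributional Fourier transform $\F{g}$ is, by the definition of the domain, representable by a function belonging to $L^2(\nu)$, and for any such function $h$ the stochastic integral $\int_\rbb \overline{h(\omega)}\,\xi(\d\omega)$ is an honest element of $L^2(\Omega)$: this is exactly the construction recalled immediately before the lemma, obtained by extending the map on simple functions by linearity and the isometry $\E{\lvert\int_\rbb h\,\xi(\d s)\rvert^2}=\int_\rbb\lvert h\rvert^2\,\nu(\d s)$. Consequently the only possible ambiguity in formula \eqref{eqn:V.op} is the choice of the function representing the tempered distribution $\F{g}$, and the whole content of the lemma is that this choice is immaterial precisely because $\nu$ is absolutely continuous with respect to Lebesgue measure.

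So the key step is the following. Suppose $h_1$ and $h_2$ are two locally integrable functions that both represent $\F{g}$, i.e.\ $\langle \F{g},\varphi\rangle=\int_\rbb h_i(\omega)\varphi(\omega)\,\d\omega$ for every $\varphi\in\Sc$ and $i=1,2$. Then $h_1-h_2$ integrates to zero against every Schwartz test function, so $h_1=h_2$ Lebesgue-almost everywhere; call the exceptional Lebesgue-null set $N$. Absolute continuity of $\nu$ gives $\nu(N)=0$, hence $h_1=h_2$ also $\nu$-almost everywhere, i.e.\ $h_1$ and $h_2$ represent the same element of $L^2(\nu)$. The isometry property of the stochastic integral then forces $\int_\rbb \overline{h_1}\,\xi(\d\omega)=\int_\rbb \overline{h_2}\,\xi(\d\omega)$ almost surely, so $\langle\Phi,g\rangle$ is independent of the representative. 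Linearity of $\Phi$ on $\dom(\Phi)$ is then immediate from linearity of $g\mapsto\F{g}$ in $\Sc'$ and of the stochastic integral. I would also record for completeness that $\Phi$ genuinely extends $F$: for $\varphi\in\Sc\subseteq\Sc'$ one has $\F{\varphi}=\widehat{\varphi}\in\Sc$, which lies in $L^2(\nu)$ because $\widehat{\varphi}$ is rapidly decreasing while $\nu$ has at most polynomial growth by \eqref{ineq:spectral-measure}; in this case \eqref{eqn:V.op} reduces to $\int_\rbb\overline{\widehat{\varphi}(\omega)}\,\xi(\d\omega)=\langle F,\varphi\rangle$ by Theorem \ref{thm:Ito-2}, so in particular $\Sc\subseteq\dom(\Phi)$.

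There is no serious technical obstacle here; the point is conceptual. Without the hypothesis $\nu\ll\mathrm{Leb}$ the expression $\int_\rbb\overline{\F{g}(\omega)}\,\xi(\d\omega)$ could a priori produce different elements of $L^2(\Omega)$ for different Lebesgue-a.e.\ equal representatives of $\F{g}$, since modifying a representative on a Lebesgue-null set of positive $\nu$-measure changes the integral; absolute continuity is exactly what is needed — and used — to rule this out. The only place that requires a little care is stating precisely the sense in which a tempered distribution ``is a function'' and invoking the fact that a locally integrable function annihilating every Schwartz function vanishes a.e.\ to obtain uniqueness of the representative modulo Lebesgue-null sets; everything after that is bookkeeping with the isometry of the stochastic integral.
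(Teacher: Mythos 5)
Your proposal is correct and follows essentially the same route as the paper: the paper too observes that any two functions representing $\F{g}$ in $\Sc'$ agree Lebesgue-a.e., hence $\nu$-a.e.\ by absolute continuity, and concludes by the isometry $\E{|\int (\overline{h_1}-\overline{h_2})\,\xi(\d\omega)|^2}=\int |h_1-h_2|^2\,\nu(\d\omega)=0$ that the two stochastic integrals coincide almost surely. Your additional remarks on linearity and on $\Sc\subseteq\dom(\Phi)$ are harmless extras the paper leaves implicit.
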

\begin{proof} Since $\nu$ is absolutely continuous with respect to Lebesgue measure, $\nu(\d\omega)=\widehat{r}(\omega)\d\omega$ for some function $\widehat{r}$. It suffices to show that the RHS of \eqref{eqn:V.op} does not depend on the choice of $\F{g}$. To see that, suppose $\mathcal{F}_1[g]$ and $\mathcal{F}_2[g]$ are Fourier transforms of $g$ in $\Sc'$, it is known that they must agree almost everywhere. We then have a chain of implication
\begin{equation}\label{eqn:V-op-defined-1}
\begin{aligned}
&\Enone{\Big|\int_\rbb \overline{\mathcal{F}_1[g](\omega)}\xi(\d\omega)-\int_\rbb \overline{\mathcal{F}_2[g](\omega)}\xi(\d\omega)\Big|^2} \\
&\qquad = \int_\rbb \left|\mathcal{F}_1[g](\omega)-\mathcal{F}_1[g](\omega)\right|^2\nu(\d\omega)=\int_\rbb \left|\mathcal{F}_1[g](\omega)-\mathcal{F}_1[g](\omega)\right|^2\widehat{r}(\d\omega)=0. 
\end{aligned}
\end{equation}
It follows that two random variables $\int_\rbb \overline{\mathcal{F}_1[g](\omega)}\xi(\d\omega)$ and $\int_\rbb \overline{\mathcal{F}_2[g](\omega)}\xi(\d\omega)$ are equal a.s. We therefore conclude that $V$ is well-defined.
\end{proof}

The function $\widehat{r}$ from \eqref{eqn:V-op-defined-1} is called {\it the spectral density} of $\Phi$.

\begin{definition}[The function-valued version of a stationary random distribution and its integral] \label{defn:form:V(t)}
Let $\delta_t$ be the Dirac $\delta$-distribution centered at $t$. If $\delta_t$ and $1_{[0,t]}$ are in $\text{\emph{Dom}}(\Phi)$, then we define 
\begin{equation} \label{form:V(t)}
V(t):=\langle \Phi,\delta_t\rangle,\ \text{and}\ X(t):=\langle \Phi,1_{[0,t]}\rangle.
\end{equation}	
\end{definition}
Note that $X(t)$ can be well-defined without $V(t)$. 

The relationship between $V(t)$ and $\nu$ is characterized as follows.
\begin{lemma} \label{lem:finite-nu} Let $\{\Phi(g)\}_{g \in \Sc'}$ be an extended stationary random distribution with spectral measure $\nu$. Then the associated stationary random process $\{V(t)\}_{t \in \rbb}$ (as in Definition \ref{defn:form:V(t)}) is well-defined if and only if $\nu$ is a finite measure. In this situation, $X(t)=\int_0^t V(s)\d s$.
\end{lemma}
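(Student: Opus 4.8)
The plan is to work on the Fourier/spectral side, using the extended stationary random distribution $\Phi$ with spectral measure $\nu(\d\omega) = \widehat{r}(\omega)\,\d\omega$ and associated random measure $\xi$. By Definition \ref{defn:form:V(t)}, the process $V(t)$ is well-defined precisely when $\delta_t \in \dom(\Phi)$ for every $t$, which by the description of $\dom(\Phi)$ means that $\F{\delta_t}$ is a function in $L^2(\nu)$. Since $\F{\delta_t}(\omega) = e^{-it\omega}$, we have $\|\F{\delta_t}\|_{L^2(\nu)}^2 = \int_\rbb |e^{-it\omega}|^2 \nu(\d\omega) = \nu(\rbb)$, which is finite for one (equivalently, every) $t$ if and only if $\nu$ is a finite measure. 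This gives both directions of the stated equivalence: if $\nu$ is finite then $\delta_t \in \dom(\Phi)$ for all $t$ and $V(t) = \la \Phi, \delta_t\ra$ is defined; conversely, if $\nu(\rbb) = \infty$ then $\F{\delta_t} \notin L^2(\nu)$ for any $t$ and $V(t)$ fails to be defined.

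Next I would verify that when $\nu$ is finite, the process $V(t) = \int_\rbb e^{it\omega}\,\xi(\d\omega)$ really is a (mean-square continuous, wide-sense stationary) stochastic process with the expected covariance: by the isometry for the stochastic integral recalled in Section \ref{sec:prelim:extension}, $\E{V(t)\overline{V(s)}} = \int_\rbb e^{it\omega}\overline{e^{is\omega}}\,\nu(\d\omega) = \int_\rbb e^{i(t-s)\omega}\,\nu(\d\omega)$, which depends only on $t-s$, and mean-square continuity follows from dominated convergence (the integrand $e^{i(t+h)\omega}$ converges pointwise to $e^{it\omega}$ and is bounded by $1 \in L^1(\nu)$ since $\nu$ is finite). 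This is also consistent with Theorem \ref{thm:spectral}.

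Finally, for the identity $X(t) = \int_0^t V(s)\,\d s$: note that $X(t) = \la \Phi, 1_{[0,t]}\ra$ is always defined since $\F{1_{[0,t]}}(\omega) = \frac{1 - e^{-it\omega}}{i\omega}$ is bounded near $0$ and decays like $1/\omega$, hence lies in $L^2(\nu)$ under the moment bound \eqref{ineq:spectral-measure} with $k=1$ — but the claim $X(t) = \int_0^t V(s)\,\d s$ only makes sense once $V$ exists, i.e.\ once $\nu$ is finite. In that case I would compare $\la\Phi, 1_{[0,t]}\ra$ with $\int_0^t \la\Phi,\delta_s\ra\,\d s$ by writing both as stochastic integrals against $\xi$: the right-hand side, interpreted as a mean-square (Bochner) integral of the $L^2(\Omega)$-valued map $s \mapsto V(s)$, equals $\int_\rbb \left(\int_0^t e^{is\omega}\,\d s\right)\xi(\d\omega) = \int_\rbb \frac{e^{it\omega} - 1}{i\omega}\,\xi(\d\omega)$ by a stochastic Fubini argument (justified because $\nu$ is finite and $s \mapsto e^{is\omega}$ is jointly nice), and this is exactly $\la\Phi, 1_{[0,t]}\ra = X(t)$ since $\overline{\F{1_{[0,t]}}(\omega)} = \frac{e^{it\omega}-1}{i\omega}$.

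The main obstacle is the last step: making the interchange of the (mean-square) time integral and the stochastic integral rigorous, i.e.\ the stochastic Fubini theorem in the setting of integration against the random measure $\xi$ over a finite spectral measure $\nu$. The finiteness of $\nu$ is what rescues this — it lets one approximate $\int_0^t e^{is\omega}\,\d s$ by Riemann sums $\sum_j e^{is_j\omega}\Delta s_j$, each of which corresponds to a finite linear combination $\sum_j V(s_j)\Delta s_j$ of the $V(s_j)$, and then pass to the limit in $L^2(\Omega)$ using the isometry together with dominated convergence on the $\nu$-side; continuity of $s \mapsto \F{\delta_s}$ in $L^2(\nu)$ (again from finiteness of $\nu$) ensures the Riemann sums converge. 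One should also record the easy observation, already flagged in the text, that $X(t)$ can be well-defined while $V(t)$ is not, so the "In this situation" qualifier in the statement is genuinely needed.
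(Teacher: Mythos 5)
Your proposal is correct and follows essentially the same route as the paper: both characterize well-definedness of $V(t)$ through membership of $\F{\delta_t}(\omega)=e^{-it\omega}$ in $L^2(\nu)$, which reduces immediately to finiteness of $\nu(\rbb)$, and both then obtain $X(t)=\int_0^t V(s)\,\d s$ by exchanging the $\d s$ integral with the stochastic integral against $\xi$. If anything you are more careful than the printed proof on the last step --- the paper swaps the order of integration in a single line without comment, whereas you correctly identify the stochastic Fubini interchange as the one place requiring justification and sketch the Riemann-sum/isometry argument that finiteness of $\nu$ (and $L^2(\nu)$-continuity of $s\mapsto e^{is\omega}$) makes available.
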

\begin{proof}
The fact that the measure $\nu$ is finite is equivalent to 
\begin{displaymath}
\F{\delta_t}(\omega)=e^{-it\omega}\in L^2(\nu)
\end{displaymath}
since $\int_\rbb\left|e^{-it\omega}\right|^2\nu(\d\omega)=\int_\rbb \nu(\d\omega)<\infty $. This is precisely the condition for $\delta_t\in \text{Dom}(V)$, which implies that $V(t)$ is well-defined. 

Let $\xi$ be the random measure with respect to $\nu$ in Definition \ref{defn:random-measure}. We note that the random measure $\xi$ satisfies the orthogonal increments: for $t_1<t_2\leq t_3<t_4$,
\begin{displaymath}
\begin{aligned}
\E{\left(\xi\left(t_4\right)-\xi\left(t_3\right)\right)\overline{\left(\xi\left(t_2\right)-\xi\left(t_1\right)\right)}} &= \nu\left(\{t_4\}\cap\{t_2\}\right)-\nu\left(\{t_4\}\cap\{t_1\}\right)\\
&\quad+\nu\left(\{t_3\}\cap\{t_2\}\right)-\nu\left(\{t_3\}\cap\{t_1\}\right) = 0,
\end{aligned}	
\end{displaymath}
since $\nu$ is assumed to be absolute continuous with respect to Lebesgue measure. Consequently, $V(t)$ is actually a stationary Gaussian process. Indeed, thanks to the characterization Theorem~\ref{thm:spectral}, we have
\begin{displaymath}
V(t)=\langle V,\delta_t\rangle= \int_\rbb \overline{e^{-it\omega}}\xi(\d\omega) = \int_\rbb e^{it\omega}\xi(\d\omega).	
\end{displaymath}
Finally, the process $X(t)$ is given by
\begin{multline} \label{form:X(t)-a}
X(t)=\int_\rbb \overline{\F{1_{[0,t]}}(\omega)}\xi(\d\omega)=\int_\rbb \overline{\int_0^te^{-is\omega}\d s}\xi(\d\omega)\\=\int_\rbb \int_0^te^{is\omega}\d s\xi(\d\omega)=\int_0^t V(s)\d s.
\end{multline}
The proof is thus complete.
\end{proof}

In general, one may understand $\langle V,g\rangle$ formally as the integral $\int_\rbb V(t)g(t)\d t$,
\[\langle V,g\rangle=\int_\rbb V(t)g(t)\d t = \int_\rbb \int_\rbb e^{it\omega}\xi(\d\omega) g(t)\d t = \int_\rbb \overline{\widehat{g}(\omega)}\xi(\d\omega)=\int_\rbb \overline{\F{g}(\omega)}\xi(\d\omega). \]
%
%

\subsection{Sufficiency of Conditions \eqref{cond1}, \eqref{cond2a}, and \eqref{cond2b}}
\label{sec:prelim:improper}
In this section, we establish that the conditions listed in Assumption \ref{a:K} are sufficient for a function to be the covariance distribution of a stationary random distribution. In Lemma \ref{lem:fourier}, we show that the improper Fourier sine and cosine transforms are well-defined for our class of memory kernels. Then, in Proposition \ref{prop:Fourier.S'}, we show that our class of memory kernels are tempered distributions and express their Fourier transform in $\Sc'$ in terms of the improper Fourier cosine transform.


\begin{lemma}  \label{lem:fourier}
Suppose that $f$ satisfies Conditions \eqref{cond1b} and \eqref{cond1c} of Assumption \ref{a:K}. Then, for $\omega\neq 0$, the improper integrals $\Fcos(\omega)=\int_0^\infty f(t)\cos(t\omega) \d t$ and $\Fsin(\omega)=\int_0^\infty f(t)\sin(t\omega) \d t$ are well-defined, continuous in $\omega$, and
 \begin{equation} \label{eq:Fcos-omega-to-infinity}
 \lim_{\omega\to\infty}\Fcos(\omega)=\lim_{\omega\to\infty} \Fsin(\omega)=0.	
 \end{equation}
\end{lemma}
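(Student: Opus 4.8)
The plan is to reduce everything to the classical Dirichlet criterion for conditionally convergent oscillatory integrals, with Bonnet's form of the second mean value theorem for integrals carrying the analytic weight. Since $\Fcos$ is even and $\Fsin$ is odd in $\omega$, it suffices to argue for $\omega>0$. By \eqref{cond1b}, fix $T>0$ so that $f$ is non-increasing on $[T,\infty)$; since $f(t)\to 0$ this forces $f\ge 0$ on $[T,\infty)$, so Bonnet's theorem is available there. Split $\int_0^\infty = \int_0^T+\int_T^\infty$. The head $\int_0^T f(t)\cos(t\omega)\,\d t$ (and its $\sin$ analogue) is an ordinary Lebesgue integral: by \eqref{cond1c} we have $f\mathbf{1}_{[0,T]}\in L^1(\rbb)$ — this is exactly where the possibly non-integrable singularity of $f$ at the origin is absorbed — so the integrand is dominated by $|f|\mathbf{1}_{[0,T]}$. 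Hence the head is finite, is continuous in $\omega$ on all of $\rbb$ by dominated convergence, and tends to $0$ as $\omega\to\infty$ by the Riemann--Lebesgue lemma.

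For the tail I would invoke Bonnet's theorem: for $T\le a<b$ there is $\xi\in[a,b]$ with $\int_a^b f(t)\cos(t\omega)\,\d t = f(a)\int_a^\xi\cos(t\omega)\,\d t$, so that
\begin{equation*}
	\Big|\int_a^b f(t)\cos(t\omega)\,\d t\Big|\;\le\;\frac{2f(a)}{\omega},
\end{equation*}
with the identical bound for $\sin$ and for $b=\infty$. As $f(a)\to 0$ when $a\to\infty$ (again \eqref{cond1b}), the Cauchy criterion shows $\int_T^\infty f(t)\cos(t\omega)\,\d t$ and $\int_T^\infty f(t)\sin(t\omega)\,\d t$ converge for every $\omega\neq 0$; together with the head this proves $\Fcos(\omega)$ and $\Fsin(\omega)$ are well-defined. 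Taking $a=T$ in the bound gives $\big|\int_T^\infty f(t)\cos(t\omega)\,\d t\big|\le 2f(T)/\omega\to 0$ as $\omega\to\infty$, and combined with the Riemann--Lebesgue decay of the head this yields \eqref{eq:Fcos-omega-to-infinity}.

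Continuity away from $\omega=0$ comes from the same estimate via uniform convergence. On a set $\{\delta\le\omega\le M\}$ with $\delta>0$, the tail bound (valid for $N\ge T$) gives $\sup_{\delta\le\omega\le M}\big|\int_N^\infty f(t)\cos(t\omega)\,\d t\big|\le 2f(N)/\delta\to 0$ as $N\to\infty$, so the partial integrals $\omega\mapsto\int_0^N f(t)\cos(t\omega)\,\d t$, each continuous by dominated convergence, converge uniformly on $[\delta,M]$; a uniform limit of continuous functions is continuous, and $\delta,M$ being arbitrary gives continuity on $(0,\infty)$, hence on $\rbb\setminus\{0\}$ by parity. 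The reasoning for $\Fsin$ is word-for-word the same.

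The one genuinely delicate point — and the only place where the argument could go wrong if an assumption were dropped — is that the two hypotheses play complementary, non-redundant roles: \eqref{cond1c} is precisely what tames the behavior near the origin (where $f$ may blow up, since $K(0)$ may be infinite), while \eqref{cond1b}, i.e.\ eventual monotonicity plus decay, is precisely what lets Bonnet's theorem convert the oscillation of $\cos(t\omega)$ and $\sin(t\omega)$ into decay both in the cutoff $a$ and in the frequency $\omega$. The exclusion of $\omega=0$ is also essential and not an artifact of the method: $\Fcos(0)=\int_0^\infty f(t)\,\d t$ may be $+\infty$ — indeed it is, exactly in the non-integrable regime \eqref{cond2b}.
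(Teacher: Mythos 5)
Your proof is correct and takes essentially the same approach as the paper: both split the integral at a threshold where $f$ becomes monotone, control the head via $L^1_{\mathrm{loc}}$ (DCT and Riemann--Lebesgue), and control the tail via the second mean value theorem (you use Bonnet's one-sided form with the helpful explicit observation that $f\ge 0$ on the tail, yielding $2f(a)/\omega$ rather than the paper's $4f(A)/\omega$). The only cosmetic difference is that you derive continuity through uniform convergence of the truncated integrals on compact sets in $\omega$, whereas the paper runs a $\limsup$ argument and then sends $A\to\infty$; the two are interchangeable here.
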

\begin{proof} The proof is essentially based on that of Lemma 1 from \cite{soni1975slowly}. We rewrite it here because some of the estimates will be useful later. Fix $A>0$ large enough such that $f(t)$ decreases on $[A,\infty)$,
\begin{equation*}
\int_0^\infty f(t)\cos(t\omega)\d t = \int_0^A f(t)\cos(t\omega)\d t +\int_A^\infty f(t)\cos(t\omega)\d t .
\end{equation*}
Because $f\in L^1_{\text{loc}}(\rbb)$, the first integral on the RHS above is finite. Since $f>0$ is decreasing on $t\geq A$, using the Second Mean Value Theorem, we have that for some $z\in(A,B)$
\begin{equation*}
\begin{aligned}
\int_A^B f(t)\cos(t\omega)\d t &= f(A)\int_A^z \cos(t\omega)\d t+f(B)\int_z^B \cos(t\omega)\d t\\
&\leq  f(A)\left|\int_A^z \cos(t\omega)\d t\right|+f(B)\left|\int_z^B \cos(t\omega)\d t\right| \leq  f(A)\frac{4}{\omega},
\end{aligned}
\end{equation*}
implying
\begin{equation} \label{ineq:fourier.1a}
\left|\int_A^\infty f(t)\cos(t\omega)\d t\right| \leq\frac{4f(A)}{\omega}.
\end{equation}
Since $f(A)\downarrow 0$ as $A\rightarrow\infty$, 
\[\lim_{A\to\infty}\left|\int_A^\infty f(t)\cos(t\omega)\d t\right|=0.\]
It follows that $\int_0^\infty f(t)\cos(t\omega)\d t$ converges for all $\omega >0$.

To demonstrate continuity, consider the limit as $\omega \to \omega_0 > 0$. Using Inequality \eqref{ineq:fourier.1a} gives
\begin{displaymath}
\begin{aligned}
&\left|\int_0^\infty f(t)\left[\cos(t\omega)-\cos(t\omega_0)\right]\d t \right| \\
& \qquad \leq \left|\int_0^A f(t)\left[\cos(t\omega)-\cos(t\omega_0)\right]\d t \right| + \frac{4f(A)}{\omega}+\frac{4f(A)}{\omega_0}.	
\end{aligned}	
\end{displaymath}
Since $f\in L^1_{\text{loc}}(\rbb)$, by the Dominated Convergence Theorem, the integral on the RHS above converges to 0. Thus,
\begin{equation*}
\limsup_{\omega\to\omega_0}\left|\int_0^\infty f(t)\left[\cos(t\omega)-\cos(t\omega_0)\right]\d t \right| \leq \frac{8f(A)}{\omega_0}.
\end{equation*}
Since $A$ is arbitrarily large and $f\downarrow 0$, the continuity is evident.  Likewise, $\Fsin(\omega)$ is also well-defined and continuous for $\omega\neq 0$.

Finally, to demonstrate \eqref{eq:Fcos-omega-to-infinity}, observe that \eqref{ineq:fourier.1a} implies
\begin{equation} \label{ineq:fourier.1}
\left|\int_0^\infty f(t)\cos(t\omega)\d t \right|< \left|\int_0^A f(t)\cos(t\omega)\d t \right| + \frac{4f(A)}{\omega}.
\end{equation}
By the Riemann Lebesgue lemma, the first integral on the RHS above tends to 0 as $\omega\rightarrow\infty$. Since $f(A)$ is fixed, the second term also converges to 0, which demonstrates \eqref{eq:Fcos-omega-to-infinity}.
\end{proof}

\begin{proposition} \label{prop:Fourier.S'} Let $f$ satisfy \eqref{cond1} and either $f\in L^1(\rbb)$ or there exists $\alpha\in(0,1)$ such that $t^\alpha f(t)$ is bounded near infinity. Then $f$ is a tempered distribution and
\begin{enumerate}[(a)]
\item \label{prop:Fourier.S':a} The Fourier transform of $f$ in $\Sc'$ is given by 
$$\F{f}=\widehat{f}=2\Fcos(\omega).$$
\item \label{prop:Fourier.S':b} For any $\varphi\in\Sc$, the Fourier transform of $f^+*\varphi$ in $\Sc'$ is given by
  $$\F{f^+*\varphi}(\omega) = \widehat{f^+}\widehat{\varphi}= \left(\Fcos(\omega)-i\Fsin(\omega)\right)\widehat{\varphi}(\omega),$$
where  $f^+(t)=f(t)1_{[0,\infty)}(t)$.
\end{enumerate}
 
\end{proposition}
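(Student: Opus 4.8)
The plan is to reduce both identities to the classical $L^1$ Fourier transform by a truncation argument, using Lemma~\ref{lem:fourier} for pointwise convergence of the truncated transforms and a uniform-in-truncation majorant for convergence in $\Sc'$. Preliminarily I would check that the right-hand sides are genuine tempered distributions. By Lemma~\ref{lem:fourier} the functions $\Fcos,\Fsin$ are continuous on $\rbb\setminus\{0\}$ and vanish at infinity, so only local integrability at $\omega=0$ is in question. If $f\in L^1$ then $|\Fcos(\omega)|,|\Fsin(\omega)|\le\|f\|_{L^1}$ are bounded. If instead $|f(t)|\le Ct^{-\alpha}$ near infinity, one cuts the defining integral at $t=1/|\omega|$: the piece $\int_0^{1/|\omega|}|f(t)|\,\d t$ is $\le C_0+C'|\omega|^{\alpha-1}$ by \eqref{cond1c} and the decay bound, and the tail $\int_{1/|\omega|}^\infty f(t)\cos(t\omega)\,\d t$ is $\le 4f(1/|\omega|)/|\omega|\le 4C|\omega|^{\alpha-1}$ by the Second Mean Value Theorem estimate already used in the proof of Lemma~\ref{lem:fourier} (valid since $f$ is eventually decreasing, \eqref{cond1b}), whence $|\Fcos(\omega)|+|\Fsin(\omega)|\le C(1+|\omega|^{\alpha-1})$; since $\alpha-1>-1$ this is locally integrable. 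In either case $\Fcos,\Fsin$ are locally integrable with polynomial growth, hence lie in $\Sc'$; the analogous observation ($f/(1+|t|)\in L^1$, using \eqref{cond1c} near the origin and the decay near infinity) shows $f$ and $f^+$ are tempered distributions.

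For part (a), set $f_A:=f\,1_{[-A,A]}\in L^1(\rbb)$. Since $f$ has at most polynomial growth, $f\varphi\in L^1$ for every $\varphi\in\Sc$, so dominated convergence gives $\langle f_A,\varphi\rangle\to\langle f,\varphi\rangle$, i.e.\ $f_A\to f$ in $\Sc'$; hence $\F{f_A}\to\F{f}$ in $\Sc'$ by continuity of the Fourier transform on $\Sc'$. But $f_A\in L^1$, so $\F{f_A}(\omega)=\int_{-A}^A f(t)e^{-it\omega}\,\d t=2\int_0^A f(t)\cos(t\omega)\,\d t$ by symmetry of $f$, and this converges to $2\Fcos(\omega)$ for every $\omega\ne0$ by Lemma~\ref{lem:fourier}. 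The uniform-in-$A$ bound $|\int_0^A f(t)\cos(t\omega)\,\d t|\le C(1+|\omega|^{\alpha-1})$ (in the $L^1$ case simply $\le\|f\|_{L^1}$; otherwise by the same cut at $t=1/|\omega|$ as above) makes $|\F{f_A}(\omega)\varphi(\omega)|$ dominated by an $L^1(\rbb)$ function, so $\langle\F{f_A},\varphi\rangle\to\int_\rbb 2\Fcos(\omega)\varphi(\omega)\,\d\omega$. Uniqueness of limits in $\Sc'$ then forces $\F{f}=2\Fcos$.

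Part (b) runs the same machine with $f^+_A:=f\,1_{[0,A]}\in L^1$: one gets $f^+_A\to f^+$ in $\Sc'$ and $\F{f^+_A}(\omega)=\int_0^A f(t)e^{-it\omega}\,\d t\to\Fcos(\omega)-i\Fsin(\omega)$ pointwise for $\omega\ne0$, dominated exactly as above, so $\F{f^+}=\Fcos-i\Fsin$ as a tempered distribution. Since $f^+\in\Sc'$ and $\varphi\in\Sc$, the convolution $f^+*\varphi$ is a smooth function of polynomial growth, hence lies in $\Sc'$, and $\F{f^+*\varphi}=\F{f^+}\,\widehat{\varphi}$ is the standard convolution theorem for a tempered distribution against a Schwartz function; as $\F{f^+}$ is here an actual locally integrable function, this is the pointwise product, giving $\F{f^+*\varphi}(\omega)=(\Fcos(\omega)-i\Fsin(\omega))\widehat{\varphi}(\omega)$.

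The main obstacle is the dominated-convergence step: producing a single $L^1(\rbb)$ majorant for the truncated cosine and sine transforms valid simultaneously for all $A$ and all $\omega$, and in particular controlling the behavior near $\omega=0$ in the non-integrable case, where the naive Second Mean Value bound $\sim1/|\omega|$ fails to be locally integrable and must be sharpened to $|\omega|^{\alpha-1}$ via the dyadic cut at $t=1/|\omega|$. This is exactly where \eqref{cond1b}, \eqref{cond1c}, and the decay hypothesis enter; everything else is routine bookkeeping about convergence in $\Sc'$.
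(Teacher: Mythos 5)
Your proof is correct and follows essentially the same route as the paper's: truncate $f$ in $t$, use Lemma~\ref{lem:fourier} for pointwise convergence of the truncated Fourier transforms, and produce a uniform-in-truncation $L^1$-against-Schwartz majorant (bounded for $|\omega|$ away from $0$ and $O(|\omega|^{\alpha-1})$ near $0$) to pass to the limit in $\Sc'$; the paper cuts the $t$-integral at $1$, $A/\omega$, and $\infty$ with a change of variables while you cut at $1/|\omega|$, which yields the same estimate. The only real deviation is part~(b), where you invoke the general $\Sc'$ identity $\F{u*\varphi}=\F{u}\,\widehat\varphi$ for $u\in\Sc'$, $\varphi\in\Sc$, whereas the paper verifies this particular instance directly by a Fubini argument using that $f$ is eventually decreasing; this is a valid shortcut since the general theorem is standard.
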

\begin{proof}
The statement is straightforward when $f\in L^1(\rbb)$. We are interested in the case when $t^\alpha f(t)$ is bounded near infinity. The proof is based on that of Theorem 1 from \cite{soni1975slowly}. 

\noindent \eqref{prop:Fourier.S':a} Since $f$ is locally integrable and decays to zero, it is clear that $f\in \Sc'$. We are left to show that for any $\phi\in\Sc$, there holds
\begin{equation}\label{eqn:Fourier.S'}
\int_\rbb f(t)\widehat{\phi}(t)\d t = \int_\rbb 2\Fcos(\omega)\phi(\omega)\d\omega.
\end{equation} 
For $k\in \nbb$, put $f_k(t)=f(t)1_{[0,k]}(|t|)$. We observe that $f_k\in L^1(\rbb)$, which implies
\begin{equation}\label{eqn:Fourier.S':1}
\int_\rbb f_k(t)\widehat{\phi}(t)\d t=\int_\rbb \widehat{f_k}(\omega)\phi(\omega)\d\omega.
\end{equation}
On one hand, as $k\to\infty$, $f_k(t)\widehat{\phi}(t)$ converges point-wisely to $f(t)\widehat{\phi}(t)$ and is dominated by $\left|f\widehat{\phi}\right|$. We obtain, by the Dominated Convergence Theorem,
\begin{equation}\label{eqn:Fourier.S':2}
\int_\rbb f_k(t)\widehat{\phi}(t)\d t\overset{k\to\infty}{\longrightarrow}\int_\rbb f(t)\widehat{\phi}(t)\d t.
\end{equation}
On the other hand, it is clear from the proof of Lemma \ref{lem:fourier} that, for all $\omega$ non zero, $\widehat{f_k}(\omega)\phi(\omega)$ converges to $2\Fcos(\omega)\phi(\omega)$. We are left to find a dominating function for $\widehat{f_k}$. To this end, there are two cases: $\omega>1$ and $0<\omega\leq 1$. We fix $A$ such that $f(t)$ is decreasing on $t\in[A.\infty)$.

\textit{Case 1:} $\omega>1$. We note that \eqref{ineq:fourier.1a} still holds for $f(t)1_{[0,k]}(t)$ since $f(t)1_{[0,k]}(t)$ is decreasing on $t\in[A,\infty)$. We then estimate
\begin{equation}\label{eqn:Fourier.S':3}
\left|\widehat{f_k}(\omega)\right|=2\left|
\int_0^\infty f(t)1_{[0,k]}(t)\cos(\omega t)\d t\right|\leq 2\int_0^A f(t)\d t+\frac{8f(A)}{\omega}.
\end{equation}

\textit{Case 2:} $0<\omega\leq 1$.  We split the integral $\int_0^\infty f(t)1_{[0,k]}(t)\cos(t\omega)\d t$ into three parts
\begin{eqnarray*}
\int_0^\infty f(t)1_{[0,k]}(t)\cos(\omega t)\d t &=& \int_0^1+\int_1^{A/\omega}+\int_{A/\omega}^\infty f(t)1_{[0,k]}(t)\cos(t\omega)\d t\\
&=&I_0^k(\omega)+I_1^k(\omega)+I_2^k(\omega).
\end{eqnarray*}
For $I_0^k(\omega)$, we estimate
\begin{equation}\label{eqn:Fourier.S':4}
\left|I_0^k(\omega)\right| = \left|\int_0^1 f(t)1_{[0,k]}(t)\cos(\omega t)\d t\right|
\leq \int_0^1 f(t)\d t.
\end{equation}
Next, by changing variable $z=t\omega$, we have
\begin{eqnarray*}
I_1^k(\omega)&=&\frac{1}{\omega^{1-\alpha}}\int_\omega^A \left(\frac{z}{\omega}\right)^\alpha f\left(\frac{z}{\omega}\right)1_{[0,k]}\left(\frac{z}{\omega}\right)\frac{\cos(z)}{z^\alpha}\d z\\
&=&\frac{1}{\omega^{1-\alpha}}\int_0^A 1_{[\omega,A]}(z) \left(\frac{z}{\omega}\right)^\alpha f\left(\frac{z}{\omega}\right)1_{[0,k]}\left(\frac{z}{\omega}\right)\frac{\cos(z)}{z^\alpha}\d z.
\end{eqnarray*}
Since $f(t)$ is continuous, $t^\alpha f(t)$ is bounded on $t\in[1,\infty)$. It follows that $I_1^k(\omega)$ is bounded by
\begin{equation}\label{eqn:Fourier.S':5}
I_1^k(\omega) = \frac{1}{\omega^{1-\alpha}}\int_0^A 1_{[\omega,A]}(z) \left(\frac{z}{\omega}\right)^\alpha f\left(\frac{z}{\omega}\right)1_{[0,k]}\left(\frac{z}{\omega}\right)\frac{\cos(z)}{z^\alpha}\d z \leq \frac{c}{\omega^{1-\alpha}}\int_0^A \frac{1}{z^\alpha}\d z,
\end{equation}
where $c>0$ is a constant independent with $k$ and $\omega$. Lastly, for $I_2^k(\omega)$, we invoke \eqref{ineq:fourier.1a} to find
\begin{multline}\label{eqn:Fourier.S':6}
I_2^k(\omega) = \int_{A/\omega}^\infty f(t)1_{[0,k]}(t)\cos(t\omega)\d t\leq \frac{4}{\omega}f\left(\frac{A}{\omega}\right)1_{[0,k]}\left(\frac{A}{\omega}\right)\\
= \frac{4A^\alpha}{\omega^{1-\alpha}}\left(\frac{A}{\omega}\right)^\alpha f\left(\frac{A}{\omega}\right)1_{[0,k]}\left(\frac{A}{\omega}\right)\leq \frac{a_2}{\omega^{1-\alpha}},
\end{multline}
where in the last implication, we have employed again the fact that $t^\alpha f(t)$ is bounded on $[1,\infty)$. We now combine \eqref{eqn:Fourier.S':3}, \eqref{eqn:Fourier.S':4}, \eqref{eqn:Fourier.S':5} and \eqref{eqn:Fourier.S':6} to infer the existence of constants $c_1,c_2,c_3>0$ independent with $k$ and $\omega\neq 0$ such that
\begin{equation*}
\left|\widehat{f_k}(\omega)\right|\leq  \frac{c_1}{\omega^{1-\alpha}}1_{\{|\omega|\leq 1\}}(\omega)+\frac{c_2}{\omega}1_{\{|\omega|>1\}}(\omega)+c_3.
\end{equation*}
Multiplying both sides of the above inequality by $\left|\phi(\omega)\right|$ yields
\begin{equation}\label{eqn:Fourier.S':7}
\left|\widehat{f_k}(\omega)\phi(\omega)\right| \leq \left(\frac{c_1}{\omega^{1-\alpha}}1_{\{|\omega|\leq 1\}}(\omega)+\frac{c_2}{\omega}1_{\{|\omega|>1\}}(\omega)+c_3\right)|\phi(\omega)|.
\end{equation}
We observe now that the above RHS is integrable, which implies, by The Dominated Convergence Theorem that
\begin{equation}\label{eqn:Fourier.S':8}
\lim_{k\to\infty}\int_\rbb \widehat{f_k}(\omega)\phi(\omega)\d\omega=\int_\rbb 2\Fcos(\omega)(\omega)\phi(\omega)\d\omega.
\end{equation}
We therefore infer \eqref{eqn:Fourier.S'} from \eqref{eqn:Fourier.S':1}, \eqref{eqn:Fourier.S':2} and \eqref{eqn:Fourier.S':8}.
\bigskip

\noindent \eqref{prop:Fourier.S':b} Similar to part \eqref{prop:Fourier.S':a}, the Fourier transform of $f^+$ is given by
\[\F{f^+}(\omega) = \widehat{f^+}=\Fcos(\omega)-i\Fsin(\omega).\]
Now, for $\psi\in\Sc$,
\[\langle f^+*\phi,\widehat{\psi} \rangle=\int_\rbb \int_0^\infty f(s)\phi(t-s)\d s \widehat{\psi}(t)\d t.\]
In order to switch the order of integration, we have to check Fubini Condition. Using the fact that $f$ eventually decreases, we have
\begin{align*}
\MoveEqLeft[4] \int_\rbb \int_0^\infty f(s)\left|\phi(t-s)\right|\d s \left|\widehat{\psi}(t)\right|\d t \\&= \int_\rbb \left(\int_0^A f(s)\left|\phi(t-s)\right|\d s +\int_A^\infty f(s)\left|\phi(t-s)\right|\d s\right)\left|\widehat{\psi}(t)\right|\d t \\
&\leq \int_\rbb \left(\|\phi\|_{L^\infty}\int_0^A f(s)\d s +f(A)\|\phi\|_{L^1}\right)\left|\widehat{\psi}(t)\right|\d t \\
&=\|\widehat{\psi}\|_{L^1}\left(\|\phi\|_{L^\infty}\int_0^A f(s)\d s +f(A)\|\phi\|_{L^1}\right).
\end{align*}
We thus obtain
\begin{displaymath}
\langle f^+*\phi,\widehat{\psi} \rangle=\langle f^+,\widetilde{\phi}*\widehat{\psi} \rangle=\langle \widehat{f^+},\widehat{\phi}\psi\rangle=\langle \widehat{f^+}\widehat{\phi},\psi\rangle,
\end{displaymath}
which completes the proof.
\end{proof}

\begin{corollary} \label{cor:covariance-dis} Suppose $f:\rbb\to\rbb$ satisfies the Hypothesis of Proposition \ref{prop:Fourier.S'}, then $f$ is the covariance distribution of a stationary random distribution whose spectral density is $2 \Fcos(\omega)$.
\end{corollary}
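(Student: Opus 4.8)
The plan is to identify the Fourier transform of $f$ already computed in Proposition~\ref{prop:Fourier.S'} as a non-negative, tempered spectral measure, and then invoke It\^o's characterization of stationary random distributions (Theorem~\ref{thm:Ito-1}).

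First I would record that, by Proposition~\ref{prop:Fourier.S'}\eqref{prop:Fourier.S':a}, $f\in\Sc'$ and its Fourier transform in $\Sc'$ is the function $\F{f}=2\Fcos(\omega)$. Condition~\eqref{cond1d} of Assumption~\ref{a:K} gives $\Fcos(\omega)>0$ for all $\omega\neq 0$, and Lemma~\ref{lem:fourier} gives continuity of $\Fcos$ on $\rbb\setminus\{0\}$, so $\nu(\d\omega):=2\Fcos(\omega)\,\d\omega$ is a well-defined non-negative Borel measure on $\rbb$. Next I would check the tempered-growth condition~\eqref{ineq:spectral-measure} (with $k=1$): near infinity $\Fcos$ is bounded, since $\Fcos(\omega)\to 0$ as $\omega\to\infty$ by Lemma~\ref{lem:fourier} while $\Fcos$ is continuous on $[1,\infty)$; near the origin $\Fcos$ is locally integrable, being bounded when $f\in L^1(\rbb)$ and satisfying $|\Fcos(\omega)|\le C|\omega|^{\alpha-1}$ in the other case by passing to the limit $k\to\infty$ in the estimate for $|\widehat{f_k}(\omega)|$ from the proof of Proposition~\ref{prop:Fourier.S'}, with $\alpha-1>-1$. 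Hence $\int_\rbb(1+\omega^2)^{-1}\nu(\d\omega)<\infty$.

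With this in hand, $f$ is a positive definite tempered distribution: unwinding the pairing $\langle f,\varphi\rangle$ through the defining relation $\langle g,\widehat\varphi\rangle=\langle\F{g},\varphi\rangle$ and $\F{f}=2\Fcos$ puts $\langle f,\varphi\rangle$ into the representation $\int_\rbb\overline{\widehat\varphi(\omega)}\,\nu(\d\omega)$ of Theorem~\ref{thm:pd-td}; equivalently one checks directly that $\langle f,\varphi*\widetilde\varphi\rangle$ is a non-negative multiple of $\int_\rbb\Fcos(\omega)\,|\widehat\varphi(\omega)|^2\,\d\omega\ge 0$, using $\widehat{\varphi*\widetilde\varphi}=\widehat\varphi\,\widehat{\widetilde\varphi}$ and $\Fcos\ge 0$. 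Then Theorem~\ref{thm:Ito-1} supplies a stationary random distribution $F$ whose covariance distribution is $f$ and whose spectral measure is $\nu$; since $\nu$ is absolutely continuous with respect to Lebesgue measure, the Remark following Theorem~\ref{thm:pd-td} identifies its density $2\Fcos(\omega)$ as the spectral density of $f$ (hence of $F$), which is the assertion.

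The one place that needs care is the Fourier-convention bookkeeping in the last step: tracking the factors of $2\pi$, the complex conjugations, and the appearance of $\widehat\varphi$ rather than $\varphi$ in the pairing, so that $\langle f,\varphi\rangle$ is exhibited in exactly the form demanded by Theorem~\ref{thm:pd-td} with the very measure $\nu$ whose density we then advertise as the spectral density. Everything else — positivity, continuity, local integrability near the origin, and the growth bound at infinity — follows immediately from Assumption~\ref{a:K} together with the estimates already recorded in Lemma~\ref{lem:fourier} and in the proof of Proposition~\ref{prop:Fourier.S'}.
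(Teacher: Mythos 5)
Your proposal is correct and follows essentially the same route as the paper: verify that $f$ is a positive definite tempered distribution by reducing $\langle f,\varphi*\widetilde\varphi\rangle$ to a non-negative multiple of $\int\Fcos(\omega)|\widehat\varphi(\omega)|^2\,\d\omega$ via Proposition~\ref{prop:Fourier.S'}\eqref{prop:Fourier.S':a} and Condition~\eqref{cond1d}, then invoke Theorem~\ref{thm:Ito-1}. The additional explicit check of the tempered-growth condition~\eqref{ineq:spectral-measure} is harmless but not strictly needed, since Theorem~\ref{thm:pd-td} already guarantees that the spectral measure of a positive definite tempered distribution satisfies it.
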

\begin{proof}
It suffices to check that $f(t)$ is positive definite as a tempered distribution. For $\varphi\in \Sc$, we have
\[\langle f,\varphi*\widetilde{\varphi} \rangle = \int_\rbb f(t)\left(\varphi*\widetilde{\varphi}\right)(t)\d t=\int_\rbb 2\Fcos(\omega)\left|\varphi(\omega)\right|^2 \d\omega\geq 0,\]
where the second and third implications follow from Proposition \ref{prop:Fourier.S'} \eqref{prop:Fourier.S':a} and Condition \eqref{cond1d}, respectively.
\end{proof}

\subsection{Examples of admissible memory kernels}

The Condition \eqref{cond1} requires that the Fourier cosine transform $\Fcos$ be positive. A sufficient condition for $K$ to guarantee positive Fourier cosine is that $K(t)$ be convex and locally integrable on $[0,\infty)$. To be precise, we record the following Lemma, whose proof can be found in \cite{tuck2006positivity}.
\begin{lemma}\label{lem:fourier-positive} Suppose that $f \in L^1_{\text{loc}}([0,\infty))$, convex on $(0,\infty)$ and decreasing to zero as $t\rightarrow\infty$, then for all $\omega\neq 0$,
\begin{displaymath}
\int_0^\infty \!\! f(t)\cos(t\omega)\d t=\lim_{t\to\infty}\int_0^t f(s)\cos(s\omega)\d s >0.	
\end{displaymath}
\end{lemma}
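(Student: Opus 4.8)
The plan is to reduce the claim to a non-negative measure-theoretic identity via the classical Pólya-type representation of a convex, decreasing function as a superposition of ``tent'' functions $(u-t)^{+}$. Since $\cos$ is even, it suffices to treat $\omega>0$. Because $f$ is convex on $(0,\infty)$, its (right) derivative $f'$ exists throughout $(0,\infty)$, is non-decreasing, and is $\le 0$ since $f$ is decreasing; together with $f(t)\to 0$ this forces $f'(t)\uparrow 0$ as $t\to\infty$, for otherwise $f$ would eventually be negative. Let $\mu$ be the non-negative Lebesgue--Stieltjes measure on $(0,\infty)$ with $\mu((a,b])=f'(b)-f'(a)$. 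Integrating $f'$ from $t$ to $\infty$, using $f(\infty)=0$ and $f'(\infty)=0$, and applying Tonelli yields
\[
f(t)=\int_{(0,\infty)}(u-t)^{+}\,\mathrm{d}\mu(u),\qquad t>0 .
\]
Feeding this back into $\int_{0}^{1}f(t)\,\mathrm{d}t<\infty$ (finite since $f\in L^{1}_{\mathrm{loc}}$) gives, again by Tonelli, the two integrability facts $\int_{(0,1]}u^{2}\,\mathrm{d}\mu(u)<\infty$ and $\int_{(1,\infty)}u\,\mathrm{d}\mu(u)<\infty$, which are the only properties of $\mu$ I will need.

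Next I would compute the truncated integral. Since $f\in L^{1}([0,T])$, Fubini gives
\[
\int_{0}^{T}f(t)\cos(\omega t)\,\mathrm{d}t=\int_{(0,\infty)}\Big(\int_{0}^{\min(u,T)}(u-t)\cos(\omega t)\,\mathrm{d}t\Big)\,\mathrm{d}\mu(u),
\]
and an integration by parts evaluates the inner integral to $\frac{1-\cos(\omega u)}{\omega^{2}}$ when $u\le T$, and to $\frac{1-\cos(\omega T)}{\omega^{2}}+\frac{(u-T)\sin(\omega T)}{\omega}$ when $u>T$. Using $\int_{(T,\infty)}(u-T)\,\mathrm{d}\mu(u)=f(T)$ and $\mu((T,\infty))=|f'(T)|$, the contribution of the range $u>T$ is $O\!\big(f(T)\big)+O\!\big(|f'(T)|\big)$, which tends to $0$ as $T\to\infty$. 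Since the surviving integrand is non-negative, monotone convergence simultaneously establishes that the improper integral exists and that
\[
\int_{0}^{\infty}f(t)\cos(\omega t)\,\mathrm{d}t=\int_{(0,\infty)}\frac{1-\cos(\omega u)}{\omega^{2}}\,\mathrm{d}\mu(u);
\]
the right-hand side is finite because $\frac{1-\cos(\omega u)}{\omega^{2}}\sim\tfrac12 u^{2}$ near $0$ and $\le 2/\omega^{2}$ on $(1,\infty)$, so the two integrability bounds on $\mu$ apply.

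From this identity, non-negativity of the cosine transform is immediate, since both $\frac{1-\cos(\omega u)}{\omega^{2}}$ and $\mu$ are non-negative. For the strict inequality one notes that $\frac{1-\cos(\omega u)}{\omega^{2}}$ vanishes only on the discrete set $\{2\pi k/\omega : k=1,2,\dots\}$, so the integral is strictly positive unless $\mu$ is carried by that set; this cannot happen once $f$ is not affine on any subinterval of $(0,\infty)$ (in particular whenever $f$ is strictly convex), since then $\mu$ gives positive mass to every non-degenerate interval. That covers all the memory kernels of interest here --- sums of exponentials, power laws, and completely monotone functions are all strictly convex. I expect the main obstacle to be the bookkeeping in the truncated computation and, above all, the limit $T\to\infty$: because $f$ need not lie in $L^{1}$, Fubini is unavailable globally, and the boundary terms must be bounded using the decay of \emph{both} $f$ and $f'$ at infinity. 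It is also worth flagging that convexity alone does not give the strict inequality --- for instance $(1-t)^{+}$ has cosine transform equal to $0$ at $\omega=2\pi$ --- so the hypothesis should be understood together with the strict convexity of the kernels under consideration.
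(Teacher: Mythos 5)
The paper does not prove this lemma: it cites Tuck (2006) and moves on, so there is no internal proof to compare against. That said, your argument runs closely parallel to the paper's own Lemma~\ref{lem4}, which establishes $\Fcos(\omega)=\omega^{-2}\int_0^\infty f''(t)(1-\cos(t\omega))\,\d t$ for $C^2$ convex kernels by integrating by parts twice. Your P\'olya ``tent'' representation $f(t)=\int_{(0,\infty)}(u-t)^+\,\d\mu(u)$ with $\mu=\d f'$ is the measure-theoretic form of the same identity and dispenses with the $C^2$ hypothesis; the computation of the truncated integral is correct, the boundary control via $f(T)$ and $|f'(T)|$ is exactly what is needed (and both do vanish: $f'\uparrow 0$ follows from convexity together with $f\to 0$, just as you argue), and the integrability constraints on $\mu$ near $0$ and $\infty$ that you extract from $f\in L^1_{\text{loc}}$ are precisely the ones that justify the monotone-convergence passage. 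The proof of non-negativity is complete and correct.

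You have also identified a genuine defect in the statement as written: convexity plus monotone decay to zero give $\Fcos(\omega)\ge 0$, not $\Fcos(\omega)>0$. Your example $f(t)=(1-t)^+$, with $\Fcos(2\pi)=0$, is decisive, and even adding strict monotonicity or strict positivity of $f$ cannot repair it --- take $f$ piecewise linear with kinks only at the positive integers, $f'$ negative, strictly increasing and summable, so that $f>0$, $f$ is strictly decreasing, $f\to 0$, and yet $\mu$ is carried by $\nbb$, giving $\Fcos(2\pi)=(2\pi)^{-2}\int(1-\cos 2\pi u)\,\d\mu(u)=0$. A correct version of the lemma should either conclude $\Fcos(\omega)\ge 0$ or require that $\d f'$ charge every nondegenerate subinterval of $(0,\infty)$, equivalently that $f$ be affine on no subinterval. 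That added hypothesis is satisfied by every kernel the paper actually feeds into this lemma --- power laws $K_H$, completely monotone kernels, and Generalized Rouse kernels are all strictly convex --- so the slip is harmless in context, but your proof states the dichotomy more carefully than the cited result.
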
 

\subsubsection{Sums of exponential functions} 
\label{sec:prelim:exponentials}

One family ofmemory functions that has proved useful in statistical analysis of viscoelastic diffusion is the Generalized Rouse kernels \cite{mckinley2009transient, lysy2016model}. While these functions can have arbitrarily many terms, the family is fully described by three parameters, which makes the associated GLE amenable for parameter inference \cite{lysy2016model}. Let $p \geq 1$, $N \in \nbb$ and $\tau_0 > 0$ be given. Then we define the Generalized Rouse kernels to be the set of functions of the
\begin{equation}\label{form:K-rouse-N}
K_N(t; p, \tau_0) = \frac{1}{N} \sum_{k=0}^{N-1} e^{-|\frac{t}{\tau_0}|\left(\frac{k}{N}\right)^p}.
\end{equation}
There is in fact an explicit form for the limit as $N$ tends to infinity:
\begin{equation}\label{form:K-rouse}
K_\text{Rouse}(t; p, \tau_0) := \lim_{N\to\infty}\frac{1}{N}\sum_{k=0}^{N-1} e^{-|\frac{t}{\tau_0}|\left(\frac{k}{N}\right)^p}=\int_0^1 e^{-|\frac{t}{\tau_0}|x^p}\d x.
\end{equation}
The following proposition asserts that as $N$ becomes larger, the tail of $K_N(t; p, \tau_0)$ behaves more and more like a power law of the form $t^{-1/p}$.
\begin{proposition} Suppose that $p\geq 1,N\in\nbb,\tau_0>0$. Denote by~$K_N=K_N(t;p,\tau_0)$ and $K=K_\text{Rouse}(t; p, \tau_0)$ where $K_N(t;p,\tau_0)$ and $K_\text{Rouse}(t;p,\tau_0)$ are as in \eqref{form:K-rouse-N} and \eqref{form:K-rouse}, respectively. Then,
\begin{enumerate}[(a)]
\item $\lim_{N\to\infty}\sup_{t\in\rbb}|K_N(t)-K(t)|=0.$
\item $K(t)\sim t^{-1/p}$ as $t\to\infty$.
\end{enumerate}  
\end{proposition}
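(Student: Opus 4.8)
The plan is to handle the two parts independently, each by an elementary estimate. For part (a), I would observe that $K_N(t;p,\tau_0)$ is precisely the left-endpoint Riemann sum, over the $N$ equal subintervals of $[0,1]$, of the function $g_t(x):=e^{-|t/\tau_0|x^p}$, whose integral over $[0,1]$ is $K_{\text{Rouse}}(t;p,\tau_0)$ by definition \eqref{form:K-rouse}. For each fixed $t$, $x\mapsto g_t(x)$ is nonincreasing on $[0,1]$ (the exponent $-|t/\tau_0|x^p$ is nonincreasing in $x$), so the standard rectangle-rule error bound for monotone integrands gives, after telescoping,
\[
0 \;\le\; K_N(t;p,\tau_0) - K_{\text{Rouse}}(t;p,\tau_0) \;\le\; \frac{1}{N}\bigl(g_t(0)-g_t(1)\bigr) \;=\; \frac{1}{N}\bigl(1-e^{-|t/\tau_0|}\bigr) \;\le\; \frac{1}{N}.
\]
The final bound does not depend on $t$, so taking $\sup_{t\in\rbb}$ and then letting $N\to\infty$ proves (a).

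For part (b), I would compute the large-$t$ asymptotics of $K_{\text{Rouse}}(t;p,\tau_0)=\int_0^1 e^{-|t/\tau_0|x^p}\,\d x$ directly via the substitution $u=|t/\tau_0|^{1/p}x$, which rescales the integral to
\[
K_{\text{Rouse}}(t;p,\tau_0) \;=\; \Bigl|\tfrac{t}{\tau_0}\Bigr|^{-1/p}\int_0^{|t/\tau_0|^{1/p}} e^{-u^p}\,\d u .
\]
As $t\to\infty$ the upper limit tends to $+\infty$ and $\int_0^\infty e^{-u^p}\,\d u=\tfrac1p\Gamma(\tfrac1p)=:C_p\in(0,\infty)$; by monotone convergence the integral factor increases to $C_p$, hence $t^{1/p}K_{\text{Rouse}}(t;p,\tau_0)\to \tau_0^{1/p}C_p\in(0,\infty)$. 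This is exactly the assertion $K(t)\sim t^{-1/p}$ in the sense defined in the introduction, with explicit constant $\tau_0^{1/p}\Gamma(1+1/p)$.

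Neither step is a genuine obstacle; the only point requiring care is that the Riemann-sum error in (a) be \emph{uniform} in $t$, which is precisely what the monotonicity of $g_t$ buys — the telescoped bound $\tfrac1N(g_t(0)-g_t(1))$ is automatically at most $\tfrac1N$ for every real $t$. One could instead prove (a) by dominated convergence for each fixed $t$ together with a compactness/equicontinuity argument to upgrade to uniformity, but the monotone rectangle-rule estimate is shorter and cleaner.
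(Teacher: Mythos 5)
Your proof is correct. For part (b), both arguments perform essentially the same rescaling to isolate a Gamma-function integral: the paper substitutes $y=(t/\tau_0)x^p$ directly to obtain an incomplete Gamma integral, while you substitute $u=(t/\tau_0)^{1/p}x$ and then identify $\int_0^\infty e^{-u^p}\,\d u=\tfrac1p\Gamma(\tfrac1p)$; these are the same computation in one step versus two, with the same constant $\tau_0^{1/p}\Gamma(1/p)/p$.

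For part (a), your route is genuinely different and, I think, cleaner. The paper's argument observes that $K_N$ and $K$ are monotonically decreasing on $[0,\infty)$, take values in $[0,1]$, and converge pointwise, and then invokes the classical Helly/Dini-type result (Rudin, \emph{Principles}, Exercise~13, p.~167) that pointwise convergence of bounded monotone functions to a \emph{continuous} limit is automatically uniform. You instead recognize $K_N(t)=\tfrac1N\sum_{k=0}^{N-1}g_t(k/N)$ as the left-endpoint Riemann sum of the nonincreasing integrand $g_t(x)=e^{-|t/\tau_0|x^p}$ over the uniform partition of $[0,1]$, and apply the telescoped rectangle-rule error estimate for monotone integrands to get the explicit two-sided bound $0\le K_N(t)-K(t)\le \tfrac1N\bigl(g_t(0)-g_t(1)\bigr)\le\tfrac1N$, which is already uniform in $t$. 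Your argument is more elementary (no compactness or selection theorem is invoked), it is quantitative (it hands you the rate $O(1/N)$ for free, which the paper's qualitative argument does not), and it sidesteps the continuity hypothesis on the limit that the paper's cited theorem silently requires. Both approaches are valid; yours trades an appeal to a textbook theorem for a direct three-line computation.
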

\begin{proof}
(a) Since $K_n$ and $K$ are even, it suffices to show that
 \begin{equation}\label{lim:K-rouse-N-1}
\lim_{N\to\infty}\sup_{t\geq 0}|K_N(t)-K(t)|=0.
\end{equation}
We observe that $K_N(t),\, K(t)\in[0,1]$ and that they are monotonically decreasing to zero on $t\in[0,\infty)$. The uniform convergence then follows from the point-wise convergence, see Exercise 13, pg. 167, \cite{rudin1964principles}.

(b) For $t>0$, using a change of variable $y=\frac{t}{\tau_0}x^p$, $K(t)$ is equal to
\begin{align*}
K(t)=\frac{\tau_0^{1/p}}{pt^{1/p}}\int_0^{t/\tau_0} y^{1/p-1}e^{-y}\d y.
\end{align*}
It follows immediately that 
\begin{align*}
t^{1/p}K(t)=\frac{\tau_0^{1/p}}{p}\int_0^{t/\tau_0} y^{1/p-1}e^{-y}\d y\longrightarrow \frac{\tau_0^{1/p}}{p}\Gamma\left(\frac{1}{p}\right),\quad t\to\infty,
\end{align*}
where $\Gamma(x)$ denotes the usual gamma function evaluated at $x$.
\end{proof}



The Generalized Rouse kernel is a special case of a class of convex functions called the {\it completely monotone} functions.

\begin{definition} \label{defn:CM}
A function $K:(0,\infty)\to\rbb$ is completely monotone if $K$ is of class $C^\infty$ and $(-1)^n K^{(n)}(t)\geq 0$ for all $n\geq 0$, $t>0$.
Denote
\begin{enumerate}[(a)]
\item $\mathcal{CM}\label{CM}$ is the set of all \textit{completely monotone} function.
\item $\mathcal{CM}_b\label{CMb}$ is the set of all $K\in \mathcal{CM}\cap C[0,\infty)$.
\end{enumerate} 
\end{definition}

These functions are characterized by the following classical theorem.

\begin{theorem}[Hausdorff-Bernstein-Widder Theorem \cite{schilling2012bernstein}]\label{thm:bernstein} A function $K$ is completely monotone if and only if $K$ admits the formula
\begin{equation} \label{laplace}
K(t)=\int_0^\infty e^{-tx} \mu (\d x),
\end{equation}
where $\mu$ is a positive measure on $[0,\infty)$. 
\end{theorem}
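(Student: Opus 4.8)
The forward direction is routine and I would dispatch it first. If $K(t)=\int_0^\infty e^{-tx}\mu(\d x)$ for a positive measure $\mu$ on $[0,\infty)$, then on each half-line $[\delta,\infty)$ with $\delta>0$ the $t$-derivatives $\partial_t^n e^{-tx}=(-x)^n e^{-tx}$ are dominated by $x^n e^{-\delta x}\le C_{n,\delta}e^{-\delta x/2}$ with $\int e^{-\delta x/2}\mu(\d x)=K(\delta/2)<\infty$, so one may differentiate under the integral sign; this gives $K\in C^\infty(0,\infty)$ and $(-1)^n K^{(n)}(t)=\int_0^\infty x^n e^{-tx}\mu(\d x)\ge 0$, i.e.\ $K$ is completely monotone.

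For the converse the plan is to reduce, along a fine arithmetic grid, to the classical Hausdorff moment problem, and then to pass to a limit. First I would note that complete monotonicity forces $K\ge 0$, $-K'\ge 0$, $K''\ge 0$, so $K$ is nonnegative, nonincreasing and convex; hence $K(\infty):=\lim_{t\to\infty}K(t)$ exists in $[0,\infty)$, and after subtracting it — which only adds an atom $K(\infty)\delta_0$ to the measure one ultimately builds — I may assume $K(\infty)=0$. Next, fix $n\in\nbb$ and consider the sequence $c_k:=K\big(\tfrac{k+1}{n}\big)$, $k\ge 0$. Its $j$-th forward difference $(\Delta^j c)_k=\sum_{i=0}^j\binom ji(-1)^{j-i}c_{k+i}$ is the $j$-th finite difference of $K$ with step $1/n$, which by the mean value theorem equals $n^{-j}K^{(j)}(\xi)$ for some $\xi$; since $(-1)^j K^{(j)}\ge 0$, the sequence $(c_k)$ is Hausdorff completely monotone, so Hausdorff's theorem produces a positive measure $m_n$ on $[0,1]$ with $c_k=\int_0^1 s^k\,m_n(\d s)$. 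The substitution $s=e^{-x/n}$ then yields a finite measure $\mu_n$ on $[0,\infty)$ (discarding the possible atom of $m_n$ at $s=0$) with
\begin{equation}\label{eq:HBW-grid}
K\big(\tfrac{k+1}{n}\big)=\int_{[0,\infty)}e^{-kx/n}\,\mu_n(\d x)\qquad\text{for every }k\ge 1 .
\end{equation}

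The substantive step is the limit $n\to\infty$, and it is where care is needed. Taking $k=n$ in \eqref{eq:HBW-grid} gives $\int_{[0,\infty)}e^{-x}\mu_n(\d x)=K(1+\tfrac1n)\le K(1)$, hence $\mu_n((0,M])\le e^{M}K(1)$ for every $M>0$ and every $n$; this uniform boundedness on bounded sets lets a vague-compactness argument extract a subsequence along which $\mu_n\to\mu$ vaguely on $(0,\infty)$ for some Radon measure $\mu$, which is moreover finite near $0$. Two tail effects then have to be controlled. First, the $e^{-tx}$-weighted mass near $+\infty$ is uniformly small: writing $e^{-tx}=e^{-tx/2}e^{-tx/2}$, bounding one factor by $e^{-tM/2}$ on $[M,\infty)$ and the remaining integral $\int e^{-tx/2}\mu_n(\d x)$ by $K$ at a grid point near $t/2$ via \eqref{eq:HBW-grid}, one obtains $\int_{[M,\infty)}e^{-tx}\mu_n(\d x)\le C_t\,e^{-tM/2}$ for all $n$. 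Second, no mass escapes to $x=0$, and this is exactly where $K(\infty)=0$ is used: if $\mu_{n_j}\big((0,\delta_j)\big)\ge a>0$ along some $\delta_j\downarrow 0$, then picking $k_j$ with $k_j/n_j\to t$ makes the right side of \eqref{eq:HBW-grid} at least $e^{-k_j\delta_j/n_j}a\to a$, forcing $K(t)\ge a$ for every $t>0$ and contradicting $K(t)\to 0$. With these facts in hand, choosing $k_n/n\to t$ and using the continuity of $K$, one passes to the limit in \eqref{eq:HBW-grid} to get $K(t)=\int_{[0,\infty)}e^{-tx}\mu(\d x)$ for all $t>0$. Uniqueness I would obtain at the end by observing that if $\mu'$ is another representing measure then $e^{-x}\mu$ and $e^{-x}\mu'$ are finite measures with identical Laplace transforms on $(0,\infty)$, hence equal by Stone--Weierstrass, so $\mu=\mu'$.

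The hard part is precisely that limiting procedure: one must suppress loss of mass at \emph{both} ends, $x=0$ and $x=+\infty$, at the same time — which is the reason the representing measure is in general only Radon and not finite (Lebesgue measure for $K(t)=1/t$ being the prototype), and the reason the constant $K(\infty)$ has to be peeled off beforehand. I have also used Hausdorff's moment theorem as a black box; as its proof runs through the very same mechanism (the $m_n$ are the Bernstein-polynomial approximants assembled from the nonnegative differences $(-1)^j(\Delta^j c)_k$), a genuinely self-contained argument would merely fold that compactness step into the one above.
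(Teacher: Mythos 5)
The paper does not contain a proof of this statement: Theorem~\ref{thm:bernstein} is the classical Hausdorff--Bernstein--Widder theorem, stated here as a citation to \cite{schilling2012bernstein}, so there is no in-paper argument against which to compare your proposal.

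Evaluated on its own, your blind proof is a correct (modulo the acknowledged black-box use of Hausdorff's moment theorem) rendition of the classical grid-discretization argument one finds in Widder. The forward direction by differentiation under the integral sign is routine and correctly handled. For the converse, the reduction to $K(\infty)=0$, the identification of $c_k=K((k+1)/n)$ as a Hausdorff completely monotone sequence via the mean value theorem for finite differences, the substitution $s=e^{-x/n}$, and the two tightness observations — a uniform local mass bound $\mu_n((0,M])\le e^M K(1)$ for vague compactness, ``no escape to $0$'' deduced from $K(\infty)=0$, and uniformly small exponentially weighted tails at $\infty$ by comparison with $K$ at a nearby grid point — are all sound, and together they do justify passing to the limit along $k_n/n\to t$. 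Your remarks that the representing measure is in general only Radan, not finite (Lebesgue measure for $K(t)=1/t$), and that a possible atom of $\mu$ at $x=0$ corresponds exactly to $K(\infty)$, are both correct and are precisely the reason the constant $K(\infty)$ must be peeled off before the compactness argument. The closing uniqueness observation, via density of $\mathrm{span}\{e^{-sx}:s>0\}$ in $C_0([0,\infty))$ applied to the finite measures $e^{-x}\mu$ and $e^{-x}\mu'$, is also correct, though uniqueness is not part of the theorem as stated. One small imprecision worth flagging: in the tail estimate near $+\infty$, the kernel actually present in the discrete identity is $e^{-k_n x/n}$ rather than $e^{-tx}$, so the factorization and the comparison with $K$ at a grid point should be carried out with the discrete exponent (say bounding $\int e^{-k_n x/(2n)}\mu_n(\d x)$ by $K(\lfloor k_n/2\rfloor/n+1/n)$); this is a relabeling rather than a gap.
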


\subsubsection{Power Law Kernels}
In \cite{kou2004generalized} and \cite{kou2008stochastic}, the author considered the kernel 
\begin{equation} \label{form:Kou-kernel}
K_H(t)=2H(2H-1)|t|^{2H-2},
\end{equation}
where $H\in(1/2,1)$. Using explicit Fourier transform of $K_H$, it is shown in \cite{kou2008stochastic} that the MSD satisfies $\E{X^2(t)}\sim t^{2-2H}$, which is subdiffusive.

To check that $K_H(t)$ verifies Assumption \ref{a:K}, we first note that $K_H(t)$ is a power-law at infinity, which is consistent with Condition \eqref{cond2b}. On the other hand, $K_H(t)$ is convex on $(0,\infty)$. Lemma~\ref{lem:fourier-positive} then implies that the improper Fourier cosine transform $\Kcos(\omega):=\int_0^\infty K_H(t)\cos(t\omega)\d t$ is positive for every non-zero $\omega$. It follows that $K_H(t)$ satisfies Condition \eqref{cond1}. We now can apply Corollary \ref{cor:covariance-dis} to see that $K_H$ is the covariance distribution of a stationary random distribution whose spectral density is $2 \Kcos(\omega)$

Our theory of weak solution in Section \ref{sec:weak-solutions}  therefore applies to $K_H$. Furthermore, our result on MSD in Section \ref{sec:msd} generalizes the result from \cite{kou2008stochastic}, namely, the class of functions satisfying \eqref{cond1} $+$ \eqref{cond2b}, of which $K_H$ is a special case,  leads to subdiffusive MSD.

%
%

\subsubsection{An example of a class of non-convex kernels}

Given the examples we have presented so far, it might appear that convexity is required of $K$ but this is not the case. In Lemma \ref{lem:non-convex-K} below, we show that our class of admissible memory kernels includes functions of the form $K(t)=\varphi\left(t^2\right)$, where $\varphi\in\mathcal{CM}_b$. Take $\beta > 0$, then $\big(1+t^2\big)^{-\beta/2}$ is a non-convex yet admissible memory kernel because $\big(1+t\big)^{-\beta/2}$ is a completely monotone function. Note that when $\beta \in (0,1)$, the associated GLE is subdiffusive. In general, let $\mu$ be the representing measure of $\varphi$ in Theorem \ref{thm:bernstein}, then $K(t) := \varphi(t^2)$ admits the representation
\begin{equation}\label{laplace1}
K(t)=\int_0^\infty e^{-t^2x}\mu(\d x).
\end{equation}
This function is not convex, but we are able to assert the following.
\begin{lemma} \label{lem:non-convex-K}
Let $K(t)=\varphi\left(t^2\right)$, where $\varphi\in\mathcal{CM}_b$. Then, for every $\omega\neq 0$, $\Kcos(\omega)>0$.
\end{lemma}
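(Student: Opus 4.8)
The plan is to unpack the Hausdorff--Bernstein--Widder representation of $\varphi$ so that $K$ becomes a continuous superposition of Gaussians $e^{-t^2x}$, and then exploit the fact that the cosine transform of each individual Gaussian is manifestly positive. By Theorem~\ref{thm:bernstein}, write $\varphi(s)=\int_0^\infty e^{-sx}\mu(\d x)$ for a positive measure $\mu$. Since $\varphi\in C[0,\infty)$, letting $s\downarrow 0$ and using monotone convergence gives $\mu(\rbb_+)=\varphi(0)<\infty$, so $\mu$ is a finite measure; and since $K(t)=\varphi(t^2)$ decays to zero (which is needed for $\Kcos(\omega)$ to be well-defined), $\mu$ assigns no mass to $\{0\}$. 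Hence $K(t)=\int_0^\infty e^{-t^2x}\mu(\d x)$ as in \eqref{laplace1}, with $\mu$ finite and $\mu\big((0,\infty)\big)>0$ (if $\mu\equiv 0$ then $K\equiv 0$, which is not an admissible kernel).

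The key computation is the elementary Gaussian identity $\int_0^\infty e^{-xt^2}\cos(\omega t)\,\d t=\tfrac12\sqrt{\pi/x}\,e^{-\omega^2/(4x)}$ for $x,|\omega|>0$ (one checks that the left-hand side, as a function of $\omega$, solves $I'(\omega)=-\tfrac{\omega}{2x}I(\omega)$ with $I(0)=\tfrac12\sqrt{\pi/x}$), the right-hand side of which is strictly positive. Granting that the $t$-integral and the $\mu(\d x)$-integral may be interchanged, one then obtains
\[
\Kcos(\omega)=\int_0^\infty\frac12\sqrt{\frac{\pi}{x}}\;e^{-\omega^2/(4x)}\,\mu(\d x),
\]
which is strictly positive because its integrand is positive on $(0,\infty)$ and $\mu\big((0,\infty)\big)>0$. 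So everything reduces to making that interchange rigorous.

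The one real subtlety is precisely this interchange, since $\Kcos(\omega)$ is only a conditionally convergent improper integral and the naive bound $\int_0^\infty e^{-xt^2}\,\d t=\tfrac12\sqrt{\pi/x}$ fails to be $\mu$-integrable near $x=0$. I would handle it by truncation. For finite $T$, Fubini applies immediately (the double integral of $e^{-xt^2}\le 1$ over $[0,T]\times[0,\infty)$ is at most $T\mu(\rbb_+)<\infty$), so $\int_0^T K(t)\cos(\omega t)\,\d t=\int_0^\infty g_T(x)\,\mu(\d x)$, where $g_T(x):=\int_0^T e^{-xt^2}\cos(\omega t)\,\d t$. Since $t\mapsto e^{-xt^2}$ is non-increasing on $[0,\infty)$ for each fixed $x$, the Second Mean Value Theorem for integrals --- the same device used in the proof of Lemma~\ref{lem:fourier} --- gives $|g_T(x)|\le 2/|\omega|$ uniformly in $T$ and $x$; as $\mu$ is finite, this constant is a legitimate dominating function. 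Letting $T\to\infty$ and applying the Dominated Convergence Theorem (using $g_T(x)\to\tfrac12\sqrt{\pi/x}\,e^{-\omega^2/(4x)}$ for every $x>0$) simultaneously shows that the improper integral $\Kcos(\omega)$ exists and equals the displayed formula, and strict positivity follows as above. Thus the main obstacle is just the integrability issue as $x\to 0^+$; once the monotonicity-plus-second-mean-value bound on $g_T$ is in place, the remainder is routine.
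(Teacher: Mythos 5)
Your proof is correct and follows essentially the same route as the paper's: truncate the $t$-integral, apply Fubini on the finite range, bound the truncated inner Gaussian--cosine integral uniformly in $x$ and $T$ via the Second Mean Value Theorem, and invoke the Dominated Convergence Theorem over the finite measure $\mu$ to identify $\Kcos(\omega)=\int_0^\infty \frac{1}{2}\sqrt{\pi/x}\,e^{-\omega^2/(4x)}\mu(\d x)>0$. You are a bit more explicit than the paper about why $\mu$ is finite (continuity of $\varphi$ at $0$ forces $\mu(\rbb_+)=\varphi(0)<\infty$) and about $\mu(\{0\})=0$, and you supply the factor $\frac{1}{2}$ in the Gaussian identity which the paper's final display drops, but these are cosmetic; the substance is the same.
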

\begin{proof}
Substituting $K$ with the formula \eqref{laplace1}, we have a chain of limits
\begin{multline}\label{lim:non-cv-1}
\int_0^\infty K(t)\cos(t\omega)\d t =\lim_{A\to\infty}\int_0^A K(t)\cos(t\omega)\d t \\
=\lim_{A\to\infty}\int_0^A \int_0^\infty e^{-t^2x}\mu(\d x)\cos(t\omega)\d t=\lim_{A\to\infty}\int_0^\infty \int_0^A e^{-t^2x}\cos(t\omega)\d t\mu(\d x),
\end{multline}
where in the last equality, we use the Fubini Theorem to switch the order of integration. Now applying the Second Mean Value Theorem, we infer a $\xi\in (0,A)$ such that
\[\left|\int_0^A e^{-t^2x}\cos(t\omega)\d t\right|=\left|e^{0}\int_0^\xi\cos(t\omega)\d t\right|\leq \frac{1}{\omega}.\]
We note that the representing measure $\mu$ is finite. Hence, by the Dominated Convergence Theorem, we obtain
\begin{equation} \label{lim:non-cv-2}
\lim_{A\to\infty}\int_0^\infty \int_0^A e^{-t^2x}\cos(t\omega)\d t\mu(\d x)=\int_0^\infty \int_0^\infty e^{-t^2x}\cos(t\omega)\d t\mu(\d x).
\end{equation}
It follows from \eqref{lim:non-cv-1} and \eqref{lim:non-cv-2} that
\begin{displaymath}
\int_0^\infty K(t)\cos(t\omega)\d t=\int_0^\infty \left[\int_0^\infty e^{-t^2x}\cos(t\omega)\d t\right]\mu(\d x)=\int_0^\infty \frac{\sqrt{\pi}}{\sqrt{x}}e^{-\omega^2/4x}\mu(\d x),
\end{displaymath}
which implies that $\int_0^\infty K(t)\cos(t\omega)\d t > 0$.
\end{proof}

In anticipation of the results that follow, we remark that since functions of this form satisfy Assumption \ref{a:K} but not Condition \ref{cond:convex}, it is not clear whether the associated GLE is well-defined in the $m = \lambda = 0$ case.

\section{Abelian Theorems for Fourier Transforms}
\label{sec:tauberian}
In the subdiffusive case (with our specialized conditions), the behavior of the Fourier transform near the origin and near infinity can be characterized in a manner analogous to the Abelian theorems for the Laplace transform in the sense presented by Feller \cite{feller1966introduction}.

\begin{proposition} \label{prop:fourier.taube}
Suppose that $f$ satisfies the conditions \eqref{cond1b}, \eqref{cond1c} and \eqref{cond2b} from Assumption \ref{a:K}. Then
\begin{equation} \label{lim:taubecos}
\lim_{\omega\to 0}\omega^{1-\alpha}\Fcos(\omega)\in (0,\infty)\quad\text{and}\quad\lim_{\omega\to 0}\omega^{1-\alpha}\Fsin(\omega)\in (0,\infty).
\end{equation}
\end{proposition}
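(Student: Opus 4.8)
The plan is to split each improper integral $\Fcos(\omega)$ and $\Fsin(\omega)$ at a scale of order $1/\omega$, rescale, and show that only the contribution below that scale matters asymptotically as $\omega \to 0$. Writing $\alpha \in (0,1)$ for the exponent from Condition \eqref{cond2b}, we know $f(t) \sim t^{-\alpha}$ at infinity, so set $g(t) := t^\alpha f(t)$, which is continuous on $(0,\infty)$, bounded on $[1,\infty)$, and satisfies $g(t) \to c \in (0,\infty)$ as $t \to \infty$. Fix a large $A$ past which $f$ is decreasing. For the cosine transform, write
\begin{equation*}
\Fcos(\omega) = \int_0^A f(t)\cos(\omega t)\,\d t + \int_A^\infty f(t)\cos(\omega t)\,\d t.
\end{equation*}
The first term is bounded by $\int_0^A f(t)\,\d t < \infty$ (using $f \in L^1_{loc}$), hence is $O(1)$ and negligible after multiplying by $\omega^{1-\alpha} \to 0$. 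In the second term substitute $z = \omega t$:
\begin{equation*}
\omega^{1-\alpha}\int_A^\infty f(t)\cos(\omega t)\,\d t = \int_{\omega A}^\infty g\!\left(\tfrac{z}{\omega}\right)\frac{\cos z}{z^\alpha}\,\d z.
\end{equation*}

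First I would argue that the integrand converges pointwise, as $\omega \to 0$, to $c\,z^{-\alpha}\cos z$ for each fixed $z > 0$, since $g(z/\omega) \to c$. The target limit is then $c\int_0^\infty z^{-\alpha}\cos z\,\d z$, which is a classical convergent improper integral equal to $c\,\Gamma(1-\alpha)\cos(\pi(1-\alpha)/2) = c\,\Gamma(1-\alpha)\sin(\pi\alpha/2)$, a strictly positive number for $\alpha \in (0,1)$; the analogous sine integral gives $c\,\Gamma(1-\alpha)\cos(\pi\alpha/2) > 0$. So the arithmetic will produce exactly the two positive limits claimed in \eqref{lim:taubecos}. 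The remaining — and main — work is to justify passing the limit inside the integral despite the oscillatory, only conditionally convergent tail.

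The hard part will be the dominated-convergence / uniform-tail estimate, since $z^{-\alpha}\cos z$ is not absolutely integrable at infinity. Here I would reuse the machinery already developed in the proof of Lemma \ref{lem:fourier} and Proposition \ref{prop:Fourier.S'}: split $\int_{\omega A}^\infty = \int_{\omega A}^{R} + \int_R^\infty$ for a fixed large $R$. On the bounded piece $[\omega A, R]$, the integrand is dominated by $(\sup_{[1,\infty)} g)\, z^{-\alpha} \in L^1([0,R])$ once $\omega$ is small enough that $z/\omega \geq 1$ throughout, so the Dominated Convergence Theorem applies there directly. For the tail $\int_R^\infty g(z/\omega) z^{-\alpha}\cos z\,\d z$, note $t \mapsto g(t) t^{-\alpha} = f(t)$ is eventually decreasing, so $z \mapsto g(z/\omega) z^{-\alpha}$ is eventually monotone for small $\omega$; applying the Second Mean Value Theorem exactly as in the derivation of \eqref{ineq:fourier.1a} bounds this tail by $C\, g(R/\omega) R^{-\alpha} \leq C' R^{-\alpha}$ uniformly in small $\omega$, and likewise $c\int_R^\infty z^{-\alpha}\cos z\,\d z$ is bounded by $C'' R^{-\alpha}$. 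Letting $R \to \infty$ after $\omega \to 0$ kills both tails, and combining with the convergent bounded piece yields the limit. Multiplying through and tracking that the first $[0,A]$ term vanishes gives $\lim_{\omega \to 0}\omega^{1-\alpha}\Fcos(\omega) = c\,\Gamma(1-\alpha)\sin(\pi\alpha/2) \in (0,\infty)$, and the identical argument with $\sin$ in place of $\cos$ gives the companion statement. The only subtlety to flag is ensuring $f$ is genuinely eventually decreasing (Condition \eqref{cond1b}) so the Second Mean Value Theorem step is legitimate — this is why \eqref{cond1b} is among the hypotheses.
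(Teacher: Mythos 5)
Your proof is correct and follows essentially the same route as the paper's: rescale by $z=\omega t$, split the domain into a near piece, a bounded middle piece handled by dominated convergence, and a far tail controlled by the Second Mean Value Theorem estimate \eqref{ineq:fourier.1a}, then send the truncation parameter to infinity. The only genuine addition is that you evaluate the limiting constants explicitly as $c\,\Gamma(1-\alpha)\sin(\pi\alpha/2)$ and $c\,\Gamma(1-\alpha)\cos(\pi\alpha/2)$, whereas the paper stops at $c\int_0^\infty z^{-\alpha}\cos z\,\d z$ and cites \cite{tuck2006positivity} for its positivity; this is a nice touch but not a different method.
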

\begin{rem} Proposition \ref{prop:fourier.taube} is slightly different from Theorem 1, \cite{soni1975slowly}, in which $f$ is assumed to be finite at the origin. Our class of memory kernels need not satisfy this condition, recalling \eqref{form:Kou-kernel} for example. The technique that we use to treat the case where $K(t)$ is infinite at the origin is similar to the proof of Theorem 1.1 from \cite{inoue1995abel}.
\end{rem}
\begin{proof}[Proof of Proposition \ref{prop:fourier.taube}]
To establish \eqref{lim:taubecos}, we shall improve the proof of Theorem 1 from \cite{soni1975slowly}. Denote $c=\lim_{t\to\infty}t^\alpha f(t)$. By a change of variable, we have
\begin{eqnarray*}
\int_0^\infty f(t)\cos(\omega t)\d t &=& \int_0^\infty f\left(\frac{z}{\omega}\right)\frac{\cos(z)}{\omega}\d z.
\end{eqnarray*}
Similar to the proof of Proposition \ref{prop:Fourier.S'}(a), fixing $A$ such that $f(t)$ is decreasing on $t\in[A,\infty)$, we split the above integral in three parts
\begin{eqnarray*}
\omega^{1-\alpha}\int_0^\infty f(t)\cos(\omega t)\d t &=& \omega^{1-\alpha}\left[\int_0^\omega+\int_\omega^A+\int_A^\infty f\left(\frac{z}{\omega}\right)\frac{\cos(z)}{\omega}\d z\right]\\
&=&  \omega^{1-\alpha}\left(I_0(\omega)+I_1(\omega)+I_2(\omega)\right).
\end{eqnarray*}
For $I_0(\omega)$, changing variable again, we have
\begin{equation}\label{eqn:fourier-taube:0}
\omega^{1-\alpha}\left|I_0(\omega)\right| = \omega^{1-\alpha}\left|\int_0^1 f(t)\cos(\omega t)\d t\right|
\leq \omega^{1-\alpha}\int_0^1 f(t)\d t \overset{\omega\to 0}{\longrightarrow}0,
\end{equation}
since $f\in L^1_{loc}$. For $I_1(\omega)$, Condition \eqref{cond2b} combining with continuity implies that $t^\alpha f(t)$ is uniformly bounded on $t\in [1,\infty)$. It follows from the Dominated Convergence Theorem that
\begin{equation}\label{eqn:fourier-taube:1}
\omega^{1-\alpha} I_1(\omega) = \int_0^A 1_{[\omega,A]}(z) \left(\frac{z}{\omega}\right)^\alpha f\left(\frac{z}{\omega}\right)\frac{\cos(z)}{z^\alpha}\d z \rightarrow c\int_0^A \frac{\cos(z)}{z^\alpha}\d z,
\end{equation}
where $c=\lim_{t\to\infty}t^\alpha f(t)$. For the last term $I_2(\omega)$, we invoke \eqref{ineq:fourier.1a} again to find
\begin{equation}\label{eqn:fourier-taube:2}
\omega^{1-\alpha} I_2(\omega) 
\leq  \frac{4}{A^\alpha}\left(\frac{A}{\omega}\right)^\alpha f\left(\frac{A}{\omega}\right),
\end{equation}
which implies
\begin{equation} \label{eqn:fourier-taube:3}
\limsup_{\omega\to 0}\left| I_2(\omega) \right| \leq c\frac{4}{A^\alpha}.
\end{equation}
Since $A$ is chosen arbitrarily large, combining \eqref{eqn:fourier-taube:0}, \eqref{eqn:fourier-taube:2} and \eqref{eqn:fourier-taube:3}, we obtain
\begin{equation} \label{eqn:fourier-taube:4}
\lim_{\omega\to 0}\omega^{1-\alpha}\Fcos(\omega)=c\int_0^\infty \frac{\cos(z)}{z^\alpha}\d z.
\end{equation}
We note that $\cos(z)/z^\alpha$ satisfies $\int_0^\infty \cos(z) z^{-\alpha} \d z\in (0,\infty)$, see \cite{tuck2006positivity}. We therefore obtain the Fourier cosine limit in \eqref{lim:taubecos}. 
The Fourier sine limit is established using a similar argument.
\end{proof}

On the other hand, for the asymptotic behavior of Fourier transform, we require Assumption \ref{cond:cv}. This assumption is particularly useful in Section \ref{sec:weak-solutions:mzero-lambdazero} and Section \ref{sec:msd:mzero-lambdazero}. We first observe that if a function $f$ is convex and twice differentiable, the Fourier transform has the following representation, whose proof makes use of standard technique of integration by parts.

\begin{lemma} \label{lem4}
Suppose $f\in C^2(0,\infty)$ satisfies \eqref{cond1b}, \eqref{cond1c}. Furthermore, we assume that $f$ is convex and $\lim_{t\to 0^+}tf(t)=0$. Then for all $\omega>0$,
\begin{equation} \label{eqn:lem4.1}
\int_0^\infty f(t)\cos(t\omega)\d t  = \frac{1}{\omega^2}\int_0^\infty f''(t)\left[1-\cos(t\omega)\right]\d t.
\end{equation}
\end{lemma}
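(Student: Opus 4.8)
The plan is to integrate by parts twice on a truncated integral $\int_\epsilon^N f(t)\cos(t\omega)\,\d t$ and then send $\epsilon \to 0^+$ and $N \to \infty$, using the hypotheses to kill the boundary terms. First I would write, for $0 < \epsilon < N$,
\begin{equation*}
\int_\epsilon^N f(t)\cos(t\omega)\,\d t = \left[f(t)\frac{\sin(t\omega)}{\omega}\right]_\epsilon^N - \frac{1}{\omega}\int_\epsilon^N f'(t)\sin(t\omega)\,\d t.
\end{equation*}
Since $f(t) \to 0$ as $t \to \infty$ (condition \eqref{cond1b}) and $f(\epsilon)\sin(\epsilon\omega)/\omega \to 0$ as $\epsilon \to 0$ because $tf(t) \to 0$ forces $f(\epsilon) = o(1/\epsilon)$ hence $f(\epsilon)\sin(\epsilon\omega) = O(\epsilon\omega \cdot f(\epsilon)) \to 0$, the boundary terms vanish in the appropriate limits. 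For the second integration by parts, I would use the antiderivative $1 - \cos(t\omega)$ of $\omega\sin(t\omega)$ (rather than $-\cos(t\omega)$), which is the key trick that makes the origin boundary term manageable:
\begin{equation*}
-\frac{1}{\omega}\int_\epsilon^N f'(t)\sin(t\omega)\,\d t = \frac{1}{\omega^2}\left[f'(t)\bigl(1-\cos(t\omega)\bigr)\right]_\epsilon^N{}^{\!\!-} \cdots
\end{equation*}
wait — more carefully, $\int f'(t)\sin(t\omega)\,\d t = -f'(t)\frac{\cos(t\omega)}{\omega} + \frac{1}{\omega}\int f''(t)\cos(t\omega)\,\d t$; to handle the origin I instead write $\sin(t\omega) = \frac{1}{\omega}\frac{\d}{\d t}(1-\cos(t\omega))$ so that $\int_\epsilon^N f'(t)\sin(t\omega)\,\d t = \frac{1}{\omega}[f'(t)(1-\cos(t\omega))]_\epsilon^N - \frac{1}{\omega}\int_\epsilon^N f''(t)(1-\cos(t\omega))\,\d t$.

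The main obstacle is controlling the boundary terms involving $f'$, since $f'$ is not assumed bounded near $0$ and need not decay at a controlled rate at $\infty$. At the origin, $1 - \cos(\epsilon\omega) = O(\epsilon^2\omega^2)$, so I need $\epsilon^2 f'(\epsilon) \to 0$; this follows from convexity plus $tf(t) \to 0$: convexity gives $f'$ monotone, and a standard argument (for $t \in (\epsilon/2, \epsilon)$, $f(\epsilon/2) - f(\epsilon) \ge -f'(\epsilon)\cdot \epsilon/2 \ge 0$, so $-\epsilon f'(\epsilon)/2 \le f(\epsilon/2) \to 0$, giving $\epsilon f'(\epsilon) \to 0$, hence a fortiori $\epsilon^2 f'(\epsilon) \to 0$). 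At infinity, I would argue that convexity plus $f \ge 0$ and $f \to 0$ forces $f'(t) \to 0$; combined with $1 - \cos$ being bounded, the boundary term at $N$ contributes $\tfrac{1}{\omega}f'(N)(1-\cos(N\omega))$, which does not obviously vanish. To fix this I would instead take the limit along a subsequence $N_k \to \infty$ with $\cos(N_k\omega) \to 1$ (or, cleaner, note that $\int_0^\infty |f''(t)|(1-\cos t\omega)\,\d t = \int_0^\infty f''(t)(1-\cos t\omega)\,\d t < \infty$ by convexity — $f'' \ge 0$ — and monotone convergence, which guarantees the right-hand side of \eqref{eqn:lem4.1} converges absolutely; then the $N$-boundary term must converge, and since the left side converges by Lemma \ref{lem:fourier}, the boundary term's limit is forced to be $0$). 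Finally, assembling the two integrations by parts and passing to the limit yields $\int_0^\infty f(t)\cos(t\omega)\,\d t = \frac{1}{\omega^2}\int_0^\infty f''(t)(1-\cos(t\omega))\,\d t$, which is \eqref{eqn:lem4.1}; positivity of the right side (already implicit) reconfirms Lemma \ref{lem:fourier-positive} in this smoother setting.
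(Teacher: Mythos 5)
Your overall strategy is the same as the paper's: integrate by parts twice, using $(1-\cos(t\omega))/\omega$ as the antiderivative of $\sin(t\omega)$ so that the boundary contribution at the origin picks up an $O(t^2)$ factor. The problem is in how you close that boundary estimate. You assert that $f(\epsilon/2)\to 0$ as $\epsilon\to 0$, and from there that $\epsilon f'(\epsilon)\to 0$. Neither is implied by the hypotheses: the lemma assumes only $\lim_{t\to 0^+}tf(t)=0$, and it is designed to cover kernels with $f(0^+)=\infty$ (this is exactly the situation in Condition \ref{cond:cv-infinite}, where Lemma \ref{lem4} is applied in Proposition \ref{prop:fourier-cv}(b)). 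Take $f(t)=t^{-\sigma}$ with $\sigma\in(0,1)$: then $f(\epsilon/2)\to\infty$ and $\epsilon f'(\epsilon)=-\sigma\epsilon^{-\sigma}\to-\infty$, so the intermediate claim ``$\epsilon f'(\epsilon)\to 0$, hence a fortiori $\epsilon^2 f'(\epsilon)\to 0$'' is false as written.

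The fix is small and turns your argument into the paper's. Your convexity bound $0\leq -\tfrac{1}{2}\epsilon f'(\epsilon)\leq f(\epsilon/2)-f(\epsilon)\leq f(\epsilon/2)$ is correct; multiply through by $\epsilon$ \emph{before} taking the limit:
\begin{equation*}
0\leq -\tfrac{1}{2}\epsilon^2 f'(\epsilon)\leq \epsilon f(\epsilon/2)=2\cdot\tfrac{\epsilon}{2}f\!\left(\tfrac{\epsilon}{2}\right)\longrightarrow 0,
\end{equation*}
using the hypothesis $tf(t)\to 0$. This gives $\epsilon^2 f'(\epsilon)\to 0$ directly (the paper's equation \eqref{eqn:lem4-2} does essentially this), without the false stronger claim. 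Separately, your worry about the boundary term at infinity is misplaced: once you have argued that convexity plus $f\geq 0$ and $f\to 0$ force $f'(N)\to 0$ (which you do), the term $\omega^{-2}f'(N)\bigl(1-\cos(N\omega)\bigr)$ tends to $0$ outright because $1-\cos$ is bounded by $2$; no subsequence or monotone-convergence fallback is needed.
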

\begin{proof}
Since for all $t>0$, $f(t)$ is decreasing and $f''(t)\geq 0$ , $f'(t)$ is increasing and negative. Now integration by parts gives
\begin{equation}\label{eqn:lem4-1}
\begin{aligned}
\int_0^\infty f(t)\cos(t\omega)\d t & = f(t)\frac{\sin(t\omega)}{\omega}\bigg|_{t\to 0}^{t\to\infty}-\frac{1}{\omega}\int_0^\infty f'(t)\sin(t\omega)\d t\\
&=-\frac{1}{\omega}\int_0^\infty f'(t)\sin(t\omega)\d t,
\end{aligned}
\end{equation}
since $\lim_{t\to 0^+}f(t)\sin(t\omega)/\omega=\lim_{t\to 0^+}tf(t)\sin(t\omega)/(t\omega)=0$ by assumption on $f$. For $t>0$, integration by part once again gives
\begin{equation}\label{eqn:lem4-1a}
\begin{aligned}
-\frac{1}{\omega}\int_t^\infty f'(x)\sin(x\omega)\d x &= f'(x)\frac{\cos(x\omega)}{\omega^2}\bigg|_{x=t}^{x\to\infty}-\frac{1}{\omega^2}\int_t^\infty f''(x)\cos(x\omega)\d x\\
&=-f'(t)\frac{\cos(t\omega)}{\omega^2}-\frac{1}{\omega^2}\int_t^\infty f''(x)\cos(x\omega)\d x\\
&=-f'(t)\frac{\cos(t\omega)-1}{\omega^2}+\frac{1}{\omega^2}\int_t^\infty f''(x)\left[1-\cos(x\omega)\right]\d x.
\end{aligned}
\end{equation}
Sending $t\rightarrow 0$, we have indeed
\[\lim_{t\to 0}f'(t)\frac{\cos(t\omega)-1}{\omega^2}=\lim_{t\to 0}t^2f'(t)\frac{\cos(t\omega)-1}{(t\omega)^2}=0.\]
To see that, we use the fact that $f'<0$ and is decreasing on $t\in(0,\infty)$ to find
\begin{equation}\label{eqn:lem4-2}
0\geq t^2f'(t)=2t\int_{t/2}^{t} f'(t)\d x\geq 2t\int_{t/2}^{t} f'(x)\d x=2t\left(f(t)-f(t/2)\right)\overset{t\to 0}{\longrightarrow}0. 
\end{equation}
It follows from \eqref{eqn:lem4-1a} that
\begin{equation}\label{eqn:lem4-3}
-\frac{1}{\omega}\int_0^\infty f'(x)\sin(x\omega)\d x =
\frac{1}{\omega^2}\int_0^\infty f''(x)\left[1-\cos(x\omega)\right]\d x.
\end{equation}
Finally, \eqref{eqn:lem4.1} follows from \eqref{eqn:lem4-1} and \eqref{eqn:lem4-3}, which concludes the proof.
\end{proof}
We finally turn to asymptotic behavior of Fourier transform. The following proposition is useful in Section \ref{sec:weak-solutions:mzero-lambdazero} where there is neither mass nor viscous drag.
\begin{proposition} \label{prop:fourier-cv}
Suppose that $f(t)$ satisfies \eqref{cond1b} $+$ \eqref{cond:convex}.
\begin{enumerate}[(a)]
\item If $f(t)$ further satisfies \eqref{cond:cv-finite}. Then, 
\begin{equation} \label{lim:fourier-cv-1}
\lim_{\omega\to\infty}\omega^{2-\sigma_1}\Fcos(\omega)=0,\ \text{and}\ \lim_{\omega\to\infty}\omega\Fsin(\omega)=f(0),
\end{equation}
where $\sigma_1$ is the exponent from \eqref{cond:cv-finite}.
\item If $f(t)$ further satisfies \eqref{cond:cv-infinite}. Then, 
\begin{equation} \label{lim:fourier-cv-2}
\lim_{\omega\to\infty}\omega^{1-\sigma_2}\Fcos(\omega)\in(0,\infty),\ \text{and}\ \lim_{\omega\to\infty}\omega^{1-\sigma_2}\Fsin(\omega)\in(0,\infty),
\end{equation}
where $\sigma_2$ is the power constant from \eqref{cond:cv-infinite}.
\end{enumerate}
\end{proposition}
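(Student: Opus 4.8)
The plan is to rescale $z=t\omega$ so that the large-$\omega$ behavior of $\Fcos(\omega),\Fsin(\omega)$ becomes a statement about the behavior of $f$ near the origin, sampled on a fixed contour, and then to control this by dominated convergence near $z=0$ together with a mean-value-theorem bound on the oscillatory tail. Several structural facts will be used throughout: since $f$ is convex with $f(t)\to 0$, the function $f$ is non-increasing and $\ge 0$ on $(0,\infty)$, $f'\le 0$ is non-decreasing, $f''\ge 0$, and $f'(t)\to 0$ as $t\to\infty$; moreover $tf(t)\to 0$ as $t\to 0^+$ (in case (a) because $f(0)<\infty$, in case (b) because $f(t)\sim ct^{-\sigma_2}$ with $\sigma_2<1$), so Lemma~\ref{lem4} applies and the boundary terms at the origin in the integrations by parts below vanish — the estimate $t^2f'(t)\to 0$ proved inside Lemma~\ref{lem4} is exactly what is needed there. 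Well-definedness and positivity of the two improper transforms is furnished by Lemma~\ref{lem:fourier-positive}.

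Case (a). For the sine transform, one integration by parts using $f(0)<\infty$ gives $\omega\Fsin(\omega)=f(0)+\int_0^\infty f'(t)\cos(t\omega)\,\d t$; here $f'\in L^1(0,\infty)$ because $\int_0^1|f'|=f(0)-f(1)$ and $\int_1^\infty|f'|=f(1)$ are finite, so the Riemann--Lebesgue Lemma forces the integral to $0$ and $\omega\Fsin(\omega)\to f(0)$. For the cosine transform, one integration by parts (the first step in the proof of Lemma~\ref{lem4}) gives $\Fcos(\omega)=-\tfrac1\omega\int_0^\infty f'(t)\sin(t\omega)\,\d t$, hence, with $z=t\omega$, $\omega^{2-\sigma_1}\Fcos(\omega)=-\omega^{-\sigma_1}\int_0^\infty f'(z/\omega)\sin z\,\d z$. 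I split this at $z=1$. On $(0,1)$, $|\sin z|\le z$ bounds the piece by $\omega^{2-\sigma_1}\int_0^{1/\omega}t|f'(t)|\,\d t$, and since \eqref{cond:cv-finite} says $t^{\sigma_1}|f'(t)|\to 0$, that is, $t|f'(t)|=o(t^{1-\sigma_1})$, this is $o\big(\omega^{2-\sigma_1}\cdot\omega^{-(2-\sigma_1)}\big)=o(1)$. On $(1,\infty)$, $z\mapsto f'(z/\omega)$ is monotone with limit $0$, so the second mean value theorem (Dirichlet's test) bounds $\big|\int_1^\infty f'(z/\omega)\sin z\,\d z\big|\le 2|f'(1/\omega)|$, and $\omega^{-\sigma_1}|f'(1/\omega)|=(1/\omega)^{\sigma_1}|f'(1/\omega)|\to 0$ again by \eqref{cond:cv-finite}. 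Together these give $\omega^{2-\sigma_1}\Fcos(\omega)\to 0$.

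Case (b). Now $f(0)=\infty$ and $f'$ need not be integrable near $0$, so I work directly with $f$: with $z=t\omega$, $\omega^{1-\sigma_2}\Fcos(\omega)=\int_0^\infty\omega^{-\sigma_2}f(z/\omega)\cos z\,\d z$. Set $c=\lim_{t\to0}t^{\sigma_2}f(t)\in(0,\infty)$ from \eqref{cond:cv-infinite}. Fix $A$; for $\omega$ large and $z\in(0,A)$ one has $\omega^{-\sigma_2}f(z/\omega)\le 2cz^{-\sigma_2}\in L^1(0,A)$ (as $\sigma_2<1$) while $\omega^{-\sigma_2}f(z/\omega)\to cz^{-\sigma_2}$ pointwise, so dominated convergence yields $\int_0^A\omega^{-\sigma_2}f(z/\omega)\cos z\,\d z\to c\int_0^Az^{-\sigma_2}\cos z\,\d z$. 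For the tail, $z\mapsto f(z/\omega)$ is non-increasing to $0$, so the second mean value theorem gives $\big|\int_A^\infty f(z/\omega)\cos z\,\d z\big|\le 2f(A/\omega)$ and hence $\omega^{-\sigma_2}\big|\int_A^\infty\big|\le 4cA^{-\sigma_2}$ for $\omega$ large. Letting $A\to\infty$ squeezes $\omega^{1-\sigma_2}\Fcos(\omega)\to c\int_0^\infty z^{-\sigma_2}\cos z\,\d z=c\,\Gamma(1-\sigma_2)\sin(\tfrac{\pi\sigma_2}{2})\in(0,\infty)$ (positivity of the integral as in \cite{tuck2006positivity} and Proposition~\ref{prop:fourier.taube}); the identical argument with $\sin z$ gives $\omega^{1-\sigma_2}\Fsin(\omega)\to c\,\Gamma(1-\sigma_2)\cos(\tfrac{\pi\sigma_2}{2})\in(0,\infty)$. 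Alternatively, one can keep the $f''$-representation from Lemma~\ref{lem4} and, using that $f''$ is monotone near $0$ (part of \eqref{cond:convex}), apply a monotone-density Abelian theorem to obtain $f''(t)\sim c\,\sigma_2(\sigma_2+1)t^{-\sigma_2-2}$ near $0$; rescaling then reproduces the same constants.

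The hard part is the cosine statement in case (a): the exponent $2-\sigma_1$ is sharp and out of reach of any absolute-value bound on $f''$ or on $1-\cos(t\omega)$, so one must extract cancellation via the second mean value theorem and then carry the little-$o$ of \eqref{cond:cv-finite} faithfully through the rescaling. The recurring secondary issue is legitimizing the integrations by parts and the substitution $z=t\omega$ as improper integrals when $f$ or $f'$ is unbounded at $0$ — every boundary term at the origin disappears precisely because $tf(t)\to 0$ and $t^2f'(t)\to0$, which is why convexity (through Lemma~\ref{lem4}) is the appropriate structural hypothesis.
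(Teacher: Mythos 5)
Your proof is correct, and it takes a genuinely different route from the paper's. The paper proves both parts by passing to $f''$: it invokes Lemma~\ref{lem4} to rewrite $\Fcos(\omega)=\omega^{-2}\int_0^\infty f''(t)(1-\cos t\omega)\,\d t$, rescales, and then establishes uniform bounds on $t^{1+\sigma_1}f''(t)$ (resp.\ $t^{2+\sigma_2}f''(t)$) before applying dominated convergence; the boundedness of these quantities near the origin is exactly where the hypothesis in \eqref{cond:convex} that $K''$ be monotone near zero is consumed, and in case (b) the paper even writes it as a double L'Hospital in the wrong direction (i.e.\ deducing $\lim f''/t^{-2-\sigma_2}$ from $\lim f/t^{-\sigma_2}$), which is only legitimate because of that monotonicity via a monotone-density argument — a point you flag correctly in your ``alternative'' remark. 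Your proof stops one derivative earlier: in case (a) you integrate by parts only once to $f'$, rescale, and use the crude bound $|\sin z|\le z$ near $z=0$ together with the second mean value theorem on $[1,\infty)$, so the little-$o$ in \eqref{cond:cv-finite} is carried through directly; in case (b) you stay at the level of $f$ itself, using dominated convergence on $(0,A)$ (with the dominating function $2cz^{-\sigma_2}$ furnished by the asymptotic $t^{\sigma_2}f(t)\to c$) and a second-mean-value tail estimate, then squeeze by letting $A\to\infty$. This buys two things: it is more elementary, and it shows that the monotonicity of $f''$ near the origin is not actually used for this Abelian proposition — convexity plus the decay/asymptotic hypotheses on $f$ and $f'$ already suffice. (The monotonicity hypothesis is still used elsewhere in the paper, e.g.\ in the proof of Lemma~\ref{lem4}'s companion estimates; your proof does not make it superfluous globally, only for Proposition~\ref{prop:fourier-cv}.) Your bookkeeping checks out: $\omega^{2-\sigma_1}\Fcos(\omega)=-\omega^{-\sigma_1}\int_0^\infty f'(z/\omega)\sin z\,\d z$, the inner piece becomes $\omega^{2-\sigma_1}\int_0^{1/\omega} t|f'(t)|\,\d t=o(1)$ because $t|f'(t)|=o(t^{1-\sigma_1})$, and the tail is $\le 2\omega^{-\sigma_1}|f'(1/\omega)|\to 0$; similarly, $f'\in L^1(0,\infty)$ in case (a) follows from $\int_0^1|f'|=f(0)-f(1)$ and $\int_1^\infty|f'|=f(1)$, so the Riemann--Lebesgue step for $\Fsin$ is legitimate.
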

\begin{proof} (a) Since $f(t)$ is convex on $t\in(0,\infty)$, it follows from Lemma \ref{lem4} that
		\begin{equation}\label{eqn:fourier-cv-1}
		\Fcos(\omega)=\frac{1}{\omega^2}\int_0^\infty f''(t)\left(1-\cos(t\omega)\right)\d t.
		\end{equation}
		By changing of variable $z=t\omega$, \eqref{eqn:fourier-cv-1} is equivalent to
		\begin{equation}\label{eqn:fourier-cv-2}
		\omega^{2-\sigma_1}\Fcos(\omega)=\int_0^\infty \left(\frac{z}{\omega}\right)^{1+\sigma_1}f''\left(\frac{z}{\omega}\right)\frac{1-\cos(z)}{z^{1+\sigma_1}}\d z.
		\end{equation}
		We aim to use the Dominated Convergence Theorem on the RHS above. Indeed, the integrand is dominated by $\frac{1-\cos(z)}{z^{1+\sigma_1}}$, which is integrable. To see that, we claim that $t^{1+\sigma_1}f''(t)$ is uniformly bounded on $t\in(0,\infty)$. The only concerns are when $t$ is near zero and when $t$ is large. On one hand, notice that $f''$ is monotone near the origin by condition \eqref{cond:convex}. We write
		\begin{equation*} -t^{\sigma_1} f'(t) =t^{\sigma_1}\int_t^{2t}f''(s)\d s-t^{\sigma_1} f'(2t)\geq t^{1+\sigma_1}f''(t)-t^{\sigma_1} f'(2t),
		\end{equation*}
where we have assumed $f''(t)$ is increasing near the origin. By shrinking $t$ to zero, we obtain $t^{1+\sigma_1} f''(t)\rightarrow 0$. Similar estimate also applies if we assume $f''(t)$ is decreasing, namely
\begin{equation*} -t^{\sigma_1} f'(t) =t^{\sigma_1}\int_t^{2t}f''(s)\d s-t^{\sigma_1} f'(2t)\geq t^{1+\sigma_1}f''(2t)-t^{\sigma_1} f'(2t).
		\end{equation*}
On the other hand, as $t\to\infty$, $f(t)/t^{1-\sigma_1}\rightarrow 0$. We employ the same trick to see that
		\begin{equation*}
		-\frac{f(t)}{t^{1-\sigma_1}}=\frac{\int_1^t-f'(s)\d s}{t^{1-\sigma_1}}-\frac{f(1)}{t^{1-\sigma_1}}\geq -\frac{t-1}{t^{1-\sigma_1}}f'(t)-\frac{f(1)}{t^{1-\sigma_1}},
		\end{equation*} 
		since $-f'(t)$ is increasing on the positive half line. By taking $t\to\infty$, we obtain $t^{\sigma_1} f'(t)\rightarrow 0$. L'Hospital Rule then implies
		\begin{equation*}
		\lim_{t\to\infty}\frac{f''(t)}{-\sigma_1 t^{-1-\sigma_1}}=\lim_{t\to\infty}\frac{f'(t)}{t^{-\sigma_1}}=0.
		\end{equation*}
		Now, from \eqref{eqn:fourier-cv-2}, sending $\omega$ to infinity, it follows from the Dominated Convergence Theorem that
		\begin{equation} \label{eqn:fourier-cv-3}
		\lim_{\omega\to\infty}\omega^{2-\sigma_1}\Fcos(\omega)=0
		\end{equation}
		For the Fourier sine transform, we integrate by parts to find
		\begin{equation}\label{eqn:fourier-cv-4}
		\Fsin(\omega)=\int_0^\infty f(t)\sin(t\omega)\d t = \frac{f(0)}{\omega}+\int_0^\infty f'(t)\frac{\cos(t\omega)}{\omega}\d t.
		\end{equation}
		Multiplying through by $\omega$, we obtain
		\begin{equation}\label{eqn:fourier-cv-5}
		\omega\Fsin(\omega) = f(0)+\int_0^\infty f'(t)\cos(t\omega)\d t.
		\end{equation}
		It suffices to show that $\lim_{\omega\to\infty}\int_0^\infty f'(t)\cos(t\omega)\d t=0$. This in turn follows immediately from  Lemma \ref{lem:fourier} since $f'\in L^1(0,\infty)$ and $-f'(t)\downarrow 0$ as $t\to\infty$.
		
(b) We use \eqref{eqn:fourier-cv-2} again to write further
		\begin{equation}\label{eqn:fourier-cv-6}
		\begin{aligned}
		\omega^{1-\sigma_2}\Fcos(\omega)&=\int_0^\infty \left(\frac{z}{\omega}\right)^{2+\sigma_2}f''\left(\frac{z}{\omega}\right)\frac{1-\cos(z)}{z^{2+\sigma_2}}\d z\\
		&=\int_0^\omega+\int_\omega^\infty \left(\frac{z}{\omega}\right)^{2+\sigma_2}f''\left(\frac{z}{\omega}\right)\frac{1-\cos(z)}{z^{2+\sigma_2}}\d z\\
		&=I_1(\omega)+I_2(\omega).
		\end{aligned}
		\end{equation}
		We first claim that $\lim_{\omega\to \infty}I_2(\omega)=0$. Indeed, by changing variable again $t=z/\omega$, we have
		\begin{equation}\label{eqn:fourier-cv-7}
		\begin{aligned}
		I_2(\omega)&=\frac{1}{\omega^{1+\sigma_2}}\int_1^\infty f''(t)\left(1-\cos(t\omega)\right)\d t\\
		&\leq\frac{2}{\omega^{1+\sigma_2}}\int_1^\infty f''(t)\d t  =\frac{-2f'(1)}{\omega^{1+\sigma_2}}\to 0.			
		\end{aligned}	
		\end{equation}
		For $I_1(\omega)$, we write 
\begin{equation}\label{eqn:fourier-cv-8}
I_1(\omega)=\int_0^\infty 1_{[0,\omega]}(z) \left(\frac{z}{\omega}\right)^{2+\sigma_2}f''\left(\frac{z}{\omega}\right)\frac{1-\cos(z)}{z^{2+\sigma_2}}\d z.
\end{equation}		
We wish to obtain from the Dominated Convergence Theorem that
\begin{equation}\label{eqn:fourier-cv-9}
\lim_{\omega\to\infty}I_1(\omega)=\int_0^\infty\frac{1-\cos(z)}{z^{2+\sigma_2}}\d z\times\lim_{t\to 0} t^{2+\sigma_2}f''\left(t\right)\in(0,\infty)
\end{equation}		
To that end, we	claim that $\lim_{t\to 0}t^{2+\sigma_2}f''(t)\in(0,\infty)$ and that $t^{2+\sigma_2}f''(t)$ is uniformly bounded on $t\in(0,1]$. The latter follows immediately from the former claim and the fact that $t^{2+\sigma_2}f''(t)$ is continuous. By condition \eqref{cond:cv-infinite}, $f(0^+)=\infty$. We apply L'Hospital Rule twice to see that
\begin{equation} \label{eqn:fourier-cv-9a}
\lim_{t\to 0}\frac{f(t)}{t^{-\sigma_2}}=\lim_{t\to 0}\frac{-f'(t)}{t^{-1-\sigma_2}}=\lim_{t\to 0}\frac{f''(t)}{t^{-2-\sigma_2}}\in(0,\infty).
\end{equation}
Thus, the integrand in \eqref{eqn:fourier-cv-8} is dominated by 
\[\sup_{t\in(0,1]}t^{2+\sigma_2}f''(t) \,\frac{1-\cos(z)}{z^{2+\sigma_2}},\]
as a function of $z$, which is integrable. We thus have shown \eqref{eqn:fourier-cv-9}. We finally combine \eqref{eqn:fourier-cv-7}, \eqref{eqn:fourier-cv-9} with \eqref{eqn:fourier-cv-6} to obtain
\begin{equation}\label{eqn:fourier-cv-10}
\lim_{\omega\to\infty}\omega^{1-\sigma_2}\Fcos(\omega)\in(0,\infty).
\end{equation}
For the Fourier sine transform, we integrate by part to obtain
\begin{equation}\label{eqn:fourier-cv-11}
\Fsin(\omega)=\frac{1}{\omega}\int_0^\infty-f'(t)\left(1-\cos(t\omega)\right),
\end{equation}
implying
\begin{equation}\label{eqn:fourier-cv-12}
\begin{aligned}
\omega^{1-\sigma_2}\Fsin(\omega)&=\int_0^\infty-\left
(\frac{z}{\omega}\right)^{1+\sigma_2}f'\left(\frac{z}{\omega}\right)\left(\frac{1-\cos(z)}{z^{1+\sigma_2}}\right)\d z\\
&=\int_0^\omega+\int_\omega^\infty-\left
(\frac{z}{\omega}\right)^{1+\sigma_2}f'\left(\frac{z}{\omega}\right)\left(\frac{1-\cos(z)}{z^{1+\sigma_2}}\right)\d z\\
&=I_3(\omega)+I_4(\omega).
\end{aligned}
\end{equation}
For $I_4(\omega)$, similar to \eqref{eqn:fourier-cv-7}, we have the chain of implications
\begin{equation}\label{eqn:fourier-cv-13}
\begin{aligned}
I_4(\omega)&=\frac{1}{\omega^{\sigma_2}}\int_1^\infty-f'(t)\left(1-\cos(t\omega)\right)\d t \leq \frac{1}{\omega^{\sigma_2}}\int_1^\infty-f'(t)\d t= \frac{f(1)}{\omega^{\sigma_2}}\rightarrow 0.	
\end{aligned}
\end{equation}
For $I_3(\omega)$, similar to the argument used to establish \eqref{eqn:fourier-cv-9}, we observe that $\lim_{t\to 0^+}t^{1+\sigma_2}f'(t)\in(0,\infty)$ thanks to \eqref{eqn:fourier-cv-9a} and that $t^{1+\sigma_2}f'(t)$ is bounded on $t\in(0,1]$ thanks to continuity. The Dominated Convergence Theorem then implies
\begin{equation}\label{eqn:fourier-cv-14}
\lim_{\omega\to\infty}\omega^{1-\sigma_2}\Fsin(\omega)\in(0,\infty).
\end{equation}
The proof is complete.
\end{proof}

\section{Weak solutions for the Generalized Langevin Equation}
\label{sec:weak-solutions}

In order to define our notion of weak solutions for the GLE, we multiply \eqref{eq:gle} through by a test function $\f \in \Sc$ and integrate over the real line with respect to time. Formally, if we integrate by parts on the left-hand side and perform a change-of-variables in the convolution term, we arrive at the integral equation  
\begin{displaymath}
\begin{aligned}
	- m\int_\rbb V(t)\varphi'(t)\d t &= - \lambda \int_{\rbb} V(t) \f(t) \d t - \beta \int_\rbb V(t)\int_\rbb K^+(u)\varphi(t+u)\d u\d t \\
	& \, \quad + \sqrt{\beta} \int_\rbb F(t)\varphi(t)\d t + \sqrt{2\lambda} \int_{\rbb} \f(t) \d W(t),
\end{aligned}
\end{displaymath}
where we have introduced the notation $K^+\!(t) := K(t) \, 1_{\{t\geq 0\}}$. If we understand $V$, $F$, and the white noise process $\dot W$ as stationary random distributions in the sense of Section \ref{sec:prelim}, then we can write the GLE in its weak form
\begin{equation} \label{eq:gle-weak}
\langle V,-m\varphi'+\lambda\varphi+\beta\widetilde{K^+*\widetilde{\varphi}}\rangle = \sqrt{2\lambda}\langle \dot W,\varphi\rangle+\sqrt{\beta}\langle F,\varphi\rangle,
\end{equation}
where $\widetilde{f}(x) := f(-x)$. In this setting, the stationary random distributions $\dot W$ and $F$ are defined in terms of their covariance structures:
\begin{displaymath}
\E{\langle \dot W,\varphi_1\rangle \langle \dot W,\varphi_2\rangle}=\int_\rbb \varphi_1(t)\varphi_2(t)\d t, \, \, \E{\langle F,\varphi_1\rangle\overline{\langle F,\varphi_2\rangle}} = \int_\rbb K(t)\left( \varphi_1*\widetilde{\varphi}_2 \right)(t)\d t.
\end{displaymath}
In other words, the spectral measure of $\dot W$ is Lebesgue measure and the spectral measure of $F$ is $\widehat{K}(\d \omega)$. In fact, we showed in Section \ref{sec:prelim:improper} that $F$ has a spectral density, $2 \Kcos(\omega)$. See Corollary \ref{cor:covariance-dis} in particular.




\begin{definition} \label{def:weak-solution}
Let $\nu$ be a non-negative measure satisfying condition \eqref{ineq:spectral-measure} and $V$ be the operator associated with $\nu$ defined in \eqref{eqn:V.op}. Then $V$ is a  \textit{weak solution} for Equation \eqref{eq:gle-weak} if $V$ satisfies the following conditions.
\begin{enumerate}[(a)]
\item For all $\varphi\in\Sc$, $K^+*\varphi$ belongs to $\text{Dom}(V)$.
\item For any $\varphi,\psi\in \Sc$, it holds that
\begin{multline*}
\E{\langle V,-m\varphi'+\lambda\varphi+\beta\widetilde{K^+*\widetilde{\varphi}}\rangle\overline{\langle V,-m\psi'+\lambda\psi+\beta\widetilde{K^+*\widetilde{\psi}}\rangle}}\\ =\E{\langle \sqrt{2\lambda} \dot W+\sqrt{\beta}F,\varphi\rangle \overline{\langle \sqrt{2\lambda} \dot W+\sqrt{\beta}F,\psi\rangle}}.\end{multline*}
\end{enumerate}
\end{definition}

The proof that weak solutions exist is sensitive what is assumed about the parameters $m$ and $\lambda$.  We start with the most delicate proof, which is in the case $(m > 0, \lambda = 0)$. The cases $(m > 0, \lambda > 0)$ and $(m = 0, \lambda > 0)$ follow a similar argument (Sections \ref{sec:weak-solutions:mpos-lambdapos} and \ref{sec:weak-solutions:mzero-lambdapos}). The case $(m=0, \lambda = 0)$ requires further assumptions about the memory kernel and we handle this case in Section \ref{sec:weak-solutions:mzero-lambdazero}.

\subsection{Weak solutions when $m > 0$ but $\lambda = 0$}

We begin by introducing the function $\widehat{r}$ in the following Lemma.

\begin{lemma}\label{lemma:r-hat-L1}
Let $K$ satisfy Assumption \ref{a:K}. Denote
\begin{equation} \label{eqn:spectral-density}
\widehat{r}(\omega) :=\frac{\widehat{K}(\omega)}{2\pi\lvert mi\omega+\widehat{K^+}(\omega)\rvert^2}.
\end{equation}
Then $\widehat{r}$ belongs to $L^1(\rbb)$.
\end{lemma}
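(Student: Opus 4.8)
The plan is to write $\widehat r$ explicitly in terms of the improper Fourier cosine and sine transforms of $K$, and then to bound $\widehat r$ separately on three regions: a punctured neighbourhood of the origin, a compact interval bounded away from the origin, and a neighbourhood of infinity. Parity of $\widehat r$ lets me reduce everything to $\omega>0$. Since $K$ satisfies Assumption~\ref{a:K}, Proposition~\ref{prop:Fourier.S'} gives $\widehat K(\omega)=2\Fcos(\omega)$ and $\widehat{K^+}(\omega)=\Fcos(\omega)-i\Fsin(\omega)$ for $\omega\neq 0$, so that $mi\omega+\widehat{K^+}(\omega)=\Fcos(\omega)+i(m\omega-\Fsin(\omega))$ and
\begin{equation*}
\widehat r(\omega)=\frac{\Fcos(\omega)}{\pi\big[\Fcos(\omega)^2+\big(m\omega-\Fsin(\omega)\big)^2\big]}.
\end{equation*}
By Lemma~\ref{lem:fourier}, $\Fcos$ and $\Fsin$ are continuous on $(0,\infty)$, and by Condition~\eqref{cond1d} $\Fcos(\omega)>0$ there; hence $\widehat r$ is continuous, positive and even, and in particular bounded on every compact subinterval of $(0,\infty)$. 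It therefore only remains to control $\omega\to\infty$ and $\omega\to0^+$.

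For large $\omega$ I would use $m>0$ crucially. By Lemma~\ref{lem:fourier}, $\Fcos(\omega)\to0$ and $\Fsin(\omega)\to0$ as $\omega\to\infty$, so for $\omega$ beyond some $\omega_0$ we have $|\Fsin(\omega)|\le1$ and $m\omega\ge2$, giving $|m\omega-\Fsin(\omega)|\ge m\omega/2$ and hence $\widehat r(\omega)\le 4/(\pi m^2\omega^2)$ on $[\omega_0,\infty)$, which is integrable there.

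For small $\omega$ there are two cases according to whether $K$ lies in $L^1$. If $K\in L^1$ (Condition~\eqref{cond2a}), dominated convergence gives $\Fcos(\omega)\to\int_0^\infty K(t)\,\d t>0$ and $\Fsin(\omega)\to0$, so $\widehat r(\omega)$ tends to a finite limit and is bounded near $0$. If instead $K\notin L^1$ and $K(t)\sim t^{-\alpha}$ (Condition~\eqref{cond2b}), Proposition~\ref{prop:fourier.taube} supplies constants $c_1,c_2\in(0,\infty)$ with $\omega^{1-\alpha}\Fcos(\omega)\to c_1$ and $\omega^{1-\alpha}\Fsin(\omega)\to c_2$; since $m\omega^{2-\alpha}\to0$, multiplying numerator and denominator of $\widehat r$ by $\omega^{2-2\alpha}$ shows $\widehat r(\omega)\sim \frac{c_1}{\pi(c_1^2+c_2^2)}\,\omega^{1-\alpha}\to0$ as $\omega\to0^+$. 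In either case $\widehat r$ is bounded on some $(0,\delta]$. Splitting $\int_0^\infty\widehat r=\int_0^\delta+\int_\delta^{\omega_0}+\int_{\omega_0}^\infty$, the first two pieces are finite by boundedness and the last by the $O(\omega^{-2})$ bound; evenness of $\widehat r$ then gives $\widehat r\in L^1(\rbb)$.

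I expect the only genuinely non-routine point to be the small-$\omega$ analysis in the non-integrable case~\eqref{cond2b}: there both the numerator and the denominator of $\widehat r$ blow up, and the conclusion hinges on the Abelian theorem (Proposition~\ref{prop:fourier.taube}) pinning down the two rates precisely enough that the quotient can be seen to vanish like $\omega^{1-\alpha}$. The large-$\omega$ estimate is elementary, but it is exactly the step where positivity of $m$ is indispensable.
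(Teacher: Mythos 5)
Your proof is correct and follows essentially the same route as the paper's: rewrite $\widehat r$ via $\Fcos$ and $\Fsin$, use evenness and continuity from Lemma~\ref{lem:fourier}, handle $\omega\to\infty$ by the $m^2\omega^2$ growth of the denominator, and handle $\omega\to 0^+$ by splitting into the $K\in L^1$ and $K\notin L^1$ cases, invoking Proposition~\ref{prop:fourier.taube} in the latter. The only difference is cosmetic: you extract the precise rate $\widehat r(\omega)\sim\omega^{1-\alpha}$ near the origin, whereas the paper only records that $\Kcos,\Ksin\to\infty$ forces $\widehat r(\omega)\to 0$, which already suffices for local integrability.
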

\begin{proof}
We can rewrite the formula for $\widehat{r}(\omega)$ as
\begin{equation} \label{r.hat}
\widehat{r}(\omega) = \frac{1}{2\pi}\times\frac{2\Kcos(\omega)}
{\left[\Kcos(\omega) \right]^2
+\left[m\omega- \Ksin(\omega) \right]^2}.
\end{equation}
Observing that $\widehat{r}$ is even, we only need to consider $\omega\in [0,\infty)$. By Lemma \ref{lem:fourier}, $\widehat{r}(\omega)$ is continuous on $(0,\infty)$. If $K\in L^1$, then
\begin{displaymath} 
\lim_{\omega\to 0}\widehat{r}(\omega)=\frac{1}{\pi\int_0^\infty K(t)\d t}<\infty.	
\end{displaymath}
If $K$ is not in $L^1$, but satisfies \eqref{cond2b}, then Proposition \ref{prop:fourier.taube} implies $\lim_{\omega\to 0}\Kcos(\omega)=\lim_{\omega\to 0}\Ksin(\omega)=\infty$. It follows from \eqref{r.hat} that $\lim_{\omega\to 0}\widehat{r}(\omega)=0$. We see that in both cases, $\widehat{r}$ is  locally integrable around zero. Now as $\omega$ tends to infinity, by Lemma \ref{lem:fourier}, the numerator tends to 0 whereas the denominator is approximately $m^2\omega^2$, which implies that $\widehat{r}$ is integrable at infinity. We therefore conclude that $\widehat{r}$ belongs to $L^1(\rbb)$.
\end{proof}

Lemma \ref{lemma:r-hat-L1} implies that $\nu(\d\omega)=\widehat{r}(\omega)\d\omega$ satisfies \eqref{ineq:spectral-measure} with $k=0$. In view of Lemma \ref{lem:V-op-defined}, $\widehat{r}$ is the spectral density of some operator $V$ defined as in \eqref{eqn:V.op}. The following Theorem asserts that $V$ is indeed the weak solution of \eqref{eq:gle-weak}.

\begin{theorem} \label{thm5} Suppose that $m>0$ and $\lambda=0$ in \eqref{eq:gle-weak}. Let $K(t)$ satisfy Assumption \ref{a:K}. Then $V$ is a weak solution for \eqref{eq:gle-weak} if and only if the spectral measure $\nu$ satisfies $\nu(\d\omega)=\widehat{r}(\omega)\d\omega$ where $\widehat{r}$ is defined as in \eqref{eqn:spectral-density}.
\end{theorem}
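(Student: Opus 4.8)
Throughout, I would work in the spectral representation of $V$: by \eqref{eqn:V.op}, $\langle V,g\rangle=\int_\rbb\overline{\F{g}(\omega)}\,\xi(\d\omega)$, so that $\E{\langle V,g_1\rangle\overline{\langle V,g_2\rangle}}=\int_\rbb\overline{\F{g_1}(\omega)}\,\F{g_2}(\omega)\,\nu(\d\omega)$ for $g_1,g_2\in\text{Dom}(V)$. With $\lambda=0$ (and taking $\beta=1$ for notational simplicity, as in \eqref{eqn:spectral-density}), the argument of $V$ on the left of \eqref{eq:gle-weak} is $g_\varphi:=-m\varphi'+\widetilde{K^+*\widetilde\varphi}$, and the right side is $\langle F,\varphi\rangle$. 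Using $\widehat{\varphi'}(\omega)=i\omega\widehat\varphi(\omega)$ together with Proposition \ref{prop:Fourier.S'} \eqref{prop:Fourier.S':b} (which gives $\F{K^+*\widetilde\varphi}=\widehat{K^+}\widehat{\widetilde\varphi}$ with $\widehat{K^+}=\Kcos-i\Ksin$, hence $\F{\widetilde{K^+*\widetilde\varphi}}(\omega)=\widehat{K^+}(-\omega)\widehat\varphi(\omega)=\overline{\widehat{K^+}(\omega)}\,\widehat\varphi(\omega)$ since $K^+$ is real), one obtains the symbol identity $\F{g_\varphi}(\omega)=\overline{\bigl(mi\omega+\widehat{K^+}(\omega)\bigr)}\,\widehat\varphi(\omega)$, so $\bigl|\F{g_\varphi}(\omega)\bigr|=\bigl|mi\omega+\widehat{K^+}(\omega)\bigr|\,\bigl|\widehat\varphi(\omega)\bigr|$. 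Recall finally (Corollary \ref{cor:covariance-dis}) that $F$ has spectral density $\widehat K=2\Kcos$, i.e.\ $\E{\langle F,\varphi\rangle\overline{\langle F,\psi\rangle}}=\int_\rbb\overline{\widehat\varphi}\,\widehat\psi\,\widehat K\,\d\omega$ up to the normalizing constant, and that $\bigl|mi\omega+\widehat{K^+}(\omega)\bigr|^2=[\Kcos(\omega)]^2+[m\omega-\Ksin(\omega)]^2$ is precisely the denominator appearing in \eqref{r.hat}.

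For the ``if'' direction, take $\nu(\d\omega)=\widehat r(\omega)\,\d\omega$; one must verify the two conditions of Definition \ref{def:weak-solution}. Condition (a) requires $\widehat{K^+}\widehat\varphi\in L^2(\widehat r\,\d\omega)$ for every $\varphi\in\Sc$, which follows from a direct estimate of $|\widehat{K^+}(\omega)|^2\widehat r(\omega)$ using Lemma \ref{lem:fourier} near infinity (there $\widehat{K^+}$ is bounded and $\widehat r(\omega)=O(1/\omega^2)$) and, in the non-integrable case \eqref{cond2b}, Proposition \ref{prop:fourier.taube} near the origin (there $|\widehat{K^+}(\omega)|^2\sim\omega^{2(\alpha-1)}$ but $\widehat r(\omega)\sim\omega^{1-\alpha}$, so the product $\sim\omega^{\alpha-1}$ stays integrable), with the rapidly decaying $|\widehat\varphi|^2$ absorbing the rest; the same estimate with $|mi\omega+\widehat{K^+}(\omega)|^2$ in place of $|\widehat{K^+}(\omega)|^2$ shows $g_\varphi\in\text{Dom}(V)$ because $\int_\rbb|mi\omega+\widehat{K^+}(\omega)|^2|\widehat\varphi(\omega)|^2\widehat r(\omega)\,\d\omega=\frac{1}{2\pi}\int_\rbb\widehat K(\omega)|\widehat\varphi(\omega)|^2\,\d\omega<\infty$. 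For condition (b), substitute the symbol identity into the covariance formula:
\begin{equation*}
\E{\langle V,g_\varphi\rangle\overline{\langle V,g_\psi\rangle}}=\int_\rbb\bigl|mi\omega+\widehat{K^+}(\omega)\bigr|^2\,\overline{\widehat\varphi(\omega)}\,\widehat\psi(\omega)\,\widehat r(\omega)\,\d\omega=\int_\rbb\frac{\widehat K(\omega)}{2\pi}\,\overline{\widehat\varphi(\omega)}\,\widehat\psi(\omega)\,\d\omega,
\end{equation*}
where the last step is exactly the definition \eqref{eqn:spectral-density} of $\widehat r$; the right-hand side is (the paper's normalization of) the covariance pairing of $F$, so (b) holds and $V$ is a weak solution.

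For the ``only if'' direction, suppose $V$, with spectral measure $\nu$ satisfying \eqref{ineq:spectral-measure}, is a weak solution; the goal is to recover the formula for $\nu$. Then (b) says $\int_\rbb|mi\omega+\widehat{K^+}(\omega)|^2\,\overline{\widehat\varphi(\omega)}\,\widehat\psi(\omega)\,\nu(\d\omega)=\frac{1}{2\pi}\int_\rbb\widehat K(\omega)\,\overline{\widehat\varphi(\omega)}\,\widehat\psi(\omega)\,\d\omega$ for all $\varphi,\psi\in\Sc$. Since the Fourier transform is a bijection of $\Sc$, taking $\varphi=\psi$ gives $\int_\rbb|mi\omega+\widehat{K^+}(\omega)|^2|\phi|^2\,\nu(\d\omega)=\frac{1}{2\pi}\int_\rbb\widehat K|\phi|^2\,\d\omega$ for every $\phi\in\Sc$; as $\{|\phi|^2:\phi\in\Sc\}$ approximates indicators of bounded intervals (dominated convergence then extends the identity to all Borel sets), this forces the equality of measures $|mi\omega+\widehat{K^+}(\omega)|^2\,\nu(\d\omega)=\frac{1}{2\pi}\widehat K(\omega)\,\d\omega$. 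By Condition \eqref{cond1d}, $|mi\omega+\widehat{K^+}(\omega)|^2=[\Kcos(\omega)]^2+[m\omega-\Ksin(\omega)]^2\geq[\Kcos(\omega)]^2>0$ for every $\omega\neq0$, so division gives $\nu(\d\omega)=\widehat r(\omega)\,\d\omega$ on $\rbb\setminus\{0\}$; and since the coefficient $|\widehat{K^+}(0)|^2=\bigl(\int_0^\infty K\bigr)^2$ is positive while $\widehat K\,\d\omega$ carries no mass at $\{0\}$, we must have $\nu(\{0\})=0$. Hence $\nu(\d\omega)=\widehat r(\omega)\,\d\omega$.

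The step I expect to be the main obstacle is the uniqueness argument in the ``only if'' direction: extracting the genuine equality of the (possibly infinite-mass, only polynomially controlled) measures $|mi\omega+\widehat{K^+}|^2\nu$ and $\frac{1}{2\pi}\widehat K\,\d\omega$ from identities that test $\nu$ only against products $\overline{\widehat\varphi}\,\widehat\psi$ of Fourier transforms of Schwartz functions, and then inverting the symbol — which is legitimate precisely because Condition \eqref{cond1d} gives $\Kcos>0$ away from the origin. A secondary technical point is verifying $\text{Dom}(V)$-membership of $K^+*\varphi$ (and of $g_\varphi$) in the non-integrable case \eqref{cond2b}, where both $\widehat{K^+}$ and $\widehat r$ are singular at $\omega=0$ and the precise rates of Proposition \ref{prop:fourier.taube} are needed to see that the relevant products remain integrable.
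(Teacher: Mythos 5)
Your argument is correct and follows essentially the same route as the paper: in both directions you reduce to the spectral-domain identity $\lvert im\omega+\widehat{K^+}(\omega)\rvert^2\nu(\d\omega)=\tfrac{1}{2\pi}\widehat K(\omega)\,\d\omega$, establish $\mathrm{Dom}(V)$-membership of $K^+*\varphi$ via the near-origin rate $\omega^{\alpha-1}$ from Proposition \ref{prop:fourier.taube} together with the near-infinity decay of $\widehat r$, and recover absolute continuity of $\nu$ by testing against an approximating sequence of Schwartz functions (the paper uses two sequences $\varphi_k\uparrow 1_{[a,b]}$, $\psi_k\uparrow 1$ with monotone convergence, you take $\varphi=\psi$, which is an equivalent variant). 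One small imprecision: when $K\notin L^1$ one has $\widehat{K^+}(0)=\int_0^\infty K=+\infty$, so the aside $\lvert\widehat{K^+}(0)\rvert^2=(\int_0^\infty K)^2$ ``is positive'' should read ``positive or infinite''; the conclusion $\nu(\{0\})=0$ still follows because otherwise the left-hand measure would assign infinite mass to a neighbourhood of the origin while the right-hand side is locally finite.
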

\begin{proof} ($\Rightarrow$) Suppose $V$ is a weak solution for \eqref{eq:gle-weak}. By Proposition \ref{prop:Fourier.S'}\eqref{prop:Fourier.S':b},
\begin{displaymath}
\F{-m\varphi'+\widetilde{K^+*\widetilde{\varphi}}}=\F{-m\varphi'}+\overline{\F{K^+*\widetilde{\varphi}} } =\overline{im\omega}\widehat{\varphi}+ \overline{\widehat{K^+}\overline{\widehat{\varphi}}}=\overline{im\omega+\widehat{K^+}}\widehat{\varphi}.	
\end{displaymath}
We thus have that
\begin{align*}
\MoveEqLeft[5] \E{ \langle V,-m\varphi'+\widetilde{K^+*\widetilde{\varphi}}\rangle \overline{ \langle V,-m\psi'+\widetilde{K^+*\widetilde{\psi}}\rangle}}\\
&= \int_\rbb \left(im\omega+\widehat{K^+}(\omega)\right)\overline{\widehat{\varphi}}(\omega)\overline{im\omega+\widehat{K^+}(\omega)}\widehat{\psi}(\omega)\nu(\d\omega)\\
&=\int_\rbb \overline{\widehat{\varphi}}(\omega)\widehat{\psi}(\omega)\left|im\omega+\widehat{K^+}(\omega)\right|^2\nu(\d\omega).
\end{align*}
One the other hand, by Proposition \ref{prop:Fourier.S'}\eqref{prop:Fourier.S':a},
\begin{equation*}
\begin{aligned}
\int_\rbb K(t)\left( \varphi*\widetilde{\psi} \right)(t)\d t=\int_\rbb \widehat{K}(\omega)\frac{\overline{\widehat{\varphi}}(\omega)\widehat{\psi}(\omega)}{2\pi}\d\omega.
\end{aligned}
\end{equation*}
Since $V$ is a weak solution, we obtain
\begin{displaymath}
\int_\rbb \overline{\widehat{\varphi}}(\omega)\widehat{\psi}(\omega)\left|im\omega+\widehat{K^+}(\omega)\right|^2\nu(\d\omega)=\int_\rbb \widehat{K}(\omega)\frac{\overline{\widehat{\varphi}}(\omega)\widehat{\psi}(\omega)}{2\pi}\d\omega.
\end{displaymath}
Since all functions in $\Sc$ are the Fourier transform of some other Schwartz functions, we can rewrite the above formula as 
\begin{displaymath}
\int_\rbb \varphi(\omega)\psi(\omega)\left|im\omega+\widehat{K^+}(\omega)\right|^2\nu(\d\omega)=\int_\rbb \widehat{K}(\omega)\frac{\varphi(\omega)\psi(\omega)}{2\pi}\d\omega.
\end{displaymath}

Now we can choose $\left\{\varphi_k\right\}_{k\geq 1} \subset \Sc$, $\left\{\psi_k\right\}_{k\geq 1} \subset \Sc$ to be non-negative and respectively increasing up to $1_{[a,b]}$ and $1$. The Monotone Convergence Theorem then implies
\begin{displaymath}
\int_a^b\left|im\omega+\widehat{K^+}(\omega)\right|^2\nu(\d\omega)=\int_a^b \frac{\widehat{K}(\omega)}{2\pi}\d\omega.
\end{displaymath}
Since the equation above holds for any $-\infty< a<b<\infty$, we conclude that $\nu$ admits the Radon-Nykodim derivative $\nu(\d\omega)=\widehat{r}(\omega)\d\omega$.

($\Leftarrow$) Suppose $\nu(\d\omega)=\widehat{r}(\omega)$. To check the first condition of Definition \ref{def:weak-solution}, in view of Proposition \ref{prop:Fourier.S'}\eqref{prop:Fourier.S':b}, it suffices to show that 
\begin{displaymath}
\int_\rbb\left|\F{K^+}(\omega)\widehat{\phi}(\omega)\right|^2\widehat{r}(\omega)\d\omega=\int_\rbb\left|\widehat{K^+}(\omega)\widehat{\phi}(\omega)\right|^2\widehat{r}(\omega)\d\omega<\infty.
\end{displaymath}
If $K\in L^1$, the inequality above is evident since $\left|\widehat{K^+}(\omega)\widehat{\phi}(\omega)
\right|^2\leq \|\widehat{K^+}\|^2_{L^\infty}\|\widehat{\phi}\|^2_{L^\infty}$ and $\widehat{r}\in L^1(\rbb)$ by Lemma \ref{lemma:r-hat-L1}.

If $K$ satisfies \eqref{cond2b}, as $\omega$ tends to infinity, Lemma \ref{lem:fourier} implies that $\left|\widehat{K^+}(\omega)\widehat{\phi}(\omega)\right|^2\rightarrow 0$. It follows that $\left|\widehat{K^+}(\omega)\widehat{\phi}(\omega)\right|^2\widehat{r}(\omega)$ is dominated for sufficiently large $\omega$ by $\widehat{r}$ which is integrable. On the other hand, to control the integrand near zero, notice that
\begin{align*}
\MoveEqLeft[5] 2\pi\left|\widehat{K^+}(\omega)\widehat{\phi}(\omega)\right|^2\widehat{r}(\omega) =\left|\widehat{\phi}(\omega)\right|^2 \frac{2\Kcos(\omega)\left(\left[\Kcos(\omega) \right]^2
+\left[\Ksin(\omega) \right]^2\right)}
{\left[\Kcos(\omega) \right]^2
+\left[m\omega- \Ksin(\omega) \right]^2}\\
& = \frac{\left|\widehat{\phi}(\omega)\right|^2}{\omega^{1-\alpha}}\times \frac{2\omega^{1-\alpha}\Kcos(\omega)\left(\left[\omega^{1-\alpha}\Kcos(\omega) \right]^2
+\left[\omega^{1-\alpha}\Ksin(\omega) \right]^2\right)}
{\left[\omega^{1-\alpha}\Kcos(\omega) \right]^2
+\left[m\omega^{2-\alpha}- \omega^{1-\alpha}\Ksin(\omega) \right]^2}.
\end{align*}
By Proposition \ref{prop:fourier.taube}, $\Kcos(\omega)$ and $\Ksin(\omega)$ can be controlled near the origin by $1/\omega^{1-\alpha}$. It follows that $\left|\widehat{K^+}(\omega)\widehat{\phi}(\omega)\right|^2\widehat{r}(\omega)$ is dominated by $\frac{\left|\widehat{\phi}(\omega)\right|^2}{\omega^{1-\alpha}}$ when $\omega$ is near zero. We conclude that $\widehat{K^+}\widehat{\phi}$ belongs to $L^2(\widehat{r})$. We thus have that
\begin{equation*}
\begin{aligned}
\MoveEqLeft[5] \E{ \langle V,-m\varphi'+\widetilde{K^+*\widetilde{\varphi}}\rangle \overline{ \langle V,-m\psi'+\widetilde{K^+*\widetilde{\psi}}\rangle}}\\
&= \int_\rbb \left(im\omega+\widehat{K^+}(\omega)\right)\overline{\widehat{\varphi}}(\omega)\overline{im\omega+\widehat{K^+}(\omega)}\widehat{\psi}(\omega)\widehat{r}(\omega)\d\omega\\
&=\int_\rbb \widehat{K}(\omega)\frac{\overline{\widehat{\varphi}}(\omega)\widehat{\psi}(\omega)}{2\pi}\d\omega\\
&=\int_\rbb K(t)\left( \varphi*\widetilde{\psi} \right)(t)\d t.
\end{aligned}
\end{equation*}
The proof is thus complete.
\end{proof}

\subsection{Weak solutions when $m > 0$ and $\lambda > 0$}
\label{sec:weak-solutions:mpos-lambdapos}

Similar to previous subsection, we introduce the following function $\rhat$.
\begin{lemma}\label{lem:r-hat-L1:mpos-lambdapos}
Let $\rhat$ be defined as
\begin{equation}\label{form:rhat-mpos-lambdapos}
\rhat(\omega)=\frac{2\lambda+\beta\widehat{K}(\omega)}{2\pi\left|im\omega+\lambda+\beta\widehat{K^+}(\omega)\right|^2}.
\end{equation} 
Suppose $K$ satisfies Assumption \ref{a:K}. Then $\rhat$ belongs to $L^1(\rbb)$.
\end{lemma}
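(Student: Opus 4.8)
The plan is to follow the proof of Lemma~\ref{lemma:r-hat-L1} essentially verbatim, tracking the extra $\lambda$ terms. First I would use the identities $\widehat{K}=2\Kcos$ and $\widehat{K^+}=\Kcos-i\Ksin$ from Proposition~\ref{prop:Fourier.S'} to rewrite
\begin{equation*}
\rhat(\omega)=\frac{1}{2\pi}\times\frac{2\lambda+2\beta\Kcos(\omega)}{\left[\lambda+\beta\Kcos(\omega)\right]^2+\left[m\omega-\beta\Ksin(\omega)\right]^2}.
\end{equation*}
Since $\Kcos$ is even and $\Ksin$ is odd, $\rhat$ is even and it suffices to work on $[0,\infty)$. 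Because $\lambda,\beta>0$ and $\Kcos(\omega)>0$ for $\omega\neq 0$ by Condition~\eqref{cond1d}, the denominator is strictly positive for $\omega\neq 0$, and by Lemma~\ref{lem:fourier} the map $\omega\mapsto\rhat(\omega)$ is continuous on $(0,\infty)$; it then remains only to verify integrability near the origin and near infinity.

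Near the origin I would split into the two cases of Assumption~\ref{a:K}. If $K\in L^1$, then $\Kcos(\omega)\to\int_0^\infty K(t)\,\d t$ and $\Ksin(\omega)\to 0$ as $\omega\to 0$, so the numerator tends to a finite positive constant while the denominator tends to $(\lambda+\beta\int_0^\infty K(t)\,\d t)^2>0$; hence $\rhat$ has a finite limit at $0$. If instead $K\notin L^1$ but $K(t)\sim t^{-\alpha}$, then Proposition~\ref{prop:fourier.taube} forces $\Kcos(\omega),\Ksin(\omega)\to\infty$ as $\omega\to 0$, both blowing up like $\omega^{-(1-\alpha)}$; since the numerator of $\rhat$ is linear in $\Kcos$ and the denominator quadratic in $\Kcos,\Ksin$, one gets $\rhat(\omega)=O(\omega^{1-\alpha})\to 0$. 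In either case $\rhat$ is bounded on a neighborhood of $0$, hence integrable there.

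Near infinity, Lemma~\ref{lem:fourier} gives $\Kcos(\omega),\Ksin(\omega)\to 0$, so the numerator of $\rhat$ tends to $2\lambda$ while the denominator is asymptotic to $m^2\omega^2$; thus $\rhat(\omega)=O(\omega^{-2})$, which is integrable at infinity. Combining the three regimes gives $\rhat\in L^1(\rbb)$. The only step that needs genuine care is the near-origin estimate in the non-integrable case, which rests on Proposition~\ref{prop:fourier.taube}; everything else is a routine adaptation of Lemma~\ref{lemma:r-hat-L1}, and in fact the hypothesis $\lambda>0$ only makes the argument more robust, since it already forces the denominator away from zero at $\omega=0$ when $K\in L^1$, with no appeal to the positivity of $\Kcos$.
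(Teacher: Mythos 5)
Your proof is correct and follows exactly the approach the paper intends: the paper's own proof of Lemma~\ref{lem:r-hat-L1:mpos-lambdapos} just says ``Same as the proof of Lemma~\ref{lemma:r-hat-L1},'' and you have carried out that adaptation carefully, tracking the extra $\lambda$ terms in the numerator (which now makes the numerator tend to $2\lambda$ rather than $0$ as $\omega\to\infty$, still giving $O(\omega^{-2})$) and in the denominator near the origin (which in the $K\in L^1$ case gives the limit $(\lambda+\beta\int_0^\infty K)^2>0$ rather than $(\int_0^\infty K)^2$).
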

\begin{proof}
Same as the proof of Lemma \ref{lemma:r-hat-L1}.
\end{proof}

Since $\rhat\in L^1(\rbb)$, it is the spectral density of some operator $V$ defined as in \eqref{eqn:V.op}. In current situation where $\lambda>0$, we will show that the weak solution $V$ of \eqref{eq:gle-weak} indeed admits $\rhat$ defined in \eqref{form:rhat-mpos-lambdapos} as the spectral density if we assume zero correlation between two stationary random distributions $\dot W$ and $F$. 
\begin{theorem}\label{thm:weak-solution-mpos-lambdapos} Suppose that $K$ satisfies Assumption \ref{a:K} and that $m>0$, $\lambda>0$. Let $V$ be a weak solution for \eqref{eq:gle-weak}. Then, the following statements are equivalent.
\begin{enumerate}[(a)]
\item The spectral measure $\nu$ admits the representation $\nu(\d\omega)=\rhat(\omega)\d\omega$ where $\rhat(\omega)$ is defined as in \eqref{form:rhat-mpos-lambdapos}.
\item For any $\varphi\in\Sc$, $\E{\langle W,\varphi\rangle\langle F,\varphi\rangle}=0$.
\end{enumerate}
\end{theorem}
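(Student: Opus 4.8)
The plan is to replay the argument of Theorem~\ref{thm5} (the $\lambda=0$ case) in the spectral domain and to keep track of the single new ingredient: a cross-covariance between the driving noise $\dot W$ and the forcing $F$. Write $L\varphi := -m\varphi'+\lambda\varphi+\beta\widetilde{K^+*\widetilde{\varphi}}$. By Proposition~\ref{prop:Fourier.S'} together with $\F{-m\varphi'}=\overline{im\omega}\,\widehat{\varphi}$, one has $\F{L\varphi}(\omega)=\overline{P(\omega)}\,\widehat{\varphi}(\omega)$, where $P(\omega):=im\omega+\lambda+\beta\widehat{K^+}(\omega)$; note that $|P(\omega)|^2=\big|im\omega+\lambda+\beta\widehat{K^+}(\omega)\big|^2$ is precisely the modulus appearing in the denominator of $\rhat$ in \eqref{form:rhat-mpos-lambdapos}, and that $|P(\omega)|^2\geq(\lambda+\beta\Kcos(\omega))^2>\lambda^2>0$ by Condition~\eqref{cond1d}. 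Since $V$ is assumed to be a weak solution, $K^+*\varphi\in\dom(V)$ for all $\varphi$ (Definition~\ref{def:weak-solution}(a)), hence $L\varphi\in\dom(V)$, and the spectral representation \eqref{eqn:V.op} gives $\langle V,L\varphi\rangle=\int_\rbb P(\omega)\,\overline{\widehat{\varphi}(\omega)}\,\xi(\d\omega)$.

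Feeding this into the identity in Definition~\ref{def:weak-solution}(b) and using the isometry of the stochastic integral against $\xi$, the left-hand side becomes $\int_\rbb |P(\omega)|^2\,\overline{\widehat{\varphi}(\omega)}\,\widehat{\psi}(\omega)\,\nu(\d\omega)$. Expanding the right-hand side and using that the spectral densities of $\dot W$ and $F$ are (a constant multiple of) Lebesgue measure and $\widehat{K}(\omega)$ respectively (Corollary~\ref{cor:covariance-dis}), one obtains, with the same normalization constants as in the proof of Theorem~\ref{thm5}, that for all $\varphi,\psi\in\Sc$
\begin{equation*}
\int_\rbb |P(\omega)|^2\,\overline{\widehat{\varphi}}\,\widehat{\psi}\,\nu(\d\omega)
\;=\; \int_\rbb \big(2\lambda+\beta\widehat{K}(\omega)\big)\frac{\overline{\widehat{\varphi}}\,\widehat{\psi}}{2\pi}\,\d\omega
\;+\;\sqrt{2\lambda\beta}\,\mathcal C(\varphi,\psi),
\end{equation*}
where $\mathcal C(\varphi,\psi):=\E{\langle\dot W,\varphi\rangle\overline{\langle F,\psi\rangle}}+\E{\langle F,\varphi\rangle\overline{\langle\dot W,\psi\rangle}}$ is the symmetrized cross-covariance of $\dot W$ and $F$ (finite by Cauchy--Schwarz since $\langle\dot W,\varphi\rangle,\langle F,\psi\rangle\in L^2(\Omega)$). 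By construction of $\rhat$ we have the identity $|P(\omega)|^2\rhat(\omega)=\tfrac{1}{2\pi}\big(2\lambda+\beta\widehat{K}(\omega)\big)$, so the two ``pure'' spectral integrals above coincide exactly when $\nu(\d\omega)=\rhat(\omega)\,\d\omega$.

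It remains to connect $\mathcal C$ to condition (b). The form $\mathcal C$ is sesquilinear and Hermitian, so by polarization $\mathcal C\equiv 0$ on $\Sc\times\Sc$ if and only if $\mathcal C(\varphi,\varphi)=0$ for every $\varphi$, i.e.\ (taking $\varphi$ real) if and only if $\E{\langle\dot W,\varphi\rangle\langle F,\varphi\rangle}=0$ for all $\varphi$, which is condition (b). The equivalence then follows from the displayed identity. If (a) holds, the two spectral integrals cancel, leaving $\sqrt{2\lambda\beta}\,\mathcal C(\varphi,\psi)=0$ for all $\varphi,\psi$; since $\lambda,\beta>0$ this forces $\mathcal C\equiv 0$, i.e.\ (b). Conversely, if (b) holds then $\mathcal C\equiv 0$ and the identity reduces to $\int_\rbb |P|^2\,\overline{\widehat{\varphi}}\,\widehat{\psi}\,\nu(\d\omega)=\int_\rbb\tfrac{1}{2\pi}(2\lambda+\beta\widehat{K})\,\overline{\widehat{\varphi}}\,\widehat{\psi}\,\d\omega$ for all $\varphi,\psi$; choosing non-negative Schwartz functions $\varphi_k,\psi_k$ increasing pointwise to $1_{[a,b]}$ and $1$ (using that $\Sc$ is invariant under the Fourier transform) and applying the Monotone Convergence Theorem exactly as in Theorem~\ref{thm5} yields $\int_{[a,b]}|P|^2\,\nu(\d\omega)=\int_{[a,b]}\tfrac{1}{2\pi}(2\lambda+\beta\widehat{K})\,\d\omega$ for every $a<b$; since $|P|^2\geq\lambda^2>0$, $\nu$ is locally finite and $|P|^2\,\nu(\d\omega)=\tfrac{1}{2\pi}(2\lambda+\beta\widehat{K})\,\d\omega$ as measures, i.e.\ $\nu(\d\omega)=\rhat(\omega)\,\d\omega$.

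The Fourier computation of $L$ and the measure-extraction step are routine repetitions of the $\lambda=0$ case. The delicate point — and the only genuine novelty — is the bookkeeping of the cross term: one must verify that the $\dot W$--$F$ cross-covariance is the \emph{unique} obstruction preventing the displayed identity from pinning down $\nu$, check that it enters that identity already in symmetrized form, and justify the polarization step that upgrades the ``diagonal'' hypothesis (b) to the vanishing of $\mathcal C(\varphi,\psi)$ for all pairs $\varphi,\psi$.
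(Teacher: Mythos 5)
Your proof is correct and follows essentially the same route as the paper: push the equation to the spectral side via $\F{L\varphi}=\overline{P}\widehat\varphi$, isolate the $\dot W$–$F$ cross term as the only difference from the target identity, and then polarize (the paper phrases this as substituting $\psi\to\varphi$ in one direction and $\varphi\to\varphi+\psi$ in the other). Your extra remark that $|P|^2\geq\lambda^2>0$ is a nice explicit justification for extracting the density $\rhat$, but otherwise the argument is the same as the paper's.
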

\begin{proof} (a)$\Rightarrow$(b): On one hand, we have that 
\begin{align*}
\MoveEqLeft[8]\E{\langle V,-m\varphi'+\lambda\varphi+\beta\widetilde{K^+*\widetilde{\varphi}}\rangle\overline{\langle V,-m\psi'+\lambda\psi+\beta\widetilde{K^+*\widetilde{\psi}}\rangle}}\\&=\int_\rbb \left|im\omega+\lambda+\beta\widehat{K^+}(\omega)\right|^2\rhat(\omega)\overline{\widehat{\varphi}}\widehat{\psi}\d\omega\\
 &=\int_\rbb \left(2\lambda+\beta\widehat{K}(\omega)\right)\frac{\overline{\widehat{\varphi}}\widehat{\psi}}{2\pi}\d\omega\\
&= 2\lambda\int_\rbb \varphi(t)\psi(t)\d t+ \beta \int_\rbb K(t)\left( \varphi*\widetilde{\psi} \right)(t)\d t.
\end{align*}
On the other hand, 
\begin{multline*}
\E{\langle \sqrt{2\lambda} W+\sqrt{\beta}F,\varphi\rangle \overline{\langle \sqrt{2\lambda} W+\sqrt{\beta}F,\psi\rangle}}\\ = 2\lambda\int_\rbb \varphi(t)\psi(t)\d t+ \beta \int_\rbb K(t)\left( \varphi*\widetilde{\psi} \right)(t)\d t + \sqrt{2\lambda\beta}\E{\langle W,\varphi\rangle\overline{\langle F,\psi\rangle}+\overline{\langle W,\psi\rangle}\langle F,\varphi\rangle}.
\end{multline*}
Since $V$ is a weak solution, we obtain
$
\E{\langle W,\varphi\rangle\overline{\langle F,\psi\rangle}+\overline{\langle W,\psi\rangle}\langle F,\varphi\rangle}=0,
$
which is the same as
$
\E{\langle W,\varphi\rangle\langle F,\psi\rangle+\langle W,\psi\rangle\langle F,\varphi\rangle}=0,
$ because they are real random variables. Substituting $\psi$ with $\varphi$ now implies (b).

(b)$\Rightarrow$(a): substituting $\varphi$ with $\varphi+\psi$, (b) implies that
\[0=\E{\langle W,\varphi+\psi\rangle\langle F,\varphi+\psi\rangle}=\E{\langle W,\varphi\rangle\langle F,\psi\rangle+\langle W,\psi\rangle\langle F,\varphi\rangle}.\]
Reversing the order of the arguments above, we obtain
\[\int_\rbb \left|im\omega+\lambda+\beta\widehat{K^+}(\omega)\right|^2\overline{\widehat{\varphi}}\widehat{\psi}\nu(\d\omega)=\int_\rbb \left(2\lambda+\beta\widehat{K}(\omega)\right)\frac{\overline{\widehat{\varphi}}\widehat{\psi}}{2\pi}\d\omega.\]
Using approximating argument as in the proof of Theorem \ref{thm5}, we deduce that $\nu$ is absolutely continuous with respect to Lebesgue measure and that $\nu(\d\omega)=\rhat(\omega)\d\omega$. The proof is complete.
\end{proof}
\begin{rem}
Since $\lambda>0$, the Fourier cosine transform of $K$ need not be strictly positive.
\end{rem}

\subsection{Weak solutions when $m=0$ and $\lambda > 0$} 
\label{sec:weak-solutions:mzero-lambdapos}
In this case, the spectral density in formula \eqref{form:rhat-mpos-lambdapos} becomes
\begin{equation}\label{form:rhat-mzero-lambdapos}
\rhat(\omega)=\frac{2\lambda+\beta\widehat{K}(\omega)}{2\pi\left|\lambda+\beta\widehat{K^+}(\omega)\right|^2}.
\end{equation}
\begin{lemma} Let $K$ satisfy Assumption \ref{a:K}. Then, $\rhat$ defined as in \eqref{form:rhat-mzero-lambdapos} is the spectral density of a generalized operator $V$ defined as in Section \ref{sec:prelim:extension}.
\end{lemma}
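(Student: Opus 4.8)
The plan is to check that the non-negative, absolutely continuous measure $\nu(\d\omega):=\rhat(\omega)\,\d\omega$ obeys the spectral-measure bound \eqref{ineq:spectral-measure} for some integer $k$; once that is in hand, the construction of Section \ref{sec:prelim:extension} together with Lemma \ref{lem:V-op-defined} (which applies precisely because $\nu$ is absolutely continuous to Lebesgue measure) yields the generalized operator $\Phi:\Sc'\to L^2(\Omega)$ whose spectral density is exactly $\rhat$. So the whole lemma reduces to an estimate on $\rhat$.

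First I would rewrite $\rhat$ in terms of the improper Fourier transforms: by Proposition \ref{prop:Fourier.S'}, $\widehat{K}(\omega)=2\Kcos(\omega)$ and $\widehat{K^+}(\omega)=\Kcos(\omega)-i\Ksin(\omega)$, so that
\[
\rhat(\omega)=\frac{1}{\pi}\cdot\frac{\lambda+\beta\Kcos(\omega)}{\bigl(\lambda+\beta\Kcos(\omega)\bigr)^2+\beta^2\Ksin(\omega)^2}.
\]
Since $\lambda>0$ and $\Kcos(\omega)>0$ for $\omega\neq 0$ by Condition \eqref{cond1d}, the numerator is strictly positive, so $\rhat\geq 0$, and by Lemma \ref{lem:fourier} it is even and continuous on $(0,\infty)$. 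It then suffices to control $\rhat$ near infinity and near the origin. Near infinity, Lemma \ref{lem:fourier} gives $\Kcos(\omega),\Ksin(\omega)\to 0$, hence $\rhat(\omega)\to \tfrac{1}{\pi\lambda}$; in particular $\rhat$ is bounded on $[A,\infty)$ for large $A$. I would flag here that this is exactly why $\rhat\notin L^1(\rbb)$ in the $m=0$ case — in contrast to the $m>0$ cases, where the denominator grows like $m^2\omega^2$ — so the extended framework of Section \ref{sec:prelim:extension}, rather than an honest stationary random process (cf.\ Lemma \ref{lem:finite-nu}, which would require $\nu$ finite), is genuinely necessary.

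Near the origin I would split into the two sub-cases of Assumption \ref{a:K}. If $K\in L^1$ (Condition \eqref{cond2a}), then $\Kcos(\omega)\to\int_0^\infty K(t)\,\d t<\infty$ and $\Ksin(\omega)\to 0$, so $\rhat(\omega)\to \tfrac{1}{\pi(\lambda+\beta\int_0^\infty K)}$. If instead $K\notin L^1$ but satisfies \eqref{cond2b}, then Proposition \ref{prop:fourier.taube} gives $\omega^{1-\alpha}\Kcos(\omega)$ and $\omega^{1-\alpha}\Ksin(\omega)$ converging to positive constants, so $\Kcos(\omega),\Ksin(\omega)\to\infty$ and the displayed formula forces $\rhat(\omega)\to 0$. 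Either way $\rhat$ has a finite limit at $0$, hence, being continuous on $(0,\infty)$, it is bounded on every $[0,A]$.

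Combining the two regimes, $\rhat$ is bounded on all of $\rbb$, so $\int_\rbb \rhat(\omega)(1+\omega^2)^{-1}\,\d\omega<\infty$, i.e.\ \eqref{ineq:spectral-measure} holds with $k=1$. Then the construction of Section \ref{sec:prelim:extension} provides a stationary random distribution $F$ with spectral measure $\nu$, and since $\nu=\rhat(\omega)\,\d\omega$ is absolutely continuous, Lemma \ref{lem:V-op-defined} shows its extension $\Phi$ to $\Sc'$ via \eqref{eqn:V.op} is well-defined with spectral density $\rhat$, which is the claim. The only mildly delicate point is the origin in the case $K\notin L^1$: one must invoke Proposition \ref{prop:fourier.taube} to see that the simultaneous blow-up of $\Kcos$ and $\Ksin$ actually drives $\rhat$ to $0$ rather than producing a non-integrable singularity; everything else is a routine boundedness argument.
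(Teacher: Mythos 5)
Your proof is correct, but it takes a substantially more laborious route than the paper's. You establish boundedness of $\rhat$ by doing a two-regime analysis: continuity on $(0,\infty)$ from Lemma \ref{lem:fourier}, the limit $\rhat(\omega)\to\tfrac{1}{\pi\lambda}$ at infinity, and a case split at the origin ($K\in L^1$ vs.\ $K\notin L^1$, the latter requiring Proposition \ref{prop:fourier.taube}). The paper instead observes a one-line pointwise inequality: since $\Kcos(\omega)\ge 0$, the denominator satisfies $(\lambda+\beta\Kcos)^2+(\beta\Ksin)^2\ge \lambda\,(\lambda+\beta\Kcos)$, so
\begin{equation*}
\rhat(\omega)=\frac{1}{2\pi}\cdot\frac{2\lambda+2\beta\Kcos(\omega)}{(\lambda+\beta\Kcos(\omega))^2+(\beta\Ksin(\omega))^2}\le\frac{1}{\pi\lambda}
\end{equation*}
for every $\omega$, with no asymptotics needed. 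This uniform bound immediately gives $\int_\rbb\rhat(\omega)(1+\omega^2)^{-1}\d\omega<\infty$, i.e.\ \eqref{ineq:spectral-measure} with $k=1$, and hence the conclusion via Lemma \ref{lem:V-op-defined}. What your version buys is a more detailed picture of $\rhat$ near $0$ and $\infty$ (which the paper does need elsewhere, e.g.\ Proposition \ref{prop:V(t)-defined:mzero-lambdapos} and Section \ref{sec:msd}), but for the present lemma it is overkill: in particular, the ``mildly delicate point'' you flag for the non-$L^1$ case is entirely bypassed by the algebraic bound, which makes no reference to whether $\Kcos$ and $\Ksin$ blow up at the origin. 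If you are going to prove the lemma from scratch, it is worth noticing that $\lambda>0$ alone caps $\rhat$ uniformly — that is exactly what distinguishes the $(m=0,\lambda>0)$ case from $(m=\lambda=0)$, where such a bound fails and Assumption \ref{cond:cv} becomes necessary.
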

\begin{proof}
We note that $\rhat$ is no longer integrable since $\lim_{\omega\to\infty}\rhat(\omega)\in(0,\infty)$. However, using the assumption that $\Kcos(\omega)\geq 0$, it follows from \eqref{form:rhat-mzero-lambdapos} that
\begin{equation}\label{ineq:rho-hat-bounded:mzero-lambdapos}
\rhat(\omega)=\frac{1}{2\pi}\times\frac{2\lambda+2\beta\Kcos(\omega)}{\left(\lambda+\beta\Kcos(\omega)\right)^2+\left(\beta\Ksin(\omega)\right)^2}\leq \frac{1}{\pi\lambda},\end{equation}
which implies that $\int_\rbb \frac{\rhat(\omega)\d\omega}{1+\omega^2}<\infty$. In other words, the measure $\nu(\d\omega)=\rhat(\omega)\d\omega$ satisfies \eqref{ineq:spectral-measure} with $k=1$. In view of Lemma \ref{lem:V-op-defined}, $\rhat$ is the spectral density of a generalized operator $V$ defined as in \eqref{eqn:V.op}.
\end{proof}

Similar to Theorem \ref{thm:weak-solution-mpos-lambdapos}, assuming zero correlation between $\dot W$ and $F$, we arrive at following Theorem.
\begin{theorem}\label{thm:weak-solution-mzero-lambdapos}  Suppose that $K$ satisfies Assumption \ref{a:K} and that $m=0$, $\lambda>0$. Let $V$ be a weak solution for \eqref{eq:gle-weak}. Then, the following statements are equivalent.
\begin{enumerate}[(a)]
\item The spectral measure $\nu$ admits the representation $\nu(\d\omega)=\rhat(\omega)\d\omega$ where $\rhat$ is defined as in \eqref{form:rhat-mzero-lambdapos}.
\item For any $\varphi\in\Sc$, $\E{\langle W,\varphi\rangle\langle F,\varphi\rangle}=0$.
\end{enumerate}
\end{theorem}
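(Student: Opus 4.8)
The plan is to run the argument of Theorem~\ref{thm:weak-solution-mpos-lambdapos} essentially verbatim with $m$ set to $0$. The one structural novelty is that the candidate spectral density $\rhat$ from \eqref{form:rhat-mzero-lambdapos} is bounded but no longer integrable, so $V$ must be treated as the extended operator of Section~\ref{sec:prelim:extension} (the measure $\nu(\d\omega)=\rhat(\omega)\d\omega$ satisfies \eqref{ineq:spectral-measure} only with $k=1$), and I would have to check that the spectral-side computations remain legitimate in that setting.

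For the implication (a)$\Rightarrow$(b), I would assume $\nu(\d\omega)=\rhat(\omega)\d\omega$ and expand the left-hand side of Definition~\ref{def:weak-solution}(b) using Theorem~\ref{thm:Ito-2} together with Proposition~\ref{prop:Fourier.S'}\eqref{prop:Fourier.S':b}. Since $\widehat{K^+}=\Kcos-i\Ksin$ and $\varphi$ is real, one has $\F{\lambda\varphi+\beta\widetilde{K^+*\widetilde\varphi}}=(\lambda+\beta\overline{\widehat{K^+}})\widehat\varphi$, so that
\[
\E{\langle V,\lambda\varphi+\beta\widetilde{K^+*\widetilde\varphi}\rangle\,\overline{\langle V,\lambda\psi+\beta\widetilde{K^+*\widetilde\psi}\rangle}}=\int_\rbb\bigl|\lambda+\beta\widehat{K^+}(\omega)\bigr|^2\rhat(\omega)\,\overline{\widehat\varphi(\omega)}\,\widehat\psi(\omega)\,\d\omega.
\]
Substituting \eqref{form:rhat-mzero-lambdapos} the factor $|\lambda+\beta\widehat{K^+}|^2$ cancels, and since $\widehat K=2\Kcos=\F{K}$ by Proposition~\ref{prop:Fourier.S'}\eqref{prop:Fourier.S':a}, this reduces to $2\lambda\int_\rbb\varphi\psi\,\d t+\beta\int_\rbb K(t)(\varphi*\widetilde\psi)(t)\,\d t$. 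Comparing with $\E{\langle\sqrt{2\lambda}\dot W+\sqrt\beta F,\varphi\rangle\,\overline{\langle\sqrt{2\lambda}\dot W+\sqrt\beta F,\psi\rangle}}$, which carries those same two diagonal terms plus $\sqrt{2\lambda\beta}\,\E{\langle\dot W,\varphi\rangle\overline{\langle F,\psi\rangle}+\overline{\langle\dot W,\psi\rangle}\langle F,\varphi\rangle}$, I would conclude that the cross term vanishes for all $\varphi,\psi\in\Sc$, and then take $\psi=\varphi$ (all random variables being real) to obtain (b).

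For (b)$\Rightarrow$(a), replacing $\varphi$ by $\varphi+\psi$ in (b) and expanding shows $\E{\langle\dot W,\varphi\rangle\langle F,\psi\rangle+\langle\dot W,\psi\rangle\langle F,\varphi\rangle}=0$, so the right-hand side of Definition~\ref{def:weak-solution}(b) again collapses to the two diagonal terms. Running the identity above backwards then gives
\[
\int_\rbb\bigl|\lambda+\beta\widehat{K^+}(\omega)\bigr|^2\,\overline{\widehat\varphi(\omega)}\,\widehat\psi(\omega)\,\nu(\d\omega)=\int_\rbb\bigl(2\lambda+\beta\widehat K(\omega)\bigr)\frac{\overline{\widehat\varphi(\omega)}\,\widehat\psi(\omega)}{2\pi}\,\d\omega
\]
for every $\varphi,\psi\in\Sc$. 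I would then write each Schwartz function as the Fourier transform of another, pick nonnegative approximants $\varphi_k\uparrow 1_{[a,b]}$ and $\psi_k\uparrow 1$, and apply the Monotone Convergence Theorem exactly as in the proof of Theorem~\ref{thm5}, obtaining $\int_a^b|\lambda+\beta\widehat{K^+}|^2\,\nu(\d\omega)=\int_a^b\tfrac{1}{2\pi}(2\lambda+\beta\widehat K)\,\d\omega$ for all $a<b$; hence $\nu$ is absolutely continuous with Radon--Nikodym derivative $\rhat$.

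The part requiring genuine care — rather than a true obstacle — is making all of these manipulations legitimate in the non-integrable setting. I would first note that the denominator stays bounded away from zero: $\Re\widehat{K^+}=\Kcos\ge 0$ by Condition~\eqref{cond1d} gives $|\lambda+\beta\widehat{K^+}(\omega)|\ge\lambda>0$, so $\rhat$ is a bona fide bounded function and \eqref{ineq:rho-hat-bounded:mzero-lambdapos} holds. Next I would control the integrands near $\omega=0$ in the non-$L^1$ case, where $\Kcos(\omega),\Ksin(\omega)$ blow up like $\omega^{-(1-\alpha)}$: Proposition~\ref{prop:fourier.taube} yields $\rhat(\omega)\sim c\,\omega^{1-\alpha}$ there, so $|\widehat{K^+}(\omega)\widehat\varphi(\omega)|^2\rhat(\omega)\sim c'\,\omega^{-(1-\alpha)}|\widehat\varphi(\omega)|^2$ is locally integrable since $1-\alpha\in(0,1)$, while at infinity $\widehat{K^+}\to 0$ (Lemma~\ref{lem:fourier}) and $\rhat$ is bounded, so everything decays with the Schwartz factor. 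These are precisely the estimates that place $\lambda\varphi+\beta\widetilde{K^+*\widetilde\varphi}$ in $\dom(V)$ and license the use of Theorem~\ref{thm:Ito-2} in the extended framework of Section~\ref{sec:prelim:extension}.
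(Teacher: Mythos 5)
Your proposal is correct and follows essentially the same route as the paper: the paper's own proof is literally a one-line citation to Theorem \ref{thm:weak-solution-mpos-lambdapos}, with the well-definedness of $V$ in the non-integrable ($k=1$) setting delegated to the unnumbered lemma preceding the theorem, which uses exactly the bound $\rhat(\omega)\le\frac{1}{\pi\lambda}$ you invoke. Your version simply makes explicit the domain checks and the small-$\omega$ cancellation $|\widehat{K^+}(\omega)|^2\rhat(\omega)\sim\omega^{-(1-\alpha)}$ that the paper leaves implicit in that citation.
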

\begin{proof} Same as the proof of Theorem \ref{thm:weak-solution-mpos-lambdapos}.
\end{proof}

\subsection{Weak solutions when $m = 0$ and $\lambda = 0$}
\label{sec:weak-solutions:mzero-lambdazero} In this situation, the spectral density in formula \eqref{form:rhat-mpos-lambdapos} becomes
\begin{equation}\label{form:rhat-mzero-lambdazero}
\rhat(\omega)=\frac{\widehat{K}(\omega)}{\pi\beta\left|\widehat{K^+}(\omega)\right|^2}.
\end{equation}

Because the structure of $\rhat$ is quite different from previous three cases, we need to impose Assumption \ref{cond:cv} in addition to the Assumption \eqref{cond1} on the memory kernel $K(t)$. 
\begin{lemma} \label{lem:V-defined:mzero-lambdazero} Let $K(t)$ satisfy Assumption \ref{a:K} and Assumption \ref{cond:cv}. Then, $\rhat$ defined as in \eqref{form:rhat-mzero-lambdazero} is the spectral density of a generalized operator $V$ defined as in Section \eqref{sec:prelim:extension}.
\end{lemma}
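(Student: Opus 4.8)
The plan is to establish that the measure $\nu(\d\omega)=\rhat(\omega)\d\omega$ meets the admissibility condition \eqref{ineq:spectral-measure} with $k=1$; since $\nu$ is absolutely continuous with respect to Lebesgue measure by construction, Lemma \ref{lem:V-op-defined} then immediately delivers the generalized operator $V$ having $\rhat$ as its spectral density. First I would rewrite $\rhat$ using the improper Fourier transforms of $K$: by Proposition \ref{prop:Fourier.S'}, $\widehat{K}(\omega)=2\Kcos(\omega)$ and $\widehat{K^+}(\omega)=\Kcos(\omega)-i\Ksin(\omega)$, so that
\[
\rhat(\omega)=\frac{2\Kcos(\omega)}{\pi\beta\bigl(\Kcos(\omega)^2+\Ksin(\omega)^2\bigr)}.
\]
Condition \eqref{cond1d} makes the denominator strictly positive for $\omega\neq 0$, so $\rhat\ge 0$ is well defined, and by Lemma \ref{lem:fourier} it is continuous on $(0,\infty)$. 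Since $\rhat$ is even and bounded on compact subsets of $(0,\infty)$, the only thing to control is its behaviour as $\omega\to 0^+$ and as $\omega\to\infty$.

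Near the origin I would split according to Assumption \ref{a:K}. If $K$ satisfies \eqref{cond2a} (so $K\in L^1$), then $\Kcos(\omega)\to\int_0^\infty K>0$ and $\Ksin(\omega)\to 0$ as $\omega\to 0$, so $\rhat(\omega)$ converges to a finite positive constant. If instead $K$ satisfies \eqref{cond2b} (so $K\notin L^1$ with power-law tail), then Proposition \ref{prop:fourier.taube} gives $\omega^{1-\alpha}\Kcos(\omega)\to c_1\in(0,\infty)$ and $\omega^{1-\alpha}\Ksin(\omega)\to c_2\in(0,\infty)$, whence $\rhat(\omega)\sim \tfrac{2c_1}{\pi\beta(c_1^2+c_2^2)}\,\omega^{1-\alpha}\to 0$ since $\alpha\in(0,1)$. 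In either case $\rhat$ is bounded on $(0,1]$, so it contributes only a finite amount to $\int_\rbb \rhat(\omega)(1+\omega^2)^{-1}\d\omega$ over that range.

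For the tail I would invoke Proposition \ref{prop:fourier-cv}, which applies because $K$ satisfies \eqref{cond1b} together with Assumption \ref{cond:cv}. Under \eqref{cond:cv-finite} one has $\omega^{2-\sigma_1}\Kcos(\omega)\to 0$ and $\omega\Ksin(\omega)\to K(0)$; here I would first note that $K(0)$ is finite by \eqref{cond:cv-finite} and strictly positive because a positive convex function decreasing to $0$ cannot vanish at the origin, so the term $\Ksin(\omega)^2\sim K(0)^2\omega^{-2}$ dominates $\Kcos(\omega)^2=o(\omega^{-2})$ in the denominator, giving $\rhat(\omega)\sim \tfrac{2\omega^2\Kcos(\omega)}{\pi\beta K(0)^2}=o(\omega^{\sigma_1})$ with $\sigma_1<1$. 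Under \eqref{cond:cv-infinite} one has $\omega^{1-\sigma_2}\Kcos(\omega)\to d_1\in(0,\infty)$ and $\omega^{1-\sigma_2}\Ksin(\omega)\to d_2\in(0,\infty)$, so $\rhat(\omega)\sim \tfrac{2d_1}{\pi\beta(d_1^2+d_2^2)}\,\omega^{1-\sigma_2}$ with $\sigma_2>0$. In both subcases $\rhat(\omega)=O(\omega^{1-\delta})$ for some $\delta>0$, so $\rhat(\omega)(1+\omega^2)^{-1}$ is integrable on $[1,\infty)$. Combined with the bound near zero and continuity on compacts this gives $\int_\rbb \rhat(\omega)(1+\omega^2)^{-1}\d\omega<\infty$, i.e.\ \eqref{ineq:spectral-measure} holds with $k=1$, and Lemma \ref{lem:V-op-defined} concludes.

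Most of this is bookkeeping layered on the Abelian estimates of Section \ref{sec:tauberian}. The step that takes the most care is the $\omega\to\infty$ analysis in the finite-$K(0)$ case: one must argue from convexity that $K(0)>0$ (so that $\Ksin$, not $\Kcos$, sets the size of $|\widehat{K^+}|^2$), and then recognize that although $\rhat$ itself need not be integrable, the bound $\rhat(\omega)=o(\omega^{\sigma_1})$ with $\sigma_1<1$ is exactly enough to make $\rhat(\omega)(1+\omega^2)^{-1}$ integrable — which is why one needs $k=1$ rather than $k=0$ here, in contrast to the $m>0$ cases.
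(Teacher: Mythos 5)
Your proposal is correct and follows essentially the same strategy as the paper: verify that $\nu(\d\omega)=\rhat(\omega)\d\omega$ satisfies \eqref{ineq:spectral-measure} with $k=1$ by bounding $\rhat$ near the origin via $\rhat\le \tfrac{2}{\pi\beta\Kcos}$ and then controlling the tail via Proposition \ref{prop:fourier-cv}, splitting the two sub-cases \eqref{cond:cv-finite} and \eqref{cond:cv-infinite} exactly as in the paper (yielding $\rhat=O(\omega^{\sigma_1})$ and $\rhat\sim c\,\omega^{1-\sigma_2}$ respectively). Your explicit observation that $K(0)>0$ follows from positivity and convexity is a useful clarification that the paper leaves implicit but does in fact need for the limit $\omega\Ksin(\omega)\to K(0)$ to dominate the denominator under \eqref{cond:cv-finite}.
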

\begin{proof} We need to check that $\nu(\d\omega)=\rhat(\omega)\d\omega$ in this case satisfies \eqref{ineq:spectral-measure}. Indeed, we claim that Inequality \eqref{ineq:spectral-measure} holds with $k=1$, namely
\begin{equation} \label{ineq:cv-gle7-1}
\int_0^\infty \frac{\pi\beta}{2}\rhat(\omega)\frac{1}{1+\omega^2}\d\omega=	\int_0^\infty \frac{\Kcos(\omega)}{\left(\Kcos(\omega)^2+\Ksin(\omega)^2\right)(1+\omega^2)}<\infty.
\end{equation}
When $\omega$ is near zero, we have that
\begin{equation} \label{ineq:cv-gle7-1a}
\frac{\pi\beta}{2}\rhat(\omega)=\frac{\Kcos(\omega)}{\Kcos(\omega)^2+\Ksin(\omega)^2}\leq \frac{1}{\Kcos(\omega)}\to\frac{1}{\Kcos(0^+)},
\end{equation}
which is either finite or zero depending on $K$ integrable or not, respectively. In other word, $\widehat{r}(\omega)$ is always bounded near the origin. The only concern now is when $\omega$ tends to infinity. Since $K$ satisfies Assumption \ref{cond:cv}, Proposition \ref{prop:fourier-cv} implies the existence of $\sigma\in (0,1)$ and $c(\sigma)>0$ such that for all $\omega$ sufficiently large
\begin{equation}\label{ineq:cv-gle7-2}
\frac{\Kcos(\omega)}{\Kcos(\omega)^2+\Ksin(\omega)^2}\leq c(\sigma)\omega^\sigma.
\end{equation}
To see that, suppose $K$ satisfies \eqref{cond:cv-finite}. Let $\sigma_1$ be the power constant from \eqref{cond:cv-finite}. We estimate
\begin{equation}\label{ineq:cv-gle7-3}
\frac{\Kcos(\omega)}{\Kcos(\omega)^2+\Ksin(\omega)^2}\leq \frac{\omega^{2-\sigma_1}\Kcos(\omega)}{\left[\omega\Ksin(\omega)\right]^2}\omega^{\sigma_1}.
\end{equation}
We invoke \eqref{lim:fourier-cv-1} to find
\begin{equation}\label{ineq:cv-gle7-4}
\lim_{\omega\to\infty} \frac{\omega^{2-\sigma_1}\Kcos(\omega)}{\left[\omega\Ksin(\omega)\right]^2}=0,
\end{equation}
and thus infer the constants $\sigma$ and $c(\sigma)$ in \eqref{ineq:cv-gle7-2}, say $\sigma=\sigma_1$ and $c(\sigma)=1$. On the other hand, suppose $K$ satisfies \eqref{cond:cv-infinite}. Let $\sigma_2$ be the power constant from \eqref{cond:cv-infinite}. Similar to \eqref{ineq:cv-gle7-3}, we estimate
\begin{equation}\label{ineq:cv-gle7-5}
\frac{\Kcos(\omega)}{\Kcos(\omega)^2+\Ksin(\omega)^2}\leq \frac{\omega^{1-\sigma_2}\Kcos(\omega)}{\left[\omega^{1-\sigma_2}\Ksin(\omega)\right]^2}\omega^{1-\sigma_2}.
\end{equation}
It follows from \eqref{lim:fourier-cv-2} that
\begin{equation}\label{ineq:cv-gle7-6}
\lim_{\omega\to\infty}\frac{\omega^{1-\sigma_2}\Kcos(\omega)}{\left[\omega^{1-\sigma_2}\Ksin(\omega)\right]^2} \in(0,\infty).
\end{equation}
Setting $\sigma=1-\sigma_2$, $c=2\lim_{\omega\to\infty}\frac{\omega^{1-\sigma_2}\Kcos(\omega)}{\left[\omega^{1-\sigma_2}\Ksin(\omega)\right]^2}$, we obtain \eqref{ineq:cv-gle7-2}. We conclude that $\nu$ satisfies condition \eqref{ineq:spectral-measure}, which completes the proof.
\end{proof}
Using the same Definition \ref{def:weak-solution} for weak solution with $m=\lambda=0$, we have the following Theorem. 
\begin{theorem} \label{thm:weak-solution:mzero-lambdazero} Suppose that $m=\lambda=0$. Let $K(t)$ satisfy Assumption \ref{a:K} and Assumption \ref{cond:cv}. Then $V$ is a weak solution for \eqref{eq:gle-weak} if and only if $\nu(\d\omega)=\rhat(\omega)\d\omega$ where $\rhat(\omega)$ is given by formula \eqref{form:rhat-mzero-lambdazero}.
\end{theorem}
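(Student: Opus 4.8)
The plan is to follow the template of Theorem~\ref{thm5} almost verbatim, using two simplifications peculiar to the case $m=\lambda=0$. First, since $\sqrt{2\lambda}=0$, the white-noise term drops out of \eqref{eq:gle-weak}, so the weak form becomes simply $\langle V,\beta\widetilde{K^+*\widetilde{\varphi}}\rangle=\sqrt{\beta}\,\langle F,\varphi\rangle$ and condition~(b) of Definition~\ref{def:weak-solution} reduces to the single identity $\beta^2\,\E{\langle V,\widetilde{K^+*\widetilde{\varphi}}\rangle\,\overline{\langle V,\widetilde{K^+*\widetilde{\psi}}\rangle}}=\beta\,\E{\langle F,\varphi\rangle\overline{\langle F,\psi\rangle}}$. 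Second, the combination ``$im\omega+\lambda+\beta\widehat{K^+}(\omega)$'' collapses to ``$\beta\widehat{K^+}(\omega)$'', which is exactly what produces the form \eqref{form:rhat-mzero-lambdazero} of the candidate spectral density. The two tools used throughout are Proposition~\ref{prop:Fourier.S'}: part~(b), giving $\F{\widetilde{K^+*\widetilde{\varphi}}}=\overline{\widehat{K^+}}\,\widehat{\varphi}$ and $\F{K^+*\varphi}=\widehat{K^+}\widehat{\varphi}$; and part~(a) together with the covariance structure of $F$, giving $\E{\langle F,\varphi\rangle\overline{\langle F,\psi\rangle}}=\int_\rbb K(t)(\varphi*\widetilde{\psi})(t)\,\d t$, which is a fixed (kernel-independent) constant times $\int_\rbb\widehat{K}(\omega)\overline{\widehat{\varphi}}(\omega)\widehat{\psi}(\omega)\,\d\omega$. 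The one genuinely new feature of this regime---that $\rhat$ of \eqref{form:rhat-mzero-lambdazero} is not integrable---need not be revisited, since Lemma~\ref{lem:V-defined:mzero-lambdazero} already showed (via Assumption~\ref{cond:cv} and Proposition~\ref{prop:fourier-cv}) that $\nu(\d\omega)=\rhat(\omega)\,\d\omega$ satisfies \eqref{ineq:spectral-measure} with $k=1$ and hence defines a legitimate extended stationary random distribution.

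For the ``only if'' direction, suppose $V$ is a weak solution with spectral measure $\nu$. Inserting $\F{\widetilde{K^+*\widetilde{\varphi}}}=\overline{\widehat{K^+}}\widehat{\varphi}$ into the isometry $\E{\langle V,g_1\rangle\overline{\langle V,g_2\rangle}}=\int_\rbb\overline{\F{g_1}}\,\F{g_2}\,\d\nu$ turns the left side of (b) into $\beta^2\int_\rbb|\widehat{K^+}(\omega)|^2\,\overline{\widehat{\varphi}}(\omega)\widehat{\psi}(\omega)\,\nu(\d\omega)$, while the right side is, by the above, a fixed constant times $\int_\rbb\widehat{K}(\omega)\overline{\widehat{\varphi}}(\omega)\widehat{\psi}(\omega)\,\d\omega$. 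Since every Schwartz function is the Fourier transform of a Schwartz function, $\overline{\widehat{\varphi}}$ and $\widehat{\psi}$ range over all of $\Sc$, and the monotone-approximation argument of Theorem~\ref{thm5} (nonnegative Schwartz functions increasing to $1_{[a,b]}$ and to $1$) gives, for every $a<b$, an equality between $\beta^2\int_a^b|\widehat{K^+}|^2\,\d\nu$ and a constant multiple of $\int_a^b\widehat{K}\,\d\omega$. Hence $|\widehat{K^+}|^2\nu$ is absolutely continuous; and since $|\widehat{K^+}(\omega)|^2=\Kcos(\omega)^2+\Ksin(\omega)^2>0$ for all $\omega\neq0$ by Condition~\eqref{cond1d}, while $\widehat{K^+}(0)=\int_0^\infty K(t)\,\d t\in(0,\infty]$ is nonzero as well, the function $|\widehat{K^+}|^2$ is positive everywhere, which forces $\nu$ itself to be absolutely continuous with density exactly the $\rhat$ of \eqref{form:rhat-mzero-lambdazero}.

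For the ``if'' direction, take $\nu(\d\omega)=\rhat(\omega)\,\d\omega$. Condition (a) of Definition~\ref{def:weak-solution} asks that $\widehat{K^+}\widehat{\varphi}=\F{K^+*\varphi}$ belong to $L^2(\nu)$; substituting $\rhat$ cancels the factor $|\widehat{K^+}|^2$ and reduces this to finiteness of a constant multiple of $\int_\rbb\Kcos(\omega)|\widehat{\varphi}(\omega)|^2\,\d\omega$, which holds because $\widehat{\varphi}$ decays rapidly while $\Kcos(\omega)\to0$ as $\omega\to\infty$ (Lemma~\ref{lem:fourier}) and, near the origin, $\Kcos$ is bounded if $K\in L^1$ and of order $\omega^{\alpha-1}$ with $\alpha\in(0,1)$ if $K$ satisfies \eqref{cond2b} (Proposition~\ref{prop:fourier.taube})---either way integrable there. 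Condition (b) is then immediate: with $\nu(\d\omega)=\rhat(\omega)\,\d\omega$ the left side of (b) equals $\beta^2\int_\rbb|\widehat{K^+}|^2\,\overline{\widehat{\varphi}}\widehat{\psi}\,\rhat\,\d\omega$, the factor $|\widehat{K^+}|^2$ cancels, and what remains is precisely $\beta\int_\rbb K(t)(\varphi*\widetilde{\psi})(t)\,\d t=\E{\langle\sqrt{\beta}F,\varphi\rangle\overline{\langle\sqrt{\beta}F,\psi\rangle}}$, the right side.

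The step that demands the most care is the Radon--Nikodym identification in the ``only if'' direction, especially the behaviour at $\omega=0$ where $\widehat{K^+}$ may be infinite; but it parallels the corresponding step in Theorem~\ref{thm5} exactly, and the only feature genuinely special to $m=\lambda=0$---non-integrability of $\rhat$---has already been handled in Lemma~\ref{lem:V-defined:mzero-lambdazero}.
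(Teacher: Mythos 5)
Your proof is correct and follows the paper's own strategy, which is stated only as ``The proof is essentially the same as that of Theorem~\ref{thm5}'': you derive the identity $\beta^2\int|\widehat{K^+}|^2\,\overline{\widehat{\varphi}}\widehat{\psi}\,\d\nu = \beta\int\widehat{K}\,\overline{\widehat{\varphi}}\widehat{\psi}\,\d\omega/(2\pi)$ and run the same monotone-approximation argument to identify $\nu$. The one genuine addition is your observation that the factor $|\widehat{K^+}|^2$ cancels exactly against the denominator of $\rhat$ when verifying condition~(a), reducing it to integrability of $\Kcos(\omega)|\widehat{\varphi}(\omega)|^2$; this cleanly exploits the absence of the $im\omega$ term and is simpler than the near-zero/near-infinity case analysis used in Theorem~\ref{thm5}.
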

\begin{proof} The proof is essentially the same as that of Theorem \ref{thm5}. 
\end{proof}

\section{Regularity}
\label{sec:regularity}
We organize this section the same as Section \ref{sec:weak-solutions}. We begin with the case $(m>0,\lambda=0)$. The other two cases $(m>0,\lambda>0)$ and $(m=0,\lambda>0)$ are handled using similar arguments. The last case $(m=0,\lambda=0)$ is treated differently. In addition, using classical theory of regularity of Gaussian processes, we will show that in the first case, with further assumptions on the memory kernel, $V(t)$ is differentiable almost surely. 
\subsection{Regularity when $m>0$ and $\lambda=0$}\label{sec:regularity:mpos-lambdazero}
We begin with the fact that the velocity $V(t)$ is well-defined as a stochastic process in time.
\begin{proposition}\label{prop:V(t)-mpos-lambdazero} Under the same hypotheses as Theorem \ref{thm5}, let $V$ be the weak solution of \eqref{eq:gle-weak}. Then the process $V(t)=\la V,\delta_t\ra$ is well-defined.
\end{proposition}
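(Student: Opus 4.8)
The plan is to reduce the assertion to the fact, already proved in Lemma~\ref{lemma:r-hat-L1}, that the spectral density $\widehat{r}$ of \eqref{eqn:spectral-density} is integrable. Indeed, by Theorem~\ref{thm5} the weak solution $V$ is the operator of \eqref{eqn:V.op} associated with the spectral measure $\nu(\d\omega)=\widehat{r}(\omega)\d\omega$, and by Definition~\ref{defn:form:V(t)} the time process $V(t):=\langle V,\delta_t\rangle$ is well-defined precisely when $\delta_t\in\text{Dom}(V)$, i.e.\ when $\F{\delta_t}(\omega)=e^{-it\omega}$ lies in $L^2(\nu)$. Since $\lvert e^{-it\omega}\rvert\equiv 1$, this amounts to showing that $\nu$ is a \emph{finite} measure.

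First I would invoke Lemma~\ref{lemma:r-hat-L1} to obtain $\widehat{r}\in L^1(\rbb)$, so that $\nu(\rbb)=\int_\rbb\widehat{r}(\omega)\d\omega<\infty$ and hence $e^{-it\omega}\in L^2(\nu)$ for every $t$; equivalently, one may cite directly the ``if'' direction of Lemma~\ref{lem:finite-nu}. Next I would record that the resulting family $\{V(t)\}_{t\in\rbb}$ is a genuine mean-square continuous stationary process: by Theorem~\ref{thm:spectral} (applied exactly as in the proof of Lemma~\ref{lem:finite-nu}) it admits the spectral representation $V(t)=\int_\rbb e^{it\omega}\,\xi(\d\omega)$, with $\xi$ the random measure attached to $\nu$, so stationarity is immediate, and mean-square continuity follows from
\begin{displaymath}
\E{\lvert V(t+h)-V(t)\rvert^2}=\int_\rbb \lvert e^{ih\omega}-1\rvert^2\,\nu(\d\omega)\ \longrightarrow\ 0\quad\text{as } h\to 0,
\end{displaymath}
by the Dominated Convergence Theorem, using the bound $\lvert e^{ih\omega}-1\rvert^2\le 4$ and the finiteness of $\nu$.

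I do not expect any real obstacle in this proposition: all of the analytic content has been front-loaded into Lemma~\ref{lemma:r-hat-L1} (the decay of the numerator $\widehat{K}$ against the $m^2\omega^2$ growth of $\lvert mi\omega+\widehat{K^+}(\omega)\rvert^2$ at infinity, together with the behavior of $\widehat{r}$ near $\omega=0$ in both the $K\in L^1$ and the non-integrable cases, the latter via Proposition~\ref{prop:fourier.taube}), and the present statement is essentially a bookkeeping corollary of that estimate combined with Lemma~\ref{lem:finite-nu}. The only point worth stating carefully is that $m>0$ is exactly what makes $\nu$ finite here — without it $\widehat{r}$ need only be bounded, not integrable, which is why the $m=0$ cases are treated separately and the velocity is then understood only as a stationary random distribution.
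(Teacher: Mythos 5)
Your argument matches the paper's proof exactly: the paper likewise invokes Lemma~\ref{lemma:r-hat-L1} to get $\widehat{r}\in L^1(\rbb)$, then Lemma~\ref{lem:finite-nu} to conclude that the finite spectral measure makes $V(t)=\langle V,\delta_t\rangle$ a well-defined stationary, mean-square continuous Gaussian process. Your added details (the $\lvert e^{-it\omega}\rvert\equiv 1$ observation and the explicit DCT check of mean-square continuity) are correct and simply unpack what Lemma~\ref{lem:finite-nu} already guarantees.
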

\begin{proof}
By Lemma \ref{lemma:r-hat-L1}, $\widehat{r}$ belongs to $L^1$. In view of Lemma \ref{lem:finite-nu}, the spectral measure $\nu(\d\omega)=\widehat{r}(\omega)\d\omega$ is finite, which implies that $V(t)=\langle V,\delta_t\rangle$ is indeed a stationary, mean-square continuous Gaussian process.
\end{proof}

In order to establish the regularity of a Gaussian process, we shall employ the following classic lemmas from Chapter 9.3, \cite{cramer1967stationary}. 
\begin{lemma}\label{lem1}
If a real stationary Gaussian process $\xi(t)$ with covariance function $k(t)=\int_\rbb e^{it\omega}\nu(\d\omega)$ satisfies 
\[\int_0^\infty \left[\log(1+\omega)\right]^a \nu(\d\omega)<\infty,\]
for some $a>3$, then $\xi(t)$ is equivalent to a process $\eta(t)$ which a.s. is continuous.
\end{lemma}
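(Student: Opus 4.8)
This is the classical Cram\'er--Leadbetter sample-continuity criterion, and the plan is a standard dyadic chaining argument together with the Borel--Cantelli lemma; the only point that requires care is extracting a quantitative decay rate for the incremental variance from the logarithmic-moment hypothesis on $\nu$. Observe first that $\nu$ is necessarily finite, since $k(0)=\nu(\rbb)<\infty$, and that by stationarity it suffices to construct a continuous modification on $[0,1]$ and then patch the resulting versions over the intervals $[j,j+1]$, $j\in\zbb$. By the spectral representation (Theorem~\ref{thm:spectral}) we may write $\xi(t)=\int_\rbb e^{it\omega}\,\zeta(\d\omega)$ for an orthogonal-increment process $\zeta$ with control measure $\nu$, so that the canonical pseudometric of $\xi$ is governed by
\begin{equation*}
\sigma^2(h):=\E{|\xi(t+h)-\xi(t)|^2}=\int_\rbb 2\bigl(1-\cos(h\omega)\bigr)\,\nu(\d\omega).
\end{equation*}

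\textbf{First}, I would prove that $\sigma^2(h)\le C\,\bigl(\log(1/h)\bigr)^{-a}$ for all small $h>0$. Using $2(1-\cos x)\le\min(x^2,4)$ and cutting the integral at $|\omega|=1/h$,
\begin{equation*}
\sigma^2(h)\le h^2\!\int_{|\omega|\le 1/h}\omega^2\,\nu(\d\omega)+4\,\nu\bigl(\{|\omega|>1/h\}\bigr),
\end{equation*}
where the tail term is at most $M\,\bigl(\log(1+1/h)\bigr)^{-a}$ by Chebyshev's inequality, with $M:=\int_\rbb\bigl(\log(1+|\omega|)\bigr)^a\nu(\d\omega)<\infty$. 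For the low-frequency term one separates off $|\omega|\le 1$, which contributes at most $h^2\nu(\rbb)$ (this is the place where finiteness of $\nu$ is used, since $\omega^2\bigl(\log(1+|\omega|)\bigr)^{-a}$ blows up at the origin when $a>2$), while on $1<|\omega|\le 1/h$ one writes $\omega^2=\omega^2\bigl(\log(1+\omega)\bigr)^{-a}\cdot\bigl(\log(1+\omega)\bigr)^a$ and uses that $\omega\mapsto\omega^2\bigl(\log(1+\omega)\bigr)^{-a}$ is eventually increasing, so its supremum over this range is bounded by $C_*(1/h)^2\bigl(\log(1+1/h)\bigr)^{-a}$ once $h$ is small; multiplying by $h^2$ and by $M$ gives the claim. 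Consequently $\sigma(2^{-n})\le c_1 n^{-a/2}$ for all large $n$.

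\textbf{Then}, set $D_n=\{k2^{-n}:0\le k\le 2^n\}$, $D=\bigcup_n D_n$, put $\delta_n=c_2 n^{-(a-1)/2}$, and consider
\begin{equation*}
A_n:=\bigcup_{k=0}^{2^n-1}\bigl\{\,|\xi((k+1)2^{-n})-\xi(k2^{-n})|>\delta_n\,\bigr\}.
\end{equation*}
The Gaussian tail bound $\P{|Z|>u}\le 2e^{-u^2/(2v)}$ for mean-zero $Z$ of variance $v$ gives $\P{A_n}\le 2^{n+1}\exp\bigl(-\delta_n^2 n^a/(2c_1^2)\bigr)=2^{n+1}\exp\bigl(-c_2^2 n/(2c_1^2)\bigr)$, which is summable once $c_2$ is chosen large enough; so by Borel--Cantelli only finitely many $A_n$ occur almost surely. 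On that event the standard telescoping estimate yields, for dyadic $s,t\in D\cap[0,1]$ with $|s-t|<2^{-n}$, the bound $|\xi(t)-\xi(s)|\le 2\sum_{j\ge n}\delta_j$, and this tends to $0$ as $n\to\infty$ \emph{precisely because} $\sum_n\delta_n=c_2\sum_n n^{-(a-1)/2}<\infty$, which is exactly where the hypothesis $a>3$ is used. Hence $\xi|_D$ is a.s.\ uniformly continuous, and so extends to an a.s.\ continuous process $\eta$ on $[0,1]$ (then on $\rbb$ by patching); finally, for each fixed $t$ one takes dyadics $t_m\to t$ and combines a.s.\ continuity of $\eta$ with the mean-square continuity of $\xi$ to conclude $\eta(t)=\xi(t)$ a.s., so $\eta$ is the desired modification.

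The main obstacle is the variance estimate: one must treat the near-origin frequencies separately, using only that $\nu$ is a finite measure, and invoke the eventual monotonicity of $\omega^2\bigl(\log(1+\omega)\bigr)^{-a}$ for the intermediate range. Once the bound $\sigma^2(h)\le C\bigl(\log(1/h)\bigr)^{-a}$ is in hand, the chaining is routine, and the exponent $a>3$ is forced solely by the requirement that $\sum_n n^{-(a-1)/2}$ converge, so that the oscillation modulus $\sum_{j\ge n}\delta_j$ of the dyadic increments vanishes.
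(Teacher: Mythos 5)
The paper does not prove Lemma~\ref{lem1} --- it is quoted without proof as a classical result from Chapter~9.3 of Cram\'er and Leadbetter \cite{cramer1967stationary} --- so there is no in-paper argument to compare against. Your proof is correct and is essentially the standard one: splitting the spectral integral at $|\omega|=1$ and $|\omega|=1/h$, then using Chebyshev for the tail and the eventual monotonicity of $\omega^2\left(\log(1+\omega)\right)^{-a}$ for the intermediate range, correctly produces the incremental-variance bound $\sigma^2(h)\leq C\left(\log(1/h)\right)^{-a}$, after which the dyadic Borel--Cantelli chaining with thresholds $\delta_n\asymp n^{-(a-1)/2}$ closes the argument, with $a>3$ entering only through the convergence of $\sum_n n^{-(a-1)/2}$ exactly as you indicate.
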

\begin{lemma}\label{lem2}
If a real stationary Gaussian process $\xi(t)$ with covariance function $k(t)=\int_\rbb e^{it\omega}\nu(\omega)$ satisfies 
\[\int_0^\infty \omega^2\left[\log(1+\omega)\right]^a \nu(\omega)<\infty,\]
for some $a>3$, then $\xi(t)$ is equivalent to a process $\eta(t)$ which is a.s.~continuously differentiable.
\end{lemma}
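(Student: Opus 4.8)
The plan is to deduce the statement from Lemma \ref{lem1} applied to the mean-square derivative of $\xi$. Since $\int_0^\infty\omega^2[\log(1+\omega)]^a\nu(\d\omega)<\infty$ forces $\int_\rbb\omega^2\nu(\d\omega)<\infty$, the stationary Gaussian process $\xi$ is differentiable in the mean-square sense; its $L^2$-derivative $\xi'(t)$ is again a stationary Gaussian process, and a standard computation with the spectral representation shows that its covariance function is $-k''(t)=\int_\rbb e^{it\omega}\,\omega^2\nu(\d\omega)$, so its spectral measure is $\widetilde\nu(\d\omega):=\omega^2\nu(\d\omega)$.

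First I would apply Lemma \ref{lem1} to $\xi'$. By hypothesis, $\int_0^\infty[\log(1+\omega)]^a\widetilde\nu(\d\omega)=\int_0^\infty\omega^2[\log(1+\omega)]^a\nu(\d\omega)<\infty$ with the same $a>3$, so Lemma \ref{lem1} yields a modification $\zeta(t)$ of $\xi'(t)$ that is a.s.\ continuous. I would also apply Lemma \ref{lem1} to $\xi$ itself: since $\nu$ is a finite measure (Bochner) and $[\log(1+\omega)]^a\le\omega^2[\log(1+\omega)]^a$ for $\omega\ge1$, the hypothesis of Lemma \ref{lem1} holds for $\nu$, giving a modification $\eta(t)$ of $\xi(t)$ that is a.s.\ continuous. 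Discarding the union of the two null sets, we may assume $\eta$ and $\zeta$ are defined everywhere and continuous on a single event of full probability.

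It remains to show $\eta$ is a.s.\ $C^1$ with $\eta'=\zeta$. For each fixed pair $(t,h)$, mean-square calculus gives the $L^2(\Omega)$ identity $\xi(t+h)-\xi(t)=\int_t^{t+h}\xi'(s)\,\d s$ (the mean-square Riemann integral of the $L^2$-derivative recovers the increment), hence $\eta(t+h)-\eta(t)=\int_t^{t+h}\zeta(s)\,\d s$ almost surely for that pair. Both sides are a.s.\ continuous in $(t,h)$ --- the left by continuity of $\eta$, the right by continuity of $\zeta$ --- so letting $(t,h)$ range over a countable dense subset of $\rbb^2$ and invoking continuity upgrades the identity to: almost surely, $\eta(t+h)-\eta(t)=\int_t^{t+h}\zeta(s)\,\d s$ for all $t,h$. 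Since $\zeta$ is continuous, the fundamental theorem of calculus then shows that on this event $\eta$ is continuously differentiable with $\eta'=\zeta$. (This argument is classical and is essentially the content of Chapter 9.3 of \cite{cramer1967stationary}.)

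The step I expect to be the main obstacle --- really the only non-routine point --- is the last one: organizing the null sets so that a single event of full probability carries simultaneously the continuity of $\eta$, the continuity of $\zeta$, and the integral identity for all $(t,h)$, and then applying the fundamental theorem of calculus pathwise. The mean-square differentiability of $\xi$ and the identification of the spectral measure of $\xi'$ are standard, and the domination needed to apply Lemma \ref{lem1} to $\xi$ is immediate.
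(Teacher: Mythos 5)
The paper does not actually prove this lemma: immediately before stating Lemma~\ref{lem1} and Lemma~\ref{lem2} it says ``we shall employ the following classic lemmas from Chapter 9.3, \cite{cramer1967stationary}'' and supplies no argument. Your proposal is therefore being compared against a citation, not a proof. That said, your argument is correct and is essentially the standard derivation found in the cited reference: the spectral measure of the mean-square derivative $\xi'$ is $\omega^2\nu(\d\omega)$, so Lemma~\ref{lem1} gives a continuous modification $\zeta$ of $\xi'$; domination (together with finiteness of $\nu$, which you correctly invoke via Bochner) lets Lemma~\ref{lem1} also produce a continuous modification $\eta$ of $\xi$; and the pathwise identity $\eta(t+h)-\eta(t)=\int_t^{t+h}\zeta(s)\,\d s$ for all $(t,h)$ is obtained by first proving it a.s.\ for each fixed $(t,h)$ from mean-square calculus (the $L^2$-limit of the Riemann sums of $\zeta$ exists pathwise by continuity of $\zeta$, so it must equal the $L^2$-limit $\eta(t+h)-\eta(t)$ a.s.), then upgrading over a countable dense set and closing under continuity. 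The fundamental theorem of calculus then gives $\eta\in C^1$ with $\eta'=\zeta$ on a single event of full probability. No gap.
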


We are now ready to assert the regularity of $V(t)$.
\begin{theorem} \label{thm:V-Continuous}
Under the same hypotheses as Theorem \ref{thm5}, let $V(t)$ be the Gaussian process defined in Proposition \ref{prop:V(t)-mpos-lambdazero}. Then $V(t)$ is continuous.
\end{theorem}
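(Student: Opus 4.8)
The plan is to produce a continuous modification of $V(t)$ by verifying the hypothesis of Lemma \ref{lem1}. Recall from Proposition \ref{prop:V(t)-mpos-lambdazero} that $V(t)=\langle V,\delta_t\rangle$ is a stationary, mean-square continuous Gaussian process whose (finite, even) spectral measure is $\nu(\d\omega)=\widehat{r}(\omega)\d\omega$, so its covariance function is $r(t)=\int_\rbb e^{it\omega}\widehat{r}(\omega)\d\omega$ with $\widehat{r}$ as in \eqref{r.hat}. By Lemma \ref{lem1} it therefore suffices to check that $\int_0^\infty [\log(1+\omega)]^a\,\widehat{r}(\omega)\d\omega<\infty$ for some $a>3$; I would fix $a=4$ and split the integral at $\omega=1$.

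For the part over $[0,1]$ the weight $[\log(1+\omega)]^4$ is bounded, and from the proof of Lemma \ref{lemma:r-hat-L1} the function $\widehat{r}$ is continuous on $(0,\infty)$ with a finite limit as $\omega\to 0^+$ (namely $1/(\pi\int_0^\infty K)$ when $K\in L^1$, and $0$ when $K\notin L^1$ but satisfies \eqref{cond2b}); hence $\widehat{r}$ is bounded on $[0,1]$ and this contribution is finite. For the tail $\omega\geq 1$, Lemma \ref{lem:fourier} gives $\Kcos(\omega)\to 0$ and $\Ksin(\omega)\to 0$ as $\omega\to\infty$, so $\Kcos$ is bounded on $[1,\infty)$ and $|m\omega-\Ksin(\omega)|\geq m\omega/2$ for all sufficiently large $\omega$. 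Dropping the nonnegative term $[\Kcos(\omega)]^2$ in the denominator of \eqref{r.hat} then yields
\[
\widehat{r}(\omega)\;\leq\;\frac{1}{\pi}\,\frac{\Kcos(\omega)}{[m\omega-\Ksin(\omega)]^2}\;\leq\;\frac{C}{\omega^2}
\]
for some constant $C=C(m,\sup_{\omega\geq 1}\Kcos(\omega))$ and all large $\omega$. Since $[\log(1+\omega)]^4/\omega^2$ is integrable on $[1,\infty)$, the tail contribution is finite as well.

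Adding the two estimates gives $\int_0^\infty [\log(1+\omega)]^4\,\widehat{r}(\omega)\d\omega<\infty$, and Lemma \ref{lem1} then produces a process equivalent to $V(t)$ that is almost surely continuous. There is essentially no obstacle in this argument: all the substantive analytic input — finiteness of $\nu$, boundedness of $\widehat{r}$ near the origin, and the $O(\omega^{-2})$ decay at infinity — has already been assembled for Lemma \ref{lemma:r-hat-L1}, and the only new observation needed is the elementary fact that the logarithmic weight is absorbed by that quadratic decay. The one mild point to keep track of is that in the non-integrable case $\widehat{r}(0^+)=0$, which only helps; no lower bound on $\widehat{r}$ is ever required.
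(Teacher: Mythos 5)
Your proof is correct and follows essentially the same route as the paper: invoke Lemma \ref{lem1}, note the spectral density $\widehat{r}$ is bounded near the origin (a byproduct of the analysis in Lemma \ref{lemma:r-hat-L1}), and use the $m^2\omega^2$ term in the denominator of \eqref{r.hat} together with $\Kcos(\omega),\Ksin(\omega)\to 0$ from Lemma \ref{lem:fourier} to get $\widehat{r}(\omega)=O(\omega^{-2})$ at infinity, which absorbs the logarithmic weight. The only difference is that you make the split at $\omega=1$ and the choice $a=4$ explicit, whereas the paper states the tail bound directly; there is no substantive divergence.
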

\begin{proof}
The continuity of $V(t)$ will follow from Lemma \ref{lem1} if it holds that
\[\int_0^\infty \left[\log(1+\omega)\right]^a \widehat{r}(\omega) \d\omega <\infty, \]
where $a>3$ and $\rhat$ is defined as in \eqref{r.hat}. The only issue here is when $\omega$ tends to infinity. However, for any $a>3$, we note that
\begin{displaymath}
2\pi\left[\log(1+\omega)\right]^a\widehat{r}(\omega) = \frac{2\left[\log(1+\omega)\right]^a\Kcos(\omega)}
{\left[\Kcos(\omega) \right]^2
+\omega^2\left[m- \frac{1}{\omega}\Ksin(\omega) \right]^2}.
\end{displaymath}
In view of Lemma \ref{lem:fourier}, $\lim_{\omega\to\infty}\Kcos(\omega)=
\lim_{\omega\to\infty}\Ksin(\omega) =0$. Hence, when $\omega$ is large, $\left[\log(1+\omega)\right]^a\widehat{r}(\omega)$ is dominated by $\left[\log(1+\omega)\right]^a/\omega^2$ which is integrable. We therefore conclude that $\left[\log(1+\omega)\right]^a \widehat{r}(\omega)\in L^1[0,\infty).$
\end{proof}

As a consequence of $V(t)$ being continuous, we immediately obtain the following.

\begin{corollary} \label{cor:X(t)-diff} Under the same hypotheses as Theorem \ref{thm5}, $X(t)$ is a.s.~differentiable where $X(t)=\int_0^t V(s)\d s$.
\end{corollary}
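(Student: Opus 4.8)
The plan is to reduce the statement to the classical fundamental theorem of calculus applied path by path, using the representation $X(t) = \int_0^t V(s)\,\d s$ together with the a.s.\ continuity of $V$ proved in Theorem~\ref{thm:V-Continuous}.

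First I would recall that by Lemma~\ref{lemma:r-hat-L1} the spectral density $\widehat r$ lies in $L^1(\rbb)$, so the spectral measure $\nu(\d\omega) = \widehat r(\omega)\,\d\omega$ is finite. Lemma~\ref{lem:finite-nu} then gives two facts: the velocity process $V(t) = \la V,\delta_t\ra$ is a well-defined stationary Gaussian process, and $X(t) = \la V, 1_{[0,t]}\ra = \int_0^t V(s)\,\d s$. Next I would pass to the modification $\widetilde V$ of $V$ that is continuous almost surely (Theorem~\ref{thm:V-Continuous}), and set $\widetilde X(t) := \int_0^t \widetilde V(s)\,\d s$. On the full-probability event $\Omega_0$ on which $s\mapsto \widetilde V(s,\omega)$ is continuous, the integrand is locally Riemann integrable and the fundamental theorem of calculus yields that $t\mapsto \widetilde X(t,\omega)$ is continuously differentiable with $\widetilde X'(t,\omega) = \widetilde V(t,\omega)$. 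Finally, for each fixed $t$ a Fubini argument shows $\int_0^t \widetilde V(s)\,\d s = \int_0^t V(s)\,\d s = X(t)$ almost surely, so $\widetilde X$ is a modification of $X$; hence $X$ admits an a.s.\ continuously differentiable version, which is exactly the assertion (with derivative $V$).

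I do not expect a genuine obstacle here. The only point that deserves a word of care is the upgrade of the identity $X(t) = \int_0^t V(s)\,\d s$ from holding a.s.\ for each fixed $t$ to holding simultaneously for all $t$ on a single event of full probability; this is handled precisely by working with the continuous modification $\widetilde V$ and defining $\widetilde X$ as its pathwise Riemann integral, after which everything is the classical one-dimensional fundamental theorem of calculus.
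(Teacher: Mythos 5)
Your proposal is correct and follows exactly the route the paper intends: the paper presents this corollary as an immediate consequence of Theorem~\ref{thm:V-Continuous} (continuity of $V$) together with the identity $X(t)=\int_0^t V(s)\,\d s$ from Lemma~\ref{lem:finite-nu}, relying on the fundamental theorem of calculus applied pathwise, and offers no further proof. Your write-up is just a careful spelling-out of that reasoning, with the one genuinely worthwhile addition being the explicit handling of the modification issue (passing to $\widetilde V$ and checking that $\widetilde X$ is a modification of $X$), which the paper glosses over.
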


We finally assert the differentiablity of $V(t)$. 
\begin{theorem} \label{thm:V-differentiable}
Under the same hypotheses as Theorem \ref{thm5}, let $V(t)$ be as in Proposition \ref{prop:V(t)-mpos-lambdazero}. Assume further that $K$ is positive definite and that for some $b>3$
\begin{equation} \label{cond:differentiability}
K(0)-K(t)=O\left(\left|\log t \right|^{-b}\right),\ t\to 0^+.
\end{equation}
Then the Gaussian process $V(t)$ is a.s.~continuously differentiable.
\end{theorem}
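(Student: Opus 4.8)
The plan is to reduce everything to Lemma~\ref{lem2}: it suffices to produce some $a>3$ with
\[
\int_0^\infty \omega^2\big[\log(1+\omega)\big]^a\,\widehat r(\omega)\,\d\omega<\infty ,
\]
where $\widehat r$ is the spectral density \eqref{r.hat} of $V(t)$ (well-defined and finite by Lemma~\ref{lemma:r-hat-L1} and Proposition~\ref{prop:V(t)-mpos-lambdazero}). On any bounded $\omega$-interval the integrand is harmless exactly as in the proof of Lemma~\ref{lemma:r-hat-L1}: $\widehat r$ is bounded near the origin and continuous on $(0,\infty)$ by Lemma~\ref{lem:fourier}, while $\omega^2[\log(1+\omega)]^a$ is bounded there. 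For large $\omega$, Lemma~\ref{lem:fourier} gives $\Kcos(\omega),\Ksin(\omega)\to 0$, so the denominator of \eqref{r.hat} eventually exceeds $(m\omega/2)^2$ and hence
\[
\omega^2\widehat r(\omega)\ \le\ \frac{4}{\pi m^2}\,\Kcos(\omega)\qquad\text{for all large }\omega .
\]
So the whole problem collapses to showing $\int^\infty [\log(1+\omega)]^a\,\Kcos(\omega)\,\d\omega<\infty$ for some $a\in(3,b)$, using the hypothesis \eqref{cond:differentiability}.

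The decisive step is a tail estimate for $\Kcos$. Since $K$ is assumed positive definite (hence continuous and finite at the origin, which is also forced by \eqref{cond:differentiability}) and obeys \eqref{cond1b} (part of Assumption~\ref{a:K}), Proposition~\ref{prop:Fourier-Inverse} applies and, using that $\Kcos$ is even and nonnegative by Condition~\eqref{cond1d}, gives
\[
K(t)=\frac{1}{\pi}\int_\rbb \Kcos(\omega)e^{it\omega}\,\d\omega=\frac{2}{\pi}\int_0^\infty \Kcos(\omega)\cos(t\omega)\,\d\omega ,
\]
so in particular $\Kcos\in L^1(\rbb)$ with $\int_0^\infty\Kcos=\tfrac\pi2 K(0)$, and
\[
K(0)-K(t)=\frac{2}{\pi}\int_0^\infty\big(1-\cos(t\omega)\big)\Kcos(\omega)\,\d\omega .
\]
Invoking the elementary lower bound $1-\cos x\ge c_0\min\{x^2,1\}$ (valid for all $x$, with $c_0=2/\pi^2$) together with $\Kcos\ge 0$ yields, upon writing $\Psi(\omega_0):=\int_{\omega_0}^\infty\Kcos(\omega)\,\d\omega$,
\[
K(0)-K(t)\ \ge\ \frac{2c_0}{\pi}\int_{1/t}^\infty \Kcos(\omega)\,\d\omega\ =\ \frac{2c_0}{\pi}\,\Psi(1/t),
\]
so taking $t=1/\omega_0$ and applying \eqref{cond:differentiability} gives $\Psi(\omega_0)=O\big((\log\omega_0)^{-b}\big)$ as $\omega_0\to\infty$; note also that $\Psi$ is bounded and nonincreasing with $\Psi'=-\Kcos$.

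It then remains to integrate by parts. For $a<b$,
\[
\int_1^\infty\big[\log(1+\omega)\big]^a\Kcos(\omega)\,\d\omega
=\big[\log 2\big]^a\Psi(1)+a\int_1^\infty\frac{\big[\log(1+\omega)\big]^{a-1}}{1+\omega}\,\Psi(\omega)\,\d\omega ,
\]
the boundary term at $\infty$ vanishing since $[\log(1+\omega)]^a\Psi(\omega)=O((\log\omega)^{a-b})\to 0$; for large $\omega$ the last integrand is $O\big((\log\omega)^{a-1-b}/\omega\big)$, and $\int^\infty (\log\omega)^{a-1-b}\,\d\omega/\omega<\infty$ exactly when $a<b$. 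Since $b>3$ we may fix $a\in(3,b)$, whence $\int_0^\infty\omega^2[\log(1+\omega)]^a\widehat r(\omega)\,\d\omega<\infty$ and Lemma~\ref{lem2} furnishes an a.s.\ continuously differentiable modification of $V(t)$. The main obstacle is the second paragraph: converting the near-origin modulus of continuity of $K$ into \emph{integrated} high-frequency decay of the spectral density. It is essential to use the quadratic bound $1-\cos x\gtrsim\min\{x^2,1\}$ (not a crude band estimate) and then integration by parts, since this is precisely what makes the sharp threshold $b>3$ — matching the exponent demanded by Lemma~\ref{lem2} — rather than a larger one.
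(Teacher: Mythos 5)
Your reduction is essentially the paper's: both arguments bring the question down, via $\omega^2\widehat r(\omega)\le \tfrac{4}{\pi m^2}\Kcos(\omega)$ for large $\omega$, to showing $\int_0^\infty[\log(1+\omega)]^a\Kcos(\omega)\,\d\omega<\infty$ for some $a\in(3,b)$, and then apply Lemma~\ref{lem2}. The difference is that the paper obtains that integrability by citing Lemma~2 of Section~9.3 in Cram\'er--Leadbetter as a black box, whereas you attempt to reprove that step from the modulus-of-continuity hypothesis \eqref{cond:differentiability}. Making that step explicit is reasonable, and the integration-by-parts argument with $\Psi(\omega_0)=\int_{\omega_0}^\infty\Kcos(\omega)\,\d\omega$ is exactly the right endgame, \emph{provided} one has the tail bound $\Psi(\omega_0)=O\big((\log\omega_0)^{-b}\big)$.

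That tail bound is where there is a genuine gap. The claimed pointwise inequality $1-\cos x\ge c_0\min\{x^2,1\}$ for all $x$ is false: at $x=2\pi$ the left side is $0$ while the right side is $c_0>0$, and more generally $1-\cos(t\omega)$ vanishes whenever $t\omega\in 2\pi\zbb$, so there is no positive pointwise lower bound on $1-\cos(t\omega)$ uniformly over $\omega\ge 1/t$. Consequently the step $K(0)-K(t)\ge \tfrac{2c_0}{\pi}\Psi(1/t)$ does not follow. The standard repair is to average in $t$ so that the oscillatory factor is replaced by the Fej\'er-type kernel: integrating the inversion formula over $t\in[0,h]$ and using Fubini (justified since $\Kcos\ge 0$ is integrable and $0\le 1-\cos(t\omega)\le 2$) gives
\begin{equation*}
\frac{1}{h}\int_0^h\big[K(0)-K(t)\big]\,\d t
=\frac{2}{\pi}\int_0^\infty\left(1-\frac{\sin(h\omega)}{h\omega}\right)\Kcos(\omega)\,\d\omega
\ \ge\ \frac{2(1-\sin 1)}{\pi}\,\Psi\!\left(\tfrac1h\right),
\end{equation*}
since $1-\tfrac{\sin y}{y}\ge 0$ for all $y$ and $1-\tfrac{\sin y}{y}\ge 1-\sin 1>0$ for $y\ge 1$. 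Because $|\log t|^{-b}$ is increasing on $(0,1)$ and $K(0)-K(t)\ge 0$, hypothesis \eqref{cond:differentiability} bounds the left side by $O\big(|\log h|^{-b}\big)$, which recovers $\Psi(\omega_0)=O\big((\log\omega_0)^{-b}\big)$. With this substitution for your second paragraph, the remainder of the argument is correct and completes the proof.
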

\begin{proof} By Proposition \ref{prop:Fourier-Inverse}, $\widehat{K}$ is integrable and $K$ admits the inverse formula
\[K(t)=\frac{1}{2\pi}\int_\rbb e^{it\omega}\widehat{K}(\omega)\d\omega=\int_0^\infty\cos(t\omega)\frac{\widehat{K}(\omega)}{\pi}\d\omega.\]
In view of Lemma 2 from Section 9.3, \cite{cramer1967stationary}, we deduce that for any $a<b$
\begin{equation}\label{ineq:V-diff-1}
\int_0^\infty |\log(1+\omega)|^a\widehat{K}(\omega)\d\omega<\infty.
\end{equation}
By Proposition \ref{prop:Fourier.S'}\eqref{prop:Fourier.S':a}, the above inequality is equivalent to
\begin{equation}
\label{ineq:V-diff-2}\int_0^\infty |\log(1+\omega)|^a\Kcos(\omega)\d\omega<\infty.
\end{equation}
Now the differentiability of $V(t)$ follows immediately from Lemma \ref{lem2} if we can show
\[\int_0^\infty \omega^2\left[\log(1+\omega)\right]^a \widehat{r}(\omega) \d\omega <\infty, \]
which is the same as
\begin{equation} \label{ineq:V-diff-4}
\int_0^\infty  \frac{\omega^2\left[\log(1+\omega)\right]^a\Kcos(\omega) }
{\left[\Kcos(\omega) \right]^2
+\omega^2\left[m- \frac{1}{\omega}\Ksin(\omega) \right]^2} \d\omega <\infty. 
\end{equation}
On one hand, when $\omega$ is near the origin, the integrand in \eqref{ineq:V-diff-4} is dominated by $\rhat$ which is integrable by virtue of Lemma \ref{lemma:r-hat-L1}. On the other hand, when $\omega$ becomes large, reasoning as in the proof of Theorem \ref{thm5}, we see that, the integrand is dominated by $\left[\log(1+\omega)\right]^a\Kcos(\omega)$, which is also integrable thanks to \eqref{ineq:V-diff-2}. We therefore obtain \eqref{ineq:V-diff-4} which in turns implies the differentiability of $V(t)$. The proof is complete.
\end{proof}

\subsection{Regularity when $m>0$ and $\lambda > 0$} \label{sec:regularity:mpos-lambdapos}
\begin{proposition} \label{prop:V(t)-defined:mpos-lambdapos} Under the same Hypothesis of Theorem \ref{thm:weak-solution-mpos-lambdapos}, let $V$ be the weak solution of \eqref{eq:gle-weak}. Then, the velocity process $V(t)=\langle V,\delta_t\rangle$ is well-defined.
\end{proposition}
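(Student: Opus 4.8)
The plan is to reproduce, almost verbatim, the argument of Proposition~\ref{prop:V(t)-mpos-lambdazero}: once one knows that the spectral density of the weak solution is integrable, Lemma~\ref{lem:finite-nu} does all the work. Concretely, Lemma~\ref{lem:r-hat-L1:mpos-lambdapos} has already established that the function $\rhat$ of \eqref{form:rhat-mpos-lambdapos} belongs to $L^1(\rbb)$, and Theorem~\ref{thm:weak-solution-mpos-lambdapos} identifies the weak solution $V$ as the operator whose spectral measure is $\nu(\d\omega)=\rhat(\omega)\,\d\omega$. Since $\rhat\in L^1(\rbb)$, this $\nu$ is a finite measure.

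With finiteness of $\nu$ in hand I would simply quote Lemma~\ref{lem:finite-nu}: a finite $\nu$ is equivalent to $\F{\delta_t}(\omega)=e^{-it\omega}\in L^2(\nu)$, i.e.~to $\delta_t\in\dom(V)$, so that $V(t):=\langle V,\delta_t\rangle$ is a well-defined random variable for each $t$; the same lemma moreover upgrades this to the statement that $\{V(t)\}_{t\in\rbb}$ is a stationary, mean-square continuous Gaussian process and that $X(t)=\int_0^t V(s)\,\d s$. That completes the proof.

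There is essentially no obstacle beyond what has already been absorbed into Lemma~\ref{lem:r-hat-L1:mpos-lambdapos}. If one wished to make the present statement self-contained, the only point requiring care is that $L^1$-integrability of $\rhat$ uses $m>0$ in a crucial way. Rewriting \eqref{form:rhat-mpos-lambdapos} via Proposition~\ref{prop:Fourier.S'} as
\begin{equation*}
\rhat(\omega)=\frac{1}{2\pi}\cdot\frac{2\lambda+2\beta\Kcos(\omega)}{\bigl[\lambda+\beta\Kcos(\omega)\bigr]^2+\bigl[m\omega-\beta\Ksin(\omega)\bigr]^2},
\end{equation*}
continuity on $(0,\infty)$ together with $\Kcos(\omega),\Ksin(\omega)\to 0$ as $\omega\to\infty$ (Lemma~\ref{lem:fourier}) shows the numerator tends to $2\lambda$ while the denominator grows like $m^2\omega^2$, so $\rhat$ is integrable near infinity; near $\omega=0$ one distinguishes the two cases of Assumption~\ref{a:K}, using $\Kcos(0^+)<\infty$ when $K\in L^1$ and Proposition~\ref{prop:fourier.taube} (which forces $\Kcos(\omega),\Ksin(\omega)\to\infty$, hence $\rhat(\omega)\to 0$) when $K$ satisfies \eqref{cond2b}. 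In either case $\rhat$ is bounded near the origin, so $\rhat\in L^1(\rbb)$ and the conclusion follows as above.
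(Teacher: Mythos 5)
Your proof is correct and follows the paper's argument exactly: invoke Lemma~\ref{lem:r-hat-L1:mpos-lambdapos} to get $\rhat\in L^1(\rbb)$, conclude $\nu(\d\omega)=\rhat(\omega)\,\d\omega$ is finite, and then apply Lemma~\ref{lem:finite-nu} to see that $\delta_t\in\dom(V)$ so $V(t)$ is a well-defined stationary Gaussian process. The optional self-contained paragraph simply re-derives Lemma~\ref{lem:r-hat-L1:mpos-lambdapos} by the same reasoning as Lemma~\ref{lemma:r-hat-L1}, which is what the paper itself references, so there is no divergence in method.
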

\begin{proof} In view of Lemma \ref{lem:finite-nu}, we need to check that the spectral measure $\nu(\d\omega)=\widehat{r}(\omega)\d\omega$ is finite, where $\widehat{r}$ is defined in \eqref{form:rhat-mpos-lambdapos}. This in turns follows immediately from Lemma \ref{lem:r-hat-L1:mpos-lambdapos}.
\end{proof}

We assert that $V(t)$ is always continuous in this case.
\begin{theorem} \label{thm:V(t)-Continuous:mpos-lambdapos} Under the same Hypothesis of Theorem \ref{thm:weak-solution-mpos-lambdapos}, let $V(t)$ be the Gaussian process from Proposition \ref{prop:V(t)-defined:mpos-lambdapos}. Then $V(t)$ is continuous.
\end{theorem}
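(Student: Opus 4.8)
The plan is to mimic the proof of Theorem \ref{thm:V-Continuous}: apply the Cram\'er--Leadbetter continuity criterion (Lemma \ref{lem1}) to the stationary Gaussian process $V(t)=\langle V,\delta_t\rangle$, whose spectral measure is $\nu(\d\omega)=\rhat(\omega)\d\omega$ with $\rhat$ given by \eqref{form:rhat-mpos-lambdapos}. That $\nu$ is a finite measure (so that $V(t)$ is a well-defined mean-square continuous Gaussian process) is already recorded in Lemma \ref{lem:r-hat-L1:mpos-lambdapos} and Proposition \ref{prop:V(t)-defined:mpos-lambdapos}, so the only thing to verify is the log-moment bound
\begin{equation*}
\int_0^\infty \left[\log(1+\omega)\right]^a \rhat(\omega)\,\d\omega < \infty
\end{equation*}
for some (hence every) $a>3$.

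First I would rewrite $\rhat$ in terms of the improper cosine and sine transforms using $\widehat{K}(\omega)=2\Kcos(\omega)$ and $\widehat{K^+}(\omega)=\Kcos(\omega)-i\Ksin(\omega)$ (Proposition \ref{prop:Fourier.S'}):
\begin{equation*}
2\pi\rhat(\omega)=\frac{2\lambda+2\beta\Kcos(\omega)}{\left(\lambda+\beta\Kcos(\omega)\right)^2+\left(m\omega-\beta\Ksin(\omega)\right)^2}.
\end{equation*}
By Lemma \ref{lem:fourier}, $\Kcos,\Ksin$ are continuous on $(0,\infty)$, so $\rhat$ is bounded on compact subsets of $(0,\infty)$. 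Near the origin there are two cases. If $K\in L^1$ then $\Kcos$ is bounded and, since $\lambda>0$, the denominator is bounded below by $\lambda^2>0$, so $\rhat$ stays bounded. If $K\notin L^1$ then by \eqref{cond2b} and Proposition \ref{prop:fourier.taube} we have $\Kcos(\omega)\to\infty$, but then the denominator is at least $(\beta\Kcos(\omega))^2$, so $2\pi\rhat(\omega)\le (2\lambda+2\beta\Kcos(\omega))/(\beta\Kcos(\omega))^2\to 0$. In either case $\rhat$ is bounded on a neighborhood of $0$, and since $[\log(1+\omega)]^a$ is bounded there too, the contribution of any bounded set to the integral is finite.

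For the tail I would invoke Lemma \ref{lem:fourier} once more: $\Kcos(\omega),\Ksin(\omega)\to 0$ as $\omega\to\infty$. Hence the numerator of $2\pi\rhat(\omega)$ tends to $2\lambda$ while the denominator is asymptotic to $m^2\omega^2$ (this is where $m>0$ is used), so $\rhat(\omega)\sim \lambda/(\pi m^2\omega^2)$ and $[\log(1+\omega)]^a\rhat(\omega)$ is eventually dominated by a constant multiple of $[\log(1+\omega)]^a/\omega^2$, which is integrable at infinity. Combining the two regimes yields the log-moment bound for every $a>3$, and Lemma \ref{lem1} then furnishes a continuous modification of $V(t)$.

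I do not expect a genuine obstacle here: relative to the $\lambda=0$ case of Theorem \ref{thm:V-Continuous}, having $\lambda>0$ only simplifies the analysis near the origin, since the denominator of $\rhat$ can no longer degenerate and the positivity of $\Kcos$ (Condition \eqref{cond1d}) is not even needed for this argument. The entire content of the proof is thus the elementary two-sided estimate on $[\log(1+\omega)]^a\rhat(\omega)$.
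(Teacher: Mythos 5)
Your proposal is correct and follows exactly the route the paper intends: the paper's proof of Theorem \ref{thm:V(t)-Continuous:mpos-lambdapos} is literally "similar to that of Theorem \ref{thm:V-Continuous}", and you have spelled out that adaptation — apply Lemma \ref{lem1} to $\rhat$ from \eqref{form:rhat-mpos-lambdapos}, note boundedness near the origin (now easy because $\lambda>0$ floors the denominator), and use $\Kcos,\Ksin\to 0$ with $m>0$ to get $\rhat(\omega)\sim\lambda/(\pi m^2\omega^2)$ so that $[\log(1+\omega)]^a\rhat(\omega)$ is integrable at infinity. Your side remark that $\lambda>0$ simplifies the near-origin analysis and makes strict positivity of $\Kcos$ less essential echoes the paper's own remark following Theorem \ref{thm:weak-solution-mpos-lambdapos}.
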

\begin{proof}
The proof is similar to that of Theorem \ref{thm:V-Continuous}.
\end{proof}

We immediately obtain the differentiability of the particle position process $X(t)$.
\begin{corollary} Under the same Hypothesis of Theorem \ref{thm:weak-solution-mpos-lambdapos}, $X(t)$ is a.s. differentiable where $X(t)=\int_0^t V(s)\d s$.
\end{corollary}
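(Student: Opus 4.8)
The plan is to reduce the statement to the pathwise fundamental theorem of calculus, leveraging the continuity of $V$ that has just been established in this subsection. First I would record that, under the hypotheses of Theorem~\ref{thm:weak-solution-mpos-lambdapos}, the spectral measure $\nu(\d\omega)=\rhat(\omega)\d\omega$ is \emph{finite}: this is immediate from Lemma~\ref{lem:r-hat-L1:mpos-lambdapos}, which gives $\rhat\in L^1(\rbb)$. Finiteness of $\nu$ lets me apply Lemma~\ref{lem:finite-nu}, which yields two things at once — that the velocity process $V(t)=\la V,\delta_t\ra$ is a genuine, mean-square continuous stationary Gaussian process (this is Proposition~\ref{prop:V(t)-defined:mpos-lambdapos}), and, crucially, that the distributionally defined object $X(t)=\la V,1_{[0,t]}\ra$ coincides with the ordinary time integral $\int_0^t V(s)\,\d s$.

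Next I would invoke Theorem~\ref{thm:V(t)-Continuous:mpos-lambdapos}, which provides a modification of $V$ whose sample paths $t\mapsto V(t,\cdot)$ are continuous on $[0,\infty)$ almost surely. Fixing a realization in the full-measure set on which $s\mapsto V(s)$ is continuous, the function $t\mapsto \int_0^t V(s)\,\d s$ is then continuously differentiable on $[0,\infty)$ with derivative $V(t)$, by the classical fundamental theorem of calculus applied to a continuous integrand. Combining this with the identification $X(t)=\int_0^t V(s)\,\d s$ from the previous paragraph shows that $X$ is a.s.\ continuously differentiable with $X'(t)=V(t)$, which is exactly the claim (and parallels Corollary~\ref{cor:X(t)-diff} in the $\lambda=0$ case).

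There is no genuinely hard step here — this is a ``we immediately obtain'' corollary of Theorem~\ref{thm:V(t)-Continuous:mpos-lambdapos}. The one point deserving care is purely bookkeeping: one must make sure the abstract position process $X(t)$ coming from the extended stationary random distribution (Definition~\ref{defn:form:V(t)}) really is the pathwise integral of the velocity rather than merely a formal expression, and that is precisely why finiteness of $\nu$ is needed and why Lemmas~\ref{lem:r-hat-L1:mpos-lambdapos} and~\ref{lem:finite-nu} are the workhorses of the argument; once that identification is in hand, everything reduces to the pathwise fundamental theorem of calculus.
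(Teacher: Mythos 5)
Your argument is correct and matches the route the paper intends: Lemma~\ref{lem:r-hat-L1:mpos-lambdapos} gives $\rhat\in L^1$, hence $\nu$ is finite; Lemma~\ref{lem:finite-nu} then identifies $X(t)$ with $\int_0^t V(s)\,\d s$ pathwise; and Theorem~\ref{thm:V(t)-Continuous:mpos-lambdapos} supplies an a.s.\ continuous version of $V$, so differentiability of $X$ follows from the fundamental theorem of calculus. The paper treats this as immediate (mirroring Corollary~\ref{cor:X(t)-diff}), and your write-up is just a careful unpacking of the same chain.
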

%

\subsection{Regularity when $m=0$ and $\lambda > 0$} \label{sec:regularity:mzero-lambdapos}
\begin{proposition} \label{prop:V(t)-defined:mzero-lambdapos} Under the same Hypothesis of Theorem \ref{thm:weak-solution-mzero-lambdapos}, let $V$ be the weak solution of \eqref{eq:gle-weak}. Then, the velocity process $V(t)=\langle V,\delta_t\rangle$ is not well-defined, but the particle position process $X(t)=\langle V,1_{[0,t]}\rangle$ is.
\end{proposition}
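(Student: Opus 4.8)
The plan is to verify the two claims separately using the characterization of $\text{Dom}(V)$ in terms of membership of the relevant Fourier transform in $L^2(\nu)$, where $\nu(\d\omega)=\rhat(\omega)\d\omega$ and $\rhat$ is the spectral density from \eqref{form:rhat-mzero-lambdapos}. Recall from the discussion in Section \ref{sec:prelim:extension} that $\delta_t\in\text{Dom}(V)$ iff $\F{\delta_t}(\omega)=e^{-it\omega}\in L^2(\nu)$, i.e.\ iff $\nu$ is a finite measure (cf.\ Lemma \ref{lem:finite-nu}), while $1_{[0,t]}\in\text{Dom}(V)$ iff $\F{1_{[0,t]}}(\omega)\in L^2(\nu)$.

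First I would show $V(t)$ is \emph{not} well-defined by showing $\nu$ is infinite. From \eqref{form:rhat-mzero-lambdapos} and Lemma \ref{lem:fourier}, as $\omega\to\infty$ we have $\Kcos(\omega),\Ksin(\omega)\to 0$, hence $\rhat(\omega)\to \frac{2\lambda}{2\pi\lambda^2}=\frac{1}{\pi\lambda}>0$; this is exactly the observation already made in \eqref{ineq:rho-hat-bounded:mzero-lambdapos}. Since $\rhat$ is continuous and bounded away from zero at infinity, $\int_\rbb\rhat(\omega)\d\omega=\infty$, so $\nu$ is not finite and $\delta_t\notin\text{Dom}(V)$.

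Next I would show $X(t)=\langle V,1_{[0,t]}\rangle$ \emph{is} well-defined by checking $\F{1_{[0,t]}}\in L^2(\nu)$. Compute $\F{1_{[0,t]}}(\omega)=\int_0^t e^{-is\omega}\d s = \frac{1-e^{-it\omega}}{i\omega}$ for $\omega\neq 0$, so $|\F{1_{[0,t]}}(\omega)|^2 = \frac{2(1-\cos(t\omega))}{\omega^2}\leq \min\{t^2,\,4/\omega^2\}$. Near the origin this is bounded (by $t^2$) and $\rhat$ is bounded near zero by \eqref{ineq:cv-gle7-1a}-type reasoning — indeed $\frac{\pi\beta}{2}\rhat$ reduces there to $\frac{\Kcos}{\Kcos^2+\Ksin^2}\leq \frac{1}{\Kcos(0^+)}$, finite or zero according as $K$ is integrable or not — so the integrand is locally integrable around $0$. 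At infinity, $|\F{1_{[0,t]}}(\omega)|^2\leq 4/\omega^2$ while $\rhat(\omega)\leq \frac{1}{\pi\lambda}$ is bounded (again by \eqref{ineq:rho-hat-bounded:mzero-lambdapos}), so the product is dominated by $\frac{C}{\omega^2}$, which is integrable at infinity. Hence $\int_\rbb |\F{1_{[0,t]}}(\omega)|^2\rhat(\omega)\d\omega<\infty$, i.e.\ $1_{[0,t]}\in\text{Dom}(V)$ and $X(t)=\langle V,1_{[0,t]}\rangle$ is a well-defined mean-zero Gaussian random variable. (This matches the remark after Definition \ref{defn:form:V(t)} that $X(t)$ can be defined without $V(t)$.)

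The only genuinely delicate point is the behavior near $\omega=0$: one must be sure that $\rhat$ does not blow up too fast there. This is controlled exactly by the positivity of $\Kcos$ from Condition \eqref{cond1d} together with the boundedness displayed in \eqref{ineq:rho-hat-bounded:mzero-lambdapos}, so no new argument beyond what is already in the paper is needed; the rest is the routine estimate above. I would close by noting, as in Lemma \ref{lem:finite-nu}'s proof via Theorem \ref{thm:spectral}, that $X(t)=\int_0^t V(s)\,\d s$ holds in the appropriate distributional sense even though the integrand process $V(s)$ is only a stationary random distribution.
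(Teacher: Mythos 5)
Your proposal is essentially correct and follows the same route as the paper: first observe $\rhat(\omega)\to\frac{1}{\pi\lambda}>0$ as $\omega\to\infty$ so that $\nu(\d\omega)=\rhat(\omega)\d\omega$ is infinite, hence $V(t)$ is not well-defined by Lemma~\ref{lem:finite-nu}; then verify $1_{[0,t]}\in L^2(\nu)$ using the bound from \eqref{ineq:rho-hat-bounded:mzero-lambdapos}. One small slip: in your near-origin discussion you invoke the identity $\frac{\pi\beta}{2}\rhat(\omega)=\frac{\Kcos(\omega)}{\Kcos(\omega)^2+\Ksin(\omega)^2}$, but that is the $m=\lambda=0$ formula \eqref{form:rhat-mzero-lambdazero}; in the present case $\lambda>0$, $\rhat$ is given by \eqref{form:rhat-mzero-lambdapos}, and that reduction does not hold. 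The slip is harmless here because the bound $\rhat(\omega)\le\frac{1}{\pi\lambda}$ from \eqref{ineq:rho-hat-bounded:mzero-lambdapos} is global in $\omega$ (its derivation only uses $\Kcos\geq 0$ and the $\lambda^2$ term in the denominator), so there is no need to split near and far from the origin at all; the paper applies the uniform bound once and is done via $\int_\rbb\frac{1-\cos(t\omega)}{\omega^2}\d\omega<\infty$. You arrive at the same conclusion, but the near-zero argument as written both cites the wrong case and is unnecessary.
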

\begin{proof} We recall that the spectral density $\rhat(\omega)$ from \eqref{form:rhat-mzero-lambdapos} satisfies $\lim_{\omega\to\infty}\rhat(\omega)\in(0,\infty)$. This implies that $\rhat\notin L^1(\rbb)$. In view of Lemma \ref{lem:finite-nu}, $V(t)$ is not well-defined since the spectral measure $\nu(\d\omega)=\widehat{r}(\omega)\d\omega$ is not finite. However, $X(t)=\langle V,1_{[0,t]}\rangle$ is well-defined since $1_{[0,t]}\in L^2(\rhat)$. To see that, we invoke Inequality \eqref{ineq:rho-hat-bounded:mzero-lambdapos} to estimate
\begin{displaymath}
\int_\rbb \left|\widehat{1_{[0,t]}}(\omega)\right|^2\rhat(\omega)\d\omega=2\int_\rbb \frac{1-\cos(t\omega)}{\omega^2}\rhat(\omega)\d\omega<\frac{2}{\pi\lambda}\int_\rbb \frac{1-\cos(t\omega)}{\omega^2}\d\omega<\infty.
\end{displaymath}
\end{proof}

Since $V(t)$ is not well-defined, it is not certain if $X(t)$ is differentiable. We however are able to assert the continuity of $X(t)$.
\begin{theorem} \label{thm:X(t)-continuous:mzero-lambdapos} Under the same hypotheses as Theorem \ref{thm:weak-solution-mzero-lambdapos}, $X(t)$ is a.s.~continuous.
\end{theorem}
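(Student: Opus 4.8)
The plan is to produce an almost surely continuous version of $X(t)$ by the Kolmogorov--Chentsov criterion; the one ingredient that makes this work is the uniform bound $\rhat(\omega)\le \tfrac{1}{\pi\lambda}$ established in \eqref{ineq:rho-hat-bounded:mzero-lambdapos}, which replaces the stationary-process regularity lemmas (Lemmas \ref{lem1} and \ref{lem2}) used in the $m>0$ cases — those do not apply here since $X$, unlike $V$, is not stationary.

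First I would record the spectral representation of the increments. As in the proof of Proposition \ref{prop:V(t)-defined:mzero-lambdapos}, $1_{[0,t]}\in L^2(\rhat)=\dom(V)$, hence $1_{[s,t]}=1_{[0,t]}-1_{[0,s]}$ is as well for $0\le s\le t$, and by \eqref{eqn:V.op} and \eqref{form:X(t)-a},
\[
X(t)-X(s)=\langle V,1_{[s,t]}\rangle=\int_\rbb \overline{\widehat{1_{[s,t]}}(\omega)}\,\xi(\d\omega),
\]
where $\xi$ is the random measure associated with $\nu(\d\omega)=\rhat(\omega)\,\d\omega$. In particular the finite-dimensional distributions of $\{X(t)\}_{t\ge 0}$ are jointly Gaussian with mean zero, and the $L^2$-isometry for this stochastic integral, combined with $\widehat{1_{[s,t]}}(\omega)=(e^{-is\omega}-e^{-it\omega})/(i\omega)$, yields
\[
\E{|X(t)-X(s)|^2}=\int_\rbb\bigl|\widehat{1_{[s,t]}}(\omega)\bigr|^2\rhat(\omega)\,\d\omega
=2\int_\rbb\frac{1-\cos\bigl((t-s)\omega\bigr)}{\omega^2}\,\rhat(\omega)\,\d\omega.
\]

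Next I would invoke $\rhat(\omega)\le\tfrac{1}{\pi\lambda}$ — legitimate here because $\lambda>0$ and $\Kcos\ge 0$ by Condition \eqref{cond1d} — together with the classical identity $\int_\rbb\frac{1-\cos(a\omega)}{\omega^2}\,\d\omega=\pi|a|$, to get the linear increment bound $\E{|X(t)-X(s)|^2}\le\tfrac{2}{\lambda}|t-s|$. Since $X(t)-X(s)$ is Gaussian, all its even moments are controlled: $\E{|X(t)-X(s)|^{2p}}=c_p\bigl(\E{|X(t)-X(s)|^2}\bigr)^p\le c_p(2/\lambda)^p|t-s|^p$ for every integer $p\ge 1$. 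Choosing $p=2$ and applying the Kolmogorov--Chentsov continuity theorem on each interval $[0,T]$ (continuity being a local property) then produces a modification of $X$ that is almost surely continuous — in fact locally H\"older of any exponent below $1/2$.

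I do not expect a real obstacle in this case; the only point requiring care is the justification of the isometry computation, which reduces to the membership $1_{[0,t]}\in L^2(\rhat)$ already checked for Proposition \ref{prop:V(t)-defined:mzero-lambdapos}, so that $1_{[s,t]}\in L^2(\rhat)$ too. The single place where the hypothesis $\lambda>0$ is used decisively is the global boundedness of $\rhat$; when $\lambda=0$ this estimate fails near $\omega\to\infty$, which is exactly why that case is handled separately under the stronger Assumption \ref{cond:cv}. Everything else is the standard Gaussian regularity machinery.
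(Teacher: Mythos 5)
Your proof is correct and follows the same overall strategy as the paper — establish the $L^2$ increment estimate from the spectral formula, use the uniform bound $\rhat\le 1/(\pi\lambda)$ from \eqref{ineq:rho-hat-bounded:mzero-lambdapos}, and then invoke a Kolmogorov-type continuity criterion (the paper cites Proposition 3.18 of \cite{hairer2009introduction}, which is the Gaussian version). The one genuine difference is in how you finish the $L^2$ estimate: the paper splits the integral $\int_0^\infty$ at $\omega=1$ and applies the two elementary inequalities $1-\cos x\le x^2/2$ and $1-\cos x\le c_\eta x^\eta$ (with $\eta=1/2$), obtaining $\E{|X(t)-X(s)|^2}\le c|t-s|^{1/2}$; you instead pull the bound on $\rhat$ out of the integral and evaluate $\int_\rbb\frac{1-\cos(a\omega)}{\omega^2}\d\omega=\pi|a|$ exactly, which yields the sharper $\E{|X(t)-X(s)|^2}\le\frac{2}{\lambda}|t-s|$. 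Your version is cleaner, avoids the auxiliary trigonometric inequality, and gives the better local H\"older exponent (any order below $1/2$, versus below $1/4$ if one bootstraps from the paper's $\kappa=1/2$ estimate at the same $p$). Both are complete proofs; yours is simply tighter.
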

\begin{proof}
In view of Proposition 3.18, \cite{hairer2009introduction}, it suffices to show that for fixed $T$, there exists $\kappa>0$ s.t. for all $0\leq s<t\leq T$, 
\begin{equation} \label{ineq:X(t)-Kolmogorov}
\Enone{\left|X(t)-X(s)\right|^2}\leq c_\kappa|t-s|^\kappa,
\end{equation}
where $c_\kappa>0$ is a constant. A straightforward calculation yields
\begin{equation} \label{ineq:X(t)-Kolmogorov-1}
\Enone{\left|X(t)-X(s)\right|^2}=\int_\rbb \left|\widehat{1_{[s,t]}}(\omega)\right|^2\rhat(\omega)\d\omega=4\int_0^\infty \frac{1-\cos\left((t-s)\omega\right)}{\omega^2}\rhat(\omega)\d\omega.
\end{equation}
Here we shall employ two elementary inequalities: for all $x\in\rbb$,
\begin{equation}\label{ineq:x-cont-1a}
1-\cos(x)\leq \frac{x^2}{2},
\end{equation}
 and that for every $\eta\in(0,1)$, there exists $c_
\eta>0$ such that for all $x$,
\begin{equation} \label{ineq:x-cont-1}
1-\cos(x)\leq c_\eta x^\eta.
\end{equation}
We estimate the last term of \eqref{ineq:X(t)-Kolmogorov-1} using \eqref{ineq:x-cont-1} with $\eta=1/2$,
\begin{align*}
\MoveEqLeft[5]\int_0^\infty \frac{1-\cos\left((t-s)\omega\right)}{\omega^2}\rhat(\omega)\d\omega \\&= \int_0^1 \frac{1-\cos\left((t-s)\omega\right)}{\omega^2}\rhat(\omega)\d\omega+\int_1^\infty \frac{1-\cos\left((t-s)\omega\right)}{\omega^2}\rhat(\omega)\d\omega\\
&\leq |t-s|^2\int_0^1\rhat(\omega)\d\omega+c_{1/2}|t-s|^{1/2}\int_1^\infty \frac{1}{\omega^{3/2}}\rhat(\omega)\d\omega,
\end{align*}
where in the last implication, we use \eqref{ineq:x-cont-1a} on the first term and \eqref{ineq:x-cont-1} with $\eta=1/2$ on the second term. We finally recall the fact that $\rhat$ is bounded by $1/{\pi\lambda}$ from Inequality \eqref{ineq:rho-hat-bounded:mzero-lambdapos} to obtain \eqref{ineq:X(t)-Kolmogorov} with $\kappa=1/2$. The proof is thus complete.
\end{proof}

\subsection{Regularity when $m = 0$ and $\lambda = 0$} In this situation, once again $V(t)$ is not well-defined but $X(t)$ is. We therefore are only able to investigate the continuity of $X(t)$. We begin by the following proposition.
\begin{proposition}\label{prop:V(t)-defined:mzero-lambdazero} Under the same hypotheses as Theorem \ref{thm:weak-solution:mzero-lambdazero}, let $V$ be the weak solution of \eqref{eq:gle-weak}. Then, $V(t)=\langle V,\delta_t\rangle$ is not well-defined, but $X(t)=\langle V,1_{[0,t]}\rangle$ is.
\end{proposition}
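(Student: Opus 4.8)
The plan is to mirror the two-part structure of Proposition~\ref{prop:V(t)-defined:mzero-lambdapos}: the assertion that $X(t)=\langle V,1_{[0,t]}\rangle$ is well-defined amounts to checking $1_{[0,t]}\in\dom(V)$, i.e.\ $1_{[0,t]}\in L^2(\nu)$, while by Lemma~\ref{lem:finite-nu} the assertion that $V(t)=\langle V,\delta_t\rangle$ is \emph{not} well-defined is equivalent to the statement that the spectral measure $\nu(\d\omega)=\rhat(\omega)\,\d\omega$, with $\rhat$ as in \eqref{form:rhat-mzero-lambdazero}, is infinite, i.e.\ $\int_\rbb\rhat(\omega)\,\d\omega=+\infty$. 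Throughout I will use $\rhat(\omega)=2\Kcos(\omega)\big/\bigl[\pi\beta\bigl(\Kcos(\omega)^2+\Ksin(\omega)^2\bigr)\bigr]$.

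For the well-definedness of $X(t)$, I would compute $\bigl|\widehat{1_{[0,t]}}(\omega)\bigr|^2=2(1-\cos(t\omega))/\omega^2$, so that
\[
\int_\rbb\bigl|\widehat{1_{[0,t]}}(\omega)\bigr|^2\rhat(\omega)\,\d\omega=2\int_\rbb\frac{1-\cos(t\omega)}{\omega^2}\,\rhat(\omega)\,\d\omega .
\]
On $\{|\omega|\le1\}$ one uses $1-\cos(t\omega)\le t^2\omega^2/2$ together with the boundedness of $\rhat$ near the origin established in \eqref{ineq:cv-gle7-1a}; on $\{|\omega|>1\}$ one uses $1-\cos(t\omega)\le 2$ together with the polynomial bound $\rhat(\omega)\le \tfrac{2}{\pi\beta}c(\sigma)\,\omega^{\sigma}$ for some $\sigma\in(0,1)$ coming from Inequality~\eqref{ineq:cv-gle7-2} in the proof of Lemma~\ref{lem:V-defined:mzero-lambdazero}; since $\sigma-2<-1$ the tail integral converges. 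Hence $1_{[0,t]}\in L^2(\nu)$ and $X(t)$ is well-defined. This argument is essentially the one used for Proposition~\ref{prop:V(t)-defined:mzero-lambdapos}, with the uniform bound on $\rhat$ there replaced by a polynomial one here.

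The substantive part is showing $\int_\rbb\rhat(\omega)\,\d\omega=\infty$. If $K$ satisfies \eqref{cond:cv-infinite}, this is immediate: Proposition~\ref{prop:fourier-cv}(b) gives $\Kcos(\omega),\Ksin(\omega)\asymp\omega^{\sigma_2-1}$ as $\omega\to\infty$, whence $\rhat(\omega)\asymp\omega^{1-\sigma_2}\to\infty$ with $1-\sigma_2\in(0,1)$. The delicate case is \eqref{cond:cv-finite}, where Proposition~\ref{prop:fourier-cv}(a) only gives $\omega\Ksin(\omega)\to K(0)\in(0,\infty)$ and $\omega^{2-\sigma_1}\Kcos(\omega)\to0$, so that $\rhat$ need not diverge pointwise at all (e.g.\ $K(t)=e^{-t}$ yields $\rhat\equiv 2/\pi\beta$). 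Here I would argue: since $\sigma_1<1$, $\Kcos(\omega)=o(\omega^{-1})$ while $\Ksin(\omega)\sim K(0)\omega^{-1}$, so for $\omega$ large $\Kcos(\omega)^2+\Ksin(\omega)^2\le C\omega^{-2}$ and therefore $\rhat(\omega)\ge c\,\omega^2\Kcos(\omega)$ with $c>0$. By Lemma~\ref{lem4} (applicable since $K\in C^2(0,\infty)$ is convex and $\lim_{t\to0^+}tK(t)=0$ because $K(0)<\infty$),
\[
\omega^2\Kcos(\omega)=\int_0^\infty K''(t)\bigl(1-\cos(t\omega)\bigr)\,\d t\ \ge\ 0,
\]
and since the integrand is non-negative ($K''\ge0$ by convexity), Tonelli's theorem gives
\[
\int_1^\infty\omega^2\Kcos(\omega)\,\d\omega=\int_0^\infty K''(t)\Bigl(\int_1^\infty\bigl(1-\cos(t\omega)\bigr)\,\d\omega\Bigr)\,\d t=+\infty,
\]
because $\int_1^\infty(1-\cos(t\omega))\,\d\omega=+\infty$ for every $t\neq0$, while $K''$ does not vanish identically ($K$, being positive, convex and decreasing to $0$, cannot be affine on $(0,\infty)$). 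Hence $\int_\rbb\rhat(\omega)\,\d\omega=\infty$, $\nu$ is infinite, and Lemma~\ref{lem:finite-nu} shows $V(t)$ is not well-defined. The main obstacle is exactly this sub-case \eqref{cond:cv-finite}: the Fubini/Tonelli observation is what lets one bypass a separate analysis of whether $K'(0^+)$ is finite or infinite.
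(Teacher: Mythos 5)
Your proposal is correct and matches the paper's proof in both structure and the key technical ingredients: the well-definedness of $X(t)$ is handled by splitting the integral at $|\omega|=1$ using the boundedness of $\rhat$ near the origin from \eqref{ineq:cv-gle7-1a} and the polynomial bound \eqref{ineq:cv-gle7-2} at infinity, while the failure of $V(t)$ to be well-defined is shown via Lemma~\ref{lem:finite-nu} by proving $\rhat\notin L^1$ — trivially under \eqref{cond:cv-infinite} via Proposition~\ref{prop:fourier-cv}(b), and under \eqref{cond:cv-finite} by reducing to $\int_1^\infty \omega^2 \Kcos(\omega)\,\d\omega = \infty$ through Lemma~\ref{lem4} and a Tonelli swap. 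The only minor difference is cosmetic: the paper restricts the Tonelli argument to an explicit subinterval $(\epsilon_1,\epsilon_2)$ where $K''>0$, whereas you apply Tonelli globally and then invoke $K''\not\equiv 0$; these are the same argument.
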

\begin{proof} (a) $V(t)$ is not-well-defined: In view of Lemma \ref{lem:finite-nu}, it suffices to show that $\rhat$ from \eqref{form:rhat-mzero-lambdazero} is not intergable, which implies that $\nu(\d\omega)=\widehat{r}(\omega)\d\omega$ is infinite. There are two cases: 

If $K$ satisfies \eqref{cond:cv-finite}, we write $\rhat$ as 
\begin{equation} \label{eqn:cv-gle7-7a}
\frac{\pi\beta}{2}\rhat(\omega)=\frac{\Kcos(\omega)}{\Kcos(\omega)^2+\Ksin(\omega)^2}=\frac{\omega^{2}\Kcos(\omega)}{\left[\omega\Kcos(\omega)\right]^2+\left[\omega\Ksin(\omega)\right]^2}.
\end{equation}
It follows from \eqref{lim:fourier-cv-1} that $\lim_{\omega\to\infty}\left[\omega\Kcos(\omega)\right]^2+\left[\omega\Ksin(\omega)\right]^2=K(0)^2$. It remains to show that $\omega^2\Kcos(\omega)$ is not integrable at infinity. Indeed, we recall from Lemma \ref{lem4} that for all non-zero $\omega$, $\omega^2\Kcos(\omega)=\int_0^\infty K''(t)\left(1-\cos(t\omega)\right)\d t$. Since for all $t$, $K''(t)$ is not identical to zero  and $K''(t)$ is continuous, we assume that there exists an interval $(\epsilon_1,\epsilon_2)$ such that $K''(t)>0$ on for $t\in(\epsilon_1,\epsilon_2)$. We now integrate with respect to $\omega$ to find
\begin{displaymath}
	\begin{aligned}
    \int_A^\infty \!\! \omega^2\Kcos(\omega)\d\omega &= \int_A^\infty \int_0^\infty \!\!K''(t)\left(1-\cos(t\omega)\right)\d t\d\omega\\ 
	& \geq \int_{\epsilon_1}^{\epsilon_2} K''(t)\int_A^\infty \!\! \left(1-\cos(t\omega)\right)\d\omega\d t =\infty,	
	\end{aligned}
\end{displaymath}
since for all $t\in (\epsilon_1,\epsilon_2)$, it is clear that $\int_A^\infty\left(1-\cos(t\omega)\right)\d\omega=\infty$.

If $K$ satisfies \eqref{cond:cv-infinite}, we observe that
\begin{equation} \label{eqn:cv-gle7-7}
\frac{\pi\beta}{2}\rhat(\omega)=\frac{\Kcos(\omega)}{\Kcos(\omega)^2+\Ksin(\omega)^2}=\frac{\omega^{1-\sigma_2}\Kcos(\omega)}{\left[\omega^{1-\sigma_2}\Kcos(\omega)\right]^2+\left[\omega^{1-\sigma_2}\Ksin(\omega)\right]^2}\omega^{1-\sigma_2},
\end{equation}
where $\sigma_2$ is the constant from \eqref{cond:cv-infinite}. We invoke \eqref{lim:fourier-cv-2} to find that $\rhat(\omega)\sim \omega^{1-\sigma_2}$ as $\omega\to\infty$. 

We therefore conclude from both cases that $\rhat(\omega)\notin L^1$.

(b) $X(t)$ is well-defined: This will follow immediately from Definition \ref{defn:form:V(t)} if we can show that
	 \begin{equation}\label{ineq:cv-gle7-8}
\int_\rbb \left|\F{1_{[0,t]}}(\omega)\right|^2\rhat(\omega)\d\omega<\infty,
\end{equation}
which is equivalent to
\begin{equation}\label{ineq:cv-gle7-9}
\int_\rbb \frac{1-\cos(t\omega)}{\omega^2}\times\frac{\Kcos(\omega)}{\Kcos(\omega)^2+\Ksin(\omega)^2}\d\omega<\infty,
	 \end{equation}
since $\F{1_{[0,t]}}(\omega)=\widehat{1_{[0,t]}}(\omega)=\frac{1-e^{-it\omega}}{i\omega}$. When $\omega$ is near the origin, we recall from \eqref{ineq:cv-gle7-1a} that $\rhat$ is always bounded regardless of the integrability of $K(t)$. The only concern is when $\omega$ tends to infinity. To this end, we employ \eqref{ineq:cv-gle7-2} to infer for all $\omega$ sufficiently large
\begin{equation}\label{ineq:cv-gle7-10}
 \frac{1-\cos(t\omega)}{\omega^2}\times\frac{\Kcos(\omega)}{\Kcos(\omega)^2+\Ksin(\omega)^2}< c(\sigma)\frac{1-\cos(t\omega)}{\omega^{2-\sigma}}.
\end{equation}
The RHS above is clearly integrable near infinity. We hence obtain \eqref{ineq:cv-gle7-8}.
\end{proof}
\begin{theorem} \label{thm:X(t)-continuous:mzero-lambdazero}  Under the same hypotheses as Theorem \ref{thm:weak-solution:mzero-lambdazero}, let $X(t)$ be the particle position process from Proposition \ref{prop:V(t)-defined:mzero-lambdazero}. Then, $X(t)$ is continuous a.s.
\end{theorem}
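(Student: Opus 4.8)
The plan is to follow the strategy of the proof of Theorem \ref{thm:X(t)-continuous:mzero-lambdapos}: verify the Kolmogorov continuity criterion (Proposition 3.18, \cite{hairer2009introduction}), which reduces the claim to exhibiting, for each fixed $T>0$, constants $\kappa>0$ and $c_\kappa>0$ with
\begin{equation*}
\Enone{\left|X(t)-X(s)\right|^2}\leq c_\kappa|t-s|^\kappa\qquad\text{for all }0\leq s<t\leq T.
\end{equation*}
Since $X(t)=\langle V,1_{[0,t]}\rangle$ is well-defined by Proposition \ref{prop:V(t)-defined:mzero-lambdazero}, the same computation as in \eqref{ineq:X(t)-Kolmogorov-1} gives
\begin{equation*}
\Enone{\left|X(t)-X(s)\right|^2}=4\int_0^\infty\frac{1-\cos\left((t-s)\omega\right)}{\omega^2}\rhat(\omega)\,\d\omega,
\end{equation*}
with $\rhat$ as in \eqref{form:rhat-mzero-lambdazero}.

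The essential difference from the $\lambda>0$ case is that $\rhat$ is no longer bounded; by Proposition \ref{prop:V(t)-defined:mzero-lambdazero} it grows polynomially at infinity. I would therefore fix $\omega_0>0$ large enough that \eqref{ineq:cv-gle7-2} holds, i.e.\ $\rhat(\omega)\leq C\omega^{\sigma}$ for $\omega\geq\omega_0$, where $\sigma\in(0,1)$ is the exponent furnished by Proposition \ref{prop:fourier-cv} (taking $\sigma=\sigma_1$ under \eqref{cond:cv-finite} and $\sigma=1-\sigma_2$ under \eqref{cond:cv-infinite}), and split the integral at $\omega_0$. On $(0,\omega_0]$ the function $\rhat$ is continuous on $(0,\infty)$ (Lemma \ref{lem:fourier} together with Condition \eqref{cond1d} makes the denominator $\Kcos(\omega)^2+\Ksin(\omega)^2$ continuous and strictly positive) and bounded near the origin by \eqref{ineq:cv-gle7-1a}, hence bounded on $(0,\omega_0]$; combining this with $1-\cos x\leq x^2/2$ bounds the integral over $(0,\omega_0]$ by $C_1|t-s|^2$.

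For the tail, I would combine $\rhat(\omega)\leq C\omega^{\sigma}$ with the elementary bound $1-\cos x\leq c_\eta|x|^\eta$ from \eqref{ineq:x-cont-1}, choosing $\eta\in(0,1-\sigma)$ so that $2-\sigma-\eta>1$; then
\begin{equation*}
\int_{\omega_0}^\infty\frac{1-\cos\left((t-s)\omega\right)}{\omega^2}\rhat(\omega)\,\d\omega\leq Cc_\eta|t-s|^\eta\int_{\omega_0}^\infty\frac{\d\omega}{\omega^{2-\sigma-\eta}}=C_2|t-s|^\eta,
\end{equation*}
the last integral being finite precisely because $2-\sigma-\eta>1$. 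Adding the two contributions and absorbing $|t-s|^2$ into $|t-s|^\eta$ via $|t-s|\leq T$ yields $\Enone{|X(t)-X(s)|^2}\leq c_\eta|t-s|^\eta$, so the Kolmogorov criterion applies with $\kappa=\eta$. The one genuinely new point — and the main obstacle — is the unbounded growth of $\rhat$: one must read off the growth rate $\sigma<1$ from Assumption \ref{cond:cv} through Proposition \ref{prop:fourier-cv}, and then balance it against the $\omega^{-2}$ decay and the Hölder exponent of $1-\cos$ so that a strictly positive power of $|t-s|$ survives while the $\omega$-integral still converges; the rest is a routine transcription of the $\lambda>0$ argument.
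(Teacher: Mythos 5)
Your proposal is correct and follows essentially the same route as the paper's proof: compute $\Enone{|X(t)-X(s)|^2}$ via the spectral density, split the integral at a threshold beyond which the polynomial growth bound $\rhat(\omega)\leq C\omega^\sigma$ from Proposition \ref{prop:fourier-cv} (via \eqref{ineq:cv-gle7-2}) is in force, bound the low-frequency piece by $|t-s|^2$ using $1-\cos x\leq x^2/2$, bound the tail by a positive power of $|t-s|$ using a H\"older bound on $1-\cos$, and invoke the Kolmogorov criterion. The only cosmetic difference is that you keep the H\"older exponent $\eta\in(0,1-\sigma)$ general, whereas the paper commits to the specific choice $\eta=(1-\sigma)/2$.
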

\begin{proof} To show continuity, we apply a similar argument as in the proof of Theorem \ref{thm:X(t)-continuous:mzero-lambdapos}. We recall from \eqref{ineq:x-cont-1} that for the constant $\sigma$ in \eqref{ineq:cv-gle7-2}, there exists $c_\sigma>0$ such that for all $x\in\rbb$,
\begin{equation}\label{ineq:cv-gle7-11}
1-\cos(x)\leq c_\sigma x^{(1-\sigma)/2}.
\end{equation}
and that $1-\cos(x)\leq x^2/2$. We now fix $A$ large enough such that for $\omega \geq A$, \eqref{ineq:cv-gle7-2} holds. We then estimate for $t\neq s$ arbitrarily given,
\begin{equation}\label{ineq:cv-gle7-12}
\begin{aligned}
\MoveEqLeft[5]\int_0^\infty \frac{1-\cos\left((t-s)\omega\right)}{\omega^2}\rhat(\omega)\d\omega \\&= \int_0^A \frac{1-\cos\left((t-s)\omega\right)}{\omega^2}\rhat(\omega)\d\omega+\int_A^\infty \frac{1-\cos\left((t-s)\omega\right)}{\omega^2}\rhat(\omega)\d\omega\\
&\leq |t-s|^2\int_0^A\rhat(\omega)\d\omega+c_\sigma|t-s|^{(1-\sigma)/2}\int_1^\infty \frac{c}{\omega^{1+(1-\sigma)/2}}\d\omega,
\end{aligned}
\end{equation}
where $c$, $c_\sigma$ are from \eqref{ineq:cv-gle7-2}, \eqref{ineq:cv-gle7-11} respectively. We thus obtain an estimate similar to \eqref{ineq:X(t)-Kolmogorov}, namely, for $0\leq s,t\leq T$, there exixts $C=C(T)$ such that
\begin{equation} \label{ineq:cv-gle7-13}
\Enone{\left|X(t)-X(s)\right|^2}\leq C|t-s|^{(1-\sigma)/2}.
\end{equation}
The continuity of $X(t)$ follows immediately from Proposition 3.18, \cite{hairer2009introduction}, which concludes the proof.
\end{proof}

\section{Asymptotic analysis of the Mean-Squared Displacement}
\label{sec:msd}

We are now prepared to prove our version of the Meta-Theorem \eqref{eq:meta-thm} that was presented in the introduction. Having established basic properties of the spectral density $\hat{r}$ in the last two sections,  the Abelian Theorem for Fourier Transforms from Section \ref{sec:tauberian} will allow us to immediately handle the case when $m > 0$ or $\lambda > 0$. As has been the case throughout the paper, $m = \lambda = 0$ presents a greater challenge and requires more restrictions on the memory $K(t)$.

Throughout this section, let $X(t)$ be the GLE position process as defined by Definition \ref{defn:form:V(t)}.

\subsection{Asymptotic of the MSD when either $m>0$ or $\lambda>0$}
\begin{theorem} \label{thm:diffusion}
Suppose that either $m > 0$ or $\lambda > 0$ and assume $K$ satisfies \eqref{cond1}$+$\eqref{cond2a}. Then
\[\lim_{t\to\infty}\frac{\E{X^2(t)}}{t} = C\in (0,\infty), \]
i.e. the process $X(t)$ is asymptotically diffusive.
\end{theorem}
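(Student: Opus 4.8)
The plan is to express $\E{X^2(t)}$ through the spectral representation of $X(t)=\langle V,1_{[0,t]}\rangle$, rescale the frequency variable by $t$, and then pass to the limit by dominated convergence, exploiting that when $K\in L^1$ the spectral density $\widehat{r}$ has a finite, strictly positive limit at the origin.

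First I would record the spectral representation. By the results of Section \ref{sec:weak-solutions}, the (extended) stationary random distribution $V$ defining $X(t)$ has spectral measure $\nu(\d\omega)=\widehat{r}(\omega)\,\d\omega$, where $\widehat{r}$ is given by \eqref{r.hat}, \eqref{form:rhat-mpos-lambdapos}, or \eqref{form:rhat-mzero-lambdapos} according as $(m>0,\lambda=0)$, $(m>0,\lambda>0)$, or $(m=0,\lambda>0)$, and $1_{[0,t]}\in\dom(V)$. Using the isometry for the stochastic integral against the random measure $\xi$ (Section \ref{sec:prelim:extension}), together with $\widehat{1_{[0,t]}}(\omega)=(1-e^{-it\omega})/(i\omega)$ and the evenness of $\widehat{r}$,
\[
\E{X^2(t)} \;=\; \int_\rbb \bigl|\widehat{1_{[0,t]}}(\omega)\bigr|^2\,\widehat{r}(\omega)\,\d\omega \;=\; 4\int_0^\infty \frac{1-\cos(t\omega)}{\omega^2}\,\widehat{r}(\omega)\,\d\omega ,
\]
and the substitution $z=t\omega$ turns this into
\[
\frac{\E{X^2(t)}}{t} \;=\; 4\int_0^\infty \frac{1-\cos z}{z^2}\,\widehat{r}\!\left(\frac{z}{t}\right)\,\d z .
\]

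Next I would collect the properties of $\widehat{r}$ needed when $K$ satisfies \eqref{cond1} $+$ \eqref{cond2a}. By Lemma \ref{lem:fourier}, $\widehat{r}$ is continuous on $(0,\infty)$; it is bounded on $[1,\infty)$ since the numerator in \eqref{r.hat}/\eqref{form:rhat-mpos-lambdapos}/\eqref{form:rhat-mzero-lambdapos} stays bounded there while the denominator is bounded away from $0$ (for $m=0$ one may invoke \eqref{ineq:rho-hat-bounded:mzero-lambdapos}); and since $K\in L^1$, dominated convergence gives $\Kcos(\omega)\to\int_0^\infty K(t)\,\d t<\infty$ and $\Ksin(\omega)\to0$ as $\omega\to0$, so $\widehat{r}$ has a finite, strictly positive limit $\widehat{r}(0^+)$ at the origin — explicitly $\widehat{r}(0^+)=\bigl(\pi\int_0^\infty K(t)\,\d t\bigr)^{-1}$ when $\lambda=0$ and $\widehat{r}(0^+)=\bigl(\pi(\lambda+\beta\int_0^\infty K(t)\,\d t)\bigr)^{-1}$ when $\lambda>0$ (cf.\ the proof of Lemma \ref{lemma:r-hat-L1}). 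In particular $\widehat{r}$ is bounded on $(0,\infty)$; set $M:=\sup_{\omega>0}\widehat{r}(\omega)<\infty$.

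With these facts the conclusion is immediate: in the last display the integrand converges pointwise to $\tfrac{1-\cos z}{z^2}\,\widehat{r}(0^+)$ and is dominated, uniformly in $t>0$, by $M\,\tfrac{1-\cos z}{z^2}$, which is integrable on $(0,\infty)$ (bounded near $0$, $O(z^{-2})$ at infinity). Dominated convergence together with $\int_0^\infty\tfrac{1-\cos z}{z^2}\,\d z=\tfrac{\pi}{2}$ then gives
\[
\lim_{t\to\infty}\frac{\E{X^2(t)}}{t} \;=\; 4\,\widehat{r}(0^+)\int_0^\infty\frac{1-\cos z}{z^2}\,\d z \;=\; 2\pi\,\widehat{r}(0^+)\ \in\ (0,\infty),
\]
i.e.\ the claim with $C=2\pi\,\widehat{r}(0^+)$. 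I do not anticipate a genuine obstacle here; the only point needing care is the bookkeeping in the previous paragraph — verifying in each of the three parameter regimes that $\widehat{r}$ is globally bounded and that $\widehat{r}(0^+)$ is finite and nonzero — which follows at once from the estimates on $\Kcos$ and $\Ksin$ already established in Sections \ref{sec:prelim:improper} and \ref{sec:weak-solutions}.
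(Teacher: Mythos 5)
Your proof is correct and follows essentially the same route as the paper's: spectral representation of $\E{X^2(t)}$ via $\widehat{1_{[0,t]}}$, the substitution $z = t\omega$, and dominated convergence using the boundedness of $\widehat{r}$ and the finite positive limit $\widehat{r}(0^+)$ when $K\in L^1$. If anything your version is a bit more careful in documenting why $\widehat{r}$ is globally bounded in each of the three parameter regimes, whereas the paper compresses that step (and in fact miscites Lemma \ref{lem1}, a Gaussian-process regularity lemma, where it means the decay estimate of Lemma \ref{lem:fourier}); your explicit formulas for $\widehat{r}(0^+)$ and the value of $\int_0^\infty\frac{1-\cos z}{z^2}\,\d z$ are harmless cosmetic additions.
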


\begin{proof} Using Definition \eqref{form:V(t)}, we have
\begin{equation} \label{eqn:X(t)}
\E{X^2(t)}=\int_\rbb \left|\widehat{1_{[0,t]}}(\omega)\right|^2\widehat{r}(\omega)\d\omega\\
= 2\int_\rbb \frac{1-\cos(t\omega)}{\omega^2} \widehat{r}(\omega)\d\omega.
\end{equation}
By changing variable $z:=t\omega$, we obtain
\[\E{X^2(t)}=2t\int_\rbb \frac{1-\cos(z)}{z^2}
\widehat{r}\left(\frac{z}{t}\right)\d z,\]
which implies
\begin{eqnarray} \label{displace1}
\frac{\E{X^2(t)}}{t}&=&2\int_\rbb \frac{1-\cos(z)}{z^2}
\widehat{r}\left(\frac{z}{t}\right)\d z.
\end{eqnarray}
We remind the reader that, by Equation \eqref{form:rhat-mpos-lambdapos}, the general form of $\widehat{r}(\omega)$ is 
\begin{displaymath}
\widehat{r}(\omega) = \frac{2\lambda+\beta\widehat{K}(\omega)}{2 \pi \big|im\omega+\lambda+\beta\widehat{K^+}(\omega)\big|^2}.
\end{displaymath}
Since $K$ is integrable by Condition \eqref{cond2a}, either $m>0$ or $\lambda>0$ implies that $\lim_{\omega\to 0}\widehat{r}(\omega)=\rhat(0)\in(0,\infty)$. In addition, by Condition \eqref{cond1}, Lemma \ref{lem1} implies that $\rhat(\omega)$ is bounded at infinity. As a consequence, $\rhat(\omega)$ is bounded in $\rbb$. By the Dominated Convergence Theorem, we obtain
\begin{displaymath}
\lim_{t\to\infty}\frac{\E{X^2(t)}}{t} = 2\int_\rbb \frac{1-\cos(z)}{z^2}
\widehat{r}(0)\d z\in (0,\infty).
\end{displaymath}

\end{proof}

\begin{theorem} \label{thm:subdiffusion-gle}
Suppose that either $m > 0$ or $\lambda > 0$ and assume $K$ satisfies~\eqref{cond1}~$+$~\eqref{cond2b}. Then
\[\lim_{t\to\infty}\frac{\E{X^2(t)}}{t^{\alpha}}=C\in (0,\infty),\]
where $\alpha$ is the constant from condition \eqref{cond2b}.
\end{theorem}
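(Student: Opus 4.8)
The plan is to mirror the proof of Theorem \ref{thm:diffusion}, but with the diffusive scaling $t$ replaced by $t^\alpha$ and with the near-zero behavior of the spectral density $\widehat{r}$ tracked carefully. Starting from the identity \eqref{eqn:X(t)}, namely
\[
\E{X^2(t)} = 2\int_\rbb \frac{1-\cos(t\omega)}{\omega^2}\,\widehat{r}(\omega)\,\d\omega,
\]
I would substitute $z = t\omega$ to get
\[
\frac{\E{X^2(t)}}{t^\alpha} = 2\,t^{1-\alpha}\int_\rbb \frac{1-\cos z}{z^2}\,\widehat{r}\!\left(\frac{z}{t}\right)\d z,
\]
and the whole proof reduces to passing the limit $t\to\infty$ inside this integral.

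Next I would pin down the precise asymptotics of $\widehat{r}$ near the origin. Recall from \eqref{form:rhat-mpos-lambdapos} that $\widehat{r}(\omega) = \frac{2\lambda + 2\beta\Kcos(\omega)}{2\pi\left[(\lambda + \beta\Kcos(\omega))^2 + (m\omega - \beta\Ksin(\omega))^2\right]}$ (covering all three active sub-cases $m>0,\lambda=0$; $m>0,\lambda>0$; $m=0,\lambda>0$). Since \eqref{cond1b}, \eqref{cond1c} and \eqref{cond2b} hold, Proposition \ref{prop:fourier.taube} provides constants $a,b\in(0,\infty)$ with $\omega^{1-\alpha}\Kcos(\omega)\to a$ and $\omega^{1-\alpha}\Ksin(\omega)\to b$ as $\omega\to 0^+$; because $\alpha\in(0,1)$, both $\Kcos(\omega)$ and $\Ksin(\omega)$ blow up like $\omega^{\alpha-1}$ and dominate the bounded quantities $\lambda$ and $m\omega$. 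Substituting, one finds $\widehat{r}(\omega)\sim C_0\,\omega^{1-\alpha}$ as $\omega\to 0^+$ with $C_0 = \frac{a}{\pi\beta(a^2+b^2)}\in(0,\infty)$. Hence, for each fixed $z\neq 0$, $t^{1-\alpha}\,\widehat{r}(z/t)\to C_0\,|z|^{1-\alpha}$ as $t\to\infty$ (using that $\widehat{r}$ is even).

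I would then justify the interchange via dominated convergence. The function $\omega\mapsto \widehat{r}(\omega)/|\omega|^{1-\alpha}$ is continuous and positive on $(0,\infty)$ by Lemma \ref{lem:fourier}, tends to $C_0$ as $\omega\to 0^+$, and tends to $0$ as $\omega\to\infty$ (because $\widehat{r}$ is bounded at infinity — it tends to $0$ when $m>0$ and to $(\pi\lambda)^{-1}$ when $m=0,\lambda>0$, exactly as in the proof of Lemma \ref{lemma:r-hat-L1} and its $\lambda>0$ analogue — while $|\omega|^{1-\alpha}\to\infty$). Therefore this ratio is bounded, say by $B<\infty$, so $\widehat{r}(\omega)\le B|\omega|^{1-\alpha}$ for all $\omega$, and the integrand in the displayed identity is dominated uniformly in large $t$ by $B\,\frac{1-\cos z}{|z|^{1+\alpha}}$, which is integrable on $\rbb$ (it behaves like $|z|^{1-\alpha}$ near $0$ and like $|z|^{-1-\alpha}$ near $\infty$, both integrable since $\alpha\in(0,1)$). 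Dominated convergence then yields
\[
\lim_{t\to\infty}\frac{\E{X^2(t)}}{t^\alpha} = 2C_0\int_\rbb \frac{1-\cos z}{|z|^{1+\alpha}}\,\d z =: C \in (0,\infty),
\]
the last integral being a finite positive constant.

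I expect the only genuine subtlety — the main obstacle — to be the uniform domination step: one must confirm that $\widehat{r}(\omega)/|\omega|^{1-\alpha}$ is bounded on all of $(0,\infty)$, which requires assembling the near-zero estimate from Proposition \ref{prop:fourier.taube}, the continuity of $\widehat{r}$ away from the origin from Lemma \ref{lem:fourier}, and the large-$\omega$ decay already used in Lemma \ref{lemma:r-hat-L1}. Everything else is bookkeeping; which of the three parameter regimes is in force only affects the numerical value of $C_0$ and the limit of $\widehat{r}$ at infinity, not the structure of the argument.
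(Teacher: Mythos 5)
Your proof is correct and follows essentially the same route as the paper: rewrite $\E{X^2(t)}/t^\alpha$ via the substitution $z=t\omega$, use Proposition \ref{prop:fourier.taube} to pin down $\widehat{r}(\omega)/\omega^{1-\alpha}\to C_0\in(0,\infty)$ as $\omega\to 0$, show the ratio is bounded on all of $(0,\infty)$ (the paper deduces this from continuity plus boundedness of $\widehat{r}$ at infinity, exactly as you do), and apply dominated convergence against $\tfrac{1-\cos z}{|z|^{1+\alpha}}$. The only differences are cosmetic — you make the constant $C_0$ and the domination bound explicit, while the paper leaves them implicit — so there is nothing to add.
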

\begin{proof}
From \eqref{displace1}, we have
\begin{equation*}
\frac{\E{X^2(t)}}{t^{\alpha}}=2\int_\rbb \frac{1-\cos(z)}{z^{1+\alpha}}\times
\frac{\widehat{r}\left(\frac{z}{t}\right)}{\left(\frac{z}{t}\right)^{1-\alpha}}\d z.
\end{equation*}
We observe that \eqref{form:rhat-mpos-lambdapos} is equivalent to
\begin{displaymath}
\frac{\widehat{r}(\omega)}{\omega^{1-\alpha}} =\frac{1}{\pi} \times\frac{\lambda\omega^{1-\alpha}+\beta\omega^{1-\alpha}\Kcos(\omega)}
{\left[\lambda\omega^{1-\alpha}+\beta\omega^{1-\alpha}\Kcos(\omega) \right]^2
+\left[m\omega^{2-\alpha}- \beta\omega^{1-\alpha}\Ksin(\omega) \right]^2}.
\end{displaymath}
Proposition \ref{prop:fourier.taube} implies that
\begin{displaymath}
\lim_{\omega\to 0}\frac{\widehat{r}(\omega)}{\omega^{1-\alpha}}=c\in(0,\infty),
\end{displaymath}
and subsequently, $\widehat{r}(\omega)/\omega^{1-\alpha}$ is bounded on $(0,\infty)$ since by Lemma \ref{lem1}, $\rhat(\omega)$ is bounded at infinity. Applying the Dominated Convergence Theorem gives
\begin{equation*}
\lim_{t\to\infty}\frac{\E{X^2(t)}}{t^{\alpha}}=2c\int_\rbb \frac{1-\cos(z)}{z^{1+\alpha}}\d z\in(0,\infty).
\end{equation*}
The proof is complete.
\end{proof}

\subsection{Asymptotics of the MSD when $m = \lambda = 0$} \label{sec:msd:mzero-lambdazero}

\begin{theorem}\label{thm:X(t)-diffusive:mzero-lambdazero} Suppose that $m=\lambda=0$ and that $K$ satisfies Assumption~\ref{cond:cv}~$+$~ \eqref{cond1}. Then, \begin{enumerate}[(a)]
		\item If $K$ satisfies \eqref{cond2a}, 
		\[\lim_{t\to\infty}\frac{\E{X^2(t)}}{t} = C\in (0,\infty). \]
		\item If $K$ satisfies \eqref{cond2b}, 
		\[\lim_{t\to\infty}\frac{\E{X^2(t)}}{t^{\alpha}}=C\in (0,\infty),\]
		where $\alpha$ is the constant from condition \eqref{cond2b}.
	\end{enumerate}
\end{theorem}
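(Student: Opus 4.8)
The plan is to run the same argument as in the proofs of Theorems~\ref{thm:diffusion} and \ref{thm:subdiffusion-gle}, with two modifications: the spectral density is now $\rhat$ from \eqref{form:rhat-mzero-lambdazero}, and the majorant needed for the Dominated Convergence Theorem must be read off from the bounds on $\rhat$ already obtained inside the proof of Lemma~\ref{lem:V-defined:mzero-lambdazero}. Since $X(t)$ is well-defined by Proposition~\ref{prop:V(t)-defined:mzero-lambdazero}, we have $\E{X^2(t)} = 2\int_\rbb \frac{1-\cos(t\omega)}{\omega^2}\,\rhat(\omega)\,\d\omega$, and the substitution $z=t\omega$ gives
\[
\frac{\E{X^2(t)}}{t} = 2\int_\rbb \frac{1-\cos z}{z^2}\,\rhat\!\Big(\frac{z}{t}\Big)\,\d z
\qquad\text{and}\qquad
\frac{\E{X^2(t)}}{t^\alpha} = 2\int_\rbb \frac{1-\cos z}{z^{1+\alpha}}\cdot\frac{\rhat(z/t)}{(z/t)^{1-\alpha}}\,\d z
\]
in cases (a) and (b) respectively. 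It then suffices to pass to the limit $t\to\infty$ inside each integral.

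For the pointwise limits, in case (a) dominated convergence gives $\Kcos(\omega)\to\int_0^\infty K$ and $\Ksin(\omega)\to0$ as $\omega\to0$, so, writing $\widehat K = 2\Kcos$ and $|\widehat{K^+}|^2 = \Kcos^2+\Ksin^2$, one gets $\rhat(\omega)\to \frac{2}{\pi\beta\int_0^\infty K}\in(0,\infty)$. In case (b), rewrite
\[
\frac{\rhat(\omega)}{\omega^{1-\alpha}} = \frac{1}{\pi\beta}\cdot\frac{2\,\omega^{1-\alpha}\Kcos(\omega)}{\big(\omega^{1-\alpha}\Kcos(\omega)\big)^2 + \big(\omega^{1-\alpha}\Ksin(\omega)\big)^2},
\]
and apply Proposition~\ref{prop:fourier.taube}, which gives $\omega^{1-\alpha}\Kcos(\omega)\to a$ and $\omega^{1-\alpha}\Ksin(\omega)\to b$ with $a,b\in(0,\infty)$ as $\omega\to0$; hence $\rhat(\omega)/\omega^{1-\alpha}\to \frac{2a}{\pi\beta(a^2+b^2)}\in(0,\infty)$. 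In either case the limiting integrand equals a strictly positive constant times $\frac{1-\cos z}{z^2}$ (respectively $\frac{1-\cos z}{z^{1+\alpha}}$), which integrates to a finite positive number over $\rbb$.

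The crux is the $t$-uniform dominating function, and here I would reuse the two estimates from the proof of Lemma~\ref{lem:V-defined:mzero-lambdazero}: by \eqref{ineq:cv-gle7-1a} the function $\rhat$ is bounded on every interval $(0,A]$, and by \eqref{ineq:cv-gle7-2}---which is exactly where Assumption~\ref{cond:cv} enters, via Proposition~\ref{prop:fourier-cv}---there are $\sigma\in(0,1)$ and $c(\sigma)>0$ with $\rhat(\omega)\le c(\sigma)\omega^\sigma$ for all large $\omega$; together these yield $\rhat(\omega)\le C_0 + C_1\omega^\sigma$ on $(0,\infty)$ and hence $\rhat(z/t)\le C_0 + C_1 z^\sigma$ for $t\ge1$. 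In case (a) the integrand is then dominated by $\frac{1-\cos z}{z^2}(C_0+C_1 z^\sigma)$, which is bounded near $z=0$ and $O(z^{-2})+O(z^{\sigma-2})$ at infinity, hence in $L^1(\rbb)$ since $\sigma<1$. In case (b) the same bounds give $\rhat(z/t)/(z/t)^{1-\alpha}\le M(1+z^\gamma)$ with $\gamma := \max(0,\sigma-1+\alpha)<\alpha$, so the integrand is dominated by $M\frac{1-\cos z}{z^{1+\alpha}}(1+z^\gamma)$, which behaves like $z^{1-\alpha}$ near the origin and like $z^{-1-\alpha}+z^{\gamma-1-\alpha}$ at infinity and is integrable because $\gamma<\alpha$. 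The Dominated Convergence Theorem now delivers the two limits, both finite and strictly positive. The only subtlety beyond routine bookkeeping is ensuring that the polynomial growth exponent of $\rhat$ at infinity stays strictly below the threshold ($1$ in case (a), $\alpha$ in case (b)) that keeps the majorant integrable; this is guaranteed by $\sigma<1$ in Proposition~\ref{prop:fourier-cv}, which is why Assumption~\ref{cond:cv} is indispensable in the $m=\lambda=0$ regime.
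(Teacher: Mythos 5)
Your approach is the same in spirit as the paper's (apply dominated convergence after the substitution $z=t\omega$), but you try to build a single $t$-uniform dominating function on all of $\rbb$ whereas the paper splits the $z$-integral at $z=At$ and treats the regions $\{z\le At\}$ and $\{z> At\}$ separately. For case (a) your global-majorant route is fine — indeed arguably cleaner — and the check is correct: $\rhat(\omega)\le C_0+C_1\omega^\sigma$ on $(0,\infty)$ follows from \eqref{ineq:cv-gle7-1a}, continuity on $(0,\infty)$, and \eqref{ineq:cv-gle7-2}, and $(1-\cos z)z^{-2}(C_0+C_1 z^\sigma)$ is integrable since $\sigma<1$.

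In case (b), however, there is a real gap in the dominating-function step. You assert that ``the same bounds give $\rhat(z/t)/(z/t)^{1-\alpha}\le M(1+z^\gamma)$.'' But dividing the bound $\rhat(\omega)\le C_0+C_1\omega^\sigma$ by $\omega^{1-\alpha}$ with $\omega=z/t$ yields
\[
\frac{\rhat(z/t)}{(z/t)^{1-\alpha}}\le C_0\Big(\frac{z}{t}\Big)^{\alpha-1}+C_1\Big(\frac{z}{t}\Big)^{\sigma+\alpha-1},
\]
and since $\alpha-1<0$ the first term equals $C_0\,t^{1-\alpha}z^{\alpha-1}$, which diverges as $t\to\infty$ for any fixed $z$ (and also as $z\to0$). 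So the bound on $\rhat$ alone does \emph{not} produce a $t$-uniform majorant; the constant $M$ cannot be chosen independently of $t$. What you actually need is a bound on the quotient $\rhat(\omega)/\omega^{1-\alpha}$ itself, namely $\rhat(\omega)/\omega^{1-\alpha}\le M(1+\omega^\gamma)$ for all $\omega>0$. Near $\omega=0$ this uses $\lim_{\omega\to0}\omega^{1-\alpha}\rhat(\omega)\in(0,\infty)$, which you already cite (via Proposition~\ref{prop:fourier.taube}) in the pointwise-limit paragraph but do not carry into the majorant argument; at infinity it follows from $\rhat(\omega)\lesssim\omega^\sigma$; and on compact intervals from continuity. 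With this corrected bound, $\rhat(z/t)/(z/t)^{1-\alpha}\le M(1+(z/t)^\gamma)\le M(1+z^\gamma)$ for $t\ge1$, and the rest of your argument goes through. The paper sidesteps this subtlety entirely by splitting at $z=At$: for $z\le At$ (so $\omega\le A$) it invokes only the boundedness of $\omega^{1-\alpha}\rhat(\omega)$ on $(0,A]$, and for $z\ge At$ it handles the tail directly and shows it vanishes.
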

\begin{proof} (a) We recall from \eqref{displace1} that
	\begin{equation}\label{eqn:cv-gle7-13}
\frac{\E{X^2(t)}}{t}=2\int_\rbb \frac{1-\cos(z)}{z^2}
\rhat\left(\frac{z}{t}\right)\d z.
	\end{equation}
It therefore suffices to show that
\begin{equation}\label{eqn:cv-gle7-14}
\lim_{t\to\infty}\int_0^\infty \frac{1-\cos(z)}{z^2}
\rhat\left(\frac{z}{t}\right)\d z=C\in(0,\infty).
\end{equation}
Fixing $A$ such that for all $\omega\geq A$, \eqref{ineq:cv-gle7-2} holds, we decompose the integral above as
\begin{equation}\label{eqn:cv-gle7-15}
\int_0^\infty \frac{1-\cos(z)}{z^2}
\rhat\left(\frac{z}{t}\right)\d z=\int_0^{At}+\int_{At}^\infty \frac{1-\cos(z)}{z^2}
\rhat\left(\frac{z}{t}\right)\d z=I_5(t)+I_6(t).
\end{equation}
We claim that $I_6(t)\to 0$ as $t\to\infty$. Indeed, since $z\geq At$, by \eqref{ineq:cv-gle7-2}, $\rhat(z/t)\leq c(z/t)^\sigma$. We have a chain of implications.
\begin{equation}\label{eqn:cv-gle7-16}
I_6(t)\leq \int_{At}^\infty \frac{1-\cos(z)}{z^2}
\times c\left(\frac{z}{t}
\right)^{\sigma}\d z= \frac{c}{t^\sigma}\int_{At}^\infty \frac{1-\cos(z)}{z^{2-\sigma}}\to 0.
\end{equation}
We write $I_5(t)$ as
\begin{equation}\label{eqn:cv-gle7-17}
I_5(t)=\int_0^{\infty} 1_{(0,At]}(z) \frac{1-\cos(z)}{z^2}
\rhat\left(\frac{z}{t}\right)\d z.
\end{equation}
Since $\rhat(\omega)$ is bounded on $(0,A]$ and $z/t\leq A$, the integrand above is dominated by $\frac{1-\cos(z)}{z^2}$, which is integrable. It follows from the Dominated Convergene Theorem that 
\begin{equation}\label{eqn:cv-gle7-18}
\lim_{t\to\infty}I_5(t)=\rhat\left(0\right)\int_0^{\infty}  \frac{1-\cos(z)}{z^2}
\d z\in(0,\infty).
\end{equation}
We finally combine \eqref{eqn:cv-gle7-16} and \eqref{eqn:cv-gle7-18} to obtain \eqref{eqn:cv-gle7-14}, which concludes part (a).

(b) Firstly, in view of Proposition \ref{prop:fourier.taube}, since $K(t)$ satisfies \eqref{cond2b}, we have that
\begin{equation}\label{lim:cv-gle7-19a}
\lim_{\omega\to 0}\omega^{1-\alpha}\rhat(\omega)=\frac{2}{\pi\beta}\lim_{\omega\to 0}\omega^{1-\alpha}\frac{\Kcos(\omega)}{\Kcos(\omega)^2+\Ksin(\omega)^2}=C\in (0,\infty).
\end{equation}
Since
\begin{equation}\label{eqn:cv-gle7-19}
\frac{\E{X^2(t)}}{t^\alpha}=2t^{1-\alpha}\int_\rbb \frac{1-\cos(z)}{z^2}
\rhat\left(\frac{z}{t}\right)\d z,
\end{equation}
it suffices to show that
\begin{equation}\label{eqn:cv-gle7-20}
\lim_{t\to\infty}t^{1-\alpha}\int_0^\infty \frac{1-\cos(z)}{z^2}
\rhat\left(\frac{z}{t}\right)\d z=C\in(0,\infty).
\end{equation}
Fixing the same $A$ from part (a), we have
\begin{equation}\label{eqn:cv-gle7-21}
\int_0^\infty t^{1-\alpha} \frac{1-\cos(z)}{z^2}
\rhat\left(\frac{z}{t}\right)\d z=\int_0^{At}+\int_{At}^\infty t^{1-\alpha}\frac{1-\cos(z)}{z^2}
\rhat\left(\frac{z}{t}\right)\d z=I_7(t)+I_8(t).
\end{equation}
To see that $I_8(t)\to 0$ as $t\to\infty$, we have a chain of implications
\begin{equation}\label{eqn:cv-gle7-22}
	\begin{aligned}
		I_8(t) \leq t^{1-\alpha}\int_{At}^\infty c \, \frac{1-\cos(z)}{z^2} \left(\frac{z}{t} \right)^{\sigma} \d z &= \frac{ct^{1-\alpha}}{t^\sigma}\int_{At}^\infty \frac{1-\cos(z)}{z^{2-\sigma}}\d z\\
			&\leq \frac{ct^{1-\alpha}}{t^\sigma}\times\frac{1}{A^{1-\sigma}t^{1-\sigma}}=\frac{c}{t^\alpha}\to 0,
	\end{aligned}	
\end{equation}
where the constant $c$ may change from line to line independent of $t$. Next, we write $I_7(t)$ as
\begin{equation}\label{eqn:cv-gle7-23}
I_7(t)=\int_0^{\infty} 1_{(0,At]}(z) \frac{1-\cos(z)}{z^{1+\alpha}}
\left(\frac{z}{t}\right)^{1-\alpha}\rhat\left(\frac{z}{t}\right)\d z.
\end{equation}
From \eqref{lim:cv-gle7-19a}, we observe that $\omega^{1-\alpha}\rhat(\omega)$ is bounded on $(0,A]$. Since $z/t\leq A$, the integrand above is dominated by $\frac{1-\cos(z)}{z^{1+\alpha}}$, which is integrable. Taking $t$ to infinity, it follows from the Dominated Convergene Theorem that 
\begin{equation}\label{eqn:cv-gle7-24}
\lim_{t\to\infty}I_7(t)=\int_0^{\infty}  \frac{1-\cos(z)}{z^{1+\alpha}}
\d z\lim_{\omega\to 0}\omega^{1-\alpha}\rhat\left(\omega\right)\in(0,\infty).
\end{equation}
Finally, \eqref{eqn:cv-gle7-20} follows from \eqref{eqn:cv-gle7-22} and \eqref{eqn:cv-gle7-24}. The proof is thus complete.
\end{proof}

\section{Transient Anomalous Diffusion}
\label{sec:transient-anomalous-diffusion} 

As mentioned when we introduced the Generalized Rouse family of memory kernels in Section \ref{sec:prelim:exponentials}, a sum of exponentials can be used to approximate a power law. This is an appealing property because, for such memory kernels, the non-Markov GLE can be rewritten as a high-dimensional system of SDEs. (See \cite{goychuk2012viscoelastic} or \cite{pavliotis2014stochastic} for discussion of the finite-dimensional case.) Since a finite sum of exponentials will always be integrable, the associated solutions to the GLE will be asymptotically diffusive.  Nevertheless, the MSD of these solutions will look subdiffusive over a large time range if the memory kernel has an appropriate form. 

In this section, we propose a rigorous definition of \emph{transient anomalous diffusion} in the case where either $m>0$ or $\lambda>0$. We formulate the result in such a way that one can check a convergence condition on the sequence of memory kernels and then have that for any interval $[0,T]$, there is an $N$ sufficiently large so that the GLE with $N$ terms is arbitrarily close to the limiting MSD over $[0,T]$. One might think that such a result is automatic, but the argument is more subtle than expected. We provide some results in this direction. Once again, the analysis is more subtle when $m > 0$ and $\lambda = 0$ (Theorem \ref{thm4}) and easier when $\lambda > 0$ (Theorem \ref{thm:transient:lambdapos}). However, we do not have a result of this kind for $m = \lambda = 0$.

\begin{theorem}\label{thm4} Suppose that $m>0$ and $\lambda=0$. Assume all of the following.
\begin{enumerate}[(a)]
\item $K_n\in C^2(0,\infty)$ satisfies \eqref{cond1} $+$ \eqref{cond2a}.
\item  $K\in  C^2(0,\infty)$ satisfies \eqref{cond1} $+$ \eqref{cond2b}. 
\item For all $n\in\nbb$, $K_n(t)$ is convex on $(0,\infty)$.
\item As $n\rightarrow\infty$, $K_n(t)\longrightarrow K(t)$ for all $t>0$.
\item There exists a constant $\kappa\in(0,1)$ such that 
\begin{equation}\label{cond:uniform-bound}
\sup_{n\in\nbb^+} \sup_{t\in(0,1]} t^\kappa K_n(t)<\infty.
\end{equation}

\end{enumerate}
Let $X_n$, $X$ be the particle position processes as in Corollary \ref{cor:X(t)-diff} associated with $K_n$, $K$, respectively. Then for all $T>0$,
\begin{equation}\label{eqn2}
\lim_{n\to\infty}\bigg[\sup_{t,s\in[0,T]} \left|\E{X_n(t)X_n(s)}-\E{X(t)X(s)}\right| \bigg]=0.
\end{equation}
\end{theorem}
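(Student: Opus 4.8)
The plan is to reduce the statement to a uniform-in-$n$ integrability estimate on the spectral densities $\widehat{r}_n$, and then apply a dominated-convergence argument to the spectral representation of $\E{X_n(t)X_n(s)}$. Recall from Definition \ref{defn:form:V(t)} and the proof of Lemma \ref{lem:finite-nu} that
\begin{displaymath}
\E{X_n(t)X_n(s)} = \int_\rbb \overline{\widehat{1_{[0,t]}}(\omega)}\,\widehat{1_{[0,s]}}(\omega)\,\widehat{r}_n(\omega)\,\d\omega,
\end{displaymath}
where $\widehat{r}_n(\omega) = \widehat{K}_n(\omega)/(2\pi|mi\omega+\widehat{K_n^+}(\omega)|^2)$, and similarly for $X$ with $\widehat{r}$. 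Since $\widehat{1_{[0,t]}}(\omega) = (1-e^{-it\omega})/(i\omega)$ is bounded by $\min(t,2/|\omega|)$ uniformly for $t \in [0,T]$, the difference $\E{X_n(t)X_n(s)} - \E{X(t)X(s)}$ is controlled, uniformly over $t,s\in[0,T]$, by $\int_\rbb \min(T,2/|\omega|)^2\,|\widehat{r}_n(\omega)-\widehat{r}(\omega)|\,\d\omega$. So it suffices to prove that $\widehat{r}_n \to \widehat{r}$ in the weighted space $L^1(\rbb, \min(T^2, 4/\omega^2)\,\d\omega)$.

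First I would establish pointwise convergence $\widehat{r}_n(\omega) \to \widehat{r}(\omega)$ for each $\omega \neq 0$. Because each $K_n$ is convex and locally integrable on $[0,\infty)$ and decreasing to zero (Conditions \eqref{cond1} and hypothesis (c)), Lemma \ref{lem:fourier-positive} applies, and in fact the representation $\omega^2 (\mathcal{F}_{\cos}K_n)(\omega) = \int_0^\infty K_n''(t)(1-\cos(t\omega))\,\d t$ from Lemma \ref{lem4} is available. Using hypothesis (d) together with the monotone/dominated structure coming from the uniform bound \eqref{cond:uniform-bound}, one gets $\widehat{K_n^+}(\omega) = (\mathcal{F}_{\cos}K_n)(\omega) - i(\mathcal{F}_{\sin}K_n)(\omega) \to \widehat{K^+}(\omega)$ and $\widehat{K}_n(\omega) \to \widehat{K}(\omega)$; since the denominator $|mi\omega + \widehat{K_n^+}(\omega)|^2 \geq m^2\omega^2 > 0$ stays bounded away from zero, $\widehat{r}_n(\omega) \to \widehat{r}(\omega)$ pointwise for $\omega \neq 0$.

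The main obstacle is producing a single dominating function $g \in L^1(\rbb, \min(T^2,4/\omega^2)\,\d\omega)$ with $|\widehat{r}_n(\omega)| \leq g(\omega)$ for all $n$; this is exactly where the uniformity hypotheses (c) and (e) are needed and where the argument is "more subtle than expected." Near infinity, $m^2\omega^2$ dominates the denominator and $|\widehat{K}_n(\omega)| \leq 2(\mathcal{F}_{\cos}K_n)(\omega)$; using \eqref{ineq:fourier.1a}-type bounds from Lemma \ref{lem:fourier} with a cutoff $A$ that can be chosen uniformly in $n$ (because the tail behavior is controlled through convexity and \eqref{cond:uniform-bound}, mimicking the split in the proof of Proposition \ref{prop:Fourier.S'}), one bounds $|\widehat{r}_n(\omega)|$ by $C/\omega^2$ for $|\omega|$ large, which is integrable against the weight. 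Near zero the concern is the opposite: $\widehat{r}_n(\omega) = (1/\pi)\,(\mathcal{F}_{\cos}K_n)(\omega)/([(\mathcal{F}_{\cos}K_n)(\omega)]^2 + [m\omega - (\mathcal{F}_{\sin}K_n)(\omega)]^2)$, and since $(\mathcal{F}_{\cos}K_n)(\omega) > 0$ one has the elementary bound $\widehat{r}_n(\omega) \leq 1/(\pi (\mathcal{F}_{\cos}K_n)(\omega))$. The uniform bound \eqref{cond:uniform-bound} with exponent $\kappa \in (0,1)$ forces a uniform \emph{lower} bound $(\mathcal{F}_{\cos}K_n)(\omega) \geq c\,\omega^{-(1-\kappa)}$ for small $\omega$ (via the same change-of-variables splitting $\int_0^\omega + \int_\omega^A + \int_A^\infty$ used in Proposition \ref{prop:fourier.taube}, now carried out uniformly in $n$), giving $|\widehat{r}_n(\omega)| \leq C\,\omega^{1-\kappa}$ near zero — integrable there against the weight $T^2$. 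On the remaining compact set $\{A_0 \leq |\omega| \leq A_1\}$ bounded away from zero, continuity and the pointwise convergence, plus the convexity representation, yield uniform boundedness. Assembling these three regions produces the dominating function $g$, and the Dominated Convergence Theorem then gives $\int_\rbb \min(T^2,4/\omega^2)\,|\widehat{r}_n(\omega)-\widehat{r}(\omega)|\,\d\omega \to 0$, which establishes \eqref{eqn2}.
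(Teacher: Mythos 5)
Your overall architecture matches the paper's: reduce to showing $\widehat{r}_n\to\widehat{r}$ in a (weighted) $L^1$ sense via the spectral representation, establish pointwise convergence of $\widehat{r}_n$, and then hunt for a dominating function. But the dominating-function argument near $\omega = 0$, which is exactly where the paper's proof has its nontrivial content (Lemma~\ref{prop:r.hat}), contains a genuine error.

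You claim that \eqref{cond:uniform-bound} ``forces a uniform lower bound $(\Fcos K_n)(\omega)\geq c\,\omega^{-(1-\kappa)}$ for small $\omega$.'' This cannot be right for two reasons. First, \eqref{cond:uniform-bound} is an \emph{upper} bound on $K_n(t)$ for \emph{small} $t$; it controls how singular $K_n$ may be at the origin and is used (in the paper) to obtain \emph{upper} bounds on $\Fcos K_n$ and $\Fsin K_n$ for \emph{large} $\omega$ (see \eqref{r.hat.2}--\eqref{r.hat.3}). It gives no lower bound on $K_n$ at all, and in particular no information about the tails of $K_n$, which is what would be needed to make $\Fcos K_n$ blow up near $\omega=0$. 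Second, each $K_n$ is integrable by hypothesis \eqref{cond2a}, so $\Fcos K_n(\omega)\to\int_0^\infty K_n<\infty$ as $\omega\to 0$; no individual $\Fcos K_n$ diverges like $\omega^{-(1-\kappa)}$ near the origin. You have confused the exponent $\kappa$ (governing behavior near $t=0$, hence large $\omega$) with the exponent $\alpha$ from \eqref{cond2b} (governing the tail of $K$, hence $\Fcos K$ near $\omega=0$), and the change-of-variables splitting from Proposition~\ref{prop:fourier.taube} applies to the single non-integrable $K$, not uniformly to the integrable sequence $K_n$. What one actually needs near zero --- and what the paper proves in Lemma~\ref{prop:r.hat} --- is the much more modest statement that $\Fcos K_n(\omega)\geq c_2>0$ uniformly for $n$ large and $\omega\in(0,\omega_0]$, which then gives $\widehat{r}_n(\omega)\leq 2/(\pi c_2)$ there. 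That uniform positive lower bound does \emph{not} follow from hypothesis (e); it follows from the convexity representation $\omega^2\Fcos K_n(\omega)=\int_0^\infty K_n''(t)(1-\cos(t\omega))\,\d t$ and a $\liminf$ argument using the pointwise convergence $K_n\to K$. This is the nonobvious step you have skipped, and your substitute claim is false.

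A smaller but related issue: your assertion that $\big|im\omega+\widehat{K_n^+}(\omega)\big|^2\geq m^2\omega^2$ is incorrect. The modulus squared is $\big(\Fcos K_n(\omega)\big)^2+\big(m\omega-\Fsin K_n(\omega)\big)^2$, and the second term need not be $\geq m^2\omega^2$. The denominator is bounded away from zero only because $\Fcos K_n(\omega)>0$; making that bound uniform is, again, exactly the content of \eqref{ineq:Kcos-bound}. Similarly, getting the $C/\omega^2$ decay for large $\omega$ requires a uniform upper bound on $\Fsin K_n(\omega)$ and a careful choice of $\omega_0$ so that $m\omega-\Fsin K_n(\omega)$ stays comparable to $m\omega$; this is handled in the paper by first proving \eqref{r.hat.2}--\eqref{r.hat.3} and then taking $\omega_0$ large enough that $c_3/\omega_0<m$. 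Your sketch gestures at this but does not actually supply the uniform bounds. To repair the proposal you would essentially need to prove Lemma~\ref{prop:r.hat}.
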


In order to prove Theorem \ref{thm4}, we need some preliminary facts.
\begin{lemma} \label{lem:trig-ineq}
For $x,y\in\rbb$, there holds
\begin{equation*} 
\left| \cos(x-y)-\cos(x)-\cos(y)+1 \right| \leq 2-\cos(x)-\cos(y).
\end{equation*}
\end{lemma}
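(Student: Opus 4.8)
The plan is to reduce this to an elementary inequality via half-angle identities. First I would establish the identity
\[
\cos(x-y) - \cos(x) - \cos(y) + 1 = 4\sin\tfrac{x}{2}\,\sin\tfrac{y}{2}\,\cos\tfrac{x-y}{2},
\]
which follows from the product-to-sum formulas: since $2\sin\tfrac{x}{2}\sin\tfrac{y}{2} = \cos\tfrac{x-y}{2} - \cos\tfrac{x+y}{2}$, multiplying by $2\cos\tfrac{x-y}{2}$ gives $2\cos^2\tfrac{x-y}{2} - 2\cos\tfrac{x-y}{2}\cos\tfrac{x+y}{2} = \bigl(1+\cos(x-y)\bigr) - \bigl(\cos x + \cos y\bigr)$, using $2\cos^2\theta = 1+\cos 2\theta$ and $2\cos A\cos B = \cos(A-B)+\cos(A+B)$.

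Next I would record that the right-hand side satisfies $2 - \cos x - \cos y = 2\sin^2\tfrac{x}{2} + 2\sin^2\tfrac{y}{2}$, which is just $1-\cos\theta = 2\sin^2(\theta/2)$ applied twice. With these two facts in hand the inequality is immediate: because $\bigl|\cos\tfrac{x-y}{2}\bigr|\le 1$,
\[
\bigl|\cos(x-y)-\cos x-\cos y+1\bigr| = 4\bigl|\sin\tfrac{x}{2}\bigr|\bigl|\sin\tfrac{y}{2}\bigr|\bigl|\cos\tfrac{x-y}{2}\bigr| \le 4\bigl|\sin\tfrac{x}{2}\bigr|\bigl|\sin\tfrac{y}{2}\bigr| \le 2\sin^2\tfrac{x}{2}+2\sin^2\tfrac{y}{2} = 2-\cos x-\cos y,
\]
where the last inequality is the elementary bound $2ab \le a^2+b^2$ applied with $a = |\sin\tfrac{x}{2}|$, $b = |\sin\tfrac{y}{2}|$.

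There is no real obstacle here; the only point requiring a moment's thought is spotting the factorization in the first step. An equivalent route, should one prefer to avoid the identity, is to split the two-sided inequality into its two halves: the upper bound is equivalent to $\cos(x-y)\le 1$, and the lower bound is equivalent to $\cos(x-y)+3 \ge 2\cos x + 2\cos y$, which after the substitution $\cos\theta = 1-2\sin^2(\theta/2)$ becomes $2\sin^2\tfrac{x}{2}+2\sin^2\tfrac{y}{2} \ge \sin^2\tfrac{x-y}{2}$; this last follows from the triangle inequality $\bigl|\sin\tfrac{x-y}{2}\bigr| = \bigl|\sin\tfrac{x}{2}\cos\tfrac{y}{2}-\cos\tfrac{x}{2}\sin\tfrac{y}{2}\bigr| \le \bigl|\sin\tfrac{x}{2}\bigr|+\bigl|\sin\tfrac{y}{2}\bigr|$ together with $(p+q)^2\le 2p^2+2q^2$. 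I would present the factorization version since it is the shortest.
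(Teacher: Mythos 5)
Your proof is correct, and the main route (via the factorization) is genuinely different from the paper's. The paper splits the two-sided inequality into its two halves: the upper half reduces to $\cos(x-y)\le 1$, and for the lower half the authors expand $\cos(x-y)-2\cos x-2\cos y+3$ directly in half-angle variables to get
\[
2\bigl[\sin^2\tfrac{x}{2}+\sin^2\tfrac{y}{2}+2\cos\tfrac{x}{2}\cos\tfrac{y}{2}\sin\tfrac{x}{2}\sin\tfrac{y}{2}\bigr]+(1-\cos x)(1-\cos y)\ge 0,
\]
which is then asserted to hold (the bracket is nonnegative, e.g.\ because it equals $\sin^2\tfrac{x+y}{2}+2\sin^2\tfrac{x}{2}\sin^2\tfrac{y}{2}$, though the paper does not spell this out). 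Your factorization $\cos(x-y)-\cos x-\cos y+1 = 4\sin\tfrac{x}{2}\sin\tfrac{y}{2}\cos\tfrac{x-y}{2}$ handles both sides of the inequality at once via $|\cos\tfrac{x-y}{2}|\le 1$ and $2ab\le a^2+b^2$, which is tighter and more self-contained — there is no leftover nonnegativity claim to justify. Your sketched ``alternative route'' (splitting into the two halves and reducing the lower one to $2\sin^2\tfrac{x}{2}+2\sin^2\tfrac{y}{2}\ge\sin^2\tfrac{x-y}{2}$ via the sine subtraction formula) is closer in spirit to the paper's argument but still arrives at the estimate more transparently. All steps check out; the factorization version you propose to present is the shorter and cleaner of the two.
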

\begin{proof}
Our inequality is equivalent to
\begin{displaymath}
-2+\cos(x)+\cos(y)\leq\cos(x-y)-\cos(x)-\cos(y)+1  \leq 2-\cos(x)-\cos(y).
\end{displaymath}
The right hand side inequality is evident. We are left to prove
\begin{displaymath}
-2+\cos(x)+\cos(y)\leq\cos(x-y)-\cos(x)-\cos(y)+1,
\end{displaymath}
which can be written as
\begin{multline*}2\left[\sin^2(x/2)+\sin^2(y/2)+2\cos(x/2)\cos(y/2)\sin(x/2)\sin(y/2)\right]\\+\left(1-\cos(x)\right)\left(1-\cos(y)\right)\geq 0,
\end{multline*}
which in turn always holds.
\end{proof}

We now assert that the Fourier cosine and sine transforms of $K_n$ converge pointwise to those of $K$.

\begin{proposition} \label{prop:fourier.limit}
Suppose that $\{K_n\}_{n\geq 1}$ and $K$ satisfy \eqref{cond1} and that for every $t>0$, $K_n(t)\rightarrow K(t)$ as $n\to\infty$. For each $n\geq 1$, put 
\[\Kcos^n(\omega)=\int_0^\infty K_n(t)\cos(t\omega)\d t,\qquad \Ksin^n(\omega)=\int_0^\infty K_n(t)\sin(t\omega)\d t.\]
Then, for non-zero $\omega$,
\begin{equation} \label{prop.fourier.limit.1}
\lim_{k\to\infty}\Kcos^n(\omega)=\Kcos(\omega) \text{ and }  \lim_{k\to\infty}\Ksin^n(\omega)=\Ksin(\omega).
\end{equation}
\end{proposition}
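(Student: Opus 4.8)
Fix $\omega\neq 0$; by symmetry it suffices to treat $\omega>0$, and we argue for $\Kcos^n(\omega)\to\Kcos(\omega)$, the Fourier sine case being entirely parallel. Following the pattern of the proof of Lemma~\ref{lem:fourier}, the plan is to split each improper integral at a cut-off $B$,
\[
\Kcos^n(\omega)=\int_0^B K_n(t)\cos(t\omega)\,\d t+\int_B^\infty K_n(t)\cos(t\omega)\,\d t,
\]
and likewise for $K$, and then establish two facts: (i) for each fixed $B$, the compact pieces converge, $\int_0^B K_n(t)\cos(t\omega)\,\d t\to\int_0^B K(t)\cos(t\omega)\,\d t$; and (ii) the tails are small uniformly in $n$ once $B$ is large, and the corresponding tail of $K$ is small. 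Granting (i)--(ii), for $\epsilon>0$ one picks $B$ making both tails $<\epsilon$ and then $n$ large making $\big|\int_0^B (K_n-K)(t)\cos(t\omega)\,\d t\big|<\epsilon$, which yields $\big|\Kcos^n(\omega)-\Kcos(\omega)\big|<3\epsilon$. The sine statement follows by repeating this with $\sin$ in place of $\cos$, using the sine halves of Lemma~\ref{lem:fourier} and of inequality~\eqref{ineq:fourier.1a}.

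For the tail in (ii), the instrument is the Second Mean Value Theorem exactly as in the derivation of~\eqref{ineq:fourier.1a}: if $B$ lies beyond a point past which $K_n$ is decreasing, then $\big|\int_B^\infty K_n(t)\cos(t\omega)\,\d t\big|\le 4K_n(B)/\omega$; since $K_n(B)\to K(B)$ by the pointwise hypothesis, $\limsup_n\big|\int_B^\infty K_n(t)\cos(t\omega)\,\d t\big|\le 4K(B)/\omega$, which tends to $0$ as $B\to\infty$ because $K\downarrow 0$ (the tail of $K$ itself being bounded directly by~\eqref{ineq:fourier.1a}). For the compact piece in (i) the plan is to invoke the Dominated Convergence Theorem: $K_n(t)\cos(t\omega)\to K(t)\cos(t\omega)$ pointwise, and one needs a fixed locally integrable majorant for $\{K_n\}$ on $[0,B]$, supplied by a uniform bound near the origin of the type $\sup_n\sup_{t\in(0,1]}t^\kappa K_n(t)<\infty$ together with the monotone tail control just described on $[1,B]$.

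The delicate point -- and the step I expect to be the main obstacle -- is the \emph{uniformity in $n$} of the tail estimate. Condition~\eqref{cond1b} guarantees, for each fixed $n$, only \emph{some} threshold past which $K_n$ decreases, and both this threshold and the value of $K_n$ on the preceding region could drift with $n$; an escaping ``bump'' of $K_n$ would defeat the estimate in (ii). In the setting where this proposition is used -- Theorem~\ref{thm4}, where each $K_n$ is convex on $(0,\infty)$ and hence, being positive and vanishing at infinity, is \emph{globally} decreasing, and where the uniform bound~\eqref{cond:uniform-bound} holds near the origin -- both difficulties vanish: one may use $B=1$ as a common monotonicity cut-off and $C t^{-\kappa}$ as the local majorant, so the split-and-dominate argument above runs verbatim. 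I would therefore carry the proof out under this additional structure, noting that the bare hypotheses~\eqref{cond1} together with pointwise convergence are exactly at the borderline needed for this step.
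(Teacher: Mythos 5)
Your argument follows the same route as the paper's own proof: split the improper integral at a cutoff, apply the Dominated Convergence Theorem to the compact piece, and control the tail by the Second Mean Value Theorem bound \eqref{ineq:fourier.1a}, using $K_n(A)\to K(A)$ to compare the tail of $K_n$ to that of $K$. In that sense you have reproduced the intended argument.

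The ``delicate point'' you flag is a genuine one, and it is present in the paper's proof as well as yours. The paper fixes a single $A$ using only the monotonicity threshold of the limit $K$, and then states that \eqref{ineq:fourier.1a} ``also holds for $K_n$'' with that same $A$; but $K_n$ being \emph{eventually} decreasing (Condition \ref{cond1b}) only provides an $n$-dependent threshold $A_n$, and nothing in the hypotheses of Proposition~\ref{prop:fourier.limit} forces a common threshold. Likewise, the Dominated Convergence step on $\int_0^A K_n(t)\cos(t\omega)\,\d t$ is invoked without exhibiting a majorant uniform in $n$; local integrability of each $K_n$ separately does not preclude singularities at the origin whose strength drifts with $n$. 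So your diagnosis is correct: under the stated hypotheses alone the proof is incomplete, and the paper silently assumes exactly the two pieces of uniformity you isolate.

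Your proposed repair -- restricting to the setting of Theorem~\ref{thm4}, where convexity gives global monotonicity (so the cutoff $B$ can be taken uniform) and \eqref{cond:uniform-bound} furnishes a $t^{-\kappa}$ majorant near the origin -- does close both gaps for that application. One caveat worth noting: the Proposition is also invoked in the proof of Theorem~\ref{thm:transient:lambdapos}, where the $K_n$ are assumed only to satisfy \eqref{cond1}~$+$~\eqref{cond2a} and no convexity or uniform near-origin bound is imposed. In that setting the uniformity issues you raise remain unresolved by your fix, and the Proposition as stated (and as used there) would still need an explicit hypothesis of the kind you introduce, or a different mechanism, to be airtight.
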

\begin{proof}
Given $\varepsilon>0$, fix $A$ large enough such that 
\begin{equation} \label{prop.fourier.limit.1c}
\left| \int_0^\infty K(t)\cos(t\omega)\d t-\int_0^A K(t)\cos(t\omega)\d t\right|<\varepsilon,
\quad\text{and}\quad \frac{8K(A)}{\omega}<\varepsilon,
\end{equation}
where the latter condition is possible since $K(t)$ eventually decreases to 0 as $t\to\infty$. We have
\begin{equation}\label{prop.fourier.limit.1d}
\int_0^A K_n(t)\cos(t\omega)\d t\overset{n\to\infty}{\longrightarrow}\int_0^A f(t)\cos(t\omega)\d t,
\end{equation}
by virtue of the Dominated Convergence Theorem. For each $n$, we note that Inequality \eqref{ineq:fourier.1a} also holds for $K_n$, which implies
\begin{equation}\label{prop.fourier.limit.1e}
\left|\int_A^\infty K_n(t)\cos(t\omega)\d t\right| \leq \frac{4K_n(A)}{\omega}\leq \frac{8K(A)}{\omega}<\varepsilon,
\end{equation}
since $K_n(A)\rightarrow K(A)$ as $n\to\infty$. It follows immediately that
\begin{equation*}
\left| \int_0^\infty K_n(t)\cos(t\omega)\d t-\int_0^A K_n(t)\cos(t\omega)\d t\right|<\varepsilon,
\end{equation*}
which is equivalent to
\begin{equation}\label{prop.fourier.limit.1f}
\int_0^A K_n(t)\cos(t\omega)\d t-\varepsilon < \int_0^\infty K_n(t)\cos(t\omega)\d t <\int_0^A K_n(t)\cos(t\omega)\d t+\varepsilon.
\end{equation}
We now send $n$ to infinity and combine \eqref{prop.fourier.limit.1c}, \eqref{prop.fourier.limit.1d} and \eqref{prop.fourier.limit.1f} to obtain the Fourier cosine limit in \eqref{prop.fourier.limit.1}. A similar argument is applied to establish the Fourier sine limit. 
\end{proof}

As a direct consequence of Proposition \ref{prop:fourier.limit}, we obtain uniform bounds on $\{\Kcos^n\}_{n\geq 1}$ and $\{\Ksin^n\}_{n\geq 1}$ in the following lemma.
\begin{lemma} \label{prop:r.hat} Let $K_n$, $K$ be as in Theorem \ref{thm4}. Then for every $\omega_0>1$, there exists $N>0$ sufficiently large such that
\begin{equation} \label{ineq:Kcos-bound}
\inf_{n\geq N} \inf_{\omega\in(0,\omega_0]} \Kcos^n(\omega) > 0, 
\end{equation}
and
\begin{equation} \label{ineq:Ksin-bound}
\sup_{n\geq N} \sup_{\omega>\omega_0} \Kcos^n(\omega) <\infty, \,\, \sup_{n\geq N} \sup_{\omega>\omega_0} \Ksin^n(\omega) <\infty. 
\end{equation}
\begin{equation} \label{r.hat.limit}
\lim_{n\to\infty}\int_0^\infty \left|\widehat{r}_n(\omega)-\widehat{r}(\omega)\right|\d\omega=0.
\end{equation}
\end{lemma}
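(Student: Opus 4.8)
The plan is to establish the three assertions in turn, using Proposition~\ref{prop:fourier.limit} (pointwise convergence $\Kcos^n\to\Kcos$, $\Ksin^n\to\Ksin$), the uniform bound \eqref{cond:uniform-bound}, and the convexity of each $K_n$. Two preliminary remarks will be used throughout. First, each $K_n$ is nonincreasing on $(0,\infty)$: a positive convex function tending to $0$ at infinity cannot be increasing on any interval. Hence $K_n(t)\le K_n(1)\to K(1)<\infty$ for $t\ge 1$, while $K_n(t)\le C t^{-\kappa}$ on $(0,1]$ by \eqref{cond:uniform-bound}; so the $K_n$ are dominated on each $[0,A]$ by a fixed $L^1_{\mathrm{loc}}$ function, $\int_0^A|K_n-K|\,\d t\to 0$ by dominated convergence, and $\lim_{t\to 0^+}t K_n(t)=0$ uniformly in $n$ (so Lemma~\ref{lem4} applies to every $K_n$). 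Second, convex functions converging pointwise have convergent derivatives, $K_n'(t)\to K'(t)$ for every $t>0$ (squeeze $K_n'(t)$ between the one--sided difference quotients of $K_n$ and let the increment tend to $0$). The bound \eqref{ineq:Ksin-bound} then follows at once: splitting $\Kcos^n(\omega)=\int_0^A K_n\cos(\omega t)\,\d t+\int_A^\infty K_n\cos(\omega t)\,\d t$ with $A$ fixed, the first term is $\le\int_0^A K_n\le \tfrac{C}{1-\kappa}+(A-1)\sup_n K_n(1)$, and the second is $\le 4K_n(A)/\omega\le 4\sup_n K_n(A)/\omega_0$ for $\omega>\omega_0$ by \eqref{ineq:fourier.1a}; the sine estimate is identical with the sine analogue of \eqref{ineq:fourier.1a}.

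For \eqref{ineq:Kcos-bound} I would split $(0,\omega_0]=(0,\varepsilon]\cup[\varepsilon,\omega_0]$. On the compact set $[\varepsilon,\omega_0]$ the decomposition above, $\int_0^A|K_n-K|\,\d t\to 0$, and the uniform tail bound $|\int_A^\infty K_n\cos(\omega t)\,\d t|\le 4\sup_n K_n(A)/\varepsilon$ give $\Kcos^n\to\Kcos$ uniformly there; since $\Kcos$ is continuous and strictly positive on $[\varepsilon,\omega_0]$ (Lemma~\ref{lem:fourier}, Condition~\eqref{cond1d}), $\inf_{n\ge N}\inf_{[\varepsilon,\omega_0]}\Kcos^n>0$ for $N$ large. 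For $\omega$ near $0$ I would use the representation $\Kcos^n(\omega)=\omega^{-2}\int_0^\infty K_n''(t)\,(1-\cos(\omega t))\,\d t$ from Lemma~\ref{lem4} together with the elementary inequality $1-\cos x\ge x^2/4$ for $|x|\le 2$: for $0<a<b$ and $\omega\le 2/b$,
\[
\Kcos^n(\omega)\ \ge\ \frac{1}{\omega^2}\int_a^b K_n''(t)\,\frac{(\omega t)^2}{4}\,\d t\ =\ \frac14\int_a^b t^2 K_n''(t)\,\d t\ \ge\ \frac{a^2}{4}\,\big(K_n'(b)-K_n'(a)\big).
\]
Since $K$, being a pointwise limit of convex functions, is convex, and is positive and vanishes at infinity, it is not affine on $[2/\omega_0,\infty)$, so one may fix $2/\omega_0\le a<b$ with $K'(a)<K'(b)$; then $K_n'(b)-K_n'(a)\to K'(b)-K'(a)>0$, and with $\varepsilon:=2/b\le\omega_0$ this gives $\Kcos^n(\omega)\ge\tfrac{a^2}{8}(K'(b)-K'(a))>0$ for $n\ge N$, $\omega\in(0,\varepsilon]$. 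Combining the two regimes yields \eqref{ineq:Kcos-bound}. I expect this small-$\omega$ step to be the main obstacle: crude estimates of $\Kcos^n$ near $0$ destroy the cancellation that keeps it positive, and the device that rescues the argument is that the $K_n''$-representation, together with convergence of the derivatives $K_n'$, localizes everything to a fixed compact interval.

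Finally, for \eqref{r.hat.limit}, recall from \eqref{r.hat} that $\widehat r_n(\omega)=\tfrac1{\pi}\cdot\tfrac{\Kcos^n(\omega)}{[\Kcos^n(\omega)]^2+[m\omega-\Ksin^n(\omega)]^2}$, and likewise $\widehat r$ with $\Kcos,\Ksin$; all these functions are nonnegative and even. Proposition~\ref{prop:fourier.limit} gives $\widehat r_n(\omega)\to\widehat r(\omega)$ for each $\omega\ne 0$, the denominator being positive because $\Kcos(\omega)>0$. To upgrade to $L^1$ convergence I would produce a fixed $g\in L^1(0,\infty)$ with $\widehat r_n\le g$ for $n\ge N$: on $(0,\omega_1]$ use $\widehat r_n(\omega)\le \tfrac{1}{\pi\Kcos^n(\omega)}$ and \eqref{ineq:Kcos-bound} (applied with the interval $(0,\omega_1]$), which bounds $\widehat r_n$ by a constant there; and, choosing $\omega_1>\omega_0$ large enough that $m\omega-\Ksin^n(\omega)\ge m\omega/2$ for $\omega>\omega_1$ via \eqref{ineq:Ksin-bound}, one gets $\widehat r_n(\omega)\le \tfrac{4}{\pi m^2\omega^2}\sup_{n\ge N,\,\omega>\omega_0}\Kcos^n(\omega)$ on $(\omega_1,\infty)$, which is integrable. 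Then $|\widehat r_n-\widehat r|\le 2g$ and $\widehat r_n-\widehat r\to 0$ pointwise, so dominated convergence yields $\int_0^\infty|\widehat r_n-\widehat r|\,\d\omega\to 0$, hence also $\int_\rbb|\widehat r_n-\widehat r|\,\d\omega\to 0$ by evenness.
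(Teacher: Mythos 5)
Your proof is correct and rests on the same two pillars as the paper's: the representation of $\Kcos^n$ in terms of $K_n''$ from Lemma~\ref{lem4}, and the uniform bound \eqref{cond:uniform-bound}. The differences are in how the near-origin lower bound is extracted and how the lemma is organized. For \eqref{ineq:Kcos-bound}, the paper treats all of $\omega\in(0,\omega_0]$ at once: taking $t_1=\pi/(2\omega_0)$ so that $t\omega\le\pi/2$ on the region of integration, it drops to $\Kcos^n(\omega)\ge c_1\int_0^{t_1}t^2K_n''(t)\,\d t$, then integrates by parts and inserts a difference quotient via the Mean Value Theorem to bound the $\liminf$ over $n$ by $t_1^2\bigl[\tfrac{K(t^*)-K(t_1)}{t^*-t_1}-K'(t_1)\bigr]+\int_0^{t_1}t^2K''(t)\,\d t$, which is strictly positive once $t^*$ is taken close enough to $t_1$. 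You instead split $(0,\omega_0]$ into $(0,\varepsilon]$ and $[\varepsilon,\omega_0]$, handle the compact piece by showing $\Kcos^n\to\Kcos$ uniformly there (which works, and is the kind of argument already appearing in Proposition~\ref{prop:fourier.limit}), and handle the origin by localizing the $K_n''$ integral to a fixed interval $[a,b]$ and using $\int_a^b t^2K_n''\ge a^2\bigl(K_n'(b)-K_n'(a)\bigr)$, invoking the standard fact that pointwise convergence of convex functions forces pointwise convergence of their derivatives at differentiability points of the limit. Both are valid; the paper's device with $t^*$ is a way of avoiding the derivative-convergence lemma, while your route makes the positivity mechanism (non-affineness of the limit $K$) more transparent. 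For \eqref{ineq:Ksin-bound}, the paper changes variable $z=t\omega$ and splits at $z=1$ instead of at a fixed $t=A$, but the content — \eqref{cond:uniform-bound} near zero and a Second Mean Value Theorem tail bound — is the same. Finally, you establish \eqref{r.hat.limit} inside the lemma by dominated convergence with a dominating $g$ built from \eqref{ineq:Kcos-bound} and \eqref{ineq:Ksin-bound}; the paper states \eqref{r.hat.limit} in the lemma but actually proves it, with exactly this dominating function, inside the proof of Theorem~\ref{thm4} (the lemma's proof stops after \eqref{ineq:Ksin-bound}), so here your write-up is tidier.
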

\begin{proof}
We first note that $\left\{K_n\right\}_{n\geq 1}$ are convex, and so is $K$ being the limiting function. Furthermore, convexity and eventually decreasing to zero imply that $K_n(t)$ is actually decreasing to zero for $t\in [0,\infty)$. 

We now invoke \eqref{cond:uniform-bound} to see that $\lim_{t\to 0} tK_n(t)=0$. In view of Lemma~\ref{lem4}, $\Kcos^n$ satisfies formula \eqref{eqn:lem4.1}. We then estimate
\begin{multline*}
\int_0^\infty K_n(t)\cos(t\omega)\d t=\frac{1}{\omega^2}\int_0^\infty K_n''(t)(1-\cos(t\omega))\d t \\
\geq  \int_0^{t_1} K_n''(t)\frac{1-\cos(t\omega)}{\omega^2}\d t
= \int_0^{t_1} t^2 K_n''(t)\frac{1-\cos(t\omega)}{(t\omega)^2}\d t.
\end{multline*}
Observe that $\min_{x\in (0,\pi/2]}\frac{1-\cos(x)}{x^2}=c_1>0$. Fixing $\omega_0>1$ and setting $t_1=\pi/(2\omega_0)$, for $\omega \in (0,\omega_0]$, we have
\begin{equation} \label{r.hat.0}
\frac{1}{\omega^2}\int_0^\infty K_n''(t)(1-\cos(t\omega))\d t \geq c_1 \int_0^{t_1} t^2 K_n''(t)\d t.
\end{equation}
Integrating by parts the above RHS yields
\begin{equation*}
\int_0^{t_1} t^2 K_n''(t)\d t = t_1^2 K_n'(t_1)-t_1 K_n(t_1)+\int_0^{t_1} K_n(t)\d t.
\end{equation*}
Fix $0<t^*<t_1$ to be chosen later. The Mean Value Theorem implies
\begin{eqnarray*}
\frac{K_n(t^*)-K_n(t_1)}{t^*-t_1}&=&K_n'(\xi),\qquad t^* < \xi< t_1,\\
&\leq & K_n'(t_1),
\end{eqnarray*}
since $K_n'(t)$ is increasing on $t\in(0,\infty)$. It follows that
\begin{equation*}
\int_0^{t_1} t^2 K_n''(t)\d t \geq  t_1^2\frac{K_n(t^*)-K_n(t_1)}{t^*-t_1}-t_1 K_n(t_1)+\int_0^{t_1} K_n(t)\d t.
\end{equation*}
Letting $n\rightarrow\infty$, we obtain
\begin{align*}
\MoveEqLeft[4]\liminf_{n\to\infty} \int_0^{t_1} t^2 K_n''(t)\d t\\
 &\geq t_1^2\frac{K(t^*)-K(t_1)}{t^*-t_1}-t_1 K(t_1)+\int_0^{t_1} K(t)\d t\\
&=  t_1^2\left[\frac{K(t^*)-K(t_1)}{t^*-t_1}-K'(t_1)\right]+t_1^2K'(t_1)-t_1 K(t_1)+\int_0^{t_1} K(t) \d t\\
&= t_1^2\left[\frac{K(t^*)-K(t_1)}{t^*-t_1}-K'(t_1)\right] + \int_0^{t_1} t^2 K''(t) \d t.
\end{align*}
As $t^*\rightarrow t_1$, on the RHS above, the bracket tends to 0 whereas the integral is positive. Subsequently, we can choose $t^*$ close enough to $t_1$ such that the RHS above is positive. And thus,
\begin{equation} \label{r.hat.4}
\liminf_{n\to\infty} \int_0^{t_1} t^2 K_n''(t) \d t > 0.
\end{equation}
We then combine \eqref{r.hat.4} with \eqref{r.hat.0} to infer the existence of a constant $c_2=c_2(\omega_0)>0$ such that for $n$ large and $\omega \in (0,\omega_0]$, it holds that
\begin{equation} \label{r.hat.1}
\int_0^\infty K_n(t)\cos(t\omega) \d t \geq c_2>0,
\end{equation}
which proves \eqref{ineq:Kcos-bound}. Now for $\omega>\omega_0$, by a change of variable, we have
\begin{eqnarray*}
\int_0^\infty K_n(t)\cos(t\omega) \d t &=& \frac{1}{\omega}\int_0^\infty K_n\left(\frac{z}{\omega}\right)\cos(z) \d z\\
&=& \frac{1}{\omega^{1-\kappa}} \int_0^1 \left(\frac{z}{\omega}\right)^\kappa K_n\left(\frac{z}{\omega}\right)\frac{\cos(z)}{z^\kappa} \d z +\frac{1}{\omega} \int_1^\infty K_n\left(\frac{z}{\omega}\right)\cos(z) \d z
\end{eqnarray*}
To estimate the first integral on the above RHS, we employ the fact that $t^\kappa K_n(t)$ is uniformly bounded on $(0,1]$ from \eqref{cond:uniform-bound} to find
\begin{align*}
\frac{1}{\omega^{1-\kappa}} \int_0^1 \left(\frac{z}{\omega}\right)^\kappa K_n\left(\frac{z}{\omega}\right)\frac{\cos(z)}{z^\kappa} \d z &\leq \frac{\sup_{n\in\nbb^+} \sup_{t\in(0,1]} t^\kappa K_n(t)}{\omega^{1-\kappa}}\int_0^1 \frac{\cos(z)}{z^\kappa} \d z\\
&\leq \sup_{n\in\nbb^+} \sup_{t\in(0,1]} t^\kappa K_n(t)\int_0^1 \frac{\cos(z)}{z^\kappa} \d z,
\end{align*} 
where the last implication simply follows from the assumption $\omega>\omega_0>1$. For the other integral, invoking the Second Mean Value Theorem again gives
\begin{align*}
\frac{1}{\omega} \int_1^\infty K_n\left(\frac{z}{\omega}\right)\cos(z) \d z\leq
\frac{2}{\omega}K_n\left(\frac{1}{\omega}\right)\leq 2\sup_{n\in\nbb^+} \sup_{t\in(0,1]} t K_n(t)\leq 2\sup_{n\in\nbb^+} \sup_{t\in(0,1]} t^\kappa K_n(t),
\end{align*}
since $\kappa\in(0,1)$ by Assumption \emph{(d)} of Theorem~\ref{thm4}. We thus obtain for every $\omega\geq\omega_0$ and $n>0$
\begin{equation} \label{r.hat.2}
\int_0^\infty K_n(t)\cos(t\omega) \d t  \leq \bigg(2+\int_0^1 \frac{\cos(z)}{z^{\kappa}}\d z\bigg)\sup_{n\in\nbb^+} \sup_{t\in(0,1]} t^\kappa K_n(t),
\end{equation}
and likewise, 
\begin{equation} \label{r.hat.3}
\int_0^\infty K_n(t)\sin(t\omega) \d t  \leq\bigg(2+\int_0^1 \frac{\cos(z)}{z^{\kappa}}\d z\bigg)\sup_{n\in\nbb^+} \sup_{t\in(0,1]} t^\kappa K_n(t),
\end{equation}
which proves \eqref{ineq:Ksin-bound}. The proof is thus complete.
\end{proof}

We are now ready to give 
\begin{proof}[Proof of Theorem \ref{thm4}]
A short computation yields 
\begin{equation}\label{eqn3}
\E{X_n(t)X_n(s)}=\int_\rbb \frac{\cos((t-s)\omega)-\cos(t\omega)-\cos(s\omega)+1}{\omega^2}\ 
\widehat{r}_n(\omega)\d\omega.
\end{equation}
For $t,s\in[0,T]$, we estimate
\begin{align*}
\MoveEqLeft[4] \left|\E{X_n(t)X_n(s)}-\E{X(t)X(s)}\right| \\
&\leq \int_\rbb \frac{\left|\cos((t-s)\omega)-\cos(t\omega)-\cos(s\omega)+1\right|}{\omega^2}\left|\widehat{r_n}(\omega)-\widehat{r}(\omega)\right| \d\omega\\
&\leq \int_\rbb \frac{2-\cos(t\omega)-\cos(s\omega)}{\omega^2}\left|\widehat{r_n}(\omega)-\widehat{r}(\omega)\right| \d\omega \\
&=\int_\rbb \frac{1-\cos(t\omega)}{\omega^2}\lvert\widehat{r_n}(\omega)-\widehat{r}(\omega)\rvert \d\omega
+ \int_\rbb \frac{1-\cos(s\omega)}{\omega^2}\lvert\widehat{r_n}(\omega)-\widehat{r}(\omega)\rvert \d\omega.
\end{align*}
It follows that
\begin{align} \label{eqn4}
\sup_{t,s\in[0,T]} \left|\E{X_n(t)X_n(s)}-\E{X(t)X(s)}\right|
\leq 2\sup_{t\in[0,T]} \int_\rbb \frac{1-\cos(t\omega)}{\omega^2}\lvert\widehat{r_n}(\omega)-\widehat{r}(\omega)\rvert \d\omega.
\end{align}
We note that for $0\leq t\leq T$,
\[
\frac{1-\cos(t\omega)}{\omega^2}=t^2\frac{1-\cos(t\omega)}{(t\omega)^2}\leq T^2 \sup_{\omega\in\rbb}\frac{1-\cos(\omega)}{\omega^2}.
\]
We then combine with Inequality \eqref{eqn4} to see that
\begin{multline} \label{eqn5}
\sup_{t,s\in[0,T]} \left|\E{X_n(t)X_n(s)}-\E{X(t)X(s)}\right|
\\ \leq 2T^2\int_\rbb\lvert\widehat{r}_n(\omega)-\widehat{r}(\omega)\rvert \d\omega \sup_{\omega\in\rbb\setminus\{0\}}\frac{1-\cos(\omega)}{\omega^2} .
\end{multline}
The problem now is reduced to showing that $\widehat{r}_n\rightarrow\widehat{r}$ in $L^1(\rbb)$. In view of Proposition~\ref{prop:fourier.limit}, for $\omega > 0$, $\widehat{r}_n(\omega)=\widehat{r}(\omega)$ as $n\to\infty$. It remains to find a dominating function. Let $\omega_0$ be the constant from Lemma~\ref{prop:r.hat}. There are two cases: on one hand, if $\omega \leq\omega_0$, recalling Formula~\eqref{r.hat}, we have
\begin{eqnarray*}
\pi\widehat{r}_n(\omega) &=& \frac{2\int_0^\infty K_n(t)\cos(t\omega) \d t}
{\left[\int_0^\infty K_n(t)\cos(t\omega) \d t \right]^2
+\left[m\omega- \int_0^\infty K_n(t)\sin(t\omega) \d t \right]^2}\\
&\leq & \frac{2}{\int_0^\infty K_n(t)\cos(t\omega) \d t}\leq  \frac{2}{c_2},
\end{eqnarray*}
where $c_2=c_2(\omega_0)$ is the constant in \eqref{r.hat.1}. On the other hand, observing that the constant $c_3:=\bigg(2+\int_0^1 \frac{\cos(z)}{z^{\kappa}}\d z\bigg)\sup_{n\in\nbb^+} \sup_{t\in(0,1]} t^\kappa K_n(t)$ in \eqref{r.hat.2} and \eqref{r.hat.3} does not depend on $\omega_0$. We thus can choose $\omega_0$ large enough such that $c_3/\omega_0<m$. Hence, for $\omega>\omega_0$, we have
\begin{equation*}
\pi\widehat{r}_n(\omega) \leq \frac{2\int_0^\infty K_n(t)\cos(t\omega) \d t}
{\left[m\omega- \int_0^\infty K_n(t)\sin(t\omega) \d t \right]^2}\leq  \frac{2c_3}{\omega^2\left[m-c_3/\omega_0\right]^2},
\end{equation*}
where the last implication follows from \eqref{r.hat.2} and \eqref{r.hat.3}.
Combining two cases above, we infer the following function 
\begin{displaymath}
g(\omega)=\frac{2}{c_2} 1_{(0,\omega_0]}(\omega)+\frac{2c_3}{\omega^2\left[m-c_3/\omega_0\right]^2} 1_{(\omega_0,\infty)}(\omega),
\end{displaymath}
dominating $\widehat{r}_n(\omega)$ in $\rbb$. It is also clear that $g\in L^1(\rbb)$. The Dominated Convergence Theorem then implies that $\widehat{r_n}$ converges to $\widehat{r}$ in $L^1(\rbb)$. As a consequence, we obtain \eqref{eqn2} following from \eqref{eqn5}. The proof is thus complete.
\end{proof}

We finally assert a result similar to Theorem \ref{thm4} for the case $m\geq 0,\,\lambda>0$, in which minimal assumptions on memory kernels are required. 
\begin{theorem}\label{thm:transient:lambdapos} Suppose that $m\geq 0$ and $\lambda>0$. Assume all of the following.
\begin{enumerate}[(a)]
\item $K_n$ satisfies \eqref{cond1} $+$ \eqref{cond2a}.
\item  $K$ satisfies \eqref{cond1} $+$ \eqref{cond2b}. 
\item As $n\rightarrow\infty$, $K_n(t)\longrightarrow K(t)$ for all $t>0$.

\end{enumerate}
Let $X_n(t)=\la V_n,1_{[0,t]}\ra$, $X(t)=\la V,1_{[0,t]}\ra$ be the particle position processes as in Definition~\ref{defn:form:V(t)} where $V_n,\, V$ are as in either Theorem~\ref{thm:weak-solution-mpos-lambdapos} (when $m>0,\, \lambda>0$) or Theorem~\ref{thm:weak-solution-mzero-lambdapos} (when $m=0,\, \lambda>0$) associated with $K_n$, $K$, respectively. Then for all $T>0$,
\begin{equation}\label{eqn:transient:lambdapos}
\lim_{n\to\infty}\bigg[\sup_{t,s\in[0,T]} \big|\E{X_n(t)X_n(s)}-\E{X(t)X(s)}\big| \bigg]=0.
\end{equation}
\end{theorem}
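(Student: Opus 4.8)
The plan is to imitate the proof of Theorem~\ref{thm4}, but to replace its delicate convexity-based estimate (Lemma~\ref{prop:r.hat}) by the elementary, uniform-in-$n$ bound on the spectral densities that $\lambda>0$ supplies for free.

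First I would reduce the statement to an integral estimate on the spectral densities. Writing $\widehat{r}_n$ and $\widehat{r}$ for the spectral densities attached (via Theorem~\ref{thm:weak-solution-mpos-lambdapos} or Theorem~\ref{thm:weak-solution-mzero-lambdapos}) to $K_n$ and $K$, the isometry for stochastic integrals against the associated random measures gives, after taking real parts, formula~\eqref{eqn3} for $\E{X_n(t)X_n(s)}$ and likewise for $\E{X(t)X(s)}$. Subtracting, bounding $|\cos((t-s)\omega)-\cos(t\omega)-\cos(s\omega)+1|$ by $2-\cos(t\omega)-\cos(s\omega)$ via Lemma~\ref{lem:trig-ineq}, and splitting exactly as in the passage to~\eqref{eqn4}, one obtains
\begin{equation*}
\sup_{t,s\in[0,T]}\big|\E{X_n(t)X_n(s)}-\E{X(t)X(s)}\big|\leq 2\sup_{t\in[0,T]}\int_\rbb \frac{1-\cos(t\omega)}{\omega^2}\,\big|\widehat{r}_n(\omega)-\widehat{r}(\omega)\big|\,\d\omega.
\end{equation*}
Since $\frac{1-\cos(t\omega)}{\omega^2}\leq\min\{t^2/2,\,2/\omega^2\}\leq C_T/(1+\omega^2)$ for $t\in[0,T]$, it suffices to prove $\int_\rbb(1+\omega^2)^{-1}|\widehat{r}_n(\omega)-\widehat{r}(\omega)|\,\d\omega\to 0$.

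Next I would produce a dominating function and check pointwise convergence. Because $\Kcos^n(\omega)>0$ by Condition~\eqref{cond1d}, the same computation as in~\eqref{ineq:rho-hat-bounded:mzero-lambdapos} yields, for every $m\geq 0$,
\begin{equation*}
\widehat{r}_n(\omega)=\frac{1}{2\pi}\cdot\frac{2\lambda+2\beta\Kcos^n(\omega)}{(\lambda+\beta\Kcos^n(\omega))^2+(m\omega-\beta\Ksin^n(\omega))^2}\leq\frac{1}{\pi\lambda},
\end{equation*}
uniformly in $n$ and $\omega$ (the $m$-term only enlarges the denominator), and the same bound holds for $\widehat{r}$; hence $(1+\omega^2)^{-1}|\widehat{r}_n-\widehat{r}|\leq\frac{2}{\pi\lambda(1+\omega^2)}\in L^1(\rbb)$. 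For pointwise convergence, Proposition~\ref{prop:fourier.limit} gives $\Kcos^n(\omega)\to\Kcos(\omega)$ and $\Ksin^n(\omega)\to\Ksin(\omega)$ for each $\omega\neq 0$, so $\widehat{K_n^+}(\omega)\to\widehat{K^+}(\omega)$ and $\widehat{K_n}(\omega)\to\widehat{K}(\omega)$; since $\lambda>0$ keeps the denominator bounded below by $\lambda^2$, the rational expression for $\widehat{r}_n(\omega)$ converges to $\widehat{r}(\omega)$ on $\rbb\setminus\{0\}$. The Dominated Convergence Theorem then closes the argument, and~\eqref{eqn:transient:lambdapos} follows.

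The hard part will be essentially nonexistent compared with Theorem~\ref{thm4}: the whole point is that $\lambda>0$ trivializes the uniform control of $\widehat{r}_n$ that was the obstacle there. The only items requiring care are (i) checking that the bound $\widehat{r}_n\leq 1/(\pi\lambda)$ is genuinely uniform in $n$ and simultaneously valid for $m>0$ and $m=0$ — which needs nothing beyond $\Kcos^n\geq 0$ from Condition~\eqref{cond1d} — and (ii) noting that $X_n(t)$ and $X(t)$ are well-defined in the $m=0$ case, which is already guaranteed by Proposition~\ref{prop:V(t)-defined:mzero-lambdapos} via that same bound.
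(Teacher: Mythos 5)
Your proposal is correct and follows essentially the same route as the paper's own proof: both reduce via \eqref{eqn4} to an $L^1$ estimate on $\widehat{r}_n-\widehat{r}$, use the uniform bound $\widehat{r}_n\leq 1/(\pi\lambda)$ furnished by Condition~\eqref{cond1d} together with $\lambda>0$, invoke Proposition~\ref{prop:fourier.limit} for pointwise convergence of $\Kcos^n,\Ksin^n$, and close with dominated convergence. The only cosmetic difference is that the paper splits the $\omega$-integral into $|\omega|\leq 1$ and $|\omega|>1$ and applies the Dominated Convergence Theorem to each piece, whereas you fold both regions into a single dominating function of the form $C_T/(1+\omega^2)$.
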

\begin{proof} Let $\widehat{r}_n,\,\widehat{r}$ be spectral densities associated with $X_n$ and $X$, respectively. Note that Inequality~\eqref{eqn4} is still valid regardless of $\lambda$,
\begin{align} \label{ineq:transient:lambdapos:1}
\sup_{t,s\in[0,T]} \left|\E{X_n(t)X_n(s)}-\E{X(t)X(s)}\right|
\leq 2\sup_{t\in[0,T]} \int_\rbb \frac{1-\cos(t\omega)}{\omega^2}\lvert\widehat{r_n}(\omega)-\widehat{r}(\omega)\rvert \d\omega.
\end{align}
Now, for $0\leq t\leq T$, we have
\begin{equation}\label{ineq:transient:lambdapos:2}
\begin{aligned}
\MoveEqLeft[3]\int_\rbb \frac{1-\cos(t\omega)}{\omega^2}\lvert\widehat{r}_n(\omega)-\widehat{r}(\omega)\rvert \d\omega\\
 &= \int_{|\omega|\leq 1} \frac{1-\cos(t\omega)}{\omega^2}\lvert\widehat{r}_n(\omega)-\widehat{r}(\omega)\rvert \d\omega+\int_{|\omega|>1} \frac{1-\cos(t\omega)}{\omega^2}\lvert\widehat{r}_n(\omega)-\widehat{r}(\omega)\rvert \d\omega\\
 &\leq T^2\sup_{y\in\rbb}\frac{1-\cos(y)}{y^2}\int_{|\omega|\leq 1} \left|\widehat{r}_n(\omega)-\widehat{r}(\omega)\right| \d\omega+\int_{|\omega|>1} \frac{2}{\omega^2}\lvert\widehat{r}_n(\omega)-\widehat{r}(\omega)\rvert \d\omega\\
\end{aligned}
\end{equation}
Recalling from~\eqref{form:rhat-mpos-lambdapos} that $\widehat{r_n}$ is given by
\begin{align*}
\pi\widehat{r}_n(\omega) &= \frac{\lambda+\beta\int_0^\infty K_n(t)\cos(t\omega) \d t}
{\left[\lambda+\beta\int_0^\infty K_n(t)\cos(t\omega) \d t \right]^2
+\left[m\omega- \beta\int_0^\infty K_n(t)\sin(t\omega) \d t \right]^2},
\end{align*}
which immediately yields the estimate $\widehat{r}_n(\omega)\leq \frac{1}{\pi\lambda}$, and likewise, $\widehat{r}(\omega)\leq \frac{1}{\pi\lambda}$. In addition, since $K_n$ converges to $K$ pointwise, Proposition~\ref{prop:fourier.limit} implies that $\widehat{r_n}(\omega)$ converges to $\widehat{r}(\omega)$ for every non-zero $\omega$. It follows from the Dominated Convergence Theorem that
\begin{equation} \label{eqn:transient:lambdapos:3}
\lim_{n\to\infty}\int_{|\omega|\leq 1} \left|\widehat{r}_n(\omega)-\widehat{r}(\omega)\right| \d\omega=0,\quad\text{and}\quad\lim_{n\to\infty}\int_{|\omega|>1} \frac{2}{\omega^2}\lvert\widehat{r}_n(\omega)-\widehat{r}(\omega)\rvert \d\omega=0.
\end{equation}
We finally combine \eqref{ineq:transient:lambdapos:1}, \eqref{ineq:transient:lambdapos:2} and \eqref{eqn:transient:lambdapos:3} to deduce the desired limit~\eqref{eqn:transient:lambdapos}, which concludes the proof.
\end{proof}
\section*{Acknowledgements}
The authors would like to thank Greg Forest, Christel Hohenegger, Pete Kramer, Martin Lysy and Davar Khoshnevisan for helpful conversations in the development of this work.

\bibliographystyle{siamplain}
\bibliography{gle-anomalous-diffusion}

\end{document}